\DeclarePairedDelimiter{\ceil}{\lceil}{\rceil}
\DeclarePairedDelimiter{\floor}{\lfloor}{\rfloor}
\newcommand{\mystrut}{\rule[-0.55\baselineskip]{0pt}{\baselineskip}}
\newcommand\topstrut[1][1.2ex]{\setlength\bigstrutjot{#1}{\bigstrut[t]}}
\newcommand\botstrut[1][0.9ex]{\setlength\bigstrutjot{#1}{\bigstrut[b]}}
\tikzset{
dot/.style={draw,fill,circle,inner sep = 0pt,minimum size = 3pt},
bigdot/.style={dot,minimum size = 4pt},
terminal/.style={draw,circle, inner sep=2.5pt},
vcolour/.style={draw,inner sep=1.5pt,font=\scriptsize,label distance=2pt},
22box/.style={draw,minimum width=2cm,minimum height=1.5cm,font=\LARGE,node contents=\( \Downarrow \)},
22boxsmall/.style={22box,minimum width=1.5cm, minimum height=1cm},
Vset/.style={draw=black!30,ellipse,minimum width=1.5cm,minimum height=2.5cm},
petal/.style={
    decoration = {markings, mark=at position 1.0 with {
\coordinate (centre) at (0,0);
\path (centre)+(120:2) node(v)[bigdot]{};
\path (centre)+(240:2) node(u)[bigdot]{};
\path (centre)+(0:2) node(w)[bigdot]{};
\draw (v)--(u)--(w);
\draw (v) arc (120:-120:2);
\path (v)--(u) node(v1)[pos=0.4][bigdot]{} node(u-v)[pos=0.6][bigdot]{};
\path (u)--(w) node(u-w)[pos=0.4][bigdot]{} node(w-u)[pos=0.6][bigdot]{};
\draw (w)--(v) node(w-v)[pos=0.4][bigdot]{} node(x1)[pos=0.6][bigdot]{};
\draw (u-w)--(u-v)--node[pos=0.3][bigdot]{} (w-v)--(w-u)--node[pos=0.3][bigdot]{} (v1)--(x1)--node[pos=0.3][bigdot]{} (u-w);
}},
    postaction = {decorate}
    },
midArrow/.style={
    decoration = {markings, mark=at position 0.5 with {
\path[->,ultra thick] (0,0)--(0,0);
}},
    postaction = {decorate}
    },
}
\newcommand{\overbar}[1]{\mkern 1.9mu\overline{\mkern-1.5mu#1\mkern-1.5mu}\mkern 3.5mu}
\newcommand{\pagetarget}[2]{%
  \phantomsection%
  \label{#1}%
  \hypertarget{#1}{#2}%
}
\newtheorem{theorem}{Theorem}
\newtheorem{lemma}{Lemma}
\newtheorem{observation}{Observation}
\newtheorem{corollary}{Corollary}
\newtheoremstyle{freethm}
{3pt}
{3pt}
{}
{}
{\bfseries}
{.\\}
{.5em}
{\thmname{#1}\thmnumber{ #2}\thmnote{ (#3)}}
\theoremstyle{freethm}
\newtheorem{problem}{Problem}
\newtheorem{conjecture}{Conjecture}
\newtheorem{construct}{Construction}%
\theoremstyle{plain}
\newenvironment{customcor}[1]
  {\innercustomcor}
  {\endinnercustomthm}
\title{Star Colouring of Bounded Degree Graphs\\
and Regular Graphs\thanks{The final authenticated version is available online at \href{https://doi.org/10.1016/j.disc.2022.112850}{https://doi.org/10.1016/j.disc.2022.112850}}}
\author{Shalu M.A.\thanks{Supported by SERB(DST), MATRICS scheme MTR/2018/000086.} \qquad Cyriac Antony\\
\small Indian Institute of Information Technology, Design \& Manufacturing\\[-0.8ex]
\small (IIITDM) Kancheepuram, Chennai, India\\[-0.8ex]
\small\tt \{shalu,mat17d001\}@iiitdm.ac.in}
\date{}
\begin{document}

\maketitle


\begin{abstract}
A $k$-star colouring of a graph $G$ is a function $f:V(G)\to\{0,1,\dots,k-1\}$ such that $f(u)\neq f(v)$ for every edge $uv$ of $G$, and every bicoloured connected subgraph of $G$ is a star. 
The star chromatic number of $G$, $\chi_s(G)$, is the least integer $k$ such that $G$ is $k$-star colourable. 
We prove that $\chi_s(G)\geq \lceil (d+4)/2\rceil$ for every $d$-regular graph $G$ with $d\geq 3$. 
We reveal the structure and properties of even-degree regular graphs $G$ that attain this lower bound. 
The structure of such graphs $G$ is linked with a certain type of Eulerian orientations of $G$. 
Moreover, this structure can be expressed in the LC-VSP framework of Telle and Proskurowski (SIDMA, 1997), and hence can be tested by an FPT algorithm with the parameter either treewidth, cliquewidth, or rankwidth. 
We prove that for $p\geq 2$, a $2p$-regular graph $G$ is $(p+2)$-star colourable only if $n\coloneqq |V(G)|$ is divisible by $(p+1)(p+2)$. 
For each $p\geq 2$ and $n$ divisible by $(p+1)(p+2)$, we construct a $2p$-regular Hamiltonian graph on $n$ vertices which is $(p+2)$-star colourable. 

The problem \textsc{$k$-Star Colourability} takes a graph $G$ as input and asks whether $G$ is $k$-star colourable. 
We prove that \textsc{3-Star Colourability} is NP-complete for planar bipartite graphs of maximum degree three and arbitrarily large girth. 
Besides, it is coNP-hard to test whether a bipartite graph of maximum degree eight has a unique 3-star colouring up to colour swaps. 
For $k\geq 3$, \textsc{$k$-Star Colourability} of bipartite graphs of maximum degree $k$ is NP-complete, and does not even admit a $2^{o(n)}$-time algorithm unless ETH fails. 
\end{abstract}

\section{Introduction}\label{sec:intro}

The star colouring is a well-known variant of (vertex) colouring introduced by Gr\"{u}nbaum~\cite{grunbaum} in the 1970s. 
The scientific computing community independently discovered star colouring in the 1980s and used it for lossless compression of symmetric sparse matrices, which is in turn used in the estimation of sparse Hessian matrices (see the survey \cite{gebremedhin2005}). 
A \( k \)-colouring \( f \) of a graph \( G \) is a \emph{\( k \)-star colouring of \( G \)} if every bicoloured component of \( G \) under \( f \) is a star (in other words, \( G \) does not contain a bicoloured 4-vertex path as a subgraph). 
The star chromatic number of \( G \), \( \chi_s(G) \), is the least integer \( k \) such that \( G \) is \( k \)-star colourable. 
Star colouring of a graph \( G \) is known to be linked with orientations of \( G \). 
Albertson et al.~\cite{albertson} proved that a colouring \( f \) of \( G \) is a star colouring if and only if there exists an orientation \( \overrightarrow{G} \) of \( G \) such that edges in each bicoloured 3-vertex path in \( \overrightarrow{G} \) are oriented towards the middle vertex. 
Ne\v{s}et\v{r}il and Mendez~\cite{nesetril_mendez2003} characterized the star chromatic number of \( G \) in terms of orientations of~\( G \).

Fertin et al.~\cite{fertin2004} proved that the star chromatic number of a \( d \)-regular hypercube is at least \( \ceil{(d+3)/2} \). 
Xie et al.~\cite{xie} proved that the star chromatic number of a \( 3 \)-regular graph is at least four. 
We generalize this result: the star chromatic number of a \( d \)-regular graph is at least \( \ceil{(d+4)/2} \), provided \( d\geq 2 \). 
We show that this lower bound is attained for each \( d\geq 2 \) and characterize even-degree regular graphs that attain the lower bound (i.e. \( 2p \)-regular \( (p+2) \)-star colourable graphs). 
We introduce a variant of Eulerian orientation named \emph{\( q \)-colourful Eulerian orientation} (see Section~\ref{sec:orientations}). 
For all \( p\geq 2 \), a \( 2p \)-regular graph \( G \) is \( (p+2) \)-star colourable if and only if \( G \) admits a \( (p+2) \)-colourful Eulerian orientation. 
Using this result, we show that a \( 2p \)-regular \( (p+2) \)-star colourable graph \( G \) does not contain diamond or \( \overbar{C_6} \) as a subgraph, and thus the clique number of \( G \) is at most three. 
We also establish the following properties of \( 2p \)-regular \( (p+2) \)-star colourable graphs: (i)~the number of vertices in \( G \) is divisible by \( (p+1)(p+2) \), (ii)~the independence number of \( G \) is greater than \( n/4 \), and (iii)~the chromatic number of \( G \) is at most \( 3\log_2(p+2) \). 
For every \( p\geq 2 \) and every integer \( n \) divisible by \( (p+1)(p+2) \), we construct a \( 2p \)-regular Hamiltonian graph on \( n \) vertices which is \( (p+2) \)-star colourable. 
For the special case \( p=2 \), the graphs constructed are also planar. 
If a regular graph \( G \) with degree \( d\geq 3 \) is a hypercube (i.e., \( G=Q_d \)) or contains diamond or \( \overbar{C_6} \) as a subgraph, then \( \chi_s(G)\geq \ceil{(d+5)/2} \); this improves on the lower bound \( \chi_s(Q_d)\geq \ceil{(d+3)/2} \) given in \cite{fertin2004}.

For all \( p\geq 2 \), we express the problem of testing whether a \( 2p \)-regular graph \( G \) is \( (p+2) \)-star colourable in the Locally Checkable Vertex Subset and Partitioning problems (LC-VSP) framework of Telle and Proskurowski~\cite{telle_proskurowski}; thus, the problem admits an FPT algorithm with the parameter either treewidth, cliquewidth, rankwidth or booleanwidth (see \cite{bui-xuan2010,bui-xuan2013,oum,telle,telle_proskurowski}). 
We also define a subclass \( \mathscr{G}_{2p} \) of the class of \( 2p \)-regular \( (p+2) \)-star colourable graphs such that testing membership in \( \mathscr{G}_{2p} \) is NP-complete (provided \( p\geq 2 \)).

The decision problem \textsc{\( k \)-Star Colourability} takes a graph \( G \) as input and asks whether \( G \) is \( k \)-star colourable. 
\textsc{3-Star Colourability} is NP-complete for planar bipartite graphs \cite{albertson}, graphs of arbitrarily large girth \cite{bok}, and graphs of maximum degree four (in fact, NP-complete for line graphs of subcubic graphs \cite{lei}). 
We prove that the problem is NP-complete for a subclass of the intersection of these three classes: 
\textsc{3-Star Colourability} is NP-complete for planar bipartite graphs of maximum degree three and arbitrarily large girth. 
Besides, it is coNP-hard to test whether a bipartite graph of maximum degree eight has a unique 3-star colouring up to colour swaps. 
For all \( k\geq 3 \), \textsc{\( k \)-Star Colourability} is NP-complete for bipartite graphs. 
We prove that for all \( k\geq 3 \), \textsc{\( k \)-Star Colourability} restricted to bipartite graphs of maximum degree \( k \) is NP-complete and the problem does not even admit a \( 2^{o(n)} \)-time algorithm unless the Exponential Time Hypothesis (ETH) fails. 

\subsection*{Fixed-Parameter Tractability}\label{sec:fpt}
For every positive integer \( k \), \textsc{\( k \)-Star Colourability} can be expressed in Monadic Second Order (MSO) logic \cite{harshita}. 
In fact, \textsc{\( k \)-Star Colourability} can be expressed in MSO\( _1 \), i.e., MSO logic without edge set quantification (see supplementary material for the formula). 
Therefore, for each \( k \), the problem \textsc{\( k \)-Star Colourability} admits FPT algorithms with the parameter either treewidth or cliquewidth by Courcelle's theorem \cite{borie,courcelle}. 
On the other hand, the reduction from \textsc{\( k \)-Colourability} to \textsc{\( k \)-Star Colourability} produced by Coleman and Mor\'{e}~\cite{coleman_more} is a Polynomial Parameter Transformation (PPT) \cite{fomin2019} when both problems are parameterized by treewidth (see Section~2 of supplementary material for details). 
Thus, we have the following observation since \textsc{\( k \)-Colourability} with parameter treewidth does not admit a polynomial kernel. 
\begin{observation}\label{obs:star intractable w.r.t. treewidth}
For all \( k\geq 3 \), \textsc{\( k \)-Star Colorability} with parameter treewidth does not admit a polynomial kernel unless NP \( \subseteq \) coNP/poly. 
\qed
\end{observation}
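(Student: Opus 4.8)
The plan is to derive the statement from two standard facts, since essentially all of the work has been front-loaded into the surrounding text. The first fact I would invoke is the general kernelization lower-bound transfer principle (see \cite{fomin2019}): if there is a polynomial parameter transformation (PPT) from a parameterized problem \( P \) to a parameterized problem \( Q \), with \( P \) NP-hard, \( Q \) in NP, and \( Q \) admitting a polynomial kernel, then \( P \) admits a polynomial compression. The contrapositive is the direction I actually need: if \( P \) has no polynomial compression, then \( Q \) has no polynomial kernel.

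I would then instantiate \( P \) as \textsc{\( k \)-Colourability} and \( Q \) as \textsc{\( k \)-Star Colourability}, both parameterized by treewidth. The hypothesis ``\( P \) has no polynomial kernel unless NP \( \subseteq \) coNP/poly'' is exactly the known result for \textsc{\( k \)-Colourability} under the treewidth parameterization cited in the preceding paragraph. The PPT from \( P \) to \( Q \) is furnished by the Coleman--Mor\'{e} reduction, which the text states is a PPT when both problems are parameterized by treewidth (with the verification deferred to the supplement). Since a \( k \)-colouring of \( G \) is a polynomial-size, polynomial-time-checkable certificate, \textsc{\( k \)-Star Colourability} lies in NP, so the transfer principle applies verbatim.

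Chaining these, a polynomial kernel for \textsc{\( k \)-Star Colourability} parameterized by treewidth would yield, through the PPT, a polynomial compression for \textsc{\( k \)-Colourability} parameterized by treewidth, which is impossible unless NP \( \subseteq \) coNP/poly. This gives the claim. I do not anticipate any genuine obstacle: the only point deserving care is confirming that the Coleman--Mor\'{e} reduction bounds the treewidth of its output polynomially in the treewidth of its input while preserving yes/no instances, but this is precisely the content of the assertion that it is a PPT, and that verification has already been established (in the supplement). Because of this, the result is stated as an observation with an immediate \( \qed \) rather than a standalone proof.
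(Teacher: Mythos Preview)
Your proposal is correct and matches the paper's approach exactly: the paper also derives the observation directly from the PPT property of the Coleman--Mor\'{e} reduction together with the known non-existence of a polynomial kernel for \textsc{\( k \)-Colourability} parameterized by treewidth, which is why it is stated as an observation with an immediate \(\qed\).
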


The paper is organized as follows. 
Section~\ref{sec:def} contains the definitions used throughout the paper. 
Section~\ref{sec:lb chi_s} discusses a lower bound for the star chromatic number of \( d \)\nobreakdash-regular graphs as well as properties of graphs that attain the lower bound. 
Section~\ref{sec:orientations} introduces colourful Eulerian orientation and discusses its relation to star colouring. 
All computational hardness results appear in Section~\ref{sec:hardness}. 
It is divided into two subsections: Subsection~\ref{sec:hardness 3-star} on 3-star colouring, and Subsection~\ref{sec:hardness k-star k>=4} on \( k \)-star colouring with \( k\geq 4 \). 
We conclude with Section~\ref{sec:open} devoted to open problems and related works.

\section{Definitions}\label{sec:def}
All graphs considered in this paper are finite and simple. 
We follow West~\cite{west} for graph theory terminology and notation. 
Unless otherwise specified, each graph we consider is undirected (the directed graphs that appear in this paper are orientations of undirected graphs). 
An \emph{orientation} of \( G \) is a directed graph obtained from \( G \) by assigning a direction to each edge of \( G \). 
When the graph is clear from the context, we denote the number of edges of the graph by \( m \) and the number of vertices by \( n \). 
For a subset \( S \) of the vertex set of \( G \), the \emph{subgraph of \( G \) induced by \( S \)} is denoted by \( G[S] \). 
The \emph{girth} of a graph with a cycle is the length of its shortest cycle. 
A graph \( G \) is \emph{\( 2 \)-degenerate} if there exists a left-to-right ordering of its vertices such that every vertex has at most two neighbours to its left; in other words, we can turn \( G \) into the empty graph by repeatedly removing vertices of degree at most two.

A \( k \)-colouring of a graph \( G \) is a function \( f \) from the vertex set of \( G \) to a set of \( k \) colours, say \( \{0,1,\dots,k-1\} \), such that \( f \) maps every pair of adjacent vertices to different colours. 
Let us denote the \( i \)th colour class \( f^{-1}(i) \) by \( V_i \). 
A \emph{bicoloured component} of \( G \) (under \( f \)) is a component of \( G[V_i\cup V_j] \) for some pair of colour classes \( V_i \) and \( V_j \). 
A \( k \)-colouring \( f \) of \( G \) is a \emph{\( k \)-star colouring} if every bicoloured component of \( G \) under \( f \) is a star (in other words, there is no 4-vertex path in \( G \) bicoloured by \( f \)). 
The acyclic colouring is a generalization of star colouring. 
A \( k \)-colouring \( f \) of \( G \) is a \emph{\( k \)-acyclic colouring} if every bicoloured component of \( G \) under \( f \) is a tree. 
By definition, every \( k \)-star colouring is a \( k \)-acyclic colouring. 
The star chromatic number \( \chi_s(G) \) is defined analogously to the chromatic number \( \chi(G) \). 
That is, the star chromatic number of \( G \) is the least integer \( k \) such that \( G \) is \( k \)-star colourable.

We say that two colourings \( f_1 \) and \( f_2 \) of \( G \) are the same \emph{up to colour swaps} if there exists a permutation \( \sigma \) of colours such that \( f_2(v)=\sigma(f_1(v)) \) for every vertex \( v \) of \( G \). 
If \( G \) has exactly one \( k \)-(star) colouring up to colour swaps, then \( G \) is said to have a \textit{unique} \( k \)-(star) colouring (we write the word `unique' in italics to indicate ``unique up to colour swaps''). 
We say that two colourings \( f_1 \) and \( f_2 \) of the same graph are \emph{equivalent under colour swaps} if they are the same up to colour swaps.

For every positive integer \( k \), the decision problem \textsc{\( k \)-Colourability} takes a graph \( G \) as input and asks whether \( G \) is \( k \)-colourable. 
The problem \textsc{\( k \)-Star Colourability} is defined likewise. 
To denote the restriction of a decision problem, we write the conditions in parenthesis. 
For instance, \textsc{3-Star Colourability}\( ( \)planar, bipartite, \( \Delta=3 \)\( ) \) denotes the problem \textsc{3-Star Colourability} restricted to the class of planar bipartite graphs \( G \) with the maximum degree \( \Delta(G)=3 \). 
Let \( k\geq 3 \). 
The decision problem \textsc{Unique \( k \)-Colouring} takes a graph \( G \) as input and asks whether \( G \) has a \textit{unique} \( k \)-colouring. 
The problem \textsc{Unique \( k \)-Star Colouring} is defined similarly. 
The problem \textsc{Another \( k \)-Colouring} takes a graph \( G \) and a \( k \)-colouring \( f_1 \) of \( G \) as input and asks whether \( G \) admits a \( k \)-colouring \( f_2 \) of \( G \) which cannot be obtained from \( f_1 \) by merely swapping colours. 
The problem \textsc{Another \( k \)-Star Colouring} is defined likewise.

\section{Lower Bound for Star Chromatic Number\\ of Regular Graphs}\label{sec:lb chi_s}
We know that for every star colouring of \( G \), each bicoloured component is a star. 
By exploiting the simple structure of bicoloured components and employing elementary counting arguments, we produce a tight lower bound for the star chromatic number of regular graphs and characterize even-degree graphs that attain the lower bound. 

\begin{theorem}\label{thm:lb chi_s}
Let \( G \) be a \( d \)-regular graph with \( d\geq 2 \). Then, \( \chi_s(G)\geq \raisebox{1pt}{\big\lceil}\frac{d+4}{2}\raisebox{1pt}{\big\rceil} \).
\end{theorem}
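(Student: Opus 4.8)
The plan is to fix a $k$-star colouring $f$ of $G$ with colour classes $V_0,\dots,V_{k-1}$ and prove $k\ge (d+4)/2$ by an edge-counting argument carried out on an auxiliary orientation of $G$. First I would dispose of the easy case $d<k$: here $k\ge d+1$, and since $d\ge 2$ we have $d+1\ge (d+4)/2$, so the bound holds immediately. Hence I may assume $d\ge k$ for the rest of the argument.

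The key local observation is the following: if a vertex $v$ with $f(v)=c$ has two neighbours of a common colour $c'$, then those two neighbours together with $v$ form a bicoloured $P_3$ centred at $v$, so $v$ must be the centre of the bicoloured star on colours $c,c'$ (otherwise some bicoloured component would fail to be a star). Invoking the characterization of Albertson et al., I orient every edge of $G$ towards the centre of the bicoloured star containing it (breaking ties arbitrarily for single-edge components). For a vertex $v$ I call a colour $c'\ne c$ a \emph{leaf-colour} of $v$ if $v$ is a leaf (equivalently, the tail under this orientation) in the $\{c,c'\}$-star. By the local observation, each leaf-colour occurs exactly once among the neighbours of $v$; consequently $\operatorname{outdeg}(v)$ equals the number of leaf-colours of $v$, and this is at most the number of colours that occur exactly once in $N(v)$.

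Let $q_v$ denote the number of colours occurring at least twice among the neighbours of $v$. Since the neighbours of $v$ lie in at most $k-1$ colour classes, the number of colours occurring exactly once is at most $(k-1)-q_v$, and therefore $\operatorname{outdeg}(v)\le (k-1)-q_v$. Summing over all vertices and using $\sum_v\operatorname{outdeg}(v)=m=nd/2$, I obtain $nd/2\le n(k-1)-\sum_v q_v$. Finally, because $d\ge k>k-1$, the pigeonhole principle forces a repeated colour in every neighbourhood, so $q_v\ge 1$ for every $v$ and hence $\sum_v q_v\ge n$. Substituting gives $nd/2\le n(k-2)$, that is $d\le 2k-4$, which rearranges to $k\ge (d+4)/2$ and thus $\chi_s(G)\ge \ceil{(d+4)/2}$.

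The main obstacle is not the arithmetic but locating the extra $+1$: the naive bound $\operatorname{outdeg}(v)\le k-1$ only yields $\chi_s(G)\ge \ceil{(d+2)/2}$. The improvement hinges entirely on the term $q_v$, i.e.\ on the fact that once $d\ge k$, pigeonhole forces every vertex to be the centre of at least one bicoloured star, which strictly lowers its out-degree. I would take particular care over the degenerate single-edge bicoloured components when defining the orientation, and verify that leaf-colours are counted without double counting, but these checks are routine once the counting scheme above is in place.
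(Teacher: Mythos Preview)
Your argument is correct, and it takes a genuinely different route from the paper's proof. The paper splits into two cases by parity: for even $d$ it simply cites the acyclic-colouring lower bound $\lceil (d+3)/2\rceil$ of Fertin, Godard and Raspaud, while for odd $d=2k+1$ it assumes a $(k+2)$-star colouring, observes that every bicoloured component $H$ satisfies $|E(H)|/|V(H)|\le d/(d+1)$ with equality only for $K_{1,d}$, shows that at least one component is \emph{not} $K_{1,d}$, and then sums over all bicoloured components to obtain the contradiction $m<\frac{d}{d+1}(k+1)n$.

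Your approach, by contrast, is parity-free and self-contained: orienting each edge towards the centre of its bicoloured star and bounding out-degrees via $\operatorname{outdeg}(v)\le(k-1)-q_v$ yields $d\le 2k-4$ directly once pigeonhole gives $q_v\ge 1$. This avoids the external citation for the even case and packages the ``not every component is $K_{1,d}$'' step into the single pigeonhole observation. The paper's component-ratio viewpoint, on the other hand, generalises more naturally to the finer structural results that follow (Theorem~\ref{thm:2p-regular if p+2-star colourable}), where one needs to know that every bicoloured component is exactly $K_{1,p}$; your out-degree count proves the bound but does not immediately expose that equality structure. One cosmetic remark: you do not actually need to invoke Albertson et al.\ --- the orientation ``towards the star centre'' is well-defined directly from the star colouring, and citing the characterisation is unnecessary here.
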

\begin{proof}
We have two cases: (i)~\( d \) is even, and (ii)~\( d \) is odd. 
If \( d \) is even, say \( d=2k \), then at least \( \lceil (d+3)/2 \rceil = (d+4)/2 = k+2 \) colours are needed to acyclic colour \( G \) \cite[Proposition~1]{fertin2003}; hence, at least \( k+2=\lceil (d+4)/2 \rceil \) colours are needed to star colour \( G \). 
Next, we consider the case when \( d \) is odd, say \( d=2k+1 \). 
To prove that \( \chi_s(G)\geq \lceil (d+4)/2 \rceil=k+3 \), it suffices to show that \( G \) is not \( (k+2) \)-star colourable. 
Assume that \( G \) admits a \( (k+2) \)-star colouring \( f\colon V(G)\to\{0,1,\dots,k+1\} \). 
Recall that \( V_i=f^{-1}(i)=\{v\in V(G)\ :\ f(v)=i \} \) for every colour \( i \).\\[5pt]
\noindent \textbf{Claim~1:} For every bicoloured component \( H \) of \( G \), \( |E(H)|\leq \frac{d}{d+1}|V(H)| \), and equality holds only when \( H \) is isomorphic to \( K_{1,d} \).\\[5pt]
Since \( H\cong K_{1,q} \) where \( 0\leq q\leq d \), we have \( |E(H)|/|V(H)|=q/(q+1)\leq d/(d+1) \) and equaltiy holds only when \( q=d \). This proves Claim~1.\\[5pt]
%
\noindent \textbf{Claim~2:} \( G \) has a bicoloured component \( H \) not isomorphic to \( K_{1,d} \).\\[5pt]
On the contrary, assume that every bicoloured component of \( G \) is isomorphic to \( K_{1,d} \). 
Consider an arbitrary bicoloured component \( H \) of \( G \). 
Let \( u \) be the centre of the star \( H\cong K_{1,d} \), and let \( v_1,v_2,\dots,v_d \) be the remaining vertices in \( H \). 
Without loss of generality, assume that \( u \) is coloured~0, and each \( v_i \) is coloured~1 for \( 1\leq i\leq d \). 
Let \( N_G(v_1)=\{u,w_1,w_2,\dots,w_{d-1}\} \). 
Clearly, \( f(w_i)\in \{2,3,\dots,k+1\} \) for \( 1\leq i\leq d-1 \) (if \( f(w_i)=0 \), then path \( v_2,u,v_1,w_i \) is a bicoloured \( P_4 \)). 
Hence, at least two of vertices \( w_1,w_2,\dots,w_{d-1} \) should receive the same colour, say colour~2. 
Since \( |\{w_1,\dots,w_{d-1}\}|=d-1 \), vertex \( v_1 \) has \( q \) neighbrours coloured~2 where \( 2\leq q\leq d-1 \).
Thus, the component of \( G[V_1\cup V_2] \) containing vertex \( v_1 \) is isomorphic to \( K_{1,q}\not\cong K_{1,d} \). This contradiction proves Claim~2. 

For every pair of distinct colours \( i \) and \( j \), let \( \mathbb{G}_{ij} \) denote the set of components of~\( G[V_i\cup V_j] \). By Claims~1 and 2, we have
\[
\sum_{\substack{i,j\\ i\neq j}}\sum_{H\in\mathbb{G}_{ij}}|E(H)|<\sum_{\substack{i,j\\ i\neq j}}\sum_{H\in\mathbb{G}_{ij}}\frac{d}{d+1}|V(H)|=\frac{d}{d+1}\sum_{\substack{i,j\\ i\neq j}}\left(\,|V_i|+|V_j|\,\right).
\]
Since the set of bicoloured components of \( G \) forms an (edge) decomposition of \( G \), the sum on the left side is \( m\coloneqq |E(G)| \). 
The sum on the right side is \( (k+1)n \) where \( n\coloneqq |V(G)| \) (because \( |V_i| \) appears exactly \( (k+1) \) times in the sum for each \( i \)). 
Therefore, the above inequality simplifies to \( m<\frac{d}{d+1}(k+1)n \). 
Since \( G \) is \( d \)-regular, \( m=\frac{d}{2}n \) and thus the inequality reduces to \( \frac{d}{2}n<\frac{d}{d+1}(k+1)n \). 
That is, \( \frac{d+1}{2}<k+1 \). 
This is a contradiction because \( d=2k+1 \). 
This completes the proof when \( d=2k+1 \). 
\end{proof}
\noindent \textbf{Remark:} From Theorem~\ref{thm:lb chi_s}, it follows that the average degree of a graph \( G \) of maximum degree \( d \) is at most \( (\chi_s(G)-1)\frac{2d}{d+1} \), and the graph is bipartite if the average degree is equal to this bound (see Corollary~1 in supplementary material).

Soon, we show that the lower bound \( \lceil (d+4)/2\rceil \) established by Theorem~\ref{thm:lb chi_s} is tight for each \( d\geq 2 \). 
For every even-degree regular graph with degree \( d=2p \), the bound is \( \lceil (d+4)/2\rceil=p+2 \). 
The next theorem shows various properties of \( (p+2) \)-star colourings of \( 2p \)-regular graphs (provided \( p\geq 2 \)). 
\begin{theorem}\label{thm:2p-regular if p+2-star colourable}
Let \( p\geq 2 \). 
Suppose that a \( 2p \)-regular graph \( G(V,E) \) admits a \( (p+2) \)-star colouring \( f \). 
Then, every bicoloured component of \( G \) is isomorphic to \( K_{1,p} \) and all colour classes \( V_i=f^{-1}(i) \) have the same cardinality. 
Besides, \( f \) is not only an equitable colouring but also a fall colouring \( ( \)see \cite{meyer} and \cite{dunbar} respectively for definitions\( ) \). 
\end{theorem}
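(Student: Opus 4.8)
The plan is to exploit the fact that, since \( f \) is a star colouring, every bicoloured component of \( G \) is a star \( K_{1,q} \) for some \( q \) with \( 0\le q\le 2p \). I would orient each such star by directing all of its edges towards its centre (for a degenerate star \( K_{1,1} \), orient its single edge arbitrarily), obtaining an orientation \( \overrightarrow{G} \); note that for a star this is exactly the Albertson et al.\ condition that bicoloured 3-vertex paths point towards their middle vertex. The key quantity is the out-degree in \( \overrightarrow{G} \). For a vertex \( v \) of colour \( i \), each of the \( p+1 \) other colours \( j \) gives \( v \) one \emph{role} inside \( G[V_i\cup V_j] \): \( v \) is a leaf (or the chosen tail of a \( K_{1,1} \)), a centre, or an isolated vertex. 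Precisely the leaf/tail roles produce an out-edge at \( v \), and each such role contributes exactly \( 1 \) to \( \deg_G(v) \), whereas a centre role of \( K_{1,q} \) contributes \( q \) (all in-edges) and an isolated role contributes \( 0 \).

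First I would prove the inequality \( \deg^+_{\overrightarrow{G}}(v)\le p \) for every vertex \( v \). The out-degree is at most \( p+1 \) since there are only \( p+1 \) colours other than \( i \); and if it equalled \( p+1 \), then every role of \( v \) would be a leaf/tail role, forcing \( \deg_G(v)=p+1 \), which contradicts \( \deg_G(v)=2p \) because \( p\ge 2 \). Summing over all vertices gives \( \sum_v \deg^+_{\overrightarrow{G}}(v)=|E(G)|=pn \), so the bound must hold with equality everywhere: \( \deg^+_{\overrightarrow{G}}(v)=\deg^-_{\overrightarrow{G}}(v)=p \), i.e.\ \( \overrightarrow{G} \) is an Eulerian orientation. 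Equality means each \( v \) has exactly \( p \) leaf/tail roles (contributing \( p \) to its degree) and exactly one remaining role; since the latter must contribute \( 2p-p=p\ge 2 \), it is a genuine centre role of a star \( K_{1,p} \) (in particular \( v \) is never isolated and is never the centre of a \( K_{1,1} \)). Hence every vertex is the centre of exactly one \( K_{1,p} \); as each non-trivial bicoloured component has a unique centre while no isolated or \( K_{1,1} \) component can survive, every bicoloured component is \( K_{1,p} \). Because no role is isolated, every vertex has a neighbour of each of the other \( p+1 \) colours, which is exactly the fall-colouring condition.

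For equitability I would count, for each ordered pair of distinct colours \( (i,j) \), the number \( a_{ij} \) of colour-\( i \) vertices that centre a \( K_{1,p} \) with leaves of colour \( j \). From the structure just established, every colour-\( i \) vertex has \( p \) neighbours of one special colour and exactly one neighbour of each remaining colour, so \( \sum_{j\ne i} a_{ij}=|V_i| \) (each vertex centres one star) and \( \sum_{j\ne i} a_{ji}=|V_i| \) (each vertex is a leaf \( p \) times, and each star with leaves of colour \( i \) has \( p \) leaves). Counting the edges between \( V_i \) and \( V_j \) from each side yields \( e(V_i,V_j)=(p-1)a_{ij}+|V_i|=(p-1)a_{ji}+|V_j| \). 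Summing this identity over all \( j\ne i \) and substituting the two sum identities collapses everything to \( (p+2)\,|V_i|=n \). Hence every colour class has size \( n/(p+2) \); in particular \( f \) is equitable, and combined with the neighbourhood condition above it is a fall colouring.

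I expect the main obstacle to be the per-vertex out-degree bound together with the bookkeeping that rules out the degenerate stars \( K_{1,1} \) and isolated bicoloured vertices: the entire argument hinges on forcing the single non-leaf role of each vertex to be a full \( K_{1,p} \). Once every vertex is pinned to exactly one centre role and \( p \) leaf roles, the component structure, the Eulerian orientation, the fall-colouring property, and the equitable class sizes all follow by routine double counting.
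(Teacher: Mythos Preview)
Your argument is correct and takes a genuinely different route from the paper's proof. The paper works entirely with the edge-to-vertex ratio of bicoloured components: it builds a partition \( \mathscr{P} \) of the set of bicoloured stars, handling separately those vertices that centre a star \( K_{1,\ell} \) with \( \ell>p \) (Claim~1 there), and shows \( \sum_{H\in P}|E(H)|\le \tfrac{p}{p+1}\sum_{H\in P}|V(H)| \) for each block \( P\in\mathscr{P} \), with equality only if every member of \( P \) is \( K_{1,p} \); summing then forces every bicoloured component to be \( K_{1,p} \). Equitability is obtained by a somewhat ad hoc edge count between \( V_0 \) and the set \( V^* \) of vertices having \( p \) neighbours in \( V_0 \). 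By contrast, you never touch the ratio \( |E|/|V| \): you pass to the in-orientation, prove the uniform bound \( \deg^+\le p \), and use \( \sum_v\deg^+=pn \) to force equality everywhere; this single stroke simultaneously rules out isolated roles, \( K_{1,1} \) components, and centres of the wrong size, and it delivers the Eulerian orientation that the paper only develops later in Section~4. Your equitability argument via the matrix \( (a_{ij}) \) and the identity \( e(V_i,V_j)=(p-1)a_{ij}+|V_i| \) is also cleaner than the paper's \( V^* \) count. The paper's approach has the virtue of being orientation-free and reusing the density inequality already used in Theorem~1; your approach is shorter, more structural, and anticipates the colourful Eulerian orientation of Theorem~7.
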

\begin{proof}
Note that the set \( S \) of all bicoloured components of \( G \) forms an (edge) decomposition of \( G \). 
We produce a partition \( \mathscr{P} \) of \( S \) such that \( \sum_{H\in P} |E(H)|\leq \frac{p}{p+1} \sum_{H\in P} |V(H)| \) for each \( P\in\mathscr{P} \) and equality holds only if all members of \( P \) are \( K_{1,p} \). 

Since \( f \) is a star colouring, every bicoloured component of \( G \) is a star. 
We first deal with bicoloured components \( H \) with the unique centre -- that is, \( H\cong K_{1,\ell} \) where \( \ell\neq 1 \). 
Let \( V' \) denote the set of vertices \( v \) in \( G \) such that \( v \) is the centre of a bicoloured component \( H\cong K_{1,\ell} \) with \( \ell>p \). 
For each \( v\in V' \), define \( C_v=\{ H\in S : H\not\cong K_{1,1}\text{ and } v \text{ is the centre of } H\} \).\\

\noindent \textbf{Claim~1:} For each \( v\in V' \), \( \sum\limits_{H\in C_v} |E(H)| < \frac{p}{p+1} \sum\limits_{H\in C_v} |V(H)| \).\\[5pt]
Let \( v\in V' \). 
Let \( J \) be the set of indices \( j \) such that \( v \) has exactly one neighbour in colour class \( V_j \). 
Let \( x=|J| \). 
By definition of \( V' \), \( v \) has \( p+1 \) or more neighbours in some colour class; therefore, \( x<p \) (since \( x+p+1\leq \deg_G(v)=2p \)). 
By definition of \( C_v \), for every neighbour \( w \) of \( v \) in \( \cup_{j\in J}V_j \), the edge \( vw \) is not counted in the sum \( \sum_{H\in C_v} |E(H)| \). 
On the other hand, the remaining \( 2p-x \) edges incident on \( v \) are counted exactly once in the sum \( \sum_{H\in C_v} |E(H)| \). 
So, \( \sum_{H\in C_v} |E(H)|=2p-x \). 
No neighbour \( w \) of \( v \) in \( \cup_{j\in J}V_j \) is counted in the sum \( \sum_{H\in C_v} |V(H)| \). 
The remaining \( 2p-x \) neighbours of \( v \) are counted exactly once in the sum \( \sum_{H\in C_v} |V(H)| \). 
Also, the vertex \( v \) is counted exactly \( p+1-x \) times in the sum \( \sum_{H\in C_v} |V(H)| \) (assuming \( v\in V_i \), \( v \) is in exactly one component of \( G[V_i\cup V_k] \) for each \( k\in\{0,1,\dots,p+1\}\setminus (J\cup \{i\}) \)). 
Therefore, \( \sum_{H\in C_v}|V(H)|=(2p-x)+(p+1-x)=3p-2x+1 \). 
Since \( x<p \) and \( p>1 \), we have 
\[ 
\frac{\sum_{H\in C_v} |E(H)|}{\strut \sum_{H\in C_v}|V(H)|}=\frac{2p-x}{3p-2x+1}<\frac{p}{p+1}
\]
because the inequality \( (p+1)(2p-x)<p(3p-2x+1) \) simplifies to \( (p-1)(p-x)>0 \). 
This proves Claim~1 since \( v\in V' \) is arbitrary.\\

For distinct vertices \( u \) and \( v \) in \( V' \), the set \( C_u\cap C_v \) is empty because each member of \( C_u \) has vertex \( u \) as its unique centre whereas each member of \( C_v \) has vertex \( v \) as its unique centre. 
We are now ready to construct the partition \( \mathscr{P} \) of \( S \). 
For each \( v\in V' \), include \( C_v \) in \( \mathscr{P} \). 
For each \( H\in S\setminus \bigcup_{v\in V'}C_v \), include \( \{H\} \) in \( \mathscr{P} \). 
Observe that for all \( H\in S\setminus \bigcup_{v\in V'}C_v \), the bicoloured component \( H \) is isomorphic to \( K_{1,\ell} \) with \( \ell\leq p \); as a result, \( |E(H)|\leq \frac{p}{p+1}|V(H)| \) and equality holds only if \( H\cong K_{1,p} \) (as in Claim~1 of Theorem~\ref{thm:lb chi_s}). 
By Claim~1, \( \sum_{H\in P} |E(H)| < \frac{p}{p+1} \sum_{H\in P} |V(H)| \) for each member \( P=C_v \) of \( \mathscr{P} \) (where \( v\in V' \)). 
Thus, we have the following claim.\\[5pt]
\noindent \textbf{Claim~2:} For each \( P\in\mathscr{P} \), \( \sum_{H\in P} |E(H)|\leq \frac{p}{p+1} \sum_{H\in P} |V(H)| \) and equality holds only if every member of \( P \) is isomorphic to \( K_{1,p} \).\\

\noindent \textbf{Claim~3:} Every bicoloured component of \( G \) is isomorphic to \( K_{1,p} \).\\[5pt]
Contrary to Claim~3, assume that there is a bicoloured component \( H' \) of \( G \) not isomorphic to \( K_{1,p} \). 
Then, \( H' \) is in some member \( P' \) of \( \mathscr{P} \). 
By Claim~2, \( \sum_{H\in P'} |E(H)|<\frac{p}{p+1} \sum_{H\in P'} |V(H)| \).  
Since \( \sum_{H\in P} |E(H)|\leq \frac{p}{p+1} \sum_{H\in P} |V(H)| \) for each \( P\in\mathscr{P} \) and the inequality is strict for \( P'\in\mathscr{P} \), we have
\[ \sum_{P\in\mathscr{P}} \sum_{H\in P} |E(H)| < \frac{p}{p+1}\sum_{P\in\mathscr{P}} \sum_{H\in P} |V(H)|. \]
Since \( \cup_{P\in\mathscr{P}} \cup_{H\in P} H \) is a decomposition of \( G \), the sum on the left side is \( m\coloneqq |E(G)| \). 
Besides, the sum on the right side is \( (p+1)n \) where \( n\coloneqq |V(G)| \) because each vertex of \( G \) is counted exactly \( p+1 \) times (note that each vertex of \( G \) is in exactly \( p+1 \) bicoloured components of \( G \)). 
Thus, the inequality simplifies to \( m<\frac{p}{p+1}(p+1)n \). 
That is, \( m<pn \), a contradiction because \( 2m=(2p)n \) for every \( 2p \)-regular graph. 
This proves Claim~3.\\

\noindent \textbf{Claim~4:} For every colour class \( V_i \) and every vertex \( v\notin V_i \), \( v \) has either exactly \( p \) neighbours or exactly one neighbour in \( V_i \). In particular, \( v \) has at least one neighbour in \( V_i \).\\[5pt]
Let \( v\notin V_i \); that is, \( v\in V_j \) for some \( j\neq i \). 
By Claim~3, the component of \( G[V_i\cup V_j] \) containing \( v \) is a star \( H\cong K_{1,p} \). 
If \( v \) is the centre of \( H \), then \( v \) has exactly \( p \) neighbours in \( V_i \). 
Otherwise, \( v \) has exactly one neighbour in \( V_i \). 
This proves Claim~4.\\

\noindent \textbf{Claim~5:} Each vertex \( v \) of \( G \), say with colour~\( i \), has exactly \( p \) neighbours in some colour class \( V_j \) and exactly one neighbour in every other colour class \( V_k \), \( k\notin \{i,j\} \) (see Figure~\ref{fig:neighbourhood of vertex in G}).
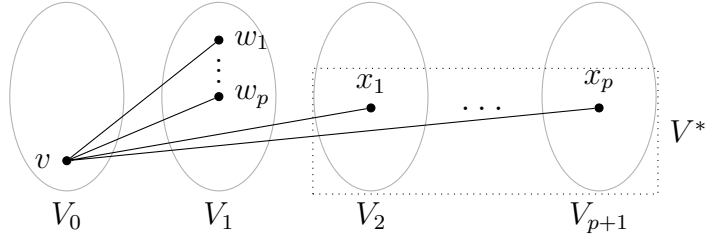
\begin{figure}[hbt]
\centering
\begin{tikzpicture}
\path (0,0)  node(V0)[Vset][label=below:\( V_0 \)]{}++(2,0) node(V1)[Vset][label=below:\( V_1 \)]{}++(2,0) node(V2)[Vset][label=below:\( V_2 \)]{}++(1.5,0) node[font=\large][yshift=-0.15cm]{\dots}++(1.5,0) node(Vp+1)[Vset][label=below:\( V_{p+1} \)]{};
\path (V0)+(0,-0.85) node(v0)[dot][label=left:\( v \)]{};
\path (V1)++(0,0.75) node(v11)[dot][label=right:\( w_1 \)]{}++(0,-0.3) node{\vdots}++(0,-0.45) node(v1p)[dot][label=right:\( w_p \)]{};
\path (V2)+(0,-0.15) node(v2)[dot][label=\( x_1 \)]{};
\path (Vp+1)+(0,-0.15) node(vp+1)[dot][label=\( x_p \)]{};
\draw
(v0)--(v11)
(v0)--(v1p)
(v0)--(v2)
(v0)--(vp+1);
\node [fit={(v2) (vp+1) (V2.-90) (Vp+1.-90) (V2.155) (Vp+1.20)},inner sep=1pt,draw,dotted][label=right:\( V^* \)]{};
\end{tikzpicture}
\caption{An arbitrary vertex \( v\in V_i \) and its neighbourhood in \( G \) (here, \( i=0 \) and \( j=1 \)).}
\label{fig:neighbourhood of vertex in G}
\end{figure}

\noindent We prove Claim~5 for \( i=0 \) (proof is similar for other values of \( i \)). 
Let \( v\in V_0 \). 
Neighbours of \( v \) are from the colour classes \( V_1,\dots,V_{p+1} \). 
Since \( \deg_G(v)=2p>p+1 \), by pigeon-hole principle, \( v \) has at least two neighbours in some colour class \( V_j \). 
Without loss of generality, assume that \( j=1 \). 
Since \( v \) has more than one neighbour in \( V_1 \), \( v \)~has exactly \( p \) neighbours in \( V_1 \) by Claim~4. 
Let \( w_1,\dots,w_p,x_1,\dots,x_p \) be the neighbours of \( v \) where \( w_1,\dots,w_p\in V_1 \). 
The remaining \( p \) neighbours \( x_1,\dots,x_p \) of \( v \) are in \( \bigcup_{k=2}^{p+1} V_k \). 
Since \( v \) has at least one neighbour in each of the colour classes \( V_2,\dots,V_{p+1} \) (by Claim~4), \( v \)~has exactly one neighbour in each of the colour classes \( V_2,\dots,V_{p+1} \).
This proves~Claim~5.\\

\noindent \textbf{Claim~6:} All colour classes have the same cardinality: that is, \( |V_i|=\frac{|V|}{p+2} \) for every colour~\( i \).\\[5pt]
\noindent We prove Claim~6 for \( i=0 \) (proof is similar for other values of \( i \)). 
By Claim~4, every vertex \( w\in V\setminus V_0 \) has either exactly \( p \) neighbours or exactly one neighbour in \( V_0 \). 
Let \( V^* \) denote the set of vertices \( x\in V\setminus V_0 \) such that \( x \) has exactly \( p \) neighbours in \( V_0 \). 
In the sum \( \sum_{v\in V_0} |N_G(v)| \), each member of \( V^* \) is counted exactly \( p \) times (because each \( w\in V^* \) has exactly \( p \) neighbours in \( V_0 \)) and every member of \( V\setminus (V_0\cup V^*) \) is counted exactly once (because each \( w\in V\setminus (V_0\cup V^*) \) has exactly one neighbour in \( V_0 \)). 
Hence, 
\( \sum_{v\in V_0} |N_G(v)|=|V\setminus (V_0\cup V^*)|+p|V^*| \), and thus
\begin{equation}\label{eqn:2p-regular if p+2-star colourable eqn1}
\sum_{v\in V_0} |N_G(v)|=|V|-|V_0|-|V^*|+p|V^*|.
\end{equation}

By counting the number of edges between the sets \( V_0 \) and \( V^* \), we show that \( |V_0|=|V^*| \). 
Consider an arbitrary vertex \( v\in V_0 \). 
Let \( w_1, \dots, w_p, x_1,\dots,x_p \) be the neighbours of \( v \). 
By Claim~5, \( v \) has exactly \( p \) neighbours in some colour class \( V_j \) and exactly one neighbour in every other colour class \( V_k \), \( k\notin \{0,j\} \). 
Without loss of generality, assume that \( w_1,\dots,w_p\in V_1 \), and \( x_r\in V_{r+1} \) for \( 1\leq r\leq p \) (see Figure~\ref{fig:neighbourhood of vertex in G}). 
For \( 1\leq r\leq p \), \( v \) is the unique neighbour of \( w_r \) in \( V_0 \) and thus \( w_r\notin V^* \). 
In contrast, for \( 1\leq r\leq p \), \( x_r \) is the unique neighbour of \( v \) in \( V_{r+1} \); hence, \( x_r \) must have \( p \) neighbours in \( V_0 \) (due to Claim~3) and thus \( x_r\in V^* \). 
So, \( v \) has exactly \( p\) neighbours in \( V^* \), namely \( x_1,\dots,x_p \). 
Since \( v\in V_0 \) is arbitrary, each vertex in \( V_0 \) has exactly \( p \) neighbours in \( V^* \). 
As a result, the number of edges from \( V_0 \) to \( V^* \) is equal to \( p|V_0| \). 
By definition of \( V^* \), each vertex in \( V^* \) has exactly \( p \) neighbours in \( V_0 \), and hence the number of edges from \( V^* \) to \( V_0 \) is equal to \( p|V^*| \). 
Therefore, we have \( p|V_0|=p|V^*| \) and thus \( |V_0|=|V^*| \). 
Since \( G \) is a \( 2p \)-regular graph, \( \sum_{v\in V_0} |N_G(v)|=2p|V_0| \). 
Therefore, equation~(\ref{eqn:2p-regular if p+2-star colourable eqn1}) implies \( 2p|V_0|=|V|-|V_0|-|V_0|+p|V_0| \). 
That is, \( 2p|V_0|=|V|+(p-2)|V_0| \) or \( |V_0|=|V|/(p+2) \). 
This proves Claim~6. 

Since all colour classes have the same cardinality, \( f \) is an equitable colouring. 
For each vertex \( v \) of \( G \), all \( p+2 \) colours are present in the closed neighbourhood of \( v \) in \( G \) by Claim~4. 
Therefore, \( f \) is a fall colouring of~\( G \). 
\end{proof}
%

\begin{theorem}\label{thm:2p-regular p+2-star colourable iff}
Let \( G(V,E) \) be a \( 2p \)-regular graph on \( n \) vertices where \( p\geq 2 \). 
Then, \( G \) is \( (p+2) \)-star colourable if and only if the vertex set of \( G \) can be partitioned into \( (p+1)(p+2) \) sets \( V_{i}^{j} \) with indices \( i,j\in\{0,1,\dots,p+1\} \) and \( i\neq j \) such that for all \( i \) and \( j \), each vertex in \( V_{i}^{j} \) has exactly \( p \) neighbours in \( \bigcup_{k\notin\{i,j\}} V_{j}^{k} \) and exactly one neighbour in \( V_{k}^{i} \) for each \( k\notin\{i,j\} \) \( ( \)see Figure~\ref{fig:partition into vijs}\( ) \). 
If the vertex set of \( G \) can be partitioned into such sets \( V_{i}^{j} \), then all sets \( V_{i}^{j} \) have the same cardinality and thus \( n \) is divisible by \( (p+1)(p+2) \).
\end{theorem}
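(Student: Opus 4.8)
The plan is to prove both directions of the equivalence by leaning on the structural results already established in Theorem~\ref{thm:2p-regular if p+2-star colourable}, especially that every bicoloured component is isomorphic to $K_{1,p}$ (Claim~3) and that each vertex has exactly $p$ neighbours in one colour class and exactly one neighbour in each of the other colour classes (Claim~5).

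For the forward direction, I would start from a $(p+2)$-star colouring $f$ and use Claim~5 to assign to each vertex $v$ of colour $i$ its unique \emph{majority colour} $j$, the colour class in which $v$ has $p$ neighbours. Setting $V_i^j=\{v: f(v)=i \text{ and } v \text{ has } p \text{ neighbours in } V_j\}$ gives a partition of $V$ indexed by the ordered pairs $(i,j)$ with $i\neq j$. To verify the two neighbourhood conditions, fix $v\in V_i^j$. Its $p$ neighbours of colour $j$ are leaves of the $K_{1,p}$ component of $G[V_i\cup V_j]$ centred at $v$, so each such neighbour has only $v$ as a colour-$i$ neighbour and hence majority colour $\neq i$; this places all $p$ of them in $\bigcup_{k\neq i,j}V_j^k$. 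For each remaining colour $k\notin\{i,j\}$, vertex $v$ has a single neighbour $x$ of colour $k$ (Claim~5); since $v$ is a leaf of the $K_{1,p}$ component of $G[V_i\cup V_k]$ containing it, the neighbour $x$ must be the centre and therefore has majority colour $i$, i.e. $x\in V_k^i$. This yields exactly the required neighbourhood structure.

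For the reverse direction, given such a partition I would define $f(v)=i$ for $v\in V_i^j$. Counting a vertex's neighbours ($p$ of colour $j$ together with one of each colour $k\notin\{i,j\}$, totalling $2p$) shows that no neighbour shares its colour, so $f$ is proper. To see that $f$ is a star colouring, I would analyse $G[V_a\cup V_b]$ for each pair of colours: the conditions force $V_a^b$ and $V_b^a$ to behave as centres (degree $p$ inside $G[V_a\cup V_b]$), while every vertex of $V_a^k$ or $V_b^k$ with $k\notin\{a,b\}$ has exactly one neighbour there and so is a leaf; moreover no edge joins the two centre sets. Hence every edge of $G[V_a\cup V_b]$ joins a centre to a degree-one leaf, and each component is a star. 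Thus $f$ is a $(p+2)$-star colouring, and in particular Theorem~\ref{thm:2p-regular if p+2-star colourable} applies to $f$.

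The remaining, and most delicate, point is the equal-cardinality claim. The naive per-vertex count of edges between $V_i^j$ and $V_k^i$ stalls because it is not obvious that the $p$ colour-$i$ neighbours of a centre in $V_k^i$ have distinct majority colours. I would sidestep this by counting leaves. Applying Theorem~\ref{thm:2p-regular if p+2-star colourable} to $f$, all colour classes have the same size $N=n/(p+2)$ and every component of $G[V_i\cup V_j]$ is a $K_{1,p}$. In $G[V_i\cup V_j]$ the colour-$i$ centres are exactly $V_i^j$, each carrying $p$ colour-$j$ leaves, and these leaves are precisely the colour-$j$ vertices of majority colour $\neq i$; counting them in two ways gives $p\,|V_i^j| = |V_j| - |V_j^i|$. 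The symmetric identity $p\,|V_j^i| = |V_i| - |V_i^j|$ together with $|V_i|=|V_j|=N$ yields $(p-1)(|V_i^j|-|V_j^i|)=0$; since $p\geq 2$ we obtain $|V_i^j|=|V_j^i|$ and then $(p+1)|V_i^j|=N$, so every $V_i^j$ has cardinality $n/((p+1)(p+2))$. As there are $(p+1)(p+2)$ such sets, $n$ is divisible by $(p+1)(p+2)$.
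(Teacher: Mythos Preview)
Your proposal is correct and follows essentially the same approach as the paper: the same definition of $V_i^j$ via the ``majority colour'', the same use of Theorem~\ref{thm:2p-regular if p+2-star colourable} (Claims~3 and~5) for the necessary direction, and the same leaf-counting identity $p\,|V_i^j|=|V_j|-|V_j^i|$ combined with $|V_i|=|V_j|$ to deduce equal cardinalities. The only stylistic difference is in the sufficient direction, where you argue directly that each bicoloured subgraph $G[V_a\cup V_b]$ has degree-$p$ centres in $V_a^b\cup V_b^a$, degree-one leaves elsewhere, and no centre--centre edges, while the paper instead assumes a bicoloured $P_4$ and derives a contradiction; both arguments are short and rest on the same neighbourhood constraints.
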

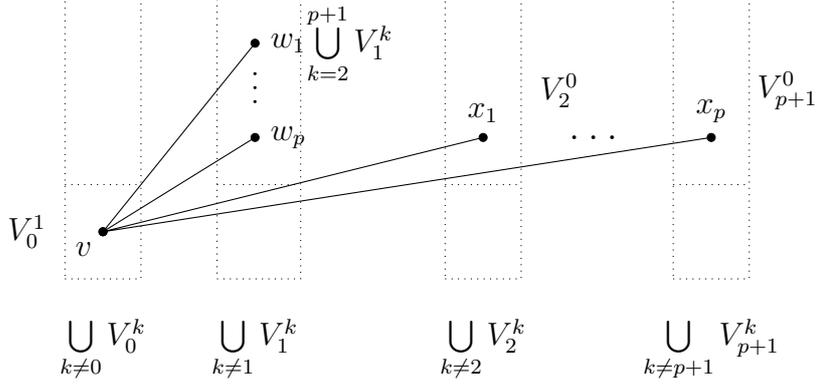
\begin{figure}[hbt]
\centering
\begin{tikzpicture}[x=1cm,y=1.25cm]
\path (0.5,-0.5) coordinate(01) ++(0,-1) coordinate(02) ++(0,-1) coordinate(03) node[below left]{\( v \)};
\path (01) ++(2,0) coordinate(10) ++(0,-1) coordinate(12) ++(0,-1) coordinate(13);
\path (10) ++(3,0) coordinate(20) ++(0,-1) coordinate(21) ++(0,-1) coordinate(23);
\path (20) ++(3,0) coordinate(30) ++(0,-1) coordinate(31) ++(0,-1) coordinate(32);

\draw[dotted,step=1] (0,0) rectangle (1,-3);
\draw[dotted,step=1] (0,-2) -- (1,-2);
\draw[dotted,step=1] (2,0) rectangle (3.1,-3);
\draw[dotted,step=1] (2,-2) -- (3.1,-2);
\draw[dotted,step=1] (5,0) rectangle (6,-3);
\draw[dotted,step=1] (5,-2) -- (6,-2);
\draw[dotted,step=1] (8,0) rectangle (9,-3);
\draw[dotted,step=1] (8,-2) -- (9,-2);

\path (03) node[dot]{};
\draw (03) node[dot]{}
(03) -- (10) node[dot][label=right:\( w_1 \)]{}
(03) -- (12) node[dot][label=right:\( w_p \)]{}
(03) -- (21) node[dot][label=\( x_1 \)]{}
(03) -- (31) node[dot][label=\( x_p \)]{};
\path (10)--node[sloped]{\dots} (12);
\path (21)--node[sloped][font=\Large]{\dots} (31);

\path (03)+(0,-1.25) node{\( \bigcup\limits_{k\neq 0}V_{0}^{k} \)};
\path (13)+(0,-1.25) node{\( \bigcup\limits_{k\neq 1}V_{1}^{k} \)};
\path (23)+(0,-1.25) node{\( \bigcup\limits_{k\neq 2}V_{2}^{k} \)};
\path (32)+(0,-1.25) node{\( \bigcup\limits_{k\neq p+1}V_{p+1}^{k} \)};
\path (03)+(-1,0) node{\( V_{0}^{1} \)};
\path (10)+(1.25,0) node{\( \bigcup\limits_{k=2}^{p+1}V_{1}^{k} \)};
\path (20)+(1,-0.5) node{\( V_{2}^{0} \)};
\path (30)+(1,-0.5) node{\( V_{p+1}^{0} \)};
\end{tikzpicture}
\caption{Neighbours of an arbitrary vertex \( v\in V_{i}^{j} \) (here, \( i=0 \) and \( j=1 \)).}
\label{fig:partition into vijs}
\end{figure}

\begin{proof}
First, we prove the characterization of \( 2p \)-regular \( (p+2) \)-star colourable graphs where \( p\geq 2 \). 

\noindent \emph{Necessary part:}   
Suppose that \( G \) admits a \( (p+2) \)-star colouring \( f\colon V\to \{0,1,\dots,p+1\} \). 
By Theorem~\ref{thm:2p-regular if p+2-star colourable}, every bicoloured component of \( G \) (under \( f \)) is isomorphic to \( K_{1,p} \). 
Moreover, by Claim~5 of Theorem~\ref{thm:2p-regular if p+2-star colourable}, each vertex \( v \) of \( G \), say with colour~\( i \), has exactly \( p \) neighbours in some colour class \( V_j \) and exactly one neighbour in every other colour class \( V_k \), \( k\notin\{i,j\} \).

For every pair of distinct colours \( i \) and \( j \), let \( V_{i}^{j} \) denote the set of vertices \( x\in V_i \) such that \( x \) has exactly \( p \) neighbours in \( V_j \). 
Since each vertex \( v\in V_i \) has exactly \( p \) neighbours in some colour class, \( \{V_{i}^{j}\ :\ 0\leq j\leq p+1 \text{ and } j\neq i\} \) is a partition of \( V_i \). 
Therefore, \( \{V_{i}^{j}\ :\ 0\leq i\leq p+1,\ \allowbreak 0\leq j\leq p+1, \allowbreak \text{ and } i\neq j\} \) is a partition of \( V=V(G) \).\\

\noindent \textbf{Claim~1:} Each vertex in \( V_{i}^{j} \) has exactly \( p \) neighbours in \( \bigcup_{k\notin\{i,j\}} V_{j}^{k} \) and exactly one neighbour in \( V_{k}^{i} \) for each \( k\notin\{i,j\} \).\\[5pt]
We prove Claim~1 for \( i=0 \) and \( j=1 \) (the proof is similar for other values). 
Let \( v\in V_{0}^{1} \).
By definition of set \( V_{0}^{1} \), \( v \) has \( p \) neighbours in \( V_1 \). 
By Claim~5 of Theorem~\ref{thm:2p-regular if p+2-star colourable}, \( v \) has exactly one neighbour in \( V_k \) for \( 2\leq k\leq p+1 \). 
Let \( w_1,\dots,w_p,x_1,\dots,x_p \) be the neighbours of \( v \) where \( w_1,\dots,w_p\in V_1 \) and \( x_r\in V_{r+1} \) for \( 1\leq r\leq p \) (see Figure~\ref{fig:neighbourhood of vertex in G}). 
Recall that every bicoloured component of \( G \) is isomorphic to \( K_{1,p} \). 
For \( 1\leq r\leq p \), \( v \) is the unique neighbour of \( w_r \) in \( V_0 \) and thus \( w_r\notin V_{1}^{0} \). 
Hence, \( v \) has \( p \) neighbours in \( V_1\setminus V_{1}^{0}=\bigcup_{k=2}^{p+1} V_{1}^{k} \). 
On the other hand, for \( 1\leq r\leq p \), \( x_r \) is the unique neighbour of \( v \) in \( V_{r+1} \) and thus \( x_r \) must have \( p \) neighbours in \( V_0 \) (if not, the bicoloured component of \( G \) containing edge \( vx_r \) is isomorphic to \( K_{1,1} \)); that is, \( x_r\in V_{r+1}^{0} \). 
So, for \( 2\leq k\leq p+1 \), \( v \) has a neighbour in \( V_{k}^{0} \).
Therefore, \( v \) has exactly \( p \) neighbours in \( V_1\setminus V_{1}^{0}=\bigcup_{k=2}^{p+1} V_{1}^{k} \) and exactly one neighbour in \( V_{k}^{0} \) for \( 2\leq k\leq p+1 \). 
This proves Claim~1, and thus completes the proof of the necessary part.\\

\noindent \emph{Sufficient part:}   
Suppose that the vertex set of \( G \) can be partitioned into \( (p+1)(p+2) \) sets \( V_{i}^{j} \) with indices \( i,j\in\{0,1,\dots,p+1\} \) and \( i\neq j \) such that for all \( i \) and \( j \), each vertex in \( V_{i}^{j} \) has exactly \( p \) neighbours in \( \bigcup_{k\notin\{i,j\}} V_{j}^{k} \) and exactly one neighbour in \( V_{k}^{i} \) for each \( k\notin\{i,j\} \). 
We claim that the function \( f \) defined as \( f(v)=i \) for all \( v\in V_{i}^{j} \) is a \( (p+2) \)-star colouring of \( G \). 
On the contrary, assume that there is a 4-vertex path \( u,v,w,x \) in \( G \) bicoloured by \( f \). 
Without loss of generality, assume that \( f(u)=f(w)=0 \) and \( f(v)=f(x)=1 \). 
Since \( v \) is coloured~1 and it has two neighbours coloured~0, \( v \) has exactly \( p \) neighbours in \( V_0 \) by Claim~4 of Theorem~\ref{thm:2p-regular if p+2-star colourable}.
Thus, \( v\in V_{1}^{0} \). 
Similarly, since \( w \) is coloured~0 and it has two neighbours coloured~1, \( w\in V_{0}^{1} \). 
By the condition on the sets \( V_{i}^{j} \), the vertex \( w\in V_{0}^{1} \) has exactly \( p \) neighbours in \( V_1\setminus V_{1}^{0}=\bigcup_{k=2}^{p+1} V_{1}^{k} \) and exactly one neighbour in \( V_{k}^{0} \) for \( 2\leq k\leq p+1 \). 
In particular, \( w \) has no neighbour in \( V_{1}^{0} \). 
We have a contradiction since the neighbour \( v \) of \( w \) is in \( V_{1}^{0} \). 
Hence, \( f \) is indeed a \( (p+2) \)-star colouring of \( G \). 
This completes the proof of the sufficient part.\\

Finally, we prove that the condition on the sets \( V_{i}^{j} \) implies that these sets are of equal size.\\[5pt]
\noindent \textbf{Claim~2:} For every pair of indices \( i \) and \( j \), \( |V_{i}^{j}|=\frac{|V|}{\mystrut (p+1)(p+2)} \).\\[5pt]
%
%
We prove Claim~2 for \( i=0 \) and \( j=1 \) (the proof is similar for other values). 
We know that \( G \) admits a \( (p+2) \)-star colouring \( f:V\to\{0,1,\dots,p+1\} \). 
Besides, the colour class \( V_0=\bigcup_{k\neq 0} V_{0}^{k} \) and the colour class \( V_1=\bigcup_{k\neq 1} V_{1}^{k} \). 
Consider the sets \( V_{0}^{1} \), \( V_0\setminus V_{0}^{1} \), \( V_{1}^{0} \) and \( V_1\setminus V_{1}^{0} \). 
By definition of \( V_{1}^{0} \), each vertex in \( V_{1}^{0} \) has exactly \( p \)  neighbours in \( \bigcup_{k=2}^{p+1} V_{0}^{k}=V_0\setminus V_{0}^{1} \). 
We know that every component of \( G[V_0\cup V_1] \) is isomorphic to \( K_{1,p} \) (by Theorem~\ref{thm:2p-regular if p+2-star colourable}). 
So, each vertex in \( V_{1}^{0} \) has exactly \( p \) neighbours in \( V_0\setminus V_{0}^{1} \), and each vertex in \( V_0\setminus V_{0}^{1} \) has exactly one neighbour in \( V_{1}^{0} \). 
Hence,
\begin{equation}
\#\text{edges between } V_{1}^{0} \text{ and } V_0\setminus V_{0}^{1}\ =\ p\,|V_{1}^{0}|\ =\ |V_0\setminus V_{0}^{1}|\ =\ |V_0|-|V_{0}^{1}|.
\end{equation}
Similarly, each vertex in \( V_{0}^{1} \) has exactly \( p \) neighbours in \( V_1\setminus V_{1}^{0}=\bigcup_{k=2}^{p+1} V_{1}^{k} \), and each vertex in \( V_1\setminus V_{1}^{0} \) has exactly one neighbour in \( V_{0}^{1} \). 
Thus, we have
\begin{equation}
\#\text{edges between } V_{0}^{1} \text{ and } V_1\setminus V_{1}^{0}\ =\ p\,|V_{0}^{1}|\ =\ |V_1\setminus V_{1}^{0}|\ =\ |V_1|-|V_{1}^{0}|.
\end{equation}

\noindent We also know that \( |V_0|=|V_1| \) (by Theorem~\ref{thm:2p-regular if p+2-star colourable}). 
Hence, by equations~(2) and (3), \( p\,|V_{1}^{0}|+|V_{0}^{1}|=|V_0|=|V_1|=p\,|V_{0}^{1}|+|V_{1}^{0}| \). 
The equation \( p\,|V_{1}^{0}|+|V_{0}^{1}|=p\,|V_{0}^{1}|+|V_{1}^{0}| \) simplifies to \( (p-1)|V_{1}^{0}|=(p-1)|V_{0}^{1}| \).
As \( p\geq 2 \), we have \( |V_{1}^{0}|=|V_{0}^{1}| \). 
Therefore, equation~(3) implies \( p\,|V_{0}^{1}|=|V_1|-|V_{0}^{1}| \). 
That is, \( |V_{0}^{1}|=\frac{|V_1|}{p+1} \). 
Since all colour classes under \( f \) have the same cardinality \( \frac{|V|}{p+2} \), we have 
\( |V_{0}^{1}|=\frac{|V_1|}{p+1}=\frac{|V|}{(p+1)(p+2)} \). 
This proves Claim~2. 
Since \( |V_{0}^{1}| \) is an integer, \( n=|V| \) is divisible by \( (p+1)(p+2) \). 
\end{proof}
The following corollary improves on the known lower bound \( \ceil{(d+3)/2} \) \cite{fertin2004} for the star chromatic number of the \( d \)-regular hypercube (provided \( d\geq 3 \)). 
\begin{corollary}
Let \( G \) be a \( d \)-regular  hypercube with \( d\geq 3 \). 
Then, \( \chi_s(G)\geq \lceil\frac{d+5}{2} \rceil \).
\end{corollary}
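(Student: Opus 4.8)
The plan is to separate the two parities of \( d \). If \( d \) is odd, then \( \lceil (d+4)/2\rceil \) and \( \lceil (d+5)/2\rceil \) coincide (both equal the integer \( (d+5)/2 \)), so the bound is already supplied by Theorem~\ref{thm:lb chi_s} and there is nothing further to prove. Thus the whole difficulty is the even case \( d=2p \) (with \( p\geq 2 \), since \( d\geq 3 \)): here Theorem~\ref{thm:lb chi_s} only gives \( \chi_s(Q_{2p})\geq p+2 \), and I must rule out the value \( p+2 \), i.e. show that \( Q_{2p} \) is not \( (p+2) \)-star colourable.

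Assume for contradiction that \( Q_{2p} \) admits a \( (p+2) \)-star colouring \( f \). By Theorem~\ref{thm:2p-regular if p+2-star colourable} every bicoloured component is a \( K_{1,p} \), and each vertex \( v \) has exactly \( p \) neighbours in one colour class (call it the \emph{dominant} colour of \( v \)) and exactly one neighbour in each of the remaining \( p \) colour classes. I orient every edge of \( Q_{2p} \) towards the centre of the star containing it. Under this orientation every vertex has in-degree \( p \) and out-degree \( p \); moreover the \( p \) in-neighbours of \( v \) all carry the dominant colour of \( v \), while its \( p \) out-neighbours carry the \( p \) colours distinct from \( f(v) \) and from the dominant colour, one each. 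In short, in-neighbourhoods are monochromatic and out-neighbourhoods are rainbow.

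The key step is a statement about the square faces (4-cycles) of the hypercube. I would prove: whenever the two edges of a square incident to a common vertex \( v \) are one incoming and one outgoing, the square is a directed 4-cycle. This follows by tracing the fourth vertex \( u \) of such a square; using that an in-neighbour is a leaf of its star while an out-neighbour is a centre, one shows \( u \) is forced to be the centre of a monochromatic star in the dominant colour, which pins down the orientations of the remaining two edges and closes the cycle. The consequence is that \( Q_{2p} \) has no square in which one vertex is a local source, another a local sink, and the other two are ``through'' vertices; equivalently, in every square the two parallel edges (those in the same coordinate direction) point in opposite bit-directions. A short propagation then shows the orientation is a \emph{coordinate-parity} orientation: the \( 2p \) coordinates split into two classes \( P \) and \( M \), necessarily of size \( p \) each because in-degree equals out-degree, and at a vertex \( y \) the coordinate-\( a \) edge points out of \( y \) exactly when the parity of the weight of \( y \) matches the class of \( a \). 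Consequently, at every even-weight vertex the incoming edges are precisely the \( M \)-coordinate edges, and at every odd-weight vertex they are the \( P \)-coordinate edges.

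Finally I would reach the contradiction inside a single subcube. Fixing the \( P \)-coordinates yields an \( M \)-subcube \( S\cong Q_p \). Within \( S \), the even-weight vertices have monochromatic neighbourhoods (their incident \( S \)-edges are all incoming), so a distance-2 propagation across the connected part (valid because \( p\geq 2 \)) forces the entire odd part of \( S \) to use a single colour. Feeding this back into the rainbow condition for the even-weight vertices of \( S \) forces all even-weight vertices of \( S \) to share one colour as well; but then an odd-weight vertex of \( S \) has \( p\geq 2 \) out-neighbours inside \( S \), all of the same colour, contradicting that out-neighbourhoods are rainbow. This contradiction shows \( Q_{2p} \) is not \( (p+2) \)-star colourable, completing the even case. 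The hardest part is the square lemma and its translation into the rigid coordinate-parity description; once the orientation is pinned down to that form, the subcube propagation is routine.
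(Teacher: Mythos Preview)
Your argument is correct, but it takes a substantially harder path than the paper's proof. The paper dispatches the even case \( d=2p \) in two lines using Theorem~\ref{thm:2p-regular p+2-star colourable iff}: if \( Q_{2p} \) were \( (p+2) \)-star colourable, then \( 2^{2p}=|V(Q_{2p})| \) would be divisible by \( (p+1)(p+2) \); but one of \( p+1,p+2 \) is an odd integer at least~3, so \( (p+1)(p+2) \) has an odd prime factor and cannot divide a power of two. That is the entire contradiction.

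Your route avoids Theorem~\ref{thm:2p-regular p+2-star colourable iff} and instead extracts the full geometric structure of the induced orientation on the hypercube. The square lemma is valid (and in fact holds in any \( 2p \)-regular \( (p+2) \)-star colourable graph: every \( C_4 \) under the in-orientation is either a directed 4-cycle or an alternating source/sink cycle), and the propagation to a coordinate-parity orientation with \( |P|=|M|=p \) is sound. One point deserves a sharper statement: your sentence ``feeding this back into the rainbow condition for the even-weight vertices of \( S \)'' really uses the out-neighbours of those vertices, which lie \emph{outside} \( S \); each such out-neighbour is an odd-weight vertex of an adjacent \( M \)-subcube and hence carries that subcube's common colour, so the rainbow constraint pins \( f(y) \) to the single colour missing from \( \{c_x\}\cup\{c_{x'}:x'\sim x\} \), independently of \( y \). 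With that clarification the final contradiction goes through. Your approach yields genuine structural information about colourful Eulerian orientations of hypercubes, but for the corollary as stated the divisibility argument is both shorter and uses only results already proved in the paper.
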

\begin{proof}
The lower bound holds for every odd number \( d\geq 3 \) since \( \ceil{\frac{d+4}{2}}=\ceil{\frac{d+5}{2}} \) and \( \chi_s(G)\geq \ceil{\frac{d+4}{2}} \) (see Theorem~\ref{thm:lb chi_s}). 
Hence, it suffices to establish the lower bound for every even number \( d\geq 4 \). 
Suppose that \( d=2p \) where \( p\geq 2 \). 
To prove that \( \chi_s(G)\geq \lceil(d+5)/2 \rceil =p+3 \), it suffices to show that \( G \) does not admit a \( (p+2) \)-star colouring. 
On the contrary, assume that \( G \) admits a \( (p+2) \)-star colouring. 
By Theorem~\ref{thm:2p-regular p+2-star colourable iff}, \( n=|V(G)| \) is divisible by \( (p+1)(p+2) \). 
Since \( p+1 \) or \( p+2 \) is an odd number greater than one, \( n \) is divisible by an odd number greater than one.
Since the number of vertices in a hypercube is a power of two, we have a contradiction. 
\end{proof}
\begin{corollary}\label{cor:diamond-free omega alpha etc}
Let \( G \) be a \( 2p \)-regular \( (p+2) \)-star colourable graph where \( p\geq 2 \). Then, the following hold: \( (i) \)~\( G \) is \( (\textup{diamond},  K_4) \)-free, \( (ii) \)~\( \alpha(G)>n/4 \), \( (iii) \)~\( \chi(G)\leq 3\log_2(p+2) \), \( (iv) \)~\( G \) admits a \( P_4 \)-decomposition, and \( (v) \)~if \( G \) contains no asteroidal triple, then \( G \) is 3-colourable. 
\end{corollary}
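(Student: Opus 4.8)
The plan is to derive all five statements from the structural description in Theorems~\ref{thm:2p-regular if p+2-star colourable} and~\ref{thm:2p-regular p+2-star colourable iff}. Fix a \( (p+2) \)-star colouring \( f \) of \( G \) and the induced partition of \( V(G) \) into the sets \( V_i^j \) (\( i\ne j \)), each of size \( s\coloneqq n/((p+1)(p+2)) \). The single technical fact I would isolate first is an \emph{edge lemma}: for distinct index pairs, there is no edge between \( V_a^b \) and \( V_c^d \) whenever \( b\ne c \) and \( a\ne d \). This is immediate from the adjacency description in Theorem~\ref{thm:2p-regular p+2-star colourable iff}, since every neighbour of a vertex of \( V_a^b \) lies in some \( V_b^k \) (first index \( b \)) or some \( V_k^a \) (second index \( a \)).

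For (i), I would work with the ``centre role'' of each edge: since every bicoloured component is a \( K_{1,p} \) with \( p\ge 2 \), each edge \( xy \) has exactly one endpoint that is the centre of the star in \( G[V_{f(x)}\cup V_{f(y)}] \) (the endpoint with \( p \) neighbours in the other colour class). By Claim~5 of Theorem~\ref{thm:2p-regular if p+2-star colourable}, no vertex can be the centre on two of its incident edges that lie in a common triangle, so in every triangle the centre-assignment is a bijection onto its three vertices, i.e.\ the triangle is ``cyclically oriented''. Applied to the two triangles of a diamond sharing the edge \( ab \), this forces either one vertex to be a centre on two edges into distinct colour classes (contradicting Claim~5) or a vertex adjacent to two same-coloured neighbours to be simultaneously a centre and a leaf (contradicting Claim~4). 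Hence \( G \) is diamond-free, and since \( K_4 \) contains a diamond as a subgraph, \( G \) is also \( K_4 \)-free.

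For (ii) and (iii) I would view each pair \( (i,j) \) as an arc on the vertex set \( \{0,\dots,p+1\} \). By the edge lemma, a union \( \bigcup_{(i,j)\in\mathcal C}V_i^j \) is independent whenever the arc set \( \mathcal C \) contains no directed path of length two. For (ii), take a balanced bipartition \( \{0,\dots,p+1\}=S\cup T \) and \( \mathcal C=\{(i,j):i\in S,\ j\in T\} \); then \( \big|\bigcup V_i^j\big|=|S|\,|T|\,s=\lceil\tfrac{p+2}{2}\rceil\lfloor\tfrac{p+2}{2}\rfloor\, s>\tfrac{(p+1)(p+2)}{4}\,s=\tfrac n4 \), giving \( \alpha(G)>n/4 \). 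For (iii), I would partition \emph{all} arcs into \( 2\lceil\log_2(p+2)\rceil \) classes free of directed paths of length two by a binary trick: assign arc \( (i,j) \) to the class indexed by the lowest bit \( t \) on which the binary codes of \( i \) and \( j \) differ, together with the value \( a\in\{0,1\} \) of that bit in \( i \); within each class all tails agree and all heads agree in bit \( t \), so the edge lemma makes each class independent. This yields a proper colouring, so \( \chi(G)\le 2\lceil\log_2(p+2)\rceil\le 3\log_2(p+2) \), using \( \log_2(p+2)\ge 2 \) for \( p\ge 2 \).

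For (iv) I would first record the divisibility \( |E(G)|=pn=p\,(p+1)(p+2)\,t \) for some integer \( t \) (by Theorem~\ref{thm:2p-regular p+2-star colourable iff}); as \( p,p+1,p+2 \) are consecutive, one is a multiple of \( 3 \), so \( 3\mid|E(G)| \). Since \( G \) is an even graph (all degrees \( 2p\ge 4 \)) with no component a triangle, I would then invoke a standard \( P_4 \)-decomposition theorem for even graphs of size divisible by three to conclude. For (v), part~(i) gives \( \omega(G)\le 3 \); I would then appeal to the known colouring theorem for AT-free graphs in this clique-bounded regime to obtain \( \chi(G)\le 3 \). The steps I expect to be the real obstacles are (iv) and (v). In (iv) the delicate point is that a naive Euler-tour grouping of three consecutive edges can close a triangle, so one must use (or reprove) the correct decomposition theorem and rule out its few exceptional graphs, which the regularity and \( K_4 \)-freeness of \( G \) do. In (v) the crux is pinning down and applying the precise colouring theorem for AT-free graphs, since AT-free graphs are not perfect (e.g.\ \( C_5 \)) and the general \( \chi \)-bounding bounds for them are too weak; the argument must therefore exploit \( \omega(G)\le 3 \) together with AT-freeness rather than either property alone.
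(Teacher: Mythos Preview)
Your proposal is correct, and for parts (ii), (iv), and (v) it coincides with the paper's proof: the paper uses the same balanced bipartition \(I_t\) with \(t=\lceil(p+2)/2\rceil\) for (ii), cites Oksimets' theorem on \(P_4\)-decompositions of even graphs for (iv), and cites Stacho's theorem on (diamond,\,\(K_4\))-free AT-free graphs for (v). Your argument for (i) is a reformulation of the paper's: the paper isolates the same ``centre role'' as its Claim~1 (for an edge \(uv\) with \(u\in V_i\), \(v\in V_j\), either \(u\in V_i^j\) or \(v\in V_j^i\)) and then does a direct case analysis on the diamond; your cyclic-orientation phrasing is equivalent. One small point: in your case split for (i), when the two non-adjacent diamond vertices receive the same colour the contradiction is not quite ``simultaneously centre and leaf'' --- rather, the centre of the shared edge then has two neighbours in a colour class that is not its centre direction, violating Claim~5. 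This is a one-line fix.

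The genuine difference is in (iii). The paper proves \(\chi(G)\le 3\lfloor\log_2(p+2)\rfloor\) by a recursive halving: it peels off three independent sets \(A,B,C\) (the cross arcs and a possible middle column) and observes that the two leftover blocks \(W_1,W_2\) each admit a \(D_{q_0}\)-partition with \(q_0\approx (p/2)(p/2+1)\), so induction applies. Your argument instead partitions the arc set \(\{(i,j):i\ne j\}\) directly into \(2\lceil\log_2(p+2)\rceil\) classes via the lowest differing bit, and uses the edge lemma to see each class is independent. Your route is shorter, avoids the induction and the auxiliary \(D_q\)-partition machinery, and actually gives the better constant \(2\) in place of \(3\). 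The paper's approach has the mild advantage that the three independent sets at each level are described by simple index ranges rather than bit conditions, but your argument is the cleaner one here.
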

\noindent The proof of Corollary~\ref{cor:diamond-free omega alpha etc} is deferred to the end of the section.


\begin{figure}[hbt]
\centering
\begin{subfigure}[b]{0.575\textwidth}
\centering
\begin{tikzpicture}[scale=2.25]
\path (0,0) node(01)[bigdot][label=left:{\( (0,1) \)}]{}++(0,-1) node(02)[bigdot][label=left:{\( (0,2) \)}]{}++(0,-1) node(03)[bigdot][label=left:{\( (0,3) \)}]{};
\path (01)++(1,0) node(10)[bigdot][label=left:{\( (1,0) \)}]{}++(0,-1) node(12)[bigdot][label={[font=\small,xshift=5pt,yshift=-4pt]left:{\( (1,2) \)}}]{}++(0,-1) node(13)[bigdot][label=left:{\( (1,3) \)}]{};
\path (10)++(1,0) node(20)[bigdot][label=right:{\( (2,0) \)}]{}++(0,-1) node(21)[bigdot][label={[font=\small,xshift=-5pt,yshift=4pt]right:{\( (2,1) \)}}]{}++(0,-1) node(23)[bigdot][label=right:{\( (2,3) \)}]{};
\path (20)++(1,0) node(30)[bigdot][label=right:{\( (3,0) \)}]{}++(0,-1) node(31)[bigdot][label=right:{\( (3,1) \)}]{}++(0,-1) node(32)[bigdot][label=right:{\( (3,2) \)}]{};

\draw
(01)--(12)
(01)--(13)
(10)--(02)
(10)--(03)
(12)--(20)
(12)--(23)
(21)--(10)
(21)--(13)
(23)--(30)
(23)--(31)
(32)--(20)
(32)--(21);
\draw
(02)--(23)
(02)--(30)
(31)--(10)
(31)--(03);
\draw
(03) to[bend left=20] (20)
(13) to[bend right=20] (30)
(12) to[bend left=20] (31)
(02) to[bend right=20] (21)
(01) to[bend left=15] (20)
(01) to[bend left=20] (30)
(32) to[bend left=15] (13)
(32) to[bend left=20] (03);
\end{tikzpicture}
\caption{}
\end{subfigure}%
\hfill 
\begin{subfigure}[b]{0.375\textwidth}
\centering
\begin{tikzpicture}[scale=1.25]
\path (0:2) node(a)[bigdot][label=right:{\( (0,1) \)}]{};
\path (120:2) node(u)[bigdot][label=above:{\( (2,0) \)}]{};
\path (-120:2) node(w)[bigdot][label=below:{\( (1,2) \)}]{};
\draw (a)--(u)--(w)--(a);
\draw (0,0) circle (2);
\path (a)--(u) node(v)[pos=0.4][bigdot][label=right:{\( (1,3) \)}]{} node(u-a)[pos=0.6][bigdot][label=above:{\( (3,2) \)}]{};
\path (u)--(w) node(u-w)[pos=0.4][bigdot][label=left:{\( (0,3) \)}]{} node(w-u)[pos=0.6][bigdot][label=left:{\( (3,1) \)}]{};
\path (w)--(a) node(w-a)[pos=0.4][bigdot][label=below:{\( (2,3) \)}]{} node(x)[pos=0.6][bigdot][label=right:{\( (3,0) \)}]{};
\draw (u-w)--(u-a)--node[pos=0.3][bigdot][label={[label distance=-6pt,font=\scriptsize]above left:{\( (2,1) \)}}]{} (w-a)--(w-u)--node[pos=0.3][bigdot][label={[label distance=-2pt,font=\scriptsize]above:{\( (1,0) \)}}]{} (v)--(x)--node[pos=0.3][bigdot][label={[label distance=-6pt,font=\scriptsize]below left:{\( (0,2) \)}}]{} (u-w);
\end{tikzpicture}
\caption{}
\end{subfigure}%
\caption{(a)~Graph \( G_4 \), and (b)~its plane drawing.}
\label{fig:G4}
\end{figure}
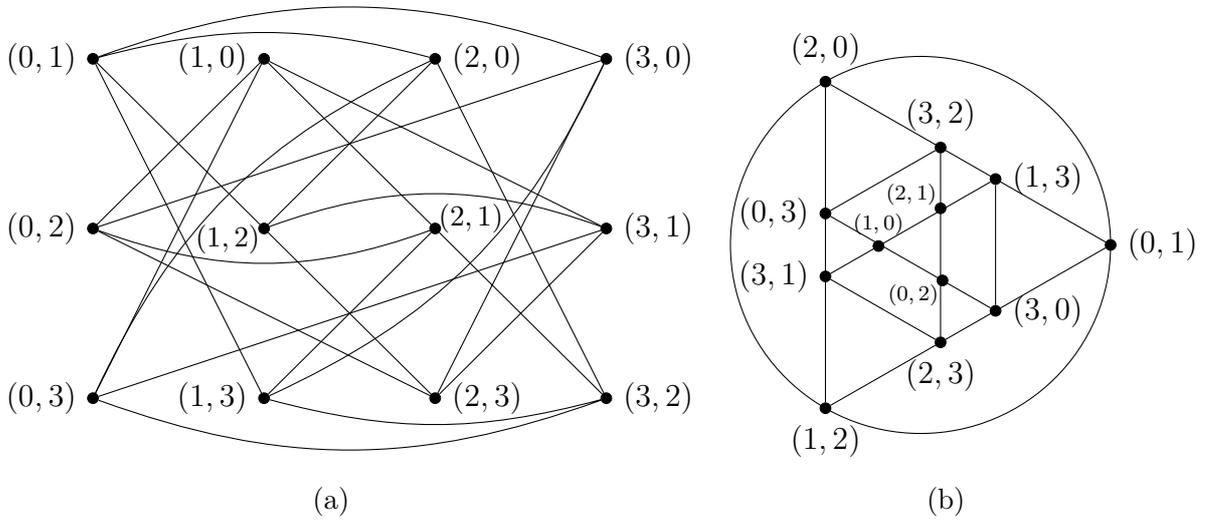

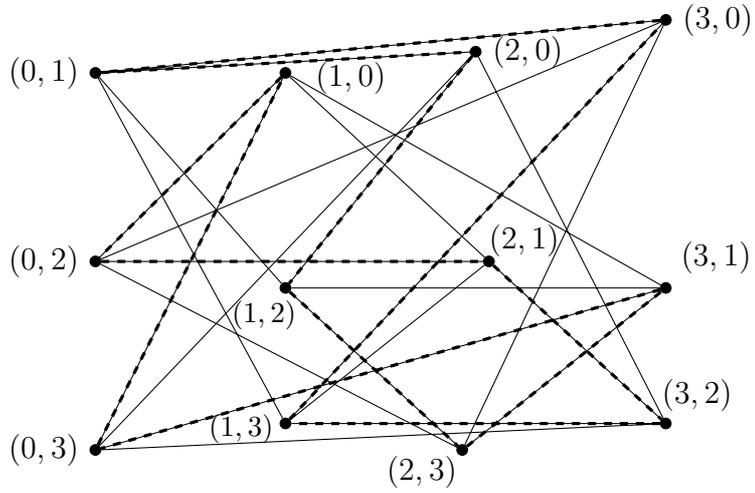
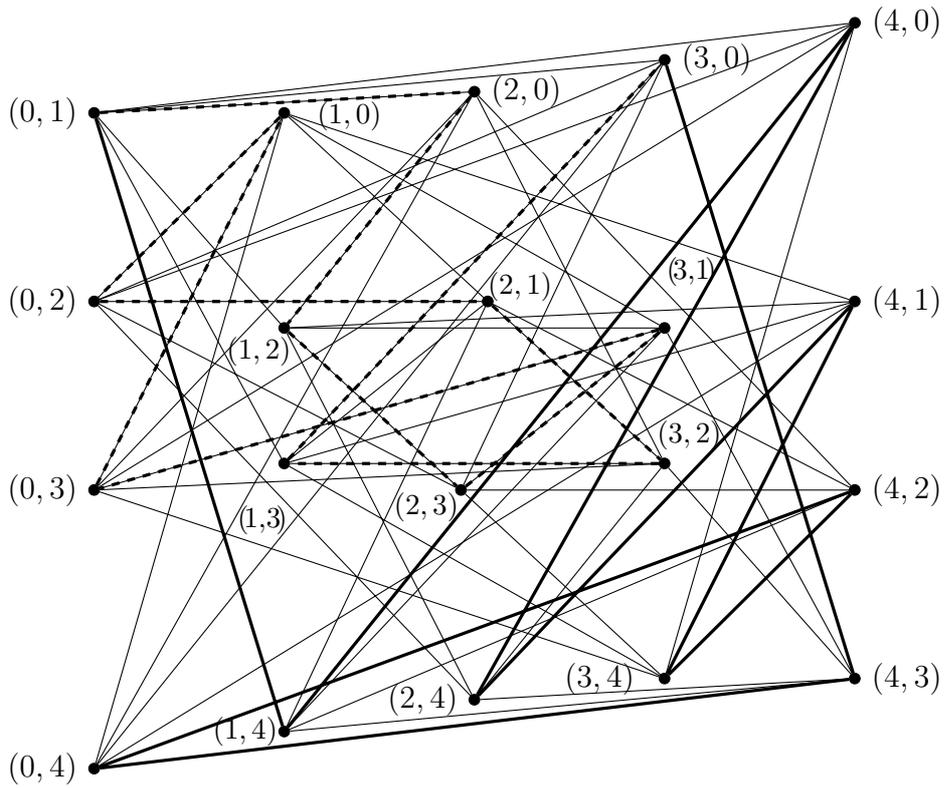
\begin{figure}[hbtp]
\centering
\begin{subfigure}[b]{0.8\textwidth}
\centering
\begin{tikzpicture}[scale=2.5]
\path (0,0) node(01)[bigdot][label=left:{\( (0,1) \)}]{}++(0,-1) node(02)[bigdot][label=left:{\( (0,2) \)}]{}++(0,-1) node(03)[bigdot][label=left:{\( (0,3) \)}]{};
\path (01)++(1,0) node(10)[bigdot][label={[label distance=5pt,yshift=-2pt]right:{\( (1,0) \)}}]{}++(0,-1) node(12)[bigdot][yshift=-10pt][label={[label distance=-14pt,yshift=-9pt,font=\small]below left:{\( (1,2) \)}}]{}++(0,-1) node(13)[bigdot][yshift=10pt][label={[xshift=2pt,yshift=-2pt,font=\small]left:{\( (1,3) \)}}]{};
\path (10)++(1,0) coordinate(20pos)++(0,-1) node(21)[bigdot][xshift=5pt][label={[label distance=-8pt,yshift=2pt]above right:{\( (2,1) \)}}]{}++(0,-1) node(23)[bigdot][xshift=-5pt][label={[label distance=-5pt]below left:{\( (2,3) \)}}]{};
\path (20pos)++(1,0) coordinate(30pos)++(0,-1) node(31)[bigdot][yshift=-10pt][label={above right:{\( (3,1) \)}}]{}++(0,-1) node(32)[bigdot][yshift=10pt][label={[label distance=-10pt,yshift=6pt]above right:{\( (3,2) \)}}]{};

\path
(20pos) node(20)[bigdot][yshift=8pt][label=right:{\( (2,0) \)}]{}
(30pos) node(30)[bigdot][yshift=20pt][label=right:{\( (3,0) \)}]{}
;

\draw
(01)--(12)
(01)--(13)
(02)--(21)
(02)--(23)
(03)--(31)
(03)--(32);
\draw
(10)--(02)
(10)--(03)
(12)--(20)
(12)--(23)
(13)--(30)
(13)--(32);
\draw
(20)--(01)
(20)--(03)
(21)--(10)
(21)--(13)
(23)--(30)
(23)--(31);
\draw
(30)--(01)
(30)--(02)
(31)--(10)
(31)--(12)
(32)--(20)
(32)--(21);

\draw [very thick,dashed]
(01)--(20)--(12)--(23)--(31)--(03)--(10)--(02)--(21)--(32)--(13)--(30)--(01);
\end{tikzpicture}
\caption{}
\label{fig:Hcycle in G4}
\end{subfigure}%

\begin{subfigure}[b]{0.8\textwidth}
\centering
\begin{tikzpicture}[scale=2.5]
\path (0,0) node(01)[bigdot][label=left:{\( (0,1) \)}]{}++(0,-1) node(02)[bigdot][label=left:{\( (0,2) \)}]{}++(0,-1) node(03)[bigdot][label=left:{\( (0,3) \)}]{}++(0,-1) node(04)[bigdot][yshift=-34pt][label=left:{\( (0,4) \)}]{};
\path (01)++(1,0) node(10)[bigdot][label={[label distance=6pt,yshift=-1pt,font=\small]right:{\( (1,0) \)}}]{}++(0,-1) node(12)[bigdot][yshift=-10pt][label={[label distance=-12pt,yshift=-6pt,font=\small]below left:{\( (1,2) \)}}]{}++(0,-1) node(13)[bigdot][yshift=10pt][label={[xshift=7pt,yshift=-10pt,font=\small]below left:{\( (\!1,\!3\!) \)}}]{}++(0,-1) node(14)[bigdot][yshift=-20pt][label={[xshift=4pt,font=\small]left:{\( (1,4) \)}}]{};
\path (10)++(1,0) coordinate(20pos)++(0,-1) node(21)[bigdot][xshift=5pt][label={[label distance=-8pt,font=\small]above right:{\( (2,1) \)}}]{}++(0,-1) node(23)[bigdot][xshift=-5pt][label={[label distance=-7pt,font=\small]below left:{\( (2,3) \)}}]{}++(0,-1) node(24)[bigdot][yshift=-8pt][label=left:{\( (2,4) \)}]{};
\path (20pos)++(1,0) coordinate(30pos)++(0,-1) node(31)[bigdot][yshift=-10pt][label={[xshift=-5pt,yshift=10pt,font=\small]above right:{\( (\!3,\!1\!) \)}}]{}++(0,-1) node(32)[bigdot][yshift=10pt][label={[label distance=-12pt,yshift=8pt,font=\small]above right:{\( (3,2) \)}}]{}++(0,-1) node(34)[bigdot][label={[label distance=5pt]left:{\( (3,4) \)}}]{};
\path (30pos)++(1,0) coordinate(40pos)++(0,-1) node(41)[bigdot][label=right:{\( (4,1) \)}]{}++(0,-1) node(42)[bigdot][label=right:{\( (4,2) \)}]{}++(0,-1) node(43)[bigdot][label=right:{\( (4,3) \)}]{};

\path
(20pos) node(20)[bigdot][yshift=8pt][label=right:{\( (2,0) \)}]{}
(30pos) node(30)[bigdot][yshift=20pt][label=right:{\( (3,0) \)}]{}
(40pos) node(40)[bigdot][yshift=34pt][label=right:{\( (4,0) \)}]{}
;

\draw
(01)--(12)
(01)--(13)
(01)--(14)
(02)--(21)
(02)--(23)
(02)--(24)
(03)--(31)
(03)--(32)
(03)--(34)
(04)--(41)
(04)--(42)
(04)--(43);
\draw
(10)--(02)
(10)--(03)
(10)--(04)
(12)--(20)
(12)--(23)
(12)--(24)
(13)--(30)
(13)--(32)
(13)--(34)
(14)--(40)
(14)--(42)
(14)--(43);
\draw
(20)--(01)
(20)--(03)
(20)--(04)
(21)--(10)
(21)--(13)
(21)--(14)
(23)--(30)
(23)--(31)
(23)--(34)
(24)--(40)
(24)--(41)
(24)--(43);
\draw
(30)--(01)
(30)--(02)
(30)--(04)
(31)--(10)
(31)--(12)
(31)--(14)
(32)--(20)
(32)--(21)
(32)--(24)
(34)--(40)
(34)--(41)
(34)--(42);
\draw
(40)--(01)
(40)--(02)
(40)--(03)
(41)--(10)
(41)--(12)
(41)--(13)
(42)--(20)
(42)--(21)
(42)--(23)
(43)--(30)
(43)--(31)
(43)--(32);

\draw [very thick]
(01)--(14)--(40)--(24)--(41)--(34)--(42)--(04)--(43)--(30);
\draw [very thick,dashed]
(01)--(20)--(12)--(23)--(31)--(03)--(10)--(02)--(21)--(32)--(13)--(30);
\end{tikzpicture}
\caption{}
\end{subfigure}%
%
\caption{(a)~A Hamiltonian cycle \( L_4 \) of \( G_4 \) containing edge \( \{(0,1),(3,0)\} \), and (b)~the Hamiltonian cycle of \( G_6 \) obtained by replacing edge \( \{(0,1),(3,0)\} \) of cycle \( L_4 \) by the path \( (0,1),(1,4),(4,0),(2,4),(4,1),(3,4),(4,2),(0,4),(4,3),(3,0) \).}
\label{fig:making Hcycle in G2p}
\end{figure}

\begin{theorem}\label{thm:intro G2p}
For every \( p\geq 2 \), there exists a unique \( 2p \)-regular \( (p+2) \)-star colourable graph \( G_{2p} \) on \( (p+1)(p+2) \) vertices. 
Moreover, \( G_{2p} \) is vertex-transitive, edge-transitive and Hamiltonian for every \( p \geq 2 \).
\end{theorem}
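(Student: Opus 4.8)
The plan is to read the structure of $G_{2p}$ straight off the characterisation in Theorem~\ref{thm:2p-regular p+2-star colourable iff}. When $n=(p+1)(p+2)$, each block $V_i^j$ of the guaranteed partition has cardinality $n/((p+1)(p+2))=1$, so I would label the single vertex of $V_i^j$ by the ordered pair $(i,j)$, where $i,j\in\{0,1,\dots,p+1\}$ and $i\neq j$. The adjacency condition of Theorem~\ref{thm:2p-regular p+2-star colourable iff} then forces each $(i,j)$ to be adjacent to exactly the $p$ vertices $(j,k)$ with $k\notin\{i,j\}$ and to the vertex $(k,i)$ for each $k\notin\{i,j\}$; equivalently, two vertices are adjacent precisely when they can be written as $(a,b)$ and $(b,c)$ with $a,b,c$ pairwise distinct. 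I would take this as the definition of $G_{2p}$, put $V_i^j=\{(i,j)\}$, and invoke the sufficiency direction of Theorem~\ref{thm:2p-regular p+2-star colourable iff} to conclude that $G_{2p}$ is $2p$-regular and $(p+2)$-star colourable (see Figure~\ref{fig:G4} for $p=2$). For uniqueness, the necessity direction together with the equal-cardinality statement of that theorem shows that every $2p$-regular $(p+2)$-star colourable graph $H$ on $(p+1)(p+2)$ vertices admits such a singleton partition; the adjacency condition then pins down all edges, so the map sending $(i,j)$ to the unique vertex of $V_i^j$ is an isomorphism from $G_{2p}$ onto $H$.

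For vertex- and edge-transitivity I would exploit the natural action of the symmetric group $S_{p+2}$ on the index set $\{0,\dots,p+1\}$: each permutation $\pi$ induces the map $(i,j)\mapsto(\pi(i),\pi(j))$, which preserves the rule ``$(a,b)\sim(b,c)$ with $a,b,c$ distinct'' and is therefore an automorphism of $G_{2p}$. Since $S_{p+2}$ acts transitively on ordered pairs of distinct indices, $G_{2p}$ is vertex-transitive. For edges I would observe that an edge $\{(a,b),(b,c)\}$ determines, and is determined by, the ordered triple $(a,b,c)$ of distinct indices: the shared middle index $b$, the tail $a$, and the head $c$ are each recoverable from the edge (a count confirms the bijection, since $G_{2p}$ has $p(p+1)(p+2)$ edges and there are $(p+2)(p+1)p$ such triples). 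As $S_{p+2}$ acts transitively on ordered triples of distinct indices, $G_{2p}$ is edge-transitive.

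Hamiltonicity is the part I expect to be the main obstacle, and I would handle it by induction on $p$ using the vertex-insertion idea of Figure~\ref{fig:making Hcycle in G2p}. First note that $G_{2p}$ is an induced subgraph of $G_{2p+2}$ (restrict the labels to $\{0,\dots,p+1\}$), and that passing from $G_{2p}$ to $G_{2p+2}$ merely adjoins the $2(p+2)$ vertices involving the new index $m\coloneqq p+2$, each old vertex $(i,j)$ gaining exactly the two neighbours $(j,m)$ and $(m,i)$. For the base case $p=2$ I would exhibit the explicit Hamiltonian cycle $L_4$ of Figure~\ref{fig:Hcycle in G4}. For the inductive step, assuming $G_{2p}$ is Hamiltonian, edge-transitivity lets me push any Hamiltonian cycle through an automorphism (which preserves Hamiltonian cycles) to one, $L_{2p}$, containing the designated edge $\{(0,1),(p+1,0)\}$; I would then replace this single edge by the path
\[ (0,1),\ (1,m),(m,0),\ (2,m),(m,1),\ \ldots,\ (0,m),(m,m-1),\ (m-1,0), \]
whose internal vertices are exactly the $2(p+2)$ new vertices and whose endpoints are $(0,1)$ and $(p+1,0)=(m-1,0)$, yielding a Hamiltonian cycle of $G_{2p+2}$. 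The routine but essential verifications are that each consecutive pair in this path is genuinely an edge of $G_{2p+2}$ and that every new vertex occurs exactly once; once these are checked the induction closes. The one real subtlety is ensuring that the removed edge may always be taken in the standard form $\{(0,1),(p+1,0)\}$, which is precisely what edge-transitivity supplies, so I would take care to establish edge-transitivity before invoking it here.
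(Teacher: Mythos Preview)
Your proposal is correct and follows essentially the same route as the paper: you derive the definition and uniqueness of $G_{2p}$ from Theorem~\ref{thm:2p-regular p+2-star colourable iff} with singleton blocks, obtain vertex- and edge-transitivity from the $S_{p+2}$-action on indices (the paper's Claim~2 phrased group-theoretically), and prove Hamiltonicity by the same induction that replaces a designated edge with the path through all new $m$-labelled vertices. The only cosmetic difference is that you justify edge-transitivity via the bijection between edges and ordered triples of distinct indices, whereas the paper writes down the automorphism explicitly; both are the same argument.
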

\begin{proof}
By Theorem~\ref{thm:2p-regular p+2-star colourable iff}, if a \( 2p \)-regular graph \( G \) is \( (p+2) \)-star colourable, then the number of vertices in \( G \) is divisible by \( (p+1)(p+2) \), and thus \( G \) has at least \( (p+1)(p+2) \) vertices. 
The following claim is a direct consequence of the characterization of \( 2p \)-regular \( (p+2) \)-colourable graphs given in Theorem~\ref{thm:2p-regular p+2-star colourable iff}.\\[5pt]
\noindent \textbf{Claim 1:} For \( p\geq 2 \), a \( 2p \)-regular graph \( G \) on \( (p+1)(p+2) \) vertices is \( (p+2) \)-star colourable if and only if the vertex set of \( G \) can be partitioned into \( (p+1)(p+2) \) singleton sets \( V_{i}^{j} \) with indices \( i,j\in\{0,1,\dots,p+1\} \) and \( i\neq j \) such that for all \( i \) and \( j \), the unique vertex in \( V_{i}^{j} \) is adjacent to the unique vertex in \( V_{j}^{k} \) and the unique vertex in \( V_{k}^{i} \) for each \( k\notin\{i,j\} \).\\

\noindent The definition of \( G_{2p} \) is motivated by Claim~1. 
The vertex set of \( G_{2p} \) is \( \{(i,j)\ :\ 0\leq i\leq p+1,\ \allowbreak 0\leq j\leq p+1, \text{ and } i\neq j\} \). 
A vertex \( (i,j) \) of \( G_{2p} \) is adjacent to a vertex \( (k,\ell)\) if (i)~\( k=j \) and \( \ell\notin \{i,j\} \), or (ii)~\( k\notin \{i,j\} \) and \( \ell=i \). 
The graph \( G_4 \) is displayed in Figure~\ref{fig:G4}. 
It is easy to verify that the sets \( V_{i}^{j}\coloneqq \{(i,j)\} \) satisfy the condition in Claim~1. 
Therefore, \( G_{2p} \) is \( (p+2) \)-star colourable. 
Moreover, every \( 2p \)-regular \( (p+2) \)-star colourable graph \( G \) on \( (p+1)(p+2) \) vertices is isomorphic to \( G_{2p} \) (call the unique vertex in \( V_{i}^{j} \) as \( (i,j) \) for all \( i \) and \( j \)). 

By the definition of \( G_{2p} \), whether vertex \( (i,j) \) is adjacent to vertex \( (k,\ell) \) depends only on equality and inequality between indices \( i,j,k,\ell \). 
Hence, for every bijection \( h \) from \( \{0,1,\dots,p+1\} \) to itself, relabelling each index \( i\in\{0,1,\dots,p+1\} \) by \( h(i) \) in vertex labels of \( G_{2p} \) (e.g.: relabel \( (0,1) \) as \( (h(0),h(1)) \)) gives \( G_{2p} \) itself. 

Thus, we have the following.\\[3pt] 
\noindent \textbf{Claim~2:} For every bijection \( h \) from \( \{0,1,\dots,p+1\} \) to itself, the function \( \psi\colon V(G_{2p})\to V(G_{2p}) \) defined as \( \psi((x,y))=(h(x),h(y)) \) is an automorphism of \( G_{2p} \).\\

With the help of Claim~2, we show that \( G_{2p} \) is vertex-transitive and edge-transitive.\\[5pt]
\textbf{Claim~3:} \( G_{2p} \) is vetex-transitive for each \( p\geq 2 \).\\[5pt]
To construct an automorphism \( \psi \) that maps a vertex \( (i,j) \) to a vertex \( (k,\ell) \), first choose a bijection \( h \) from \( \{0,1,\dots,p+1\} \) to itself such that \( h(i)=k \) and \( h(j)=\ell \), and then define \( \psi\big((x,y)\big)=\big(h(x),h(y)\big) \) for all \( (x,y)\in V(G_{2p}) \). 
This proves Claim~3.\\

\noindent \textbf{Claim~4:} \( G_{2p} \) is edge-transitive for each \( p\geq 2 \).\\[5pt]
For each vertex \( (i,j) \) of \( G_{2p} \), each neighour of \( (i,j) \) in \( G_{2p} \) is either of the form \( (j,k) \) for some \( k\notin\{i,j\} \) or \( (k,i) \) for some \( k\notin\{i,j\} \). 
So, edges incident on \( (i,j) \) are \( \{(i,j),(j,k)\} \) where \( k\notin\{i,j\} \) or \( \{(k,i),(i,j)\} \) where \( k\notin\{i,j\} \). 
As a result, each edge of \( G_{2p} \) is of the form \( \{(q,r),(r,s)\} \) where \( q,r,s\in\{0,1,\dots,p+1\} \), \( q\neq r \), \( r\neq s \) and \( s\neq q \). 
To construct an automorphism \( \psi \) that maps an edge \( \{(i,j),(j,k)\} \) to an edge \( \{(q,r),(r,s)\} \), first choose a bijection \( h \) from \( \{0,1,\dots,p+1\} \) to itself such that \( h(i)=q \), \( h(j)=r \) and \( h(k)=s \), and then define \( \psi\big((x,y)\big)=\big(h(x),h(y)\big) \) for all \( (x,y)\in V(G_{2p}) \). 
This proves Claim~4.\\

Next, we prove that \( G_{2p} \) is Hamiltonian for \( p\geq 2 \). 
We employ induction on \( p\geq 2 \).\\
\noindent \textit{Base case} \( (p=2) \): Figure~\ref{fig:Hcycle in G4} exhibits a Hamiltonian cycle in \( G_4 \). 
This proves the base case.\\
\noindent \textit{Induction step} (\( p\geq 3 \)): Assume that \( G_{2(p-1)} \) is Hamiltonian. 
Since \( G_{2(p-1)} \) is edge-transitive (see Claim~4), \( G_{2(p-1)} \) has a Hamiltonian cycle \( L_{2(p-1)} \) containing the edge \( \{(0,1),(p,0)\} \). 
In \( G_{2p} \), \( L_{2(p-1)} \) is a cycle, and the only vertices not in \( L_{2(p-1)} \) are \( (0,p+1),(1,p+1),\dots,(p,p+1),\allowbreak \) \( (p+1,0),\allowbreak (p+1,1),\dots,(p+1,p) \). 
Replacing the edge \( \{(0,1),(p,0)\} \) of the cycle \( L_{2(p-1)} \) in \( G_{2p} \) by the path \( (0,1),(1,p+1),(p+1,0),(2,p+1),(p+1,1),\dots,(p,p+1),(p+1,p-1),(0,p+1),(p+1,p),(p,0) \) gives a Hamiltonian cycle of \( G_{2p} \) (see Figure~\ref{fig:making Hcycle in G2p} for an example). 
This proves the induction step. 
\end{proof}

Next, we show that the lower bound \( \ceil{(d+4)/2} \) for the star chromatic number of \( d \)-regular graphs established by Theorem~\ref{thm:lb chi_s} is attained for all \( d\geq 2 \). 
\begin{theorem}\label{thm:chi_s attained}
For all \( d\geq 2 \), there exists a \( d \)-regular graph \( G \) with \( \chi_s(G)=\raisebox{1pt}{\big\lceil}\frac{d+4}{2}\raisebox{1pt}{\big\rceil} \).
\end{theorem}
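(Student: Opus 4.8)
The plan is to observe that Theorem~\ref{thm:lb chi_s} already supplies the inequality \( \chi_s(G)\geq \lceil (d+4)/2\rceil \) for \emph{every} \( d \)-regular graph with \( d\geq 2 \). Hence it suffices, for each \( d\geq 2 \), to exhibit a single \( d \)-regular graph \( G \) that admits a \( \lceil (d+4)/2\rceil \)-star colouring; the matching upper bound then forces \( \chi_s(G)=\lceil (d+4)/2\rceil \). I would split the argument according to the parity of \( d \), since the structured family \( G_{2p} \) of Theorem~\ref{thm:intro G2p} lives in even degree.

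For even \( d \), write \( d=2p \), so the target value is \( p+2 \). When \( p\geq 2 \), the graph \( G_{2p} \) of Theorem~\ref{thm:intro G2p} is \( 2p \)-regular and \( (p+2) \)-star colourable, giving \( \chi_s(G_{2p})\leq p+2 \); combined with the lower bound this yields \( \chi_s(G_{2p})=p+2=\lceil (d+4)/2\rceil \). The only even case left uncovered is \( d=2 \) (i.e. \( p=1 \)), which I would dispatch by hand: a triangle \( C_3 \) is \( 2 \)-regular, and assigning its three vertices three distinct colours is a star colouring (each bicoloured component is a single edge \( K_{1,1} \), which is a star), so \( \chi_s(C_3)=3=\lceil 6/2\rceil \).

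The substantive case is odd \( d=2k+1 \) with \( k\geq 1 \), where the target is \( k+3 \). Here I would first record the monotonicity principle: if \( H \) is a spanning subgraph of \( G \), then restricting any star colouring of \( G \) to \( H \) is still a star colouring (a bicoloured \( P_4 \) in \( H \) would be one in \( G \) as well), so \( \chi_s(H)\leq \chi_s(G) \). Now take \( G_{2(k+1)}=G_{2k+2} \), which is \( (2k+2) \)-regular and \( (k+3) \)-star colourable, and is Hamiltonian on \( (k+2)(k+3) \) vertices by Theorem~\ref{thm:intro G2p}. Since \( (k+2)(k+3) \) is a product of consecutive integers it is even, so its Hamiltonian cycle has even length and taking every other edge of that cycle produces a perfect matching \( M \). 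Deleting \( M \) leaves a \( (2k+1) \)-regular spanning subgraph \( H \) (each vertex loses exactly one incident edge). By monotonicity \( \chi_s(H)\leq \chi_s(G_{2k+2})=k+3 \), while Theorem~\ref{thm:lb chi_s} applied to \( H \) (with \( d=2k+1\geq 3 \)) gives \( \chi_s(H)\geq k+3 \); hence \( \chi_s(H)=k+3=\lceil (d+4)/2\rceil \).

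The even case is essentially immediate from the earlier theorems, so I expect the odd case to be the main obstacle, since no analogue of the family \( G_{2p} \) is available in odd degree. The crux is the matching-deletion trick, which hinges on three facts: subgraph-monotonicity of \( \chi_s \); the existence of a perfect matching in \( G_{2k+2} \), secured by its Hamiltonicity together with its even order; and the fact that passing from degree \( 2k+2 \) to \( 2k+1 \) does not lower the target value below \( k+3 \). I would take care to verify that the deletion keeps the graph \( (2k+1) \)-regular and simple, after which no extra colours can be forced, by monotonicity.
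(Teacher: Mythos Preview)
Your proposal is correct and follows essentially the same approach as the paper: invoke Theorem~\ref{thm:lb chi_s} for the lower bound, use \( G_{2p} \) from Theorem~\ref{thm:intro G2p} for even degree, and for odd degree delete a perfect matching (obtained from the Hamiltonian cycle on an even number of vertices) from \( G_{2p} \) to drop the degree by one while retaining the \( (p+2) \)-star colouring. The only cosmetic difference is that for \( d=2 \) the paper uses \( C_4 \) whereas you use \( C_3 \); both work.
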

\begin{proof}
By Theorem~\ref{thm:lb chi_s}, \( \chi_s(G)\geq \ceil{(d+4)/2} \) for every \( d \)-regular graph \( G \) with \( d\geq 2 \). 
Hence, to prove the theorem, it suffices to show that for every \( d\geq 2 \), there exists a \( d \)-regular graph \( G \) with \( \chi_s(G)\leq \lceil(d+4)/2\rceil \).

First, we consider the case when \( d \) is even, say \( d=2p \). 
If \( p=1 \), then there exists a 2-regular graph \( C_4 \) such that \( \chi_s(C_4)=3=p+2 \). 
For \( p\geq 2 \), there exists a \( 2p \)-regular graph \( G_{2p} \) with \( \chi_s(G_{2p})\leq p+2 \) by Theorem~\ref{thm:intro G2p}. 
So, for all \( p\geq 1 \), there exists a \( 2p \)-regular graph \( G \) with \( \chi_s(G)\leq p+2=\ceil{(d+4)/2} \). 
That is, for every even number \( d\geq 2 \), there exists a \( d \)-regular graph with the star chromatic number at most \( (d+4)/2 \), and thus the lower bound \( \ceil{(d+4)/2} \) for the star chromatic number is attained for \( d \). 

Next, we consider the case when \( d \) is odd, say \( d=2p-1 \) for some \( p\geq 2 \). 
We know that \( G_{2p} \) is a \( 2p \)-regular Hamiltonian graph on \( (p+1)(p+2) \) vertices (see Theorem~\ref{thm:intro G2p}). 
Since \( G_{2p} \) is a Hamiltonian graph on an even number of vertices, \( G_{2p} \) admits a perfect matching \( M \) (pick alternate edges from a Hamiltonian cycle). 
Hence, \( H_{2p}\coloneqq G_{2p}-M \) is a \( (2p-1) \)-regular \( (p+2) \)-star colourable graph. 
So, \( \chi_s(H_{2p})\leq p+2=\ceil{(d+4)/2} \). 
This proves that the lower bound \( \ceil{(d+4)/2} \) for the star chromatic number of \( d \)-regular graphs is attained for every odd number \( d\geq 3 \) as well. 
\end{proof}


Next, we show that the structure of \( 2p \)-regular \( (p+2) \)-star colourable graphs proved in Theorem~\ref{thm:2p-regular p+2-star colourable iff} can be expressed in the Locally Checkable Vertex Subset and Partitioning problems (LC-VSP) framework of Telle and Proskurowski~\cite{telle_proskurowski}. 
For a fixed integer \( q\geq 1 \) and a fixed \( q\times q \) matrix \( D_q \) each entry of which is a subset of \( \mathbb{Z}_0^+\coloneqq\{0,1,\dots\} \), the \pagetarget{def:D_q-partition}{\( \exists D_q \)\emph{-partition} problem} in the LC-VSP framework is the decision problem that takes a graph \( G \) as input and asks whether the vertex set of \( G \) can be partitioned into \( q \) sets \( U_1,U_2,\dots,U_q \) such that for every \( i \) and \( j \), each vertex \( v\in U_i \) satisfy \( |N_G(v)\cap U_j|\in D_q[i,j] \) \big(we write the \( (i,j) \)-th entry of \( D_q \) as \( D_q[i,j] \)\,\big). 
By Theorem~\ref{thm:2p-regular p+2-star colourable iff}, a \( 2p \)-regular graph \( G \) is \( (p+2) \)-star colourable if and only if the vertex set of \( G \) can be partitioned into sets \( V_{i}^{j} \) with indices \( i,j\in\{0,1,\dots,p+1\} \) and \( i\neq j \) such that for all \( i \) and \( j \), each vertex in \( V_{i}^{j} \) has exactly one neighbour in \( V_{k}^{i} \) for each \( k\neq j \) and exactly \( p \) neighbours in \( \bigcup_{k\neq i} V_{j}^{k} \). 
We can rephrase the condition on sets \( V_{i}^{j} \) as follows: for all \( i \) and \( j \), each vertex \( v \) in \( V_{i}^{j} \) has exactly one neighbour in \( V_{k}^{i} \) for each \( k\neq j \), \( v \) has no neighbour in \( V_{j}^{i} \), and \( v \) has no neighbour in \( V_{k}^{\ell} \) for each \( k\neq j \) and \( \ell\neq i \). 
This formulation of the structure can be directly expressed as the \( \exists D_q \)-partition problem in the LC-VSP framework where \( q=(p+1)(p+2) \) and \pagetarget{def:D_q}{\( D_q \)} is the symmetric \( q\times q \) matrix whose rows represent sets \( V_{i}^{j} \) and the entry of \( D_q \) at the intersection of the row for \( V_{i}^{j} \) and the column for \( V_{k}^{\ell} \) is \( \{1\} \) if \( k\neq j \) and \( \ell=i \), the entry is \( \mathbb{Z}_0^+ \) if \( k=j \) and \( \ell\neq i \), and the entry is \( \{0\} \) in all other cases. 
For the special case \( p=2 \), the matrix \( D_q \) is given below.
\begin{equation*}
  D_{12}=
  \begin{blockarray}{lccc@{\hskip 6mm}ccc@{\hskip 6mm}ccc@{\hskip 6mm}ccc}
    \begin{block}{lccc@{\hskip 6mm}ccc@{\hskip 6mm}ccc@{\hskip 6mm}ccc}
              &  V_{0}^{1} & V_{0}^{2} &  V_{0}^{3} & V_{1}^{0} & V_{1}^{2} & V_{1}^{3} & V_{2}^{0} &  V_{2}^{1} & V_{2}^{3} & V_{3}^{0} & V_{3}^{1} & V_{3}^{2}\\
    \end{block}
    \begin{block}{l[ccc@{\hskip 6mm}ccc@{\hskip 6mm}ccc@{\hskip 6mm}ccc]}
      V_{0}^{1}  & \{0\}         & \{0\}         &  \{0\}         & \{0\}         & \mathbb{Z}_0^+  & \mathbb{Z}_0^+  & \{1\}         & \{0\}         & \{0\}         & \{1\}         & \{0\}       &\{0\}\topstrut \\
      V_{0}^{2}  & \{0\}         & \{0\}         &  \{0\}         & \{1\}         & \{0\}         & \{0\}         & \{0\}         & \mathbb{Z}_0^+  & \mathbb{Z}_0^+  & \{1\}         & \{0\}       &             \{0\}\\
      V_{0}^{3}  & \{0\}         & \{0\}         &  \{0\}         & \{1\}         & \{0\}         & \{0\}         & \{1\}         & \{0\}         & \{0\}         & \{0\}         & \mathbb{Z}_0^+& \mathbb{Z}_0^+\\[2mm]
      V_{1}^{0}  & \{0\}         & \mathbb{Z}_0^+  & \mathbb{Z}_0^+  & \{0\}         & \{0\}         & \{0\}         & \{0\}         & \{1\}         & \{0\}         & \{0\}         & \{1\}       &             \{0\}\\
      V_{1}^{2}  & \{1\}         & \{0\}         & \{0\}         & \{0\}         & \{0\}         & \{0\}         & \mathbb{Z}_0^+  & \{0\}         & \mathbb{Z}_0^+  & \{0\}         & \{1\}       &             \{0\}\\
      V_{1}^{3}  & \{1\}         & \{0\}         & \{0\}         & \{0\}         & \{0\}         & \{0\}         & \{0\}         & \{1\}         & \{0\}         & \mathbb{Z}_0^+  & \{0\}       & \mathbb{Z}_0^+\\[2mm]
      V_{2}^{0}  & \mathbb{Z}_0^+  & \{0\}         & \mathbb{Z}_0^+  & \{0\}         & \{1\}         & \{0\}         & \{0\}         & \{0\}         & \{0\}         & \{0\}         & \{0\}       &             \{1\}\\
      V_{2}^{1}  & \{0\}         & \{1\}         &  \{0\}        & \mathbb{Z}_0^+  & \{0\}         & \mathbb{Z}_0^+   & \{0\}         & \{0\}         & \{0\}         & \{0\}         & \{0\}       &            \{1\}\\
      V_{2}^{3}  & \{0\}         & \{1\}         &  \{0\}        & \{0\}         & \{1\}         & \{0\}          & \{0\}         & \{0\}         & \{0\}         & \mathbb{Z}_0^+  &  \mathbb{Z}_0^+  &            \{0\}\\[2mm]
      V_{3}^{0}  & \mathbb{Z}_0^+  &  \mathbb{Z}_0^+ &  \{0\}         & \{0\}         & \{0\}         & \{1\}         & \{0\}         & \{0\}         & \{1\}         & \{0\}         & \{0\}       &            \{0\}\\
      V_{3}^{1}  & \{0\}         & \{0\}         &  \{1\}         &  \mathbb{Z}_0^+ &  \mathbb{Z}_0^+ & \{0\}         & \{0\}         & \{0\}         & \{1\}         & \{0\}         & \{0\}       &            \{0\}\\
      V_{3}^{2}  & \{0\}         & \{0\}         &  \{1\}         & \{0\}         & \{0\}         & \{1\}         &  \mathbb{Z}_0^+ &  \mathbb{Z}_0^+  & \{0\}         & \{0\}         & \{0\}       &            \{0\}\botstrut \\
    \end{block}
  \end{blockarray}
\end{equation*}

The entry of the matrix \( D_{12} \) at the intersection of the first row (i.e., row for \( V_{0}^{1} \)) and the seventh column (i.e., column for \( V_{2}^{0} \)) is \( D_{12}[1,7]=\{1\} \). 
Allow us to explain the first row of \( D_{12} \) in detail. 
Consider an arbitrary vertex \( v \) in \( V_{0}^{1} \). 
The first row contains two \( \{1\} \) entries: one for column \( V_{2}^{0} \) and one for column \( V_{3}^{0} \). 
These entries demand that \( v \) has exactly one neighbour in \( V_{2}^{0} \) and exactly one neighbour in \( V_{3}^{0} \). 
The first row also contains two \( \mathbb{Z}_0^{+} \) entries: one for column \( V_{1}^{2} \) and one for column \( V_{1}^{3} \). 
These entries put no restriction on the number of neighbours of \( v \) in sets \( V_{1}^{2} \) and \( V_{1}^{3} \). 
Every other entry of the first row is \( \{0\} \). 
These entries demand that \( v \) has no neighbour outside of \( V_{2}^{0}\cup V_{3}^{0}\cup V_{1}^{2}\cup V_{1}^{3} \). 
Hence, by the first row of \( D_{12} \), \( v \) has exactly one neighbour in \( V_{2}^{0} \), exactly one neighbour in \( V_{3}^{0} \), and exactly two neighbours in \( V_{1}^{2}\cup V_{1}^{3} \) (note that \( \deg_G(v)=4 \) since \( G \) is a regular graph with degree \( 2p=4 \)). 

For each \( p\geq 2 \), a \( 2p \)-regular graph \( G \) is \( (p+2) \)-star colourable if and only if  \( G \) admits a \( D_q \)-partition; also, each entry of the degree constraint matrix \( D_q \) is either finite or co-finite. 
Thanks to results in \cite{bui-xuan2010,bui-xuan2013,telle_proskurowski}, this proves that for each \( p\geq 2 \), the problem of testing whether a \( 2p \)-regular graph is \( (p+2) \)-star colourable admits an FPT algorithm with the parameter either treewidth, cliquewidth, rankwidth, or booleanwidth (see also \cite{oum,telle}). 
In particular, for graph classes with bounded treewidth (resp.\ cliquewidth, rankwidth, or booleanwidth), the problem is polynomial-time solvable. 
Moreover, by results from \cite{belmonte}, the problem is polynomial-time solvable in several other graph classes including interval graphs, permutation graphs, trapezoid graphs, convex graphs and Dilworth-\( k \) graphs. 
It is worth mentioning that the problem also fits in the framework of Gerber and Kobler~\cite{gerber_kobler} since every entry in \( D_q \) is a set of consecutive integers.

Motivated by the structure of \( G_{2p} \), we define a subclass \( \mathscr{G}_{2p} \) of the class of \( 2p \)-regular \( (p+2) \)-star colourable graphs. 
A graph \( G \) belongs to \( \mathscr{G}_{2p} \) if and only if the vertex set of \( G \) can be partitioned into \( (p+1)(p+2) \) sets \( V_{i}^{j} \) with indices \( i,j\in \{0,1,\dots,p+1\} \) and \( i\neq j \) such that for all \( i \) and \( j \), each vertex in \( V_{i}^{j} \) has exactly one neighbour in \( V_{j}^{k} \) and exactly one neighbour in \( V_{k}^{i} \) for each \( k\notin \{i,j\} \). 
In other words, a graph \( G \) is in \( \mathscr{G}_{2p} \) if and only if \( G \) admits an \( E_q \)-partition where \( q=(p+1)(p+2) \) and \( E_q \) is the matrix obtained from \( D_q \) by replacing entries \( \mathbb{Z}_0^{+} \) by \( \{1\} \). 
Hence, \( 2p \)-regular graphs in \( \mathscr{G}_{2p} \) are \( (p+2) \)-star colourable. 
Observe that for each \( p\geq 2 \), \( E_q \) is precisely the matrix obtained from the adjacency matrix of \( G_{2p} \) by replacing entries 0 by \( \{0\} \) and entries 1 by \( \{1\} \). 
This natural connection between graph \( G_{2p} \) and the class \( \mathscr{G}_{2p} \) can be expressed by the notion of locally bijective homomorphisms. 
Given a graph \( H \) with vertex set \( \{u_1,u_2,\dots,u_r\} \), a graph \( G \) is said to have a \emph{locally bijective homomorphism} to \( H \) (also called an \( H \)-cover) if the vertex set of \( G \) can be partitioned into \( r \) sets \( U_1,U_2,\dots,U_r \) such that the following hold: (i)~\( u_i \) is adjacent to \( u_j \) in \( H \) implies that each vertex in \( U_i \) is adjacent to exactly one vertex in \( U_j \) in \( G \), and (ii)~\( u_i \) is not adjacent to \( u_j \) in \( H \) implies that no vertex in \( U_i \) is adjacent to any vertex in \( U_j \) in \( G \) \cite{fiala_kratochvil}. 
In other words, \( G \) has a \pagetarget{def:H cover}{locally bijective homomorphism} to \( H \) if we can label the vertices of \( G \) by names of vertices in \( H \) such that the following hold: (i)~each vertex of \( G \) labelled \( u \) (where \( u\in V(H) \)) is called a copy of \( u \) in \( G \), and (ii)~for each \( u\in V(H) \) and each copy \( u^{(s)} \) of \( u \) in \( G \), \( \deg_G(u^{(s)})=\deg_H(u) \) and neighbours of \( u^{(s)} \) in \( G \) are exactly copies of neighbours of \( u \) in \( H \). 
Clearly, a graph \( G \) is in \( \mathscr{G}_{2p} \) if and only if \( G \) has a locally bijective homomorphism to \( G_{2p} \) (label members of \( V_{i}^{j} \) by \( (i,j) \)). 
It is known that for every (simple) \( d \)-regular graph \( H \) with \( d\geq 3 \), it is NP-complete to test whether an input graph \( G \) has a locally bijective homomorphism to \( H \)  \cite[Theorem 20]{fiala_kratochvil}. 
Therefore, we have the following result by using \( H=G_{2p} \). 
\begin{observation}\label{obs:scriptG_2p npc}
For all \( p\geq 2 \), it is NP-complete to test whether a \( 2p \)-regular graph belongs to \(  \mathscr{G}_{2p} \).
\qed
\end{observation}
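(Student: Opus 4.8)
The plan is to derive the result directly from the characterization, established in the paragraph preceding the statement, that a graph $G$ belongs to $\mathscr{G}_{2p}$ if and only if $G$ admits a locally bijective homomorphism to $G_{2p}$. Under this equivalence, the decision problem ``is $G \in \mathscr{G}_{2p}$?'' for an arbitrary input graph $G$ is literally the same decision problem as ``does $G$ have a locally bijective homomorphism to $G_{2p}$?'', so it suffices to transfer the known complexity of the latter. The overall strategy is therefore: show membership in NP, then invoke the cited hardness theorem with $H=G_{2p}$.

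First I would verify membership in NP. A partition of $V(G)$ into the sets $V_{i}^{j}$ (equivalently, a labelling of each vertex of $G$ by a vertex name $(i,j)$ of $G_{2p}$) is a certificate of polynomial size, and checking that each vertex in $V_{i}^{j}$ has exactly one neighbour in $V_{j}^{k}$ and exactly one neighbour in $V_{k}^{i}$ for every $k\notin\{i,j\}$ (and no other neighbours) can be done in polynomial time by inspecting adjacencies. Hence testing membership in $\mathscr{G}_{2p}$ lies in NP.

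For hardness, I would invoke \cite[Theorem 20]{fiala_kratochvil}, which asserts that for every simple $d$-regular graph $H$ with $d\geq 3$, deciding whether an input graph has a locally bijective homomorphism to $H$ is NP-complete, and then set $H=G_{2p}$. Two hypotheses must be checked: that $G_{2p}$ is simple, which holds since all graphs in this paper are simple, and that its degree is at least three, which holds because $G_{2p}$ is $2p$-regular by Theorem~\ref{thm:intro G2p} and $2p\geq 4$ for $p\geq 2$. The cited theorem then yields NP-hardness of deciding the existence of a locally bijective homomorphism to $G_{2p}$, and the equivalence above transports this to NP-hardness of testing membership in $\mathscr{G}_{2p}$.

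There is essentially no combinatorial obstacle here, since all of the work is already packaged in the equivalence between $\mathscr{G}_{2p}$ and $G_{2p}$-covers and in the cited hardness theorem. The only points that require a moment of care are confirming that a graph admitting a locally bijective homomorphism to $G_{2p}$ is automatically $2p$-regular (because such a homomorphism forces the degree of each vertex to equal that of its image, so the membership question over \emph{all} graphs coincides with the $H$-cover question rather than being restricted to $2p$-regular inputs), and confirming the hypotheses $d\geq 3$ and simplicity of $G_{2p}$ needed to apply \cite[Theorem 20]{fiala_kratochvil}.
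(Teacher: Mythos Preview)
Your proposal is correct and follows essentially the same approach as the paper: both reduce to the characterization $G\in\mathscr{G}_{2p}\iff G$ has a locally bijective homomorphism to $G_{2p}$, then invoke \cite[Theorem~20]{fiala_kratochvil} with $H=G_{2p}$ (noting $G_{2p}$ is simple and $2p$-regular with $2p\geq 4$). You have added the explicit NP-membership argument and the observation that yes-instances are automatically $2p$-regular, both of which the paper leaves implicit.
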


This motivates the following problem. 
\begin{problem}
For \( p\geq 2 \), is it NP-complete to test whether a \( 2p \)-regular graph is \( (p+2) \)-star colourable?
\end{problem}
%
The special case \( p=2 \) of Observation~\ref{obs:scriptG_2p npc} says that it is NP-complete to test whether a 4-regular graph is in \( \mathscr{G}_4 \). 
The following is a decomposition result for members of \( \mathscr{G}_4 \) (see supplementary material for proof). 
\begin{observation}\label{obs:graphs is G4 allow C_3q decomposition}
Every graph \( G \in \mathscr{G}_4 \) can be decomposed into cycles of length divisible by three. 
\end{observation}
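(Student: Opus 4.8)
The plan is to exploit the characterization of \( \mathscr{G}_{4} \) just established: a graph \( G \) lies in \( \mathscr{G}_{4} \) if and only if \( G \) admits a locally bijective homomorphism (an \( H \)-cover) to \( H=G_{4} \), via the labelling that sends each block \( V_{i}^{j} \) to the vertex \( (i,j) \) of \( G_{4} \). I would transfer a suitable cycle decomposition of the small fixed graph \( G_{4} \) up to the cover \( G \), using the general principle that a cover maps a cycle of \( H \) back to a disjoint union of cycles of \( G \) whose lengths are multiples of the length of that cycle. Since \( G_{4} \) decomposes into triangles, this produces a decomposition of \( G \) into cycles of length divisible by three.

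First I would orient \( G_{4} \). Recall from the proof of Theorem~\ref{thm:intro G2p} that every edge of \( G_{4} \) has the form \( \{(q,r),(r,s)\} \) with \( q,r,s \) pairwise distinct; orient each such edge as \( (q,r)\to(r,s) \). This is well defined (the representation of an edge in this form is unique), and a short check shows that each vertex \( (i,j) \) has out-neighbours \( (j,k) \) and in-neighbours \( (k,i) \) with \( k\notin\{i,j\} \), so the orientation is Eulerian with in-degree and out-degree both \( 2 \). Next I would record a triangle decomposition of \( G_{4} \): for each \( 3 \)-element subset \( \{a,b,c\}\subseteq\{0,1,2,3\} \) and each of its two cyclic orders, the three induced vertices traverse a directed triangle, e.g.\ \( (a,b)\to(b,c)\to(c,a)\to(a,b) \). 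Each directed edge \( (q,r)\to(r,s) \) lies in exactly one such triangle, namely the one on the symbol set \( \{q,r,s\} \) taken in the cyclic order \( q\to r\to s \); counting gives \( 4\cdot 2=8 \) triangles using all \( 24 \) edges, so these triangles form an edge-decomposition of \( G_{4} \).

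Now I would pull the orientation back along the cover to \( G \): each edge \( uv \) of \( G \) projects to an edge of \( G_{4} \), which carries a fixed orientation, and orienting \( uv \) accordingly makes \( G \) an Eulerian digraph with in-degree and out-degree \( 2 \) at every vertex. For a fixed directed triangle \( T=(a,b)\to(b,c)\to(c,a) \) of \( G_{4} \), consider the spanning subgraph \( G_{T} \) of \( G \) consisting of all edges of \( G \) that project to the three edges of \( T \). By local bijectivity, every copy of \( (a,b) \) in \( G \) has exactly one out-edge (to a copy of \( (b,c) \)) and exactly one in-edge (from a copy of \( (c,a) \)) inside \( G_{T} \), and similarly for copies of \( (b,c) \) and \( (c,a) \), while every copy of a vertex outside \( T \) is isolated in \( G_{T} \). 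Hence the non-trivial components of \( G_{T} \) are \( 2 \)-regular, i.e.\ (simple) directed cycles of \( G \). Traversing such a cycle, the labels cycle through \( (a,b),(b,c),(c,a) \) with period \( 3 \) and must return to the start, so its length is a multiple of \( 3 \).

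Finally, because the \( 8 \) triangles edge-decompose \( G_{4} \) and every edge of \( G \) projects to exactly one edge of \( G_{4} \), the subgraphs \( G_{T} \) over the \( 8 \) triangles \( T \) partition \( E(G) \); together with the previous paragraph, their components form a decomposition of \( G \) into cycles of length divisible by three, as required. The one point that genuinely needs care---the main obstacle---is the length claim: one must verify that lifting a triangle along a locally bijective homomorphism yields cycles whose length is a multiple of \( 3 \) rather than arbitrary, which is exactly the period-\( 3 \) argument above (equivalently, the standard fact that a covering projection restricts the preimage of a base cycle to a union of cycles that wrap around it an integral number of times). Everything else reduces to routine verification of the degrees under the orientation and of the triangle decomposition of the single fixed graph \( G_{4} \).
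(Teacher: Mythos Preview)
Your proof is correct. The paper defers its own proof of this observation to the supplementary material, so a line-by-line comparison is not possible here; however, your argument---decompose \( G_{4} \) into its eight directed triangles \( (a,b)\to(b,c)\to(c,a) \) and lift each along the locally bijective homomorphism to obtain a \( 2 \)-regular subgraph of \( G \) whose cycles have length divisible by three---is exactly the approach suggested by the covering characterisation of \( \mathscr{G}_{4} \) stated immediately before the observation, and all verifications (well-definedness of the orientation, the triangle count, the degree check in \( G_{T} \), and the period-\(3\) label argument) are sound.
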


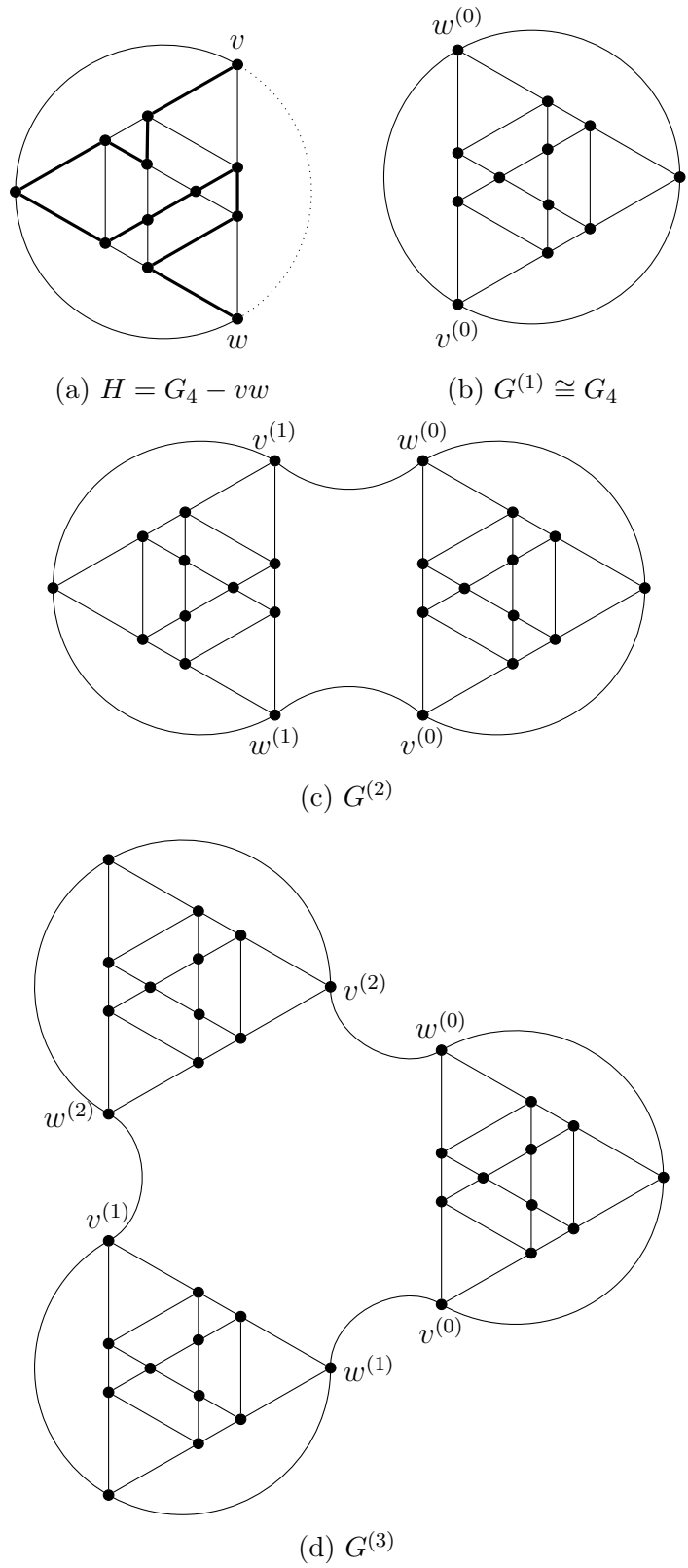
\begin{figure}[hbtp]
\centering
\begin{subfigure}[b]{0.3\textwidth}
\centering
\begin{tikzpicture}
\path (60:2) node(a)[bigdot][label=above:\( v \)]{};
\path (180:2) node(u)[bigdot]{};
\path (-60:2) node(w)[bigdot][label=below:\( w \)]{};
\draw (a)--(u)--(w)--(a);
\draw (a) arc (60:300:2);
\draw[dotted] (w) arc (-60:60:2);
\path (a)--(u) node(v)[pos=0.4][bigdot]{} node(u-a)[pos=0.6][bigdot]{};
\path (u)--(w) node(u-w)[pos=0.4][bigdot]{} node(w-u)[pos=0.6][bigdot]{};
\path (w)--(a) node(w-a)[pos=0.4][bigdot]{} node(x)[pos=0.6][bigdot]{};
\draw (u-w)--(u-a)--node(m1)[pos=0.3][bigdot]{} (w-a)--(w-u)--node[pos=0.3][bigdot]{} (v)--(x)--node[pos=0.3][bigdot]{} (u-w);

\draw [very thick] (a)--(v)--(m1)--(u-a)--(u)--(u-w)--(x)--(w-a)--(w-u)--(w);
\end{tikzpicture}
\caption{\( H=G_4-vw \)}
\label{fig:G_4 in seq}
\end{subfigure}%
\hspace{0.25cm}
\begin{subfigure}[b]{0.3\textwidth}
\centering
\begin{tikzpicture}
\path (-120:2) node(a)[bigdot][label=below:\( v^{(0)} \)]{};
\path (0:2) node(u)[bigdot]{};
\path (120:2) node(w)[bigdot][label=above:\( w^{(0)} \)]{};
\draw (a)--(u)--(w)--(a);
\draw (0,0) circle (2);
\path (a)--(u) node(v)[pos=0.4][bigdot]{} node(u-a)[pos=0.6][bigdot]{};
\path (u)--(w) node(u-w)[pos=0.4][bigdot]{} node(w-u)[pos=0.6][bigdot]{};
\path (w)--(a) node(w-a)[pos=0.4][bigdot]{} node(x)[pos=0.6][bigdot]{};
\draw (u-w)--(u-a)--node[pos=0.3][bigdot]{} (w-a)--(w-u)--node[pos=0.3][bigdot]{} (v)--(x)--node[pos=0.3][bigdot]{} (u-w);
\end{tikzpicture}
\caption{\( G^{(1)}\cong G_4 \)}
\label{fig:G^(1) in seq}
\end{subfigure}%

\begin{subfigure}[b]{0.65\textwidth}
\centering
\begin{tikzpicture}
\path[petal] (0:0)--(0:2) coordinate(p1);
\path[petal] (0:0)--(180:2) coordinate(p2);
\path (p1)+(-120:2) node(v1) [inner sep=0pt][label=below:\( v^{(0)} \)]{};
\path (p1)+( 120:2) node(w1) [inner sep=0pt][label=above:\( w^{(0)} \)]{};
\path (p2)+( 60:2) node(v2) [inner sep=0pt][label=above:\( v^{(1)} \)]{};
\path (p2)+(-60:2) node(w2) [inner sep=0pt][label=below:\( w^{(1)} \)]{};
\draw (v1) to[bend right=40] (w2);
\draw (v2) to[bend right=40] (w1);
\end{tikzpicture}
\caption{\( G^{(2)} \)}
\label{fig:G^(2) in seq}
\end{subfigure}%

\begin{subfigure}[b]{0.6\textwidth}
\centering
\begin{tikzpicture}
\path[petal] (0:0)--(0:3) coordinate(p1);
\path[petal] (0:0)--(120:3) coordinate(p2);
\path[petal] (0:0)--(240:3) coordinate(p3);
\path (p1)+(-120:2) node(v1) [inner sep=0pt][label=below:\( v^{(0)} \)]{};
\path (p1)+( 120:2) node(w1) [inner sep=0pt][label=above:\( w^{(0)} \)]{};
\path (p2)+(0:2) node(v3) [inner sep=0pt][label=right:\( v^{(2)} \)]{};
\path (p2)+(-120:2) node(w3) [inner sep=0pt][label=left:\( w^{(2)} \)]{};
\path (p3)+(120:2) node(v2) [inner sep=0pt][label=above:\( v^{(1)} \)]{};
\path (p3)+(0:2) node(w2) [inner sep=0pt][label=right:\( w^{(1)} \)]{};
\draw (v1) to[bend right=60] (w2);
\draw (v2) to[bend right=60] (w3);
\draw (v3) to[bend right=60] (w1);
\end{tikzpicture}
\caption{\( G^{(3)} \)}
\label{fig:G^(3) in seq}
\end{subfigure}%
\caption{Graphs \( G^{(1)} \), \( G^{(2)} \) and \( G^{(3)} \) in Theorem~\ref{thm:exists for each n=12t}.}
\label{fig:planar 4-regular 4-star colourable seq}
\end{figure}

Theorem~\ref{thm:chi_s attained} showed that for all \( p\geq 2 \), \( G_{2p} \) is a \( 2p \)-regular \( (p+2) \)-star colourable graph. 
The graph \( G_4 \) is planar as shown in Figure~\ref{fig:G4} whereas \( G_{2p} \) is non-planar for \( p>2 \) (because planar graphs are 5-degenerate). 
Next, we construct infinitely many planar 4-regular connected graphs that are 4-star colourable (recall that a 4-regular graph \( G \) is 4-star colourable only if \( n=|V(G)| \) is divisible by twelve).

\begin{theorem}\label{thm:exists for each n=12t}
For every integer \( n \) divisible by twelve, there exists a planar 4-regular Hamiltonian graph on \( n \) vertices which is 4-star colourable. 
\end{theorem}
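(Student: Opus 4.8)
The plan is to build the required graph on \( n=12t \) vertices by gluing together \( t \) copies of a single small planar gadget obtained from \( G_4 \), following the scheme depicted in Figure~\ref{fig:planar 4-regular 4-star colourable seq}. First I would fix an edge \( vw \) of \( G_4 \) and set \( H\coloneqq G_4-vw \); this \( H \) is planar (a subgraph of the planar graph \( G_4 \)) and has exactly two vertices of degree three, namely \( v \) and \( w \), all other vertices having degree four (see Figure~\ref{fig:G_4 in seq}). For \( t\geq 1 \), let \( G^{(t)} \) consist of \( t \) vertex-disjoint copies \( H^{(0)},H^{(1)},\dots,H^{(t-1)} \) of \( H \) together with the \( t \) ``cross edges'' \( v^{(s)}w^{((s+1)\bmod t)} \), \( 0\leq s\leq t-1 \), where \( v^{(s)},w^{(s)} \) denote the copies of \( v,w \) in \( H^{(s)} \). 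Each cross edge raises the degree of one copy of \( v \) and one copy of \( w \) back to four, so \( G^{(t)} \) is \( 4 \)-regular on \( 12t \) vertices; note \( G^{(1)}\cong G_4 \).

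For \( 4 \)-star colourability I would invoke the characterisation in Theorem~\ref{thm:2p-regular p+2-star colourable iff}. Recall that \( G_4 \) carries the partition \( V_i^j=\{(i,j)\} \) (\( i,j\in\{0,1,2,3\} \), \( i\neq j \)) satisfying the conditions of that theorem. On \( G^{(t)} \) I would define \( V_i^j \) to be the union, over all copies, of the vertices labelled \( (i,j) \). Every vertex other than a copy of \( v \) or \( w \) keeps all of its \( G_4 \)-neighbours, so its counts into the various sets are unchanged; for a copy of \( v \) (resp.\ \( w \)) the single edge redirected by the gluing still lands in the set \( V_{i}^{j} \) prescribed by \( G_4 \), since the endpoint in the neighbouring copy carries the same label \( (i,j) \) as the deleted endpoint. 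Hence the partition \( \{V_i^j\} \) of \( V(G^{(t)}) \) again meets the conditions of Theorem~\ref{thm:2p-regular p+2-star colourable iff}, so \( G^{(t)} \) is \( 4 \)-star colourable. For Hamiltonicity I would use that \( G_4 \) is edge-transitive and Hamiltonian (Theorem~\ref{thm:intro G2p}): some Hamiltonian cycle of \( G_4 \) then uses the edge \( vw \), and deleting \( vw \) from it yields a Hamiltonian \( v \)--\( w \) path of \( H \). Splicing these \( t \) paths together through the cross edges---traverse \( H^{(s)} \) from \( w^{(s)} \) to \( v^{(s)} \), step to \( w^{(s+1)} \), and finally close up---produces a Hamiltonian cycle of \( G^{(t)} \).

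It remains to argue planarity, which I expect to be the main obstacle, being the one geometric (rather than combinatorial) ingredient. Fix a plane embedding of \( G_4 \) as in Figure~\ref{fig:G4}(b). The edge \( vw \) borders two faces; deleting it merges them into a single face \( F \) having both \( v \) and \( w \) on its boundary, so \( H \) admits a plane drawing inside a closed disk with \( v \) and \( w \) on the disk's boundary circle. I would draw each copy \( H^{(s)} \) inside such a disk (a ``petal''), place the \( t \) petals around a central region at the vertices of a regular \( t \)-gon, and orient each petal so that \( v^{(s)} \) faces the next petal and \( w^{(s)} \) the previous one; the cross edges \( v^{(s)}w^{(s+1)} \) then run through the annular gaps between consecutive petals without crossing one another or any petal (cf.\ the cases \( t=2,3 \) in Figures~\ref{fig:G^(2) in seq}--\ref{fig:G^(3) in seq}). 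The care needed lies precisely in verifying that \( v \) and \( w \) can be placed simultaneously on the outer boundary of the petal and in the cyclic (rather than merely pairwise) routing of the cross edges; once this embedding is exhibited, \( G^{(t)} \) is planar, and taking \( G\coloneqq G^{(t)} \) completes the proof for \( n=12t \).
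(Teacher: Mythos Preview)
Your proposal is correct and follows essentially the same approach as the paper: the same gadget \(H=G_4-vw\), the same cyclic gluing via cross edges \(v^{(s)}w^{(s+1)}\), the same Hamiltonian-path splicing argument, and the same planarity picture of petals arranged around a cycle. The only cosmetic difference is that for \(4\)-star colourability you invoke Theorem~\ref{thm:2p-regular p+2-star colourable iff} directly on the copied partition \(\{V_i^j\}\), whereas the paper phrases the identical verification as a locally bijective homomorphism \(G^{(t)}\to G_4\) (i.e.\ membership in \(\mathscr{G}_4\)); these are the same check in different language.
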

\begin{proof}
For every positive integer \( t \), we construct a planar 4-regular Hamiltonian graph \( G^{(t)} \) on \( 12t \) vertices which is 4-star colourable. 
Recall that \( G_4 \) is Hamiltonian by Theorem~\ref{thm:intro G2p}. 
Choose an edge \( vw \) of \( G_4 \) which is part of a Hamiltonian cycle of \( G_4 \). 
Choose a plane drawing of \( G_4 \) such that edge \( vw \) appear in the outer face. 
The graph \( G^{(t)} \) is made of \( t \) copies \( H^{(0)},H^{(1)},\dots,H^{(t-1)} \) of \( H\coloneqq G_4-vw \) and edges between them in the cyclic order. 
For each vertex \( u \) of \( G_4 \), let us denote the copy of \( u \) in \( H^{(s)} \) by \( u^{(s)} \) for all \( s\in \mathbb{Z}_t \). 
For each \( s\in\mathbb{Z}_t \), add an edge \( v^{(s)}w^{(s+1)} \) where superscript \( (s+1) \) is modulo \( t \). 
Examples are exhibited in Figure~\ref{fig:planar 4-regular 4-star colourable seq}.

Since \( G^{(t)} \) is composed of \( t \) copies of the planar graph \( H \) and edges between those copies in the cyclic order, \( G^{(t)} \) is planar (where \( t\in\mathbb{N} \)). 
Next, we show that \( G^{(t)} \) is Hamiltonian for every \( t\in\mathbb{N} \). 
Since the edge \( vw \) of \( G_4 \) is part of a Hamiltonian cycle of \( G_4 \), there is a Hamiltonian path from \( w \) to \( v \) in the graph \( H=G_4-vw \) (one such path is highlighted in Figure~\ref{fig:G_4 in seq}). 
So, for each \( s\in\mathbb{Z}_t \), the graph \( H^{(s)} \) has a Hamiltonian path from \( w^{(s)} \) to \( v^{(s)} \). 
For \( s\in\mathbb{Z}_t \), let \( w^{(s)},P^{(s)},v^{(s)} \) denote one such Hamiltonian path in \( H^{(s)} \) (where \( P^{(s)} \) stands for a segment of the path). 
Then, \( (w^{(0)},P^{(0)},v^{(0)},w^{(1)},P^{(1)},v^{(1)},\dots,w^{(t-1)},P^{(t-1)},v^{(t-1)}) \) is a Hamiltonian cycle in \( G^{(t)} \). 
So, \( G^{(t)} \) is Hamiltonian for every \( t\in\mathbb{N} \). 
Therefore, for each \( t\in \mathbb{N} \), \( G^{(t)} \) is a planar 4-regular Hamiltonian graph on \( 12t \) vertices. 

Finally, we show that \( G^{(t)} \) is 4-star colourable for each \( t\in \mathbb{N} \). 
To prove this, it suffices to show that \( G^{(t)} \) is in \( \mathscr{G}_4 \). 
Note that for each \( u\in V(G_4) \) and each copy \( u^{(s)} \) of \( u \) in \( G^{(t)} \) (where \( 0\leq s\leq t-1 \)), \( \deg_{G^{(t)}}(u^{(s)})=\deg_{G_4}(u) \) and the neighbours of \( u^{(s)} \) in \( G^{(t)} \) are exactly copies of neighbours of \( u \) in \( G_4 \). 
Therefore, labelling each vertex \( u^{(s)} \) of \( G^{(t)} \) by \( u \) for all \( u\in V(G_4) \) and \( 0\leq s\leq t-1 \) gives a locally bijective homomorphism from \( G^{(t)} \) to \( G_4 \) (see the \hyperlink{def:H cover}{alternate definition} of locally bijective homomorphism on Page~\pageref{def:H cover}). 
Since there is a locally bijective homomorphism from \( G^{(t)} \) to \( G_4 \), \( G^{(t)} \) is in \( \mathscr{G}_4 \). 
In particular, \( G^{(t)} \) is 4-star colourable.
\end{proof}
\begin{theorem}\label{thm:exists for each n=(p+1)(p+2)t}
Let \( p\geq 2 \). For every integer \( n \) divisible by \( (p+1)(p+2) \), there exists a planar \( 2p \)-regular Hamiltonian graph on \( n \) vertices which is \( (p+2) \)-star colourable. 
\end{theorem}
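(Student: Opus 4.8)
The plan is to mimic the proof of Theorem~\ref{thm:exists for each n=12t} verbatim, with \( G_4 \) replaced by \( G_{2p} \). By Theorem~\ref{thm:intro G2p}, the graph \( G_{2p} \) is \( 2p \)-regular, Hamiltonian, and \( (p+2) \)-star colourable on exactly \( (p+1)(p+2) \) vertices, so I would first fix an edge \( vw \) of \( G_{2p} \) lying on a Hamiltonian cycle; the block \( H\coloneqq G_{2p}-vw \) then carries a Hamiltonian path from \( w \) to \( v \). For a positive integer \( t \) I would take \( t \) disjoint copies \( H^{(0)},\dots,H^{(t-1)} \), write \( u^{(s)} \) for the copy of \( u \) in \( H^{(s)} \), and glue them in a cycle by adding the edges \( v^{(s)}w^{(s+1)} \) with superscripts read modulo \( t \), calling the result \( G^{(t)} \). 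This puts \( (p+1)(p+2)t \) vertices in \( G^{(t)} \), which accounts for every \( n \) divisible by \( (p+1)(p+2) \).

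The three graph-theoretic properties would follow exactly as for the \( G_4 \) case. For regularity, the only vertices of \( H \) with degree below \( 2p \) are \( v^{(s)} \) and \( w^{(s)} \) (each of degree \( 2p-1 \)), and the gluing edge \( v^{(s)}w^{(s+1)} \) restores both to degree \( 2p \). For Hamiltonicity, concatenating the Hamiltonian \( w^{(s)} \)-to-\( v^{(s)} \) paths through the gluing edges, i.e.\ traversing \( w^{(0)},P^{(0)},v^{(0)},w^{(1)},P^{(1)},v^{(1)},\dots,w^{(t-1)},P^{(t-1)},v^{(t-1)} \) and closing up, yields a Hamiltonian cycle. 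For star colourability, since \( \deg_{G^{(t)}}(u^{(s)})=\deg_{G_{2p}}(u) \) and the neighbours of \( u^{(s)} \) in \( G^{(t)} \) are precisely the copies of the neighbours of \( u \) in \( G_{2p} \), the map \( u^{(s)}\mapsto u \) is a locally bijective homomorphism \( G^{(t)}\to G_{2p} \); hence \( G^{(t)}\in\mathscr{G}_{2p} \) and is therefore \( (p+2) \)-star colourable.

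The hard part is planarity, and it is genuinely the crux of the statement. For \( p=2 \) the block \( G_4 \) is planar, and choosing a plane drawing with \( vw \) on the outer face makes the cyclic gluing planar; this is exactly Theorem~\ref{thm:exists for each n=12t} and settles the case \( p=2 \). For \( p\geq 3 \), however, no \( 2p \)-regular graph can be planar at all: a planar graph on \( n \) vertices has at most \( 3n-6 \) edges, whereas a \( 2p \)-regular graph has \( pn \) edges, and \( pn>3n-6 \) once \( p\geq 3 \). Thus the planarity clause is attainable only when \( p=2 \), in agreement with the abstract; for \( p\geq 3 \) the construction above still delivers a \( 2p \)-regular Hamiltonian \( (p+2) \)-star colourable graph on every admissible \( n \), but necessarily a non-planar one. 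I would therefore present the planar conclusion as the \( p=2 \) instance (via Theorem~\ref{thm:exists for each n=12t}) and flag the Euler-density obstruction as the precise reason the planarity assertion cannot be pushed beyond \( p=2 \).
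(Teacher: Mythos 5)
Your proposal is correct and follows essentially the same route as the paper: the paper's proof (deferred to the supplementary material) likewise chains \( t \) copies of \( G_{2p}-vw \) cyclically, where \( vw \) lies on a Hamiltonian cycle of \( G_{2p} \) (Theorem~\ref{thm:intro G2p}), and obtains regularity, Hamiltonicity and \( (p+2) \)-star colourability via the locally bijective homomorphism \( u^{(s)}\mapsto u \) onto \( G_{2p} \), exactly as in Theorem~\ref{thm:exists for each n=12t}. You are also right to flag the word ``planar'': by Euler's formula no \( 2p \)-regular graph is planar for \( p\geq 3 \), so its appearance in the theorem statement is a slip --- the abstract and introduction claim planarity only for \( p=2 \), and the paper itself notes that \( G_{2p} \) is non-planar for \( p>2 \).
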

\noindent The proof is similar to Theorem~\ref{thm:exists for each n=12t} and hence moved to supplementary material.\\

Next, we provide proof of Corollary~\ref{cor:diamond-free omega alpha etc}. 
\begin{customcor}{\ref{cor:diamond-free omega alpha etc}}
Let \( G \) be a \( 2p \)-regular \( (p+2) \)-star colourable graph where \( p\geq 2 \). Then, the following hold: \( (i) \)~\( G \) is \( (\textup{diamond},  K_4) \)-free, \( (ii) \)~\( \alpha(G)>n/4 \), \( (iii) \)~\( \chi(G)\leq 3\log_2(p+2) \), \( (iv) \)~\( G \) admits a \( P_4 \)-decomposition, and \( (v) \)~if \( G \) contains no asteroidal triple, then \( G \) is 3-colourable. 
\end{customcor}
\begin{proof}
Let \( f \) be a \( (p+2) \)-star colouring of \( G \). 
By Theorem~\ref{thm:2p-regular if p+2-star colourable}, every bicoloured component of \( G \) under \( f \) is isomorphic to \( K_{1,p} \). 
By Theorem~\ref{thm:2p-regular p+2-star colourable iff}, \( f \) induces a partition of the vertex set of \( G \) into sets \( V_{i}^{j} \) with indices \( i,j\in \{0,1,\dots,p+1\} \) and \( i\neq j \) such that the following hold for every pair of indices \( i \) and \( j \): (i)~each vertex in \( V_{i}^{j} \) has exactly \( p \) neighbours in \( \bigcup_{k\notin \{i,j\}} V_{j}^{k} \) and exactly one neighbour in \( V_{k}^{i} \) for each \( k\notin \{i,j\} \), and (ii)~\( V_i=\bigcup_{j\neq i} V_{i}^{j} \) for every colour \( i \) (where \( V_i=f^{-1}(i) \)).\\

\noindent \textbf{Claim 1:} If \( u\in V_i \), \( v\in V_j \) and \( uv \) is an edge in \( G \), then either \( u\in V_{i}^{j} \) or \( v\in V_{j}^{i} \).\\[5pt]
Suppose that \( u\in V_i \), \( v\in V_j \) and \( uv \) is an edge in \( G \). 
By Theorem~\ref{thm:2p-regular if p+2-star colourable}, the component of \( G[V_i\cup V_j] \) containing edge \( uv \) is a star \( H\cong K_{1,p} \). 
So, either \( u \) or \( v \) is the centre of \( H \). 
If \( u \) is the centre of \( H \), then \( u \) has \( p \) neighbours in \( V_j \) and thus \( u\in V_{i}^{j} \). 
If \( v \) is the centre of \( H \), then \( v \) has \( p \) neighbours in \( V_i \) and thus \( v\in V_{j}^{i} \). 
This proves Claim~1.\\

To prove that \( G \) is (diamond, \( K_4 \))-free, it suffices to show that diamond is not a subgraph of \( G \). 
On the contrary, assume that \( G \) contains diamond as a subgraph; that is, there exist vertices \( a,b,x,y \) of \( G \) such that \( ax, ay, bx,by \) and \( xy \) are edges in \( G \). 
Without loss of generality, assume that \( x\in V_0 \) and \( y\in V_1 \). 
Since \( xy \) is an edge, either \( x\in V_{0}^{1} \) or \( y\in V_{1}^{0} \) by Claim~1. 
Without loss of generality, assume that \( x\in V_{0}^{1} \). 
Due to Claim~5 of Theorem~\ref{thm:2p-regular if p+2-star colourable}, \( x \) has exactly \( p \) neighbours in \( V_1 \) and exactly one neighbour in each of the colour classes \( V_2,V_3,\dots,V_{p+1} \). 
In particular, the vertices \( a \) and \( b \) are in different colour classes (if not, \( a,b\in V_i \) for some \( i\in \{2,3,\dots,p+1\} \) and hence \( x \) has two neighbours in \( V_i \); a contradiction). 
Hence, we may assume without loss of generality that \( a\in V_2 \) and \( b\in V_3 \). 
Since \( \{V_{i}^{j}\ :\ 0\leq i\leq p+1,\ \allowbreak 0\leq j\leq p+1, \allowbreak \text{ and } i\neq j\} \) is a partition of \( V(G) \), no vertex of \( G \) is in two distinct sets \( V_{i}^{j} \) and \( V_{k}^{\ell} \) (where \( k\neq i \) or \( j\neq \ell \)). 
Since \( ax \) is an edge, either \( a\in V_{2}^{0} \) or \( x\in V_{0}^{2} \) by Claim~1. 
Since \( x\in V_{0}^{1} \), \( x\notin V_{0}^{2} \) and hence \( a\in V_{2}^{0} \). 
Since \( bx \) is an edge, either \( b\in V_{3}^{0} \) or \( x\in V_{0}^{3} \) by Claim~1. 
Since \( x\in V_{0}^{1} \), \( x\notin V_{0}^{3} \) and hence \( b\in V_{3}^{0} \). 
Since \( ay \) is an edge, either \( a\in V_{2}^{1} \) or \( y\in V_{1}^{2} \) by Claim~1. 
Since \( a\in V_{2}^{0} \), \( a\notin V_{2}^{1} \) and hence \( y\in V_{1}^{2} \). 
Since \( by \) is an edge, either \( b\in V_{3}^{1} \) or \( y\in V_{1}^{3} \) by Claim~1. 
Since \( b\in V_{3}^{0} \), \( b\notin V_{3}^{1} \) and hence \( y\in V_{1}^{3} \). 
Since \( y\in V_{1}^{2} \) and \( y\in V_{1}^{3} \), we have a contradiction to \( V_{1}^{2}\cap V_{1}^{3}=\emptyset \). 
Hence, \( G \) does not contain diamond as a subgraph, and thus \( G \) is  \( (\text{diamond}, K_4) \)-free.
This proves~(i).\\

%
Next, we show that the independence number \( \alpha(G)>n/4 \). 
By Claim~1, if \( u\in V_0 \) is adjacent to \( v\in V_1 \), then either \( u\in V_{0}^{1} \) or \( v\in V_{1}^{0} \). 
Therefore, \( (V_0\cup V_1)\setminus (V_{0}^{1}\cup V_{1}^{0}) \) is an independent set in \( G \). 
For the same reason, \( \bigcup_{i=0}^{2}\bigcup_{j=3}^{p+1} V_{i}^{j} = (V_0\cup V_1\cup V_2)\setminus (V_{0}^{1}\cup V_{0}^{2}\cup V_{1}^{0}\cup V_{1}^{2}\cup V_{2}^{0}\cup V_{2}^{1}) \) is an independent set in \( G \). 
In general, for \( 0<t<p+1 \), the set \( I_t\coloneqq \bigcup_{i=0}^{t-1}\bigcup_{j=t}^{p+1} V_{i}^{j} \) 
is an independent set of \( G \) with cardinality \( \sum_{i=0}^{t-1}\sum_{j=t}^{p+1} |V_{i}^{j}| \). 
By Theorem~\ref{thm:2p-regular p+2-star colourable iff}, each set \( V_{i}^{j} \) has cardinality \( n/(p+1)(p+2) \) and thus \( |I_t| = \sum_{i=0}^{t-1}\sum_{j=t}^{p+1} \frac{n}{(p+1)(p+2)} = t(p+2-t)\frac{n}{(p+1)(p+2)} \) for \( 0<t<p+1 \). 

In particular, for \( t=\ceil[\big]{\frac{p+2}{2}} \), \( I_t \) is an independent set of size \( \floor[\big]{\frac{p+2}{2}}\ceil[\big]{\frac{p+2}{2}}\frac{n}{(p+1)(p+2)}>\frac{n}{4} \). 
Hence, \( \alpha(G)>n/4 \). 
This proves (ii).\\

Next, we show that the chromatic number of \( G \) is at most \( 3\log_2(p+2) \). 
From Theorem~\ref{thm:2p-regular p+2-star colourable iff}, we know that \( G \) admits a \( D_{(p+1)(p+2)} \)-partition where \( D_q \)-partition problem is \hyperlink{def:D_q-partition}{defined} on Page~\pageref{def:D_q-partition} and the matrix \( D_{(p+1)(p+2)} \) is \hyperlink{def:D_q}{defined} on Page~\pageref{def:D_q}. 
The next claim follows from the definition of the \( D_q \)-partition problem and the definition of the matrix \( D_{(p+1)(p+2)} \).
Note that  the definition of \( D_{(p+1)(p+2)} \) is valid for all \( p\geq 0 \).\\[5pt]  
\textbf{Claim 2:} A graph \( G^* \) admits a \( D_{(p+1)(p+2)} \)-partition if and only if the vertex set of \( G^* \) can be partitioned into sets \( V_{i}^{j} \) with indices \( i,j\in\{0,1,\dots,p+1\} \) and \( i\neq j \) such that for each pair \( i,j \) (with \( i\neq j \)) and each vertex \( v\in V_{i}^{j} \), \( v \) has exactly one neighbour in \( V_{k}^{i} \) for each \( k\notin\{i,j\} \), and every neighbour of \( v \) is either in \( V_{k}^{i} \) for some \( k\notin\{i,j\} \) or in \( V_{j}^{k} \) for some \( k\notin\{i,j\} \).\\[4pt]
Note that the notation \( k\notin\{i,j\} \) abbreviates \( k\in\{0,1,\dots,p+1\}\setminus\{i,j\} \).\\ 

We prove by induction on \( p \) that every graph \( G^* \) which admits a \( D_{(p+1)(p+2)} \)-partition satisfies \( \chi(G^*)\leq 3\floor{\log_2(p+2)} \). 
Note that \( G^* \) is not necessarily a regular graph.\\[5pt]
Base Case \( (p=0, 1) \): If \( p=0 \) and \( G^* \) admits a \( D_{(p+1)(p+2)} \)-partition, then \( V(G^*) \) can be partitioned into two independent sets \( V_{0}^{1} \) and \( V_{1}^{0} \) such that there is no edge between \( V_{0}^{1} \) and \( V_{1}^{0} \). 
That is, \( G^* \) is the complement of a complete graph. 
Since \( G^* \) is 1-colourable, \( \chi(G)\leq 3\floor{\log_2(p+2)}=3 \) is true. 

If \( p=1 \) and \( G^* \) admits a \( D_{(p+1)(p+2)} \)-partition, then \( V(G^*) \) can be partitioned into six sets \( V_{i}^{j} \) with indices \( i,j\in\{0,1,2\} \) and \( i\neq j \) such that the number of neighbours of a vertex \( v\in V_{i}^{j} \) in the set \( V_{k}^{\ell} \) is determined by the matrix \( D_6 \) given below.
\begin{equation*}
  D_{6}=
  \begin{blockarray}{lcc@{\hskip 6mm}cc@{\hskip 6mm}cc}
    \begin{block}{lcc@{\hskip 6mm}cc@{\hskip 6mm}cc}
              &  V_{0}^{1}       & V_{0}^{2}         & V_{1}^{0}        & V_{1}^{2}        & V_{2}^{0}        &  V_{2}^{1}\\
    \end{block}
    \begin{block}{l[cc@{\hskip 6mm}cc@{\hskip 6mm}cc]}
      V_{0}^{1}  & \bm{\{0\}}    & \bm{\{0\}}     & \{0\}         & \mathbb{Z}_0^+& \{1\}           & \{0\}\topstrut \\
      V_{0}^{2}  & \bm{\{0\}}    & \bm{\{0\}}     & \{1\}         & \{0\}         & \{0\}           & \mathbb{Z}_0^+\\[2mm]
      V_{1}^{0}  & \{0\}         & \mathbb{Z}_0^+ & \bm{\{0\}}    & \bm{\{0\}}    & \{0\}           & \{1\}\\
      V_{1}^{2}  & \{1\}         & \{0\}          & \bm{\{0\}}    & \bm{\{0\}}    & \mathbb{Z}_0^+  & \{0\}\\[2mm]
      V_{2}^{0}  & \mathbb{Z}_0^+  & \{0\}        & \{0\}         & \{1\}         & \bm{\{0\}}      & \bm{\{0\}}\\
      V_{2}^{1}  & \{0\}         & \{1\}          & \mathbb{Z}_0^+& \{0\}         & \bm{\{0\}}      & \bm{\{0\}}\botstrut \\
    \end{block}
  \end{blockarray}
\end{equation*}

Clearly, \( V_{0}^{1}\cup V_{0}^{2} \), \( V_{1}^{0}\cup V_{1}^{2} \) and \( V_{2}^{0}\cup V_{2}^{1} \) are independent sets in \( G^* \). 
Since these three independent sets form a partition of \( V(G^*) \), \( G^* \) is 3-colourable. 
So, the inequality \( \chi(G^*)\leq 3\floor{\log_2(p+2)}=3 \) is true.\\

\noindent Induction Step \( (p\geq 2) \): Suppose that \( G^* \) admits a \( D_{(p+1)(p+2)} \)-partition. 
By Claim~2, the vertex set of \( G^* \) can be partitioned into sets \( V_{i}^{j} \) with indices \( i,j\in\{0,1,\dots,p+1\} \) and \( i\neq j \) such that for each pair \( i,j \) (with \( i\neq j \)) and each vertex \( v\in V_{i}^{j} \), the following hold: (i)~\( v \) has exactly one neighbour in \( V_{k}^{i} \) for each \( k\in\{0,1,\dots,p+1\}\setminus\{i,j\} \), and (ii)~for every neighbour \( w \) of \( v \), either \( w\in V_{k}^{i} \) for some \( k\in\{0,1,\dots,p+1\}\setminus \{i,j\} \) or \( w\in V_{j}^{k} \) for some \( k\in\{0,1,\dots,p+1\}\setminus \{i,j\} \).\\

\noindent \textbf{Claim~3:} If \( u\in \bigcup_{q\neq i} V_{i}^{q} \), \( v\in \bigcup_{q\neq j} V_{j}^{q} \), and \( uv \) is an edge in \( G^* \), then \( u\in V_{i}^{j} \) or \( v\in V_{j}^{i} \).\\[5pt]
On the contrary, assume that \( u\in V_{i}^{r} \), \( v\in V_{j}^{s} \), and \( uv \) is an edge in \( G^* \) where \( r,s\notin\{i,j\} \). 
Due to Claim~2, every neighbour of \( u \) is either in \( V_{k}^{i} \) or in \( V_{r}^{k} \) for some \( k\in\{0,1,\dots,p+1\}\setminus\{i,r\} \). 
Since \( v\in V_{j}^{s} \) is a neighbour of \( u \) and \( s\neq i \), the only possibility is \( V_{j}^{s}=V_{r}^{k} \); that is, \( r=j \) and \( k=s \). This is a contradiction because \( r\notin \{i,j\} \). 
This proves Claim~3.\\

Due to Claim~3, \( (\,\bigcup_{k\notin\{0,1\}} V_{0}^{k})\cup (\,\bigcup_{k\notin\{0,1\}} V_{1}^{k}) \) is an independent set in \( G^* \). 
Similarly, as in proof of \( \alpha(G)>n/4 \), for each \( t\in\{1,2,\dots,p\} \), the set \( I_t\coloneqq \bigcup_{i=0}^{t-1}\bigcup_{j=t}^{p+1} V_{i}^{j} \) is an independent set in \( G^* \).

We partition the vertex set of \( G^* \) into five sets \( A,B,C,W_1 \) and \( W_2 \) defined as\\
%
\(
\displaystyle
A=\bigcup_{i=0}^{\left\lfloor\!{\nicefrac{p}{2}}\!\right\rfloor}\bigcup_{j=\left\lfloor\!{\nicefrac{p}{2}}\!\right\rfloor+1}^{p+1}V_{i}^{j}, \quad 
B=\bigcup_{i=\left\lceil\!{\nicefrac{p}{2}}\!\right\rceil+1}^{p+1}\bigcup_{j=0}^{\left\lceil\!{\nicefrac{p}{2}}\!\right\rceil}V_{i}^{j}, \quad 
W_1=\bigcup_{i=0}^{\left\lfloor\!{\nicefrac{p}{2}}\!\right\rfloor}\bigcup_{\substack{j=0\\(j\neq i)}}^{\left\lfloor\!{\nicefrac{p}{2}}\!\right\rfloor}V_{i}^{j}, \quad
W_2=\bigcup_{i=\left\lceil\!{\nicefrac{p}{2}}\!\right\rceil+1}^{p+1}\,\bigcup_{\substack{j=\left\lceil\!{\nicefrac{p}{2}}\!\right\rceil +1\\(j\neq i)}}^{p+1}V_{i}^{j},\\[3pt]
 \text{ and } 
C=\mathsmaller{\bigcup}\limits_{j\neq (p+1)/2} \, V_{(p+1)/2}^{j} \text{ if \( p \) is odd, and } C=\emptyset \text{ otherwise (see Figure~\ref{fig:A,B,C,W_1,W_2})}
\).
\begin{figure}[hbtp]
\centering
\begin{subfigure}[b]{\textwidth}
\centering
\begin{tikzpicture}[x=1.5cm,y=1.75cm]
\draw[dotted,step=1] (0,0) grid (5,-4);
\path (0.5,-0.5) node(01){\( V_{0}^{1} \)} ++(0,-1) node(02){\( V_{0}^{2} \)} ++(0,-1) node(03){\( V_{0}^{3} \)} ++(0,-1) node(04){\( V_{0}^{4} \)};
\path (01) ++(1,0) node(10){\( V_{1}^{0} \)} ++(0,-1) node(02){\( V_{1}^{2} \)} ++(0,-1) node(03){\( V_{1}^{3} \)} ++(0,-1) node(04){\( V_{1}^{4} \)};
\path (10) ++(1,0) node(20){\( V_{2}^{0} \)} ++(0,-1) node(02){\( V_{2}^{1} \)} ++(0,-1) node(03){\( V_{2}^{3} \)} ++(0,-1) node(04){\( V_{2}^{4} \)};
\path (20) ++(1,0) node(30){\( V_{3}^{0} \)} ++(0,-1) node(02){\( V_{3}^{1} \)} ++(0,-1) node(03){\( V_{3}^{2} \)} ++(0,-1) node(04){\( V_{3}^{4} \)};
\path (30) ++(1,0) node(40){\( V_{4}^{0} \)} ++(0,-1) node(02){\( V_{4}^{1} \)} ++(0,-1) node(03){\( V_{4}^{2} \)} ++(0,-1) node(04){\( V_{4}^{3} \)};

\node [fit={(0,-1) (2,-4)},draw,xshift=-5pt,yshift=-5pt][label=left:\( A \)]{};
\node [fit={(0,0) (2,-1)},draw,xshift=-5pt,yshift=5pt][label=left:\( W_1 \)]{};
\node [fit={(2.1,0.1) (2.9,-4.25)},inner sep=0pt,yshift=3pt,draw][label=\( C \)]{};
\node [fit={(3,0) (5,-3)},draw,xshift=5pt,yshift=5pt][label=right:\( B \)]{};
\node [fit={(3,-3) (5,-4)},draw,xshift=5pt,yshift=-5pt][label=right:\( W_2 \)]{};


\end{tikzpicture}
\caption{\( p=3 \)}
\vspace*{7pt}%
\end{subfigure}%

\begin{subfigure}[b]{\textwidth}
\centering
\begin{tikzpicture}[x=1.5cm,y=1.75cm]
\draw[dotted,step=1] (0,0) grid (6,-5);
\path (0.5,-0.5) node(01){\( V_{0}^{1} \)} ++(0,-1) node(02){\( V_{0}^{2} \)} ++(0,-1) node(03){\( V_{0}^{3} \)} ++(0,-1) node(04){\( V_{0}^{4} \)} ++(0,-1) node(05){\( V_{0}^{5} \)};
\path (01) ++(1,0) node(10){\( V_{1}^{0} \)} ++(0,-1) node(12){\( V_{1}^{2} \)} ++(0,-1) node(13){\( V_{1}^{3} \)} ++(0,-1) node(14){\( V_{1}^{4} \)} ++(0,-1) node(15){\( V_{1}^{5} \)};
\path (10) ++(1,0) node(20){\( V_{2}^{0} \)} ++(0,-1) node(21){\( V_{2}^{1} \)} ++(0,-1) node(23){\( V_{2}^{3} \)} ++(0,-1) node(24){\( V_{2}^{4} \)} ++(0,-1) node(25){\( V_{2}^{5} \)};
\path (20) ++(1,0) node(30){\( V_{3}^{0} \)} ++(0,-1) node(31){\( V_{3}^{1} \)} ++(0,-1) node(32){\( V_{3}^{2} \)} ++(0,-1) node(34){\( V_{3}^{4} \)} ++(0,-1) node(35){\( V_{3}^{5} \)};
\path (30) ++(1,0) node(40){\( V_{4}^{0} \)} ++(0,-1) node(41){\( V_{4}^{1} \)} ++(0,-1) node(42){\( V_{4}^{2} \)} ++(0,-1) node(43){\( V_{4}^{3} \)} ++(0,-1) node(45){\( V_{4}^{5} \)};
\path (40) ++(1,0) node(50){\( V_{5}^{0} \)} ++(0,-1) node(51){\( V_{5}^{1} \)} ++(0,-1) node(52){\( V_{5}^{2} \)} ++(0,-1) node(53){\( V_{5}^{3} \)} ++(0,-1) node(54){\( V_{5}^{4} \)};

\node [fit={(0,-2) (3,-5)},draw,xshift=-5pt,yshift=-5pt][label=left:\( A \)]{};
\node [fit={(0,0) (3,-2)},draw,xshift=-5pt,yshift=5pt][label=left:\( W_1 \)]{};
\node [fit={(3,0) (6,-3)},draw,xshift=5pt,yshift=5pt][label=right:\( B \)]{};
\node [fit={(3,-3) (6,-5)},draw,xshift=5pt,yshift=-5pt][label=right:\( W_2 \)]{};
\end{tikzpicture}
\caption{\( p=4 \) (here, \( C=\emptyset \))}
\label{fig: partition when p=4}
\end{subfigure}%
\caption{Examples of partitioning \( V(G^*) \) into sets \( A,B,C,W_1 \) and \( W_2 \).}
\label{fig:A,B,C,W_1,W_2}
\end{figure}
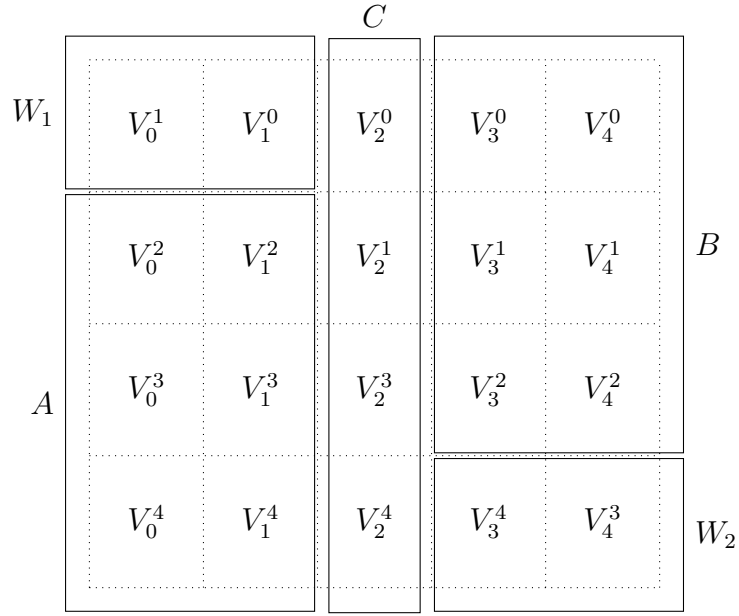
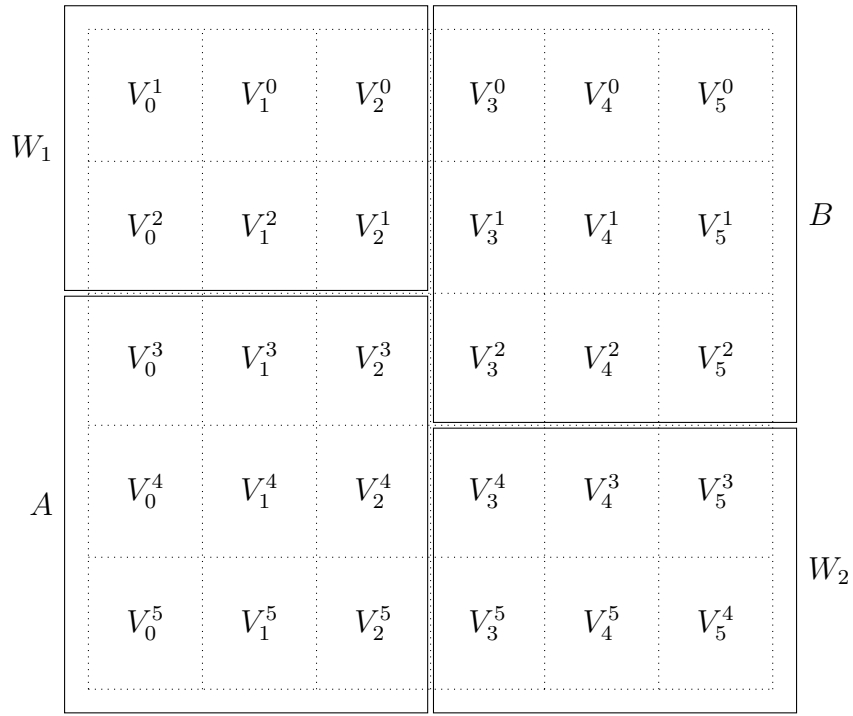


Note that \( A \) is precisely the independent set \( I_t \) for \( t=\floor{p/2}+1 \). 
So, \( A \) is an independent set in \( G^* \). 
By similar arguments, \( B \) is an independent set in \( G^* \) (due to Claim~3). 
Since \( C \) is either the empty set or \( C=\bigcup_{j\neq (p+1)/2}\, V_{(p+1)/2}^{j} \), \( C \) is also an independent set. 
Since \( A,B \) and \( C \) are independent sets, \( \chi(G^*)\leq \chi(G^*[W_1\cup W_2])+3 \). 
We know that for each pair \( i,j \) (with \( i\neq j \)) and for each vertex \( v \) in \( V_{i}^{j} \) and each neighbour \( w \) of \( v \), either \( w\in V_{k}^{i} \) for some \( k\notin\{i,j\} \) or \( w\in V_{j}^{k} \) for some \( k\notin\{i,j\} \). 
In particular, if \( k\notin\{i,j\} \) and  \( \ell\notin\{i,j\} \), then no vertex in \( V_{i}^{j} \) has a neighbour in \( V_{k}^{\ell} \). 
As a result, there is no edge between \( W_1 \) and \( W_2 \) in \( G^* \). 
Hence, we have \( \chi(G^*[W_1\cup W_2])=\max \{\chi(G^*[W_1]), \chi(G^*[W_2])\} \).\\

\noindent \textbf{Claim~4: } \( G^*[W_1] \) admits a \( D_{q_{\vphantom{|}0}} \)-partition with \( q_{0}=\floor{\frac{p}{2}}(\floor{\frac{p}{2}}+1) \).\\[5pt]
We know that the following hold for each \( i,j\in\{0,1,\dots,p+1\} \) with \( i\neq j \) and each vertex \( v\in V_{i}^{j} \):\\
(i)~\( v \)  has exactly one neighbour in \( V_{k}^{i} \) for each \( k\in\{0,1,\dots,p+1\}\setminus \{i,j\} \), and\\
(ii)~for each neighbour \( w \) of \( v \), either \( w\in V_{k}^{i} \) for some \( k\in\{0,1,\dots,p+1\}\setminus \{i,j\} \) or \( w\in V_{j}^{k} \) for some \( k\in\{0,1,\dots,p+1\}\setminus \{i,j\} \). 

Note that \( \{V_{i}^{j}\ :\ 0\leq i\leq \floor{p/2},\ 0\leq j\leq \floor{p/2}, \text{ and } i\neq j\} \) is a partition of the vertex set of \( G^*[W_1] \). 
By the property of the sets \( V_{i}^{j} \) stated in the previous paragraph, the following hold in the graph \( G^*[W_1] \) for each \( i,j\in \{0,1,\dots,\floor{p/2}\} \) with \( i\neq j \) and each vertex \( v \) in \( V_{i}^{j} \): (i)~\( v \) has exactly one neighbour in \( V_{k}^{i} \) for each \( k\in\{0,1,\dots,\floor{p/2}\}\setminus\{i,j\} \), and (ii)~for every neighbour \( w \) of \( v \), either \( w\in V_{k}^{i} \) for some \( k\in\{0,1,\dots,\floor{p/2}\}\setminus\{i,j\} \) or \( w\in V_{j}^{k} \) for some \( k\in\{0,1,\dots,\floor{p/2}\}\setminus\{i,j\} \). 
Therefore, by Claim~2, \( G^*[W_1] \) admits a \( D_{q_{\vphantom{|}0}} \)-partition where \( q_{0}=\floor{\frac{p}{2}}(\floor{\frac{p}{2}}+1) \). 
This proves Claim~4.

Thanks to Claim~4, by induction hypothesis, \( \chi(G^*[W_1])\leq 3\log_2(\floor{\nicefrac{p}{2}}+1)=3\log_2(\floor{\nicefrac{p+2}{2}}) \). 
Similarly, \( G^*[W_2] \) admits a \( D_{q_{\vphantom{|}0}} \)-partition and thus \( \chi(G^*[W_2])\leq 3\log_2(\floor{\nicefrac{p+2}{2}}) \). 
So, \( \chi(G^*[W_1\cup W_2])=\allowbreak \max \{\chi(G^*[W_1]), \chi(G^*[W_2])\}\leq 3\log_2(\floor{\nicefrac{p+2}{2}}) \). 
Therefore, \( \chi(G^*)\leq 3\log_2(\floor{\nicefrac{p+2}{2}})+3\leq 3(\log_2(\nicefrac{p+2}{2})+1)=\allowbreak 3\log_2(p+2) \). 
Thus, by mathematical induction, for all \( p\geq 0 \), \( \chi(G^*)\leq 3\log_2(p+2) \) if \( G^* \) admits a \( D_{(p+1)(p+2)} \)-partition.\\ 

Since \( G \) is \( 2p \)-regular and \( (p+2) \)-star colourable, \( G \) admits a \( D_{(p+1)(p+2)} \)-partition by Theorem~\ref{thm:2p-regular p+2-star colourable iff}. 
Hence, \( \chi(G)\leq 3\log_2(p+2) \). 
This proves (iii).\\


Next, we show that \( G \) admits a \( P_4 \)-decomposition. 
Oksimets~\cite{oksimets} proved that a \( 2p \)-regular graph \( G^* \) admits a \( P_4 \)-decomposition if and only if \( |E(G^*)| \) is divisible by three. 
By Theorem~\ref{thm:2p-regular p+2-star colourable iff}, the number of vertices in \( G \) is divisible by \( (p+1)(p+2) \). 
Hence, \( |E(G)|=np \) is divisible by \( p(p+1)(p+2) \) which is in turn divisible by three. 
Hence, \( G \) admits a \( P_4 \)-decomposition. This proves (iv).\\

Finally, we prove that \( G \) is 3-colourable if \( G \) contains no asteroidal triple. 
Stacho established that every (diamond, \( K_4 \))-free graph without asteroidal triple is 3-colourable \cite[Theorem~1.3]{stacho}. 
From (i), we know that \( G \) is \( (\text{diamond}, K_4) \)-free. 
This proves (v). 
\end{proof}

\section{Star Colourings and Orientations}\label{sec:orientations}
Let \( G \) be an (undirected) graph. 
Recall that an \emph{orientation} of \( G \) is a directed graph obtained from \( G \) by assigning a direction to each edge of \( G \). 
Star colouring of \( G \) is known to be linked with orientations of \( G \). 
Albertson et al.~\cite{albertson} proved that a colouring \( f \) of \( G \) is a star colouring if and only if there exists an orientation \( \overrightarrow{G} \) of \( G \) such that edges in each bicoloured 3-vertex path in \( \overrightarrow{G} \) are oriented towards the middle vertex. 
Nešetřil and Mendez~\cite{nesetril_mendez2003} characterized the star chromatic number of \( G \) in terms of orientations of \( G \) (see the next paragraph for details). 
Motivated by Claim~5 in proof of Theorem~\ref{thm:2p-regular if p+2-star colourable}, we introduce a new type of orientation named colourful Eulerian orientation and reveal its connection to star colouring in even-degree regular graphs.

For each orientation \( \overrightarrow{G_i} \) of \( G \), let us define \( G_i^+ \) as the undirected graph with \( V(G_i^+)=V(G) \) and \( E(G_i^+)=E(G)\cup E_i^+ \) where \( E_i^+=\{uv\ :\ u,v\in V(G), \text{ and } \exists w\in N(u)\cap N(v) \text{ such that } (u,w)\notin E(\overrightarrow{G_i}) \text{ or } (v,w)\notin E(\overrightarrow{G_i}) \} \). 
In other words, \( G_i^+ \) is obtained from \( G \) by adding edges \( uv \) whenever \( u \) and \( v \) have a common neighbour \( w \) such that at least one edge in path \( u,w,v \) is oriented away from the middle vertex \( w \). 
For every graph \( G \), the star chromatic number \( \chi_s(G)=\min_{i\in I} \chi(G_i^+) \) where \( I \) is an index set and \( \{\overrightarrow{G}_i\ :\ i\in I\} \) is the set of all orientations of \( G \) \cite[Corollary~3]{nesetril_mendez2003} (a different notation is used in \cite{nesetril_mendez2003}).

Let us see a few definitions first. 
Recall that a colouring \( f \) of \( G \) is a star colouring if and only if there exists an orientation \( \overrightarrow{G} \) of \( G \) such that edges in each bicoloured 3-vertex path in \( \overrightarrow{G} \) are oriented towards the middle vertex \cite{albertson}. 
If \( f \) is a star colouring of \( G \), an \emph{in-orientation of \( G \) induced by \( f \)} is an orientation \( \overrightarrow{G} \) of \( G \) obtained by orienting edges in each bicoloured 3-vertex path in \( G \) towards the middle vertex, and then orienting the remaining edges arbitrarily. 
The notion of in-orientation \( \overrightarrow{G} \) induced by \( f \) is the same as the notion of `colored in-orientation' \( (f,\overrightarrow{G}) \) in \cite{dvorak_esperet}. 
If \( f \) is a star colouring of \( G \) and no bicoloured component of \( G \) under \( f \) is isomorphic to \( K_{1,1} \), then the in-orientation of \( G \) induced by \( f \) is unique (because every edge in \( G \) is part of a bicoloured 3-vertex path). 

An orientation \( \overrightarrow{G} \) is an \emph{Eulerian orientation} if the number of in-neighbours of \( v \) equals the number of out-neighbours of \( v \) for every vertex \( v \) of \( \overrightarrow{G} \) (i.e., \( \text{in-degree}(v)=\text{out-degree}(v)\ \forall v\in V(\overrightarrow{G}) \)) \cite{levit}. 
If \( G \) admits an Eulerian orientation, then clearly every vertex of \( G \) is of even degree. 
Conversely, if every vertex of \( G \) is of even degree, then \( G \) admits an Eulerian orientation \cite{levit}. 
Connected graphs \( G \) such that every vertex of \( G \) is of even degree is called an \emph{Eulerian graph} because \( G \) admits an Eulerian tour. 

Let \( G \) be an Eulerian graph. 
Then, \( G \) admits Eulerian orientations. 
We say that an Eulerian orientation \( \overrightarrow{G} \) of \( G \) is a \emph{\( q \)-colourful Eulerian orientation} if there exists a \( q \)-colouring \( f \) of \( G \) such that the following hold under \( f \) for every vertex \( v \) of \( \overrightarrow{G} \):\\
(i)~in-neighbours of \( v \) have the same colour, say colour \( c_v \),\\
(ii)~no out-neighbour of \( v \) has colour \( c_v \), and\\
(iii)~out-neighbours of \( v \) have pairwise distinct colours.\\
An Eulerian orientation \( \overrightarrow{G} \) of \( G \) is said to be a \emph{colourful Eulerian orientation} if \( \overrightarrow{G} \) is a \( q \)-colourful Eulerian orientation for some integer \( q \).

We remark that there exist Eulerian graphs which do not admit a colourful Eulerian orientation (see Theorem~\ref{thm:graphs without CEO}). 
The next theorem shows for all \( p\geq 2 \), a \( 2p \)-regular graph \( G \) is \( (p+2) \)-star colourable if and only if \( G \) admits a \( (p+2) \)-colourful Eulerian orientation. 

\begin{theorem}\label{thm:chi_s bound iff p+2 CEO}
Let \( p\geq 2 \), and let \( G \) be a \( 2p \)-regular graph. 
Then, \( G \) is \( (p+2) \)-star colourable if and only if \( G \) admits a \( (p+2) \)-colourful Eulerian orientation. 
\end{theorem}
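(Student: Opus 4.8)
The plan is to route both implications through the \emph{in-orientation of \( G \) induced by \( f \)}. Since \( p\geq 2 \), no bicoloured component of a \( (p+2) \)-star colouring can be a \( K_{1,1} \); combined with Theorem~\ref{thm:2p-regular if p+2-star colourable} (every bicoloured component is a \( K_{1,p} \)) this guarantees that every edge of \( G \) lies on a bicoloured 3-vertex path, so the induced in-orientation is uniquely determined and simply orients every edge of \( G \) towards the centre of its bicoloured star.

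\emph{Necessity.} Suppose \( G \) admits a \( (p+2) \)-star colouring \( f \), and let \( \overrightarrow{G} \) be the in-orientation it induces. Fix a vertex \( v \), say with \( f(v)=i \). By Claim~5 of Theorem~\ref{thm:2p-regular if p+2-star colourable}, \( v \) has exactly \( p \) neighbours in some colour class \( V_j \) and exactly one neighbour \( x_k \) in each remaining class \( V_k \), \( k\notin\{i,j\} \). In the bicoloured component of \( G[V_i\cup V_j] \) containing \( v \), the vertex \( v \) is the centre of the star \( K_{1,p} \), so all \( p \) edges to \( V_j \) are oriented into \( v \); whereas in each \( G[V_i\cup V_k] \) with \( k\notin\{i,j\} \), the vertex \( v \) is a leaf, so the edge \( vx_k \) is oriented out of \( v \). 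Hence \( v \) has exactly \( p \) in-neighbours (all in \( V_j \)) and exactly \( p \) out-neighbours \( x_k \) (one per class \( V_k \), \( k\notin\{i,j\} \)), so \( \overrightarrow{G} \) is Eulerian. Taking \( c_v=j \), conditions (i)--(iii) are then immediate: the in-neighbours all have colour \( j \); no out-neighbour has colour \( j \), since the out-neighbours carry colours in \( \{0,\dots,p+1\}\setminus\{i,j\} \); and the out-neighbours \( x_k \) carry the pairwise distinct colours \( k\notin\{i,j\} \). Thus \( \overrightarrow{G} \) is a \( (p+2) \)-colourful Eulerian orientation.

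\emph{Sufficiency.} Conversely, suppose \( \overrightarrow{G} \) is a \( (p+2) \)-colourful Eulerian orientation witnessed by a \( (p+2) \)-colouring \( f \); being a colouring, \( f \) is already proper, so it remains only to rule out a bicoloured \( P_4 \). Assume for contradiction that \( a,b,c,d \) is a path in \( G \) with \( f(a)=f(c) \) and \( f(b)=f(d) \). Consider the vertex \( b \): its neighbours \( a \) and \( c \) share the colour \( f(a)=f(c) \), so by condition (iii) at least one of \( a,c \) is an in-neighbour of \( b \); hence \( c_b=f(a) \), and condition (ii) then forces \emph{both} \( a \) and \( c \) to be in-neighbours of \( b \) (no out-neighbour may carry colour \( c_b \)). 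In particular \( c\to b \). Applying the identical argument to \( c \), whose neighbours \( b,d \) share the colour \( f(b)=f(d) \), shows that both \( b \) and \( d \) are in-neighbours of \( c \), so \( b\to c \). The edge \( bc \) cannot be oriented both ways, a contradiction. Hence \( f \) has no bicoloured \( P_4 \), so it is a \( (p+2) \)-star colouring.

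The only genuinely delicate point is the necessity direction: one must read off correctly, from Claim~5 of Theorem~\ref{thm:2p-regular if p+2-star colourable}, exactly which incident edges of each vertex become inward and which outward, and verify that this assignment simultaneously achieves the Eulerian balance and the three colour conditions. The sufficiency direction, by contrast, collapses to the short local orientation argument above and does not even use regularity or the Eulerian hypothesis beyond conditions (i)--(iii).
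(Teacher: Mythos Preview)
Your proof is correct. The necessity direction is essentially identical to the paper's: both build the in-orientation induced by \( f \), invoke Claim~5 of Theorem~\ref{thm:2p-regular if p+2-star colourable} to see that each vertex has its \( p \) in-neighbours in a single colour class and one out-neighbour in each remaining class, and read off conditions (i)--(iii) from that.

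For sufficiency the two arguments diverge slightly. The paper argues structurally: using the Eulerian hypothesis and \( 2p \)-regularity, each vertex has exactly \( p \) in-neighbours, and conditions (i)--(iii) force every bicoloured component to be precisely a vertex together with its \( p \) in-neighbours, hence a \( K_{1,p} \). Your argument instead works locally on a hypothetical bicoloured \( P_4 \), showing that conditions (ii) and (iii) force the middle edge to be oriented both ways. Your version is marginally more economical---as you note, it uses neither regularity nor the Eulerian balance, only the three colour conditions---while the paper's version yields the extra information that the bicoloured components are all \( K_{1,p} \), which is consistent with (though already known from) Theorem~\ref{thm:2p-regular if p+2-star colourable}.
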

\begin{proof}
Suppose that \( G \) admits a \( (p+2) \)-star colouring \( f\colon V(G)\to\{0,1,\dots,p+1\} \). 
By Theorem~\ref{thm:2p-regular if p+2-star colourable}, every bicoloured component of \( G \) under \( f \) is isomorphic to \( K_{1,p} \). 
Hence, the in-orientation \( \overrightarrow{G} \) of \( G \) induced by \( f \) is unique. 
Also, for every bicoloured component \( H\cong K_{1,p} \) of \( G \), the edges in \( H \) are oriented by \( \overrightarrow{G} \) towards the centre of \( H \). 
We claim that \( \overrightarrow{G} \) is a \( (p+2) \)-colourful Eulerian orientation of \( G \) with \( f \) as the underlying \( (p+2) \)-colouring. 
To show this, it suffices to prove the following claim (recall that we denote the colour class \( f^{-1}(i) \) by \( V_i \) for every colour \( i \)).\\[5pt]
\textbf{Claim 1:} For each colour \( i\in\{0,1,\dots,p+1\} \) and each vertex \( v\in V_i \), all \( p \) in-neighbours of \( v \) are in some colour class \( V_j \), and every other colour class \( V_k \), \( k\notin\{i,j\} \), contains exactly one out-neighbour of~\( v \).\\[5pt]
We prove Claim~1 under the assumption \( i=0 \) (the proof is similar for other values of \( i \)). 
Let \( v\in V_0 \). 
By Claim~5 of Theorem~\ref{thm:2p-regular if p+2-star colourable}, \( v \) has exactly \( p \) neighbours in some colour class \( V_j \) and exactly one neighbour in every other colour class \( V_k \), \( k\notin\{i,j\} \) (see Figure~\ref{fig:re-neighbourhood of vertex in G}). 

\begin{figure}[hbt]
\centering
\begin{tikzpicture}
\path (0,0)  node(V0)[Vset][label=below:\( V_0 \)]{}++(2,0) node(V1)[Vset][label=below:\( V_1 \)]{}++(2,0) node(V2)[Vset][label=below:\( V_2 \)]{}++(1.5,0) node[font=\large][yshift=-0.15cm]{\dots}++(1.5,0) node(Vp+1)[Vset][label=below:\( V_{p+1} \)]{};
\path (V0)+(0,-0.85) node(v0)[dot][label=left:\( v \)]{};
\path (V1)++(0,0.75) node(v11)[dot][label=right:\( w_1 \)]{}++(0,-0.3) node{\vdots}++(0,-0.45) node(v1p)[dot][label=right:\( w_p \)]{};
\path (V2)+(0,-0.15) node(v2)[dot][label=\( x_1 \)]{};
\path (Vp+1)+(0,-0.15) node(vp+1)[dot][label=\( x_p \)]{};
\draw
(v0)--(v11)
(v0)--(v1p)
(v0)--(v2)
(v0)--(vp+1);
\end{tikzpicture}
\caption{Colours on the neighbourhood of an arbitrary vertex \( v\in V_0 \) (here, \( j=1 \)).}
\label{fig:re-neighbourhood of vertex in G}
\end{figure}
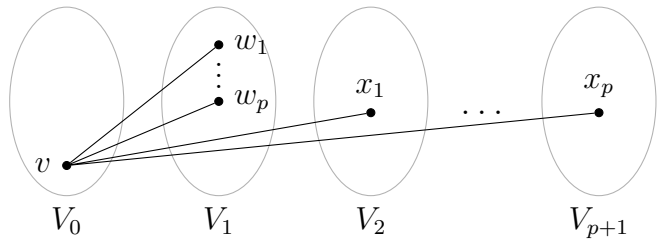

Without loss of generality, assume that \( j=1 \). 
Let \( w_1,\dots,w_p,x_1,\dots,x_p \) be the neighbours of \( v \) in \( G \) where \( w_1,\dots,w_p\in V_1 \) and \( x_r\in V_{r+1} \) for \( 1\leq r\leq p \). 
The subgraph of \( G \) induced by the set \( \{v,w_1,\dots,w_p\} \) is a bicoloured component \( H \) of \( G \), and hence edges in \( H \) are oriented by \( \overrightarrow{G} \) towards the centre \( v \) of \( H \). 
So, \( w_1,\dots,w_p \) are in-neighbours of \( v \) (in \( \overrightarrow{G} \)). 
On the other hand, for \( 1\leq r\leq p \), \( v \) has exactly one neighbour \( x_r \) in \( V_{r+1} \) and hence \( x_r \) has exactly \( p \) neighbours in \( V_0 \). 
Since \( x_r \) together with its neighbours in \( V_0 \) induce a bicoloured component with \( x_r \) as the centre, \( x_r \) is an out-neighbour of \( v \) (provided \( 1\leq r\leq p \)). 
Therefore, the orientation of edges incident on \( v \) are as shown in Figure~\ref{fig:orientation in neighbourhood of vertex in G}.

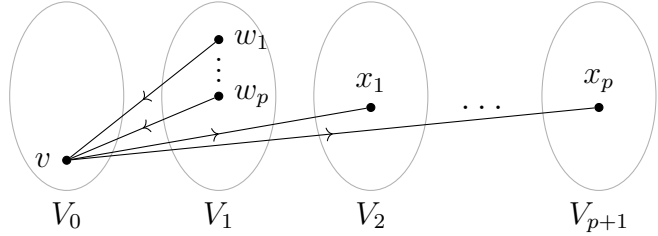
\begin{figure}[hbt]
\centering
\begin{tikzpicture}
\path (0,0)  node(V0)[Vset][label=below:\( V_0 \)]{}++(2,0) node(V1)[Vset][label=below:\( V_1 \)]{}++(2,0) node(V2)[Vset][label=below:\( V_2 \)]{}++(1.5,0) node[font=\large][yshift=-0.15cm]{\dots}++(1.5,0) node(Vp+1)[Vset][label=below:\( V_{p+1} \)]{};
\path (V0)+(0,-0.85) node(v0)[dot][label=left:\( v \)]{};
\path (V1)++(0,0.75) node(v11)[dot][label=right:\( w_1 \)]{}++(0,-0.3) node{\vdots}++(0,-0.45) node(v1p)[dot][label=right:\( w_p \)]{};
\path (V2)+(0,-0.15) node(v2)[dot][label=\( x_1 \)]{};
\path (Vp+1)+(0,-0.15) node(vp+1)[dot][label=\( x_p \)]{};
\begin{scope}[decoration={
    markings,
    mark=at position 0.5 with {\arrow{>}}},
    postaction={decorate}
    ] 
\draw [midArrow] 
(v11)--(v0);
\draw [midArrow] 
(v1p)--(v0);
\draw [midArrow]
(v0)--(v2);
\draw  [postaction={decorate}]
(v0)--(vp+1);
\end{scope}
\end{tikzpicture}
\captionsetup{width=0.75\textwidth}
\caption{Orientation of edges incident on the neighbourhood\\ of an arbitrary vertex \( v\in V_0 \).}
\label{fig:orientation in neighbourhood of vertex in G}
\end{figure}

\noindent This proves Claim~1. 
Therefore, \( \overrightarrow{G} \) is a \( (p+2) \)-colourful Eulerian orientation of \( G \).

Conversely, suppose that \( G \) admits a \( (p+2) \)-colourful Eulerian orientation with a \( (p+2) \)-colouring \( f \) as the underlying colouring. 
By the definition of a \( (p+2) \)-colourful Eulerian orientation, each bicoloured component \( H \) of \( G \) under \( f \) consists of some vertex \( v \) and all \( p \) in-neighbours of \( v \); this implies that \( H \) is isomorphic to \( K_{1,p} \). 
Since every bicoloured component of \( G \) under \( f \) is a star, \( f \) is a \( (p+2) \)-star colouring of \( G \). 
This proves the converse part. 
\end{proof}

The next theorem shows that the presence of some subgraphs makes it impossible for a graph to admit a colourful Eulerian orientation. 
\begin{figure}[hbt]
\centering
\begin{subfigure}[b]{0.3\textwidth}
\centering
\begin{tikzpicture}
\draw (0,0) node(a)[dot][label=left:\( a \)]{}--++(1,1)  node(x)[dot][label=above:\( x \)]{}--++(1,-1) node(b)[dot][label=right:\( b \)]{}--++(-1,-1) node(y)[dot][label=below:\( y \)]{}--(a);
\draw (x)--(y);
\end{tikzpicture}
\caption{diamond}
\label{fig:CEO obstruction 1}
\end{subfigure}%
\begin{subfigure}[b]{0.4\textwidth}
\centering
\begin{tikzpicture}
\draw (0,0) node(u)[dot][label=left:\( u \)]{}--++(-1,-1)  node(v)[dot][label=left:\( v \)]{}--++(0,2) node(w)[dot][label=left:\( w \)]{}--(u);
\draw (u)--++(2,0)  node(x)[dot][label=right:\( x \)]{}--++(-1,-1) node(a)[dot][label=\( a \)]{}--(u);
\draw (x)--++(1,1)  node(z)[dot][label=right:\( z \)]{}--++(0,-2) node(y)[dot][label=right:\( y \)]{}--(x);
\draw (a)--+(0,-1.5) node(b)[dot][label=below:\( b \)]{};
\draw (w)--(z)  (v)--(b)--(y);
\end{tikzpicture}
\caption{}
\label{fig:CEO obstruction 2}
\end{subfigure}%
\vspace*{5pt}

\begin{subfigure}[b]{0.3\textwidth}
\centering
\begin{tikzpicture}
\draw (0,0) node(u)[dot][label=left:\( u \)]{}--++(-1,-1)  node(v)[dot][label=left:\( v \)]{}--++(0,2) node(w)[dot][label=left:\( w \)]{}--(u);
\draw (u)--++(2,0)  node(x)[dot][label=right:\( x \)]{};
\draw (x)--++(1,1)  node(z)[dot][label=right:\( z \)]{}--++(0,-2) node(y)[dot][label=right:\( y \)]{}--(x);
\draw (w)--(z)  (v)--(y);
\end{tikzpicture}
\caption{\( \overbar{C_6} \)}
\label{fig:CEO obstruction 3}
\end{subfigure}%
\hfill
\begin{subfigure}[b]{0.3\textwidth}
\centering
\begin{tikzpicture}
\draw (0,0) node(u)[dot][label=left:\( u \)]{}--++(-1,-1)  node(v)[dot][label=left:\( v \)]{}--++(0,2) node(w)[dot][label=left:\( w \)]{}--(u);
\path (u)--++(2,0)  node(x)[dot][label=right:\( x \)]{};
\draw (x)--++(1,1)  node(z)[dot][label=right:\( z \)]{}--++(0,-2) node(y)[dot][label=right:\( y \)]{}--(x);
\draw (w)--(z)  (v)--(y);
\draw (w)--node(a)[dot][label=above:\( a \)]{} (x);
\draw (y)--node(b)[dot][label=below:\( b \)]{} (u);
\draw (a)--(b);
\end{tikzpicture}
\caption{}
\label{fig:CEO obstruction 4}
\end{subfigure}%
\hfill
\begin{subfigure}[b]{0.3\textwidth}
\centering
\begin{tikzpicture}
\draw (0,0) node(u)[dot][label=left:\( u \)]{}--++(-1,-1)  node(v)[dot][label=left:\( v \)]{}--++(0,2) node(w)[dot][label=left:\( w \)]{}--(u);
\path (u)--++(2,0)  node(x)[dot][label=right:\( x \)]{};
\draw (x)--++(1,1)  node(z)[dot][label=right:\( z \)]{}--++(0,-2) node(y)[dot][label=right:\( y \)]{}--(x);
\draw (w)--(z)  (v)--(y);
\draw (w)--node(a)[dot][pos=0.6][label=above:\( a \)]{} (x);
\draw (v)--node(c)[dot][pos=0.6][label=below:\( c \)]{} (x);
\draw (a)--node(b)[dot][label=right:\( b \)]{} (c);
\draw (u)--(b);
\end{tikzpicture}
\caption{}
\label{fig:CEO obstruction 5}
\end{subfigure}%
\caption{Some obstructions to colourful Eulerian orientation.}
\label{fig:CEO obstructions}
\end{figure}
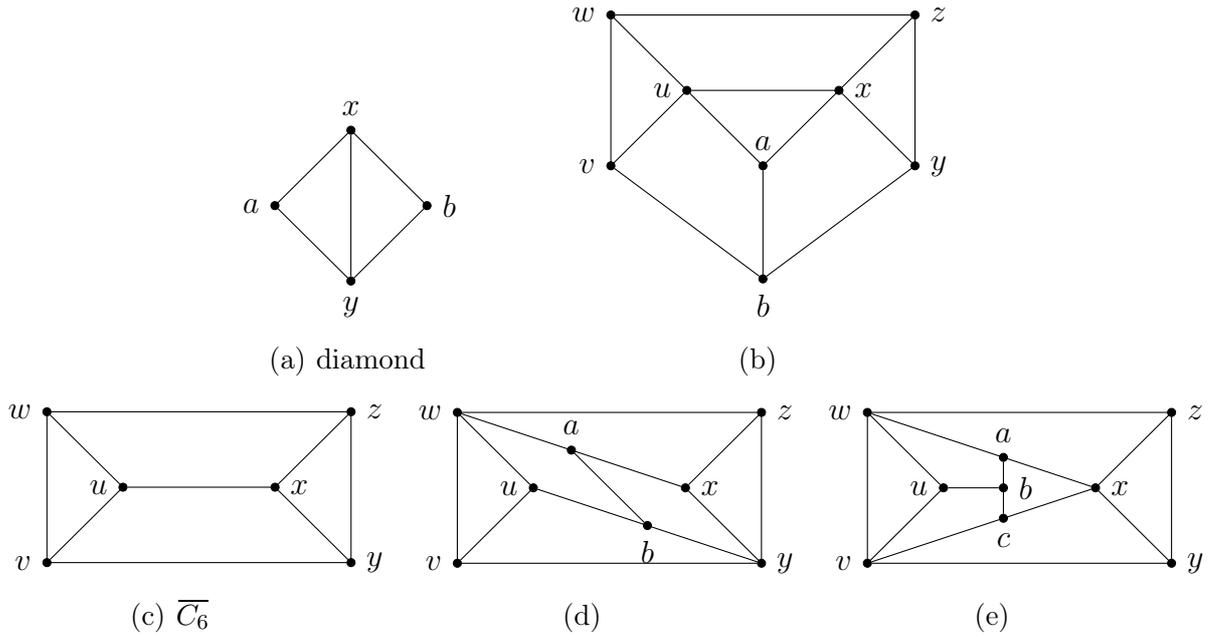
\begin{theorem}\label{thm:graphs without CEO}
Let \( G \) be a graph that contains at least one of the graphs in Figure~\ref{fig:CEO obstructions} as a subgraph. 
Then, \( G \) does not admit a colourful Eulerian orientation. 
In particular, if \( G \) is \( d \)-regular, then \( \chi_s(G)\geq \ceil{(d+5)/2} \). 
\end{theorem}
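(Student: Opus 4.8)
The plan is to assume, for contradiction, that $G$ admits a colourful Eulerian orientation $\overrightarrow{G}$ with underlying colouring $f$, and to extract a contradiction from the conditions (i)--(iii) in the definition of a colourful Eulerian orientation applied to the vertices of the forbidden subgraph. The crucial first step is the following local lemma: \emph{in any colourful Eulerian orientation every triangle is oriented cyclically}. Indeed, a $3$-vertex tournament is either a directed cycle or transitive, and in the transitive case some vertex, say $r$ of a triangle $pqr$, has both $p$ and $q$ as in-neighbours; but $p$ and $q$ are adjacent, so $f(p)\neq f(q)$, contradicting condition~(i) that all in-neighbours of $r$ share one colour. Note that this lemma, and in fact the whole argument, uses only the colourfulness conditions~(i)--(iii); the Eulerian balance is not invoked for subgraph vertices (which have further neighbours in $G$).

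I would treat the diamond (Figure~\ref{fig:CEO obstruction 1}) as the model case. Label its vertices so that $x,y$ are the two degree-three vertices joined by the chord and $a,b$ the two degree-two vertices, so that $axy$ and $bxy$ are triangles sharing the edge $xy$. Orient $xy$ as $x\to y$ without loss of generality. By the triangle lemma each of $axy$ and $bxy$ is cyclic, which forces $y\to a\to x$ and $y\to b\to x$. Now $a$ and $b$ are both in-neighbours of $x$, so condition~(i) gives $f(a)=f(b)$; but $a$ and $b$ are both out-neighbours of $y$, so condition~(iii) gives $f(a)\neq f(b)$ -- a contradiction. Hence $G$ admits no colourful Eulerian orientation whenever it contains a diamond.

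For the remaining obstructions (Figures~\ref{fig:CEO obstruction 2}--\ref{fig:CEO obstruction 5}), including $\overbar{C_6}$, the same template applies: orient every triangle cyclically (forced by the lemma) and then case-split on the orientations of the few remaining edges, propagating the monochromatic constraint~(i), the in-colour avoidance constraint~(ii), and the rainbow constraint~(iii) along the subgraph until two vertices are forced to be simultaneously equicoloured and differently coloured. For instance, for $\overbar{C_6}$ (the triangular prism with triangles $uvw$, $xyz$ and matching $ux$, $vy$, $wz$), once both triangles are fixed cyclically one examines each choice of the three matching orientations; in every case a short propagation produces a pair of vertices that condition~(i) forces to agree in colour while condition~(ii) or~(iii) forbids it. Unlike the diamond, these cases genuinely require condition~(ii) as well, and this bookkeeping over all orientations of the non-triangle edges is where the bulk of the work -- and the main obstacle -- lies.

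Finally, for the ``in particular'' statement, suppose $G$ is $d$-regular and contains one of the obstructions; since each obstruction has a vertex of degree at least three, $d\geq 3$. If $d$ is odd, then $\ceil{(d+4)/2}=\ceil{(d+5)/2}$, so the bound is already supplied by Theorem~\ref{thm:lb chi_s}. If $d$ is even, write $d=2p$; then $p\geq 2$, and by the first part $G$ admits no colourful Eulerian orientation, hence no $(p+2)$-colourful Eulerian orientation. By Theorem~\ref{thm:chi_s bound iff p+2 CEO}, $G$ is therefore not $(p+2)$-star colourable, and combined with $\chi_s(G)\geq p+2$ from Theorem~\ref{thm:lb chi_s} this yields $\chi_s(G)\geq p+3=\ceil{(d+5)/2}$.
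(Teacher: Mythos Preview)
Your approach is correct and is the natural one (the paper defers its own proof to the supplement): the key observation that every triangle in a colourful Eulerian orientation must be cyclic, followed by propagating conditions~(i)--(iii) along the remaining edges, is exactly how these obstructions are dispatched, and your diamond argument is complete and clean. The derivation of the ``in particular'' clause from Theorems~\ref{thm:lb chi_s} and~\ref{thm:chi_s bound iff p+2 CEO} is also correct; the only thing left unwritten is the explicit case analysis for obstructions~(b), (d), (e), which is routine but somewhat longer than for the diamond and $\overbar{C_6}$.
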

See supplementary material for the proof of Theorem~\ref{thm:graphs without CEO} and a discussion of the complexity of colourful Eulerian orientations.

\section{Hardness Results}\label{sec:hardness}
A graph \( G \) is 1-star colourable if and only if \( G \) is the complement of a complete graph. 
A graph \( G \) is 2-star colourable if and only if \( G \) is a disjoint union of stars. 
Hence, \textsc{\( k \)-Star Colourability} is polynomial-time solvable for \( k\leq 2 \). 
We prove that for all \( k\geq 3 \), \textsc{\( k \)-Star Colourability} in NP-complete for graphs of maximum degree \( k \). 
We present hardness results on 3-star colouring in Subsection~\ref{sec:hardness 3-star} and results on \( k \)-star colouring with \( k\geq 4 \) in Subsection~\ref{sec:hardness k-star k>=4}. 

\subsection{3-Star Colouring}\label{sec:hardness 3-star}
It is known that \textsc{3-Star Colourability} is NP-complete for planar bipartite graphs \cite{albertson}. 
We prove that the problem remains NP-complete when further restricted to graphs of maximum degree three and arbitrarily large girth. 
Let us start with a simple observation on 3-star colourings, which is quite useful in our reductions. 
By our convention, every 3-star colouring uses colours 0,1 and 2. 
\begin{observation}\label{obs:P3 force colours}
Let \( f \) be a 3-star colouring of a graph \( G \). If \( u,v,w \) is a path in \( G \) which is bicoloured, say with colours 0 and 1, then every neighbour of \( w \) except \( v \) must be coloured 2. Moreover, if \( w \) has two neighbours \( x_1 \) and \( x_2 \) besides \( v \), then their neighbours  \( y\neq w \) must be coloured \( f(v) \) for the same reason (see Figure~\ref{fig:P3 force colours}). 
\end{observation}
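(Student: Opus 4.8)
The plan is to derive both claims directly from the defining property of a star colouring --- that no $4$-vertex path is bicoloured --- by exhibiting a would-be bicoloured $P_4$ whenever a vertex is coloured incorrectly.

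First I would record the colour pattern forced on the given path. Since $f$ is proper and $u,v,w$ uses only colours $0$ and $1$, consecutive vertices differ, so the two endpoints $u,w$ must share one colour while the centre $v$ gets the other; I would write $f(u)=f(w)=0$ and $f(v)=1$, the remaining case being symmetric. A key incidental consequence I would flag immediately is that $u$ and $w$, having the same colour, are non-adjacent.

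For the first claim I would take an arbitrary neighbour $z\neq v$ of $w$. The non-adjacency of $u$ and $w$ guarantees $z\neq u$, so $u,v,w,z$ is a genuine $4$-vertex path; properness forces $f(z)\in\{1,2\}$, and $f(z)=1$ would make this path bicoloured with colours $0$ and $1$, contradicting the star condition. Hence $f(z)=2$, giving the first claim. For the \emph{moreover} part I would observe that if $x_1,x_2\neq v$ are two neighbours of $w$, then both are coloured $2$ by the first claim, so $x_1,w,x_2$ is a \emph{new} bicoloured $P_3$ (colours $0$ and $2$) with non-adjacent endpoints. I would then apply the first claim verbatim to this path, with $x_1$ playing the role of the endpoint $w$: every neighbour $y\neq w$ of $x_1$ must take the colour absent from $\{0,2\}$, which is $1=f(v)$; symmetrically for $x_2$. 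This reuse is exactly what the phrase ``for the same reason'' refers to.

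I do not anticipate a genuine obstacle; the one point that must be handled carefully --- in both the base claim and its reuse --- is the remark that the two endpoints of a bicoloured $P_3$ necessarily share a colour and are therefore non-adjacent. This is what ensures that adjoining a neighbour of an endpoint yields an honest $P_4$ rather than closing a triangle, and hence that the star-colouring hypothesis can legitimately be invoked.
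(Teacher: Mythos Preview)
Your proof is correct. The paper does not give a written proof of this observation at all; it states it and points to an illustrative figure, treating the claim as self-evident from the definition of star colouring. Your argument makes explicit exactly the reasoning the figure suggests (avoid a bicoloured $P_4$), and you also take care of the one genuine subtlety --- that the endpoints of a bicoloured $P_3$ share a colour and are therefore non-adjacent, so that adjoining a further neighbour produces an honest four-vertex path rather than a triangle --- which the paper leaves entirely implicit.
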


\begin{figure}[hbt]
\centering
\begin{tikzpicture}
\draw (0,0) node[dot](a)[label=below:\( u \)][label={[vcolour]above:1}]{} --++(1,0) node[dot](b)[label=below:\( v \)][label={[vcolour]above:0}]{}  --++(1,0) node[dot](c)[label=below:\( w \)][label={[vcolour]above:1}]{};
\draw[dashed] (c)  --+(20:1) node[dot](d1)[label=below:\( x_1 \)]{}
              (c)  --+(-20:1) node[dot](d2)[label=below:\( x_2 \)]{};
\path (c) --++(1.5,0) coordinate (from) --+(0.5,0) coordinate(to);

\draw[-Implies,double distance=1.3pt,thick,shorten <=-0.5pt] (from)--(to);
\path (to) --+(0.75,0) coordinate (mid);

\draw (mid) node[dot](a')[label=below:\( u \)][label={[vcolour]above:1}]{} --++(1,0) node[dot](b')[label=below:\( v \)][label={[vcolour]above:0}]{}  --++(1,0) node[dot](c')[label=below:\( w \)][label={[vcolour]above:1}]{};
\draw[dashed] (c')  --+(20:1) node[dot](d1')[label=below:\( x_1 \)][label={[solid,vcolour]above:2}]{}
              (c')  --+(-20:1) node[dot](d2')[label=below:\( x_2 \)][label={[solid,vcolour]right:2}]{};
\path (c') --++(1.75,0) coordinate (from) --+(0.5,0) coordinate(to);

\draw[-Implies,double distance=1.3pt,thick,shorten <=-0.5pt] (from)--(to);
\path (to) --+(0.75,0) coordinate (rhs);
\draw (rhs) node[dot](a')[label=below:\( u \)][label={[vcolour]above:1}]{} --++(1,0) node[dot](b')[label=below:\( v \)][label={[vcolour]above:0}]{}  --++(1,0) node[dot](c')[label=below:\( w \)][label={[vcolour]above:1}]{};
\draw[dashed] (c')  --++(20:1) node[dot](d1')[label=below:\( x_1 \)][label={[solid,vcolour]above:2}]{} --+(1,0) node[dot][label=below:\( y \)][label={[solid,vcolour]above:0}]{}
              (c')  --+(-20:1) node[dot](d2')[label=below:\( x_2 \)][label={[solid,vcolour]right:2}]{};
\path (c') --++(1.5,0) coordinate (from) --+(0.5,0) coordinate(to);
\end{tikzpicture}
\caption{Under a 3-star colouring, colours are forced on neighbours of endpoints of bicoloured \( P_3 \)'s.}
\label{fig:P3 force colours}
\end{figure}
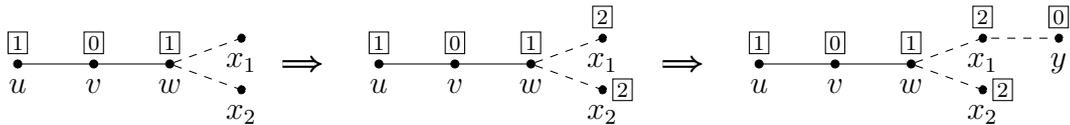

To make gadgets, we use the bipartite graph displayed in Figure~\ref{fig:gadget component}; let us call it the gadget component (only some of its vertices are labelled; these labels are essential to proof of Lemma~\ref{lem:gadget component}).  
%
Lemma~\ref{lem:gadget component} shows that the gadget component admits a \textit{unique} 3-star colouring (recall that `unique' in italics indicates ``unique up to colour swaps'').

\begin{lemma}\label{lem:gadget component}
The colouring displayed in Figure~\ref{fig:3-star colouring of component} is the \textit{unique} 3-star colouring of the gadget component. 
In particular, \( f(x)=f(y)=f(z) \) for every 3-star colouring \( f \) of the gadget component. 
\end{lemma}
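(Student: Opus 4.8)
The plan is to establish uniqueness by a \emph{forcing} (colour-propagation) argument driven entirely by Observation~\ref{obs:P3 force colours}, and to obtain existence for free from the explicit colouring exhibited in Figure~\ref{fig:3-star colouring of component}. First I would fix a starting configuration up to colour swaps: using the freedom to permute the three colours, I may assume that a specific short path at a chosen ``root'' of the gadget component receives a prescribed pair of colours, say $0$ and $1$. Since the gadget component contains a degree-$\geq 2$ vertex lying on a $P_3$, this initial bicoloured $P_3$ is forced to exist, which seeds the propagation.

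Next I would repeatedly invoke Observation~\ref{obs:P3 force colours}: whenever a bicoloured path $u,v,w$ has been identified, every neighbour of $w$ other than $v$ is forced to the third colour, and (by the ``moreover'' part of the observation) the far neighbours of those vertices are in turn forced to colour $f(v)$, and so on. The key structural facts to verify are that this propagation is \emph{deterministic} --- at each step the colour of the newly reached vertex is uniquely determined and never offers a choice --- and that it \emph{reaches every vertex} of the gadget component. Here the bipartiteness and the specific adjacency pattern of Figure~\ref{fig:gadget component} are what I would exploit, tracing the forced colours layer by layer outward from the root and recording at each vertex the unique colour dictated by the already-coloured bicoloured paths ending at it. This layerwise bookkeeping is where I expect most of the work to lie.

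The main obstacle is twofold. On one hand, I must check \emph{consistency}: as the propagation wraps around the cyclic or branching portions of the component, two forcing chains may meet at a common vertex, and the argument must confirm intrinsically that they agree rather than demand conflicting colours --- a disagreement would contradict $3$-star colourability, which Figure~\ref{fig:3-star colouring of component} rules out, but for \emph{uniqueness} (not merely existence) the agreement has to be derived directly from Observation~\ref{obs:P3 force colours}. On the other hand, I must ensure \emph{completeness} of the forcing, i.e.\ that no vertex escapes the propagation and retains a free choice; for any such stray vertex I would locate a bicoloured $P_3$ through one of its already-coloured neighbours and apply Observation~\ref{obs:P3 force colours} once more. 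Once every vertex's colour is pinned down uniquely (up to the single initial colour swap), the colouring must coincide with the one in Figure~\ref{fig:3-star colouring of component}, proving it is the \emph{unique} $3$-star colouring of the gadget component. Reading off the colours of $x$, $y$, and $z$ from this forced assignment then yields $f(x)=f(y)=f(z)$, establishing the ``in particular'' claim.
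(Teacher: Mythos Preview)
Your proposal has two gaps. First, your seeding step is unjustified: the mere presence of a degree-$\geq 2$ vertex on a $P_3$ does not force that $P_3$ to be bicoloured under a $3$-star colouring (it could perfectly well be tricoloured). The paper instead exploits the central vertex $w$: since $x,y,z$ are all adjacent to $w$, only two colours are available for them, so by pigeonhole two of $x,y,z$ receive the same colour, and \emph{that} is what produces the bicoloured $P_3$ through $w$ which seeds everything.

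Second --- and this is the essential point --- the propagation via Observation~\ref{obs:P3 force colours} is \emph{not} fully deterministic as you assume. After fixing $f(w)=0$ and, say, $f(y)=f(z)=1$, repeated application of the observation determines colours along the entire $y$--$z$ arc of the outer cycle and partway up the $y$--$x$ and $z$--$x$ arcs (up to $a_2$ and $a_7$ in the paper's labelling; see Figure~\ref{fig:colours in component}), but then it stalls: each of $a_3,a_4,a_5,a_6$ and $x$ itself still admits two colours, and there is no bicoloured $P_3$ through an already-coloured neighbour of $x$ to which the observation applies (the only coloured neighbour of $x$ is $w$, and every $P_3$ ending at $w$ through $y$ or $z$ is tricoloured at this stage). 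Your fallback plan of ``locating a bicoloured $P_3$ through one of its already-coloured neighbours'' therefore fails here. The paper closes this gap not by further propagation but by a short case-elimination: assuming $f(x)=2$, one traces the forced colours on $a_3,\dots,a_6$ and exhibits an explicit bicoloured $P_4$, a contradiction, whence $f(x)=1$. This contradiction argument is the heart of the proof, and your outline does not account for it.
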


\begin{figure}[hbt]
\centering
\begin{subfigure}[b]{0.5\textwidth}
\centering
\begin{tikzpicture}
\node[dot] (w)[label=below:\( w \)]{};
\draw (w)--+(90:2.5) node[dot] (x)[label=above:\( x \)]{}
      (w)--+(-30:2.5) node[dot] (z)[label=right:\( z \)]{}
      (w)--+(-150:2.5) node[dot] (y)[label=left:\( y \)]{};
\draw (x)--node[dot][pos=0.17](afterXtoY)[label=left:\( a_4 \)]{} node[dot][pos=0.33][label=left:\( a_3 \)]{} node[dot][pos=0.5][label=left:\( a_2 \)]{} node[dot][pos=0.66][label=left:\( a_1 \)]{} node[dot][pos=0.83](afterYtoX){} (y);
\draw (x)--node[dot][pos=0.17](afterXtoZ)[label=right:\( a_5 \)]{} node[dot][pos=0.33][label=right:\( a_6 \)]{} node[dot][pos=0.5][label=right:\( a_7 \)]{} node[dot][pos=0.66][label=right:\( a_8 \)]{} node[dot][pos=0.83](afterZtoX){} (z);
\draw (y)--node[dot][pos=0.17](afterYtoZ){} node[dot][pos=0.33]{} node[dot][pos=0.5]{} node[dot][pos=0.66]{} node[dot][pos=0.83](afterZtoY){} (z);
\draw (afterXtoY)--+(150:0.5) node[dot]{}
      (afterYtoX)--+(150:0.5) node[dot]{};
\draw (afterXtoZ)--+(30:0.5) node[dot]{}
      (afterZtoX)--+(30:0.5) node[dot]{};
\draw (afterYtoZ)--+(-90:0.5) node[dot]{}
      (afterZtoY)--+(-90:0.5) node[dot]{};
\end{tikzpicture}
\caption{}
\label{fig:gadget component}
\end{subfigure}%
\begin{subfigure}[b]{0.5\textwidth}
\centering
\begin{tikzpicture}
\node[dot] (w)[label=below:\( w \)][label={[vcolour]above right:0}]{};
\draw (w)--+(90:2.5) node[dot] (x)[label=above:\( x \)][label={[vcolour]right:1}]{}
      (w)--+(-30:2.5) node[dot] (z)[label=right:\( z \)][label={[vcolour]below:1}]{}
      (w)--+(-150:2.5) node[dot] (y)[label=left:\( y \)][label={[vcolour]below:1}]{};
\draw (x)--node[dot][pos=0.17](afterXtoY)[label={[vcolour]right:2}]{} node[dot][pos=0.33][label={[vcolour]right:0}]{} node[dot][pos=0.5][label={[vcolour]right:1}]{} node[dot][pos=0.66][label={[vcolour]right:0}]{} node[dot][pos=0.83][label={[vcolour]right:2}](afterYtoX){} (y);
\draw (x)--node[dot][pos=0.17](afterXtoZ)[label={[vcolour]left:2}]{} node[dot][pos=0.33][label={[vcolour]left:0}]{} node[dot][pos=0.5][label={[vcolour]left:1}]{} node[dot][pos=0.66][label={[vcolour]left:0}]{} node[dot][pos=0.83][label={[vcolour]left:2}](afterZtoX){} (z);
\draw (y)--node[dot][pos=0.17][label={[vcolour]above:2}](afterYtoZ){} node[dot][pos=0.33][label={[vcolour]above:0}]{} node[dot][pos=0.5][label={[vcolour]above:1}]{} node[dot][pos=0.66][label={[vcolour]above:0}]{} node[dot][pos=0.83][label={[vcolour]above:2}](afterZtoY){} (z);
\draw (afterXtoY)--+(150:0.5) node[dot][label={[vcolour]left:0}]{}
      (afterYtoX)--+(150:0.5) node[dot][label={[vcolour]left:0}]{};
\draw (afterXtoZ)--+(30:0.5) node[dot][label={[vcolour]right:0}]{}
      (afterZtoX)--+(30:0.5) node[dot][label={[vcolour]right:0}]{};
\draw (afterYtoZ)--+(-90:0.5) node[dot][label={[vcolour]below:0}]{}
      (afterZtoY)--+(-90:0.5) node[dot][label={[vcolour]below:0}]{};
\end{tikzpicture}
\caption{}
\label{fig:3-star colouring of component}
\end{subfigure}
\caption{(a)~Gadget component, and (b)~\textit{Unique} 3-star colouring of it.}
\end{figure}
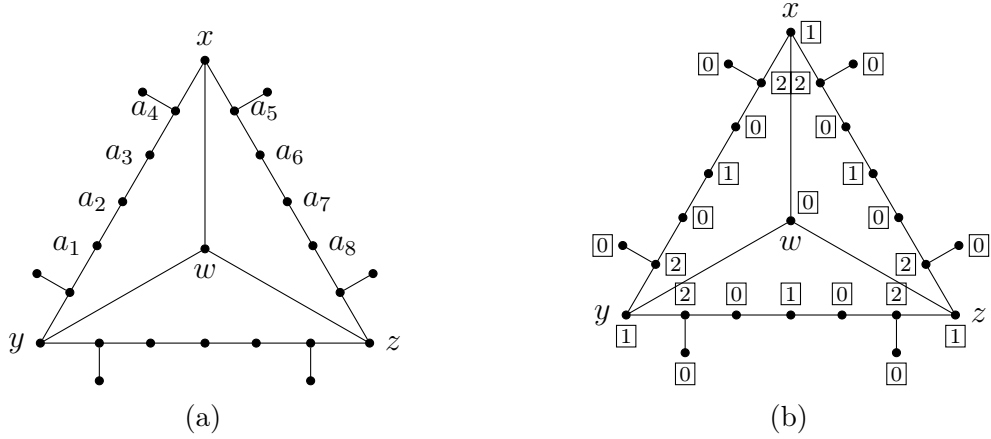

\begin{proof}
Let \( f \) be a 3-star colouring of the gadget component. 
Without loss of generality, assume that \( f(w)=0 \). 
First, we prove that \( f \) must use the same colour on vertices \( x,y \) and \( z \). 
Since only colours 1 and 2 are available for vertices \( x,y \) and \( z \), at least two of them should get the same colour. 
Without loss of generality, assume that \( f(y)=f(z)=1 \). 
Since \( y,w,z \) is a bicoloured \( P_3 \), neighbours of \( y \) on the outer cycle must be coloured 2 due to Observation~\ref{obs:P3 force colours}. 
Repeated application of Observation~\ref{obs:P3 force colours} reveals that colours are forced on vertices of the gadget component as shown in Figure~\ref{fig:colours in component}. 

\begin{figure}[hbt]
\centering
\begin{minipage}[b]{0.48\textwidth}
\centering
\begin{tikzpicture}
\node[dot] (w)[label=below:\( w \)][label={[vcolour]above right:0}]{};
\draw (w)--+(90:2.5) node[dot] (x)[label=above:\( x \)]{}
      (w)--+(-30:2.5) node[dot] (z)[label=right:\( z \)][label={[vcolour]below:1}]{}
      (w)--+(-150:2.5) node[dot] (y)[label=left:\( y \)][label={[vcolour]below:1}]{};
\draw (x)--node[dot][pos=0.17](afterXtoY)[label=left:\( a_4 \)]{} node[dot][pos=0.33][label=left:\( a_3 \)]{} node[dot][pos=0.5][label={[vcolour]right:1}][label=left:\( a_2 \)]{} node[dot][pos=0.66][label={[vcolour]right:0}][label=left:\( a_1 \)]{} node[dot][pos=0.83][label={[vcolour]right:2}](afterYtoX){} (y);
\draw (x)--node[dot][pos=0.17](afterXtoZ)[label=right:\( a_5 \)]{} node[dot][pos=0.33][label=right:\( a_6 \)]{} node[dot][pos=0.5][label={[vcolour]left:1}][label=right:\( a_7 \)]{} node[dot][pos=0.66][label={[vcolour]left:0}][label=right:\( a_8 \)]{} node[dot][pos=0.83][label={[vcolour]left:2}](afterZtoX){} (z);
\draw (y)--node[dot][pos=0.17][label={[vcolour]above:2}](afterYtoZ){} node[dot][pos=0.33][label={[vcolour]above:0}]{} node[dot][pos=0.5][label={[vcolour]above:1}]{} node[dot][pos=0.66][label={[vcolour]above:0}]{} node[dot][pos=0.83][label={[vcolour]above:2}](afterZtoY){} (z);
\draw (afterXtoY)--+(150:0.5) node[dot]{}
      (afterYtoX)--+(150:0.5) node[dot][label={[vcolour]above left:0}]{};
\draw (afterXtoZ)--+(30:0.5) node[dot]{}
      (afterZtoX)--+(30:0.5) node[dot][label={[vcolour]above right:0}]{};
\draw (afterYtoZ)--+(-90:0.5) node[dot][label={[vcolour]below:0}]{}
      (afterZtoY)--+(-90:0.5) node[dot][label={[vcolour]below:0}]{};
\end{tikzpicture}
\captionsetup{width=0.7\textwidth}
\caption{Colours forced\\in the gadget component.}
\label{fig:colours in component}
\end{minipage}%
\hspace{5pt}
\begin{minipage}[b]{0.48\textwidth}
\centering
\begin{tikzpicture}
\node[dot] (w)[label=below:\( w \)][label={[vcolour]above right:0}]{};
\draw (w)--+(90:2.5) node[dot] (x)[label=above:\( x \)][label={[vcolour]right:2}]{}
      (w)--+(-30:2.5) node[dot] (z)[label=right:\( z \)][label={[vcolour]below:1}]{}
      (w)--+(-150:2.5) node[dot] (y)[label=left:\( y \)][label={[vcolour]below:1}]{};
\draw (x)--node[dot][pos=0.17](afterXtoY)[label=left:\( a_4 \)][label={[vcolour]right:0}](a4){} node[dot][pos=0.33][label=left:\( a_3 \)][label={[vcolour]right:2}](a3){} node[dot][pos=0.5][label={[vcolour]right:1}][label=left:\( a_2 \)]{} node[dot][pos=0.66][label={[vcolour]right:0}][label=left:\( a_1 \)]{} node[dot][pos=0.83][label={[vcolour]right:2}](afterYtoX){} (y);
\draw (x)--node[dot][pos=0.17](afterXtoZ)[label=right:\( a_5 \)][label={[vcolour]left:0}](a5){} node[dot][pos=0.33][label=right:\( a_6 \)][label={[vcolour]left:2}](a6){} node[dot][pos=0.5][label={[vcolour]left:1}][label=right:\( a_7 \)]{} node[dot][pos=0.66][label={[vcolour]left:0}][label=right:\( a_8 \)]{} node[dot][pos=0.83][label={[vcolour]left:2}](afterZtoX){} (z);
\draw (y)--node[dot][pos=0.17][label={[vcolour]above:2}](afterYtoZ){} node[dot][pos=0.33][label={[vcolour]above:0}]{} node[dot][pos=0.5][label={[vcolour]above:1}]{} node[dot][pos=0.66][label={[vcolour]above:0}]{} node[dot][pos=0.83][label={[vcolour]above:2}](afterZtoY){} (z);
\draw (afterXtoY)--+(150:0.5) node[dot]{}
      (afterYtoX)--+(150:0.5) node[dot]{};
\draw (afterXtoZ)--+(30:0.5) node[dot]{}
      (afterZtoX)--+(30:0.5) node[dot]{};
\draw (afterYtoZ)--+(-90:0.5) node[dot][label={[vcolour]below:0}]{}
      (afterZtoY)--+(-90:0.5) node[dot][label={[vcolour]below:0}]{};

\draw [ultra thick] (a3)--(a4)--(x)--(a5);
\end{tikzpicture}
\captionsetup{width=0.7\textwidth}
\caption{\( f(x)=2 \) leads to\\a contradiction.}
\label{fig:f(x)=2 gives contradiction}
\end{minipage}
\end{figure}

~\\
\noindent \textbf{Claim:} \( f(x)= 1 \).\\[5pt]
On the contrary, assume that \( f(x)=2 \). 
If \( f(a_3)=0 \), then the bicoloured path \( a_1,a_2,a_3 \) forces colour 2 at \( a_4 \) by Observation~\ref{obs:P3 force colours}; this is a contradiction because \( f(x)=2 \). 
So, \( f(a_3)=2 \). 
Similarly, \( f(a_6)=2 \). 
Now, \( f(a_4)\neq 1 \) (if not, \( a_2,a_3,a_4,x \) is a bicoloured~\( P_4 \)). 
Hence, \( f(a_4)=0 \). 
Similarly, \( f(a_5)=0 \). 
Then, \( a_3,a_4,x,a_5 \) is a bicoloured \( P_4 \); a contradiction (see Figure~\ref{fig:f(x)=2 gives contradiction}). 
This proves the claim.\\

Therefore, \( f(x)=f(y)=f(z) \). 
Thus, by symmetry, the colouring in Figure~\ref{fig:3-star colouring of component} is the \textit{unique} 3-star colouring of the gadget component.
\end{proof}

For every construction in this paper, the output graph is made up of gadgets. 
For every gadget, only some of the vertices in it are allowed to have edges to vertices outside the gadget; we call these vertices as \emph{terminals}. 
In diagrams, we draw a \textit{circle around each terminal}. 

\begin{construct}\label{make:3-star}
\emph{Input:} A graph \( G \) of maximum degree four.\\
\emph{Output:} A bipartite graph \( G' \) of maximum degree three and girth eight.\\
\emph{Guarantee 1:} \( G \) is 3-colourable if and only if \( G' \) is 3-star colourable.\\
\emph{Guarantee 2:} \( G' \) has only \( O(n) \) vertices where \( n=|V(G)| \).\\
\emph{Guarantee 3:} If \( G \) is planar, then \( G' \) is planar (and the construction can be done in polynomial time).\\
\emph{Steps:}\\
First, replace each vertex \( v \) of \( G \) by a vertex gadget as shown in Figure~\ref{fig:vertex replacement}. 
For each vertex \( v \) of \( G \), the vertex gadget for \( v \) has four terminals \( v_1,v_2,v_3,v_4 \) which accommodate the edges incident on \( v \) in \( G \) (each terminal takes at most one edge; order does not matter). 
Replacement of vertices by vertex gadgets converts each edge \( uv \) of \( G \) to an edge \( u_iv_j \) between two terminals (i.e., there exists a unique \( i\in\{1,2,3,4\} \) and a unique \( j\in\{1,2,3,4\} \) such that \( u_iv_j \) is an edge). 
Finally, replace each edge \( u_iv_j \) between two terminals by an edge gadget as shown in Figure~\ref{fig:edge replacement} (the edge gadget is not symmetric; it does not matter which way we connect). 
Figure~\ref{fig:vertex edge replacement} shows an overview. 

The graph \( G\bm{'} \) is bipartite (small dots form one part and big dots form the other part in the bipartition; see Figure~\ref{fig:vertex edge replacement}).
Observe that the vertex gadget and the edge gadget have maximum degree three and girth eight. 
Since a terminal in \( G' \) is shared by at most two gadgets in \( G' \), \( G' \) has maximum degree three. 
Since the distance between two terminals in \( G' \) is at least eight, there is no cycle of length less than eight in \( G' \). 
So, \( G' \) has girth eight. 

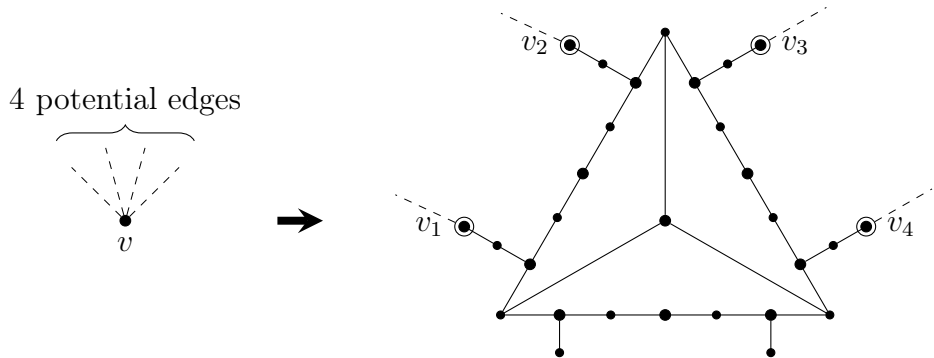
\begin{figure}[hbt]
\centering
\begin{tikzpicture}
\path (0,0)  node[bigdot](v)[label=below:\( v \)]{} --++(2,0) coordinate(from) --++(0.6,0) coordinate(to);
\draw[dashed,very thin] (v)--+(45:1) coordinate(rightNbr)
                        (v)--+(75:1)
                        (v)--+(105:1)
                        (v)--+(135:1)coordinate(leftNbr);
\draw [decorate,decoration={brace,amplitude=6pt,raise=10pt}] (leftNbr)++(-0.2,0)--node[yshift=25pt]{\( 4 \) potential edges} ($(rightNbr)+(0.2,0)$);

\draw [-stealth,draw=black,line width=3pt] (from)--(to);

\path (to) --+(4.5,0) node[bigdot] (w){};
\draw (w)--+(90:2.5) node[dot] (x){}
      (w)--+(-30:2.5) node[dot] (z){}
      (w)--+(-150:2.5) node[dot] (y){};
\draw (x)--node[bigdot][pos=0.17](afterXtoY){} node[dot][pos=0.33]{} node[bigdot][pos=0.5]{} node[dot][pos=0.66]{} node[bigdot][pos=0.83](afterYtoX){} (y);
\draw (x)--node[bigdot][pos=0.17](afterXtoZ){} node[dot][pos=0.33]{} node[bigdot][pos=0.5]{} node[dot][pos=0.66]{} node[bigdot][pos=0.83](afterZtoX){} (z);
\draw (y)--node[bigdot][pos=0.17](afterYtoZ){} node[dot][pos=0.33]{} node[bigdot][pos=0.5]{} node[dot][pos=0.66]{} node[bigdot][pos=0.83](afterZtoY){} (z);
\draw (afterXtoY)--++(150:0.5) node[dot]{}--++(150:0.5) node[bigdot]{} node[terminal](v1)[label=left:\( v_2 \)]{}
      (afterYtoX)--++(150:0.5) node[dot]{}--++(150:0.5) node[bigdot]{} node[terminal](v2)[label=left:\( v_1 \)]{};
\draw (afterXtoZ)--++(30:0.5) node[dot]{}--++(30:0.5) node[bigdot]{} node[terminal](v3)[label=right:\( v_3 \)]{}
      (afterZtoX)--++(30:0.5) node[dot]{}--++(30:0.5) node[bigdot]{} node[terminal](v4)[label=right:\( v_4 \)]{};
\draw (afterYtoZ)--+(-90:0.5) node[dot]{}
      (afterZtoY)--+(-90:0.5) node[dot]{};
\draw[dashed,very thin] (v4)--+(30:1)
                        (v3)--+(30:1)
                        (v1)--+(155:1)
                        (v2)--+(155:1);
\end{tikzpicture}
\caption{Replacement of vertex by vertex gadget.}
\label{fig:vertex replacement}
\end{figure}

\begin{figure}[hbt]
\centering
\begin{tikzpicture}
\draw (0,0) node[bigdot]{} node[terminal](uk)[label=below:\( u_i \)]{} --++(1,0) node[bigdot]{} node[terminal](vl)[label=below:\( v_j \)]{};
\path (vl) --++(1,0) coordinate(from) --++(0.6,0) coordinate(to);
\draw [-stealth,draw=black,line width=3pt] (from)--(to);

\path (to)--+(4.5,0) node[bigdot] (w){};
\draw (w)--+(90:2.5) node[dot] (x){}
      (w)--+(-30:2.5) node[dot] (z){}
      (w)--+(-150:2.5) node[dot] (y){};
\draw (x)--node[bigdot][pos=0.17](afterXtoY){} node[dot][pos=0.33]{} node[bigdot][pos=0.5]{} node[dot][pos=0.66]{} node[bigdot][pos=0.83](afterYtoX){} (y);
\draw (x)--node[bigdot][pos=0.17](afterXtoZ){} node[dot][pos=0.33]{} node[bigdot][pos=0.5]{} node[dot][pos=0.66]{} node[bigdot][pos=0.83](afterZtoX){} (z);
\draw (y)--node[bigdot][pos=0.17](afterYtoZ){} node[dot][pos=0.33]{} node[bigdot][pos=0.5]{} node[dot][pos=0.66]{} node[bigdot][pos=0.83](afterZtoY){} (z);
\draw (afterXtoY)--+(150:0.5) node[dot]{}
      (afterYtoX)--++(150:0.5) node[dot]{}--+(150:0.5) node[bigdot]{} node[terminal][label={left:\( u_i \)}]{};
\draw (afterXtoZ)--+(30:0.5) node[dot]{}
      (afterZtoX)--++(30:0.5) node[dot](3vertex1){} --++(30:0.5) node[bigdot](3vertex2){}--++(30:0.5) node[dot]{}--++(30:0.5) node[bigdot]{} node[terminal][label={right:\( v_j \)}]{};
\draw (3vertex1)--+(120:0.5) node[bigdot]{}
      (3vertex2)--+(120:0.5) node[dot]{};
\draw (afterYtoZ)--+(-90:0.5) node[dot]{}
      (afterZtoY)--+(-90:0.5) node[dot]{};

\end{tikzpicture}
\caption{Replacement of edge between terminals by edge gadget.}
\label{fig:edge replacement}
\end{figure}

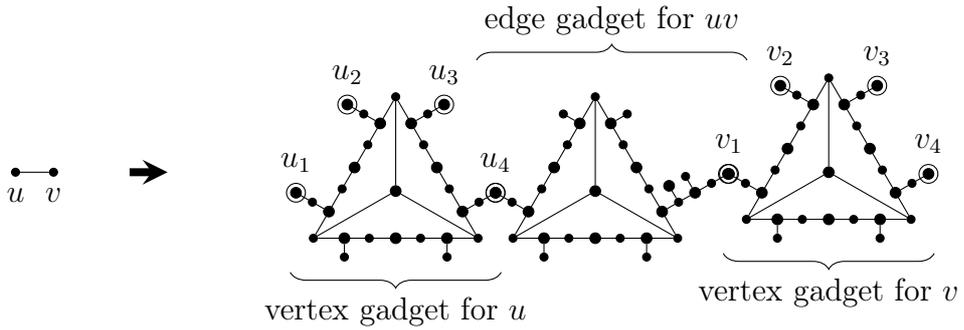
\begin{figure}[hbt]
\centering
\begin{tikzpicture}[scale=0.5]
\path (0,0) node[dot](u)[label=below:\( u \)]{} --++(1,0) node[dot](v)[label=below:\( v \)]{};
\draw (u)-- coordinate(u2v) (v);
\path (v) --++(2,0) coordinate(from) --++(1,0) coordinate(to);
\draw [-stealth,draw=black,line width=3pt] (from)--(to);

\path (to) --+(6,-0.5) node[bigdot] (w){};
\draw (w)--+(90:2.5) node[dot] (x){}
      (w)--+(-30:2.5) node[dot] (z){}
      (w)--+(-150:2.5) node[dot] (y){};
\draw (x)--node[bigdot][pos=0.17](afterXtoY){} node[dot][pos=0.33]{} node[bigdot][pos=0.5]{} node[dot][pos=0.66]{} node[bigdot][pos=0.83](afterYtoX){} (y);
\draw (x)--node[bigdot][pos=0.17](afterXtoZ){} node[dot][pos=0.33]{} node[bigdot][pos=0.5]{} node[dot][pos=0.66]{} node[bigdot][pos=0.83](afterZtoX){} (z);
\draw (y)--node[bigdot][pos=0.17](afterYtoZ){} node[dot][pos=0.33]{} node[bigdot][pos=0.5]{} node[dot][pos=0.66]{} node[bigdot][pos=0.83](afterZtoY){} (z);
\draw (afterXtoY)--++(150:0.5) node[dot]{}--++(150:0.5) node[bigdot]{} node[terminal](u1)[label=above:\( u_2 \)]{}
      (afterYtoX)--++(150:0.5) node[dot]{}--++(150:0.5) node[bigdot]{} node[terminal](u2)[label=above:\( u_1 \)]{};
\draw (afterXtoZ)--++(30:0.5) node[dot]{}--++(30:0.5) node[bigdot]{} node[terminal](u3)[label=above:\( u_3 \)]{}
      (afterZtoX)--++(30:0.5) node[dot]{}--++(30:0.5) node[bigdot]{} node[terminal](u4)[label=above:\( u_4 \)]{};
\draw (afterYtoZ)--+(-90:0.5) node[dot]{}
      (afterZtoY)--+(-90:0.5) node[dot]{};

\path (u2)--+(-0.15,0) coordinate(brace1Start);
\path (u4)--+(0.15,0) coordinate(brace1End);
\draw [decorate,decoration={brace,amplitude=6pt,raise=30pt,mirror}] (brace1Start)--  node[yshift=-45pt]{vertex gadget for \( u \)} (brace1End);

\path (w)--+(5.25,0) node[bigdot] (w){};
\draw (w)--+(90:2.5) node[dot] (x){}
      (w)--+(-30:2.5) node[dot] (z){}
      (w)--+(-150:2.5) node[dot] (y){};
\draw (x)--node[bigdot][pos=0.17](afterXtoY){} node[dot][pos=0.33]{} node[bigdot][pos=0.5]{} node[dot][pos=0.66]{} node[bigdot][pos=0.83](afterYtoX){} (y);
\draw (x)--node[bigdot][pos=0.17](afterXtoZ){} node[dot][pos=0.33]{} node[bigdot][pos=0.5]{} node[dot][pos=0.66]{} node[bigdot][pos=0.83](afterZtoX){} (z);
\draw (y)--node[bigdot][pos=0.17](afterYtoZ){} node[dot][pos=0.33]{} node[bigdot][pos=0.5]{} node[dot][pos=0.66]{} node[bigdot][pos=0.83](afterZtoY){} (z);
\draw (afterXtoY)--+(150:0.5) node[dot]{}
      (afterYtoX)--++(150:0.5) node[dot]{}--+(150:0.5) node[bigdot]{} node[terminal](u){};
\draw (afterXtoZ)--+(30:0.5) node[dot]{}
      (afterZtoX)--++(30:0.5) node[dot](3vertex1){} --++(30:0.5) node[bigdot](3vertex2){}--++(30:0.5) node[dot]{}--++(30:0.5) node[bigdot]{} node[terminal](v){};
\draw (3vertex1)--+(120:0.5) node[bigdot]{}
      (3vertex2)--+(120:0.5) node[dot]{};
\draw (afterYtoZ)--+(-90:0.5) node[dot]{}
      (afterZtoY)--+(-90:0.5) node[dot]{};

\path (u)--+(-0.5,0) coordinate(brace1Start);
\path (v)--+(0.5,-0.5) coordinate(brace1End);
\draw [decorate,decoration={brace,amplitude=6pt,raise=50pt}] (brace1Start)--  node[yshift=65pt]{edge gadget for \( uv \)} (brace1End);


\path (w)--+(6.14,0.5) node[bigdot] (w){};
\draw (w)--+(90:2.5) node[dot] (x){}
      (w)--+(-30:2.5) node[dot] (z){}
      (w)--+(-150:2.5) node[dot] (y){};
\draw (x)--node[bigdot][pos=0.17](afterXtoY){} node[dot][pos=0.33]{} node[bigdot][pos=0.5]{} node[dot][pos=0.66]{} node[bigdot][pos=0.83](afterYtoX){} (y);
\draw (x)--node[bigdot][pos=0.17](afterXtoZ){} node[dot][pos=0.33]{} node[bigdot][pos=0.5]{} node[dot][pos=0.66]{} node[bigdot][pos=0.83](afterZtoX){} (z);
\draw (y)--node[bigdot][pos=0.17](afterYtoZ){} node[dot][pos=0.33]{} node[bigdot][pos=0.5]{} node[dot][pos=0.66]{} node[bigdot][pos=0.83](afterZtoY){} (z);
\draw (afterXtoY)--++(150:0.5) node[dot]{}--++(150:0.5) node[bigdot]{} node[terminal](v1)[label=above:\( v_2 \)]{}
      (afterYtoX)--++(150:0.5) node[dot]{}--++(150:0.5) node[bigdot]{} node[terminal](v2)[label=above:\( v_1 \)]{};
\draw (afterXtoZ)--++(30:0.5) node[dot]{}--++(30:0.5) node[bigdot]{} node[terminal](v3)[label=above:\( v_3 \)]{}
      (afterZtoX)--++(30:0.5) node[dot]{}--++(30:0.5) node[bigdot]{} node[terminal](v4)[label=above:\( v_4 \)]{};
\draw (afterYtoZ)--+(-90:0.5) node[dot]{}
      (afterZtoY)--+(-90:0.5) node[dot]{};

\path (v2)--+(-0.15,0) coordinate(brace1Start);
\path (v4)--+(0.15,0) coordinate(brace1End);
\draw [decorate,decoration={brace,amplitude=6pt,raise=30pt,mirror}] (brace1Start)--  node[yshift=-45pt]{vertex gadget for \( v \)} (brace1End);

\end{tikzpicture}
\caption{Construction of \( G\bm{'} \) from \( G \). Only vertices \( u,v \) and edge \( uv \) in \( G \) and corresponding gadgets in \( G\bm{'} \) are shown.}
\label{fig:vertex edge replacement}
\end{figure}

\end{construct}

\begin{proof}[Proof of Guarantee 1]
The following claim demonstrates how the vertex gadget and the edge gadget serve their respective roles.\\

\noindent \textbf{Claim~1:} The colourings shown in Figure~\ref{fig:3-star colouring vertex and edge gadgets} are the \textit{unique} 3-star colouring of the vertex gadget and the edge gadget. In particular, under every 3-star colouring, all four terminals of a vertex gadget must get the same colour whereas the terminals of an edge gadget must get different colours.\\

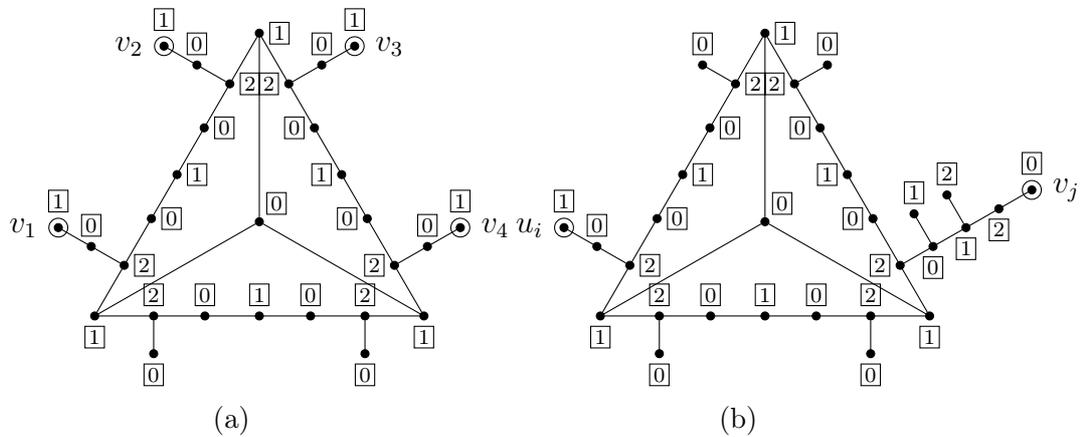
\begin{figure}[hbt]
\centering
\begin{subfigure}[b]{0.4\textwidth}
\begin{tikzpicture}

\node[dot] (w)[label={[vcolour]above right:0}]{};
\draw (w)--+(90:2.5) node[dot] (x)[label={[vcolour]right:1}]{}
      (w)--+(-30:2.5) node[dot] (z)[label={[vcolour]below:1}]{}
      (w)--+(-150:2.5) node[dot] (y)[label={[vcolour]below:1}]{};
\draw (x)--node[dot][pos=0.17](afterXtoY)[label={[vcolour]right:2}]{} node[dot][pos=0.33][label={[vcolour]right:0}]{} node[dot][pos=0.5][label={[vcolour]right:1}]{} node[dot][pos=0.66][label={[vcolour]right:0}]{} node[dot][pos=0.83][label={[vcolour]right:2}](afterYtoX){} (y);
\draw (x)--node[dot][pos=0.17](afterXtoZ)[label={[vcolour]left:2}]{} node[dot][pos=0.33][label={[vcolour]left:0}]{} node[dot][pos=0.5][label={[vcolour]left:1}]{} node[dot][pos=0.66][label={[vcolour]left:0}]{} node[dot][pos=0.83][label={[vcolour]left:2}](afterZtoX){} (z);
\draw (y)--node[dot][pos=0.17][label={[vcolour]above:2}](afterYtoZ){} node[dot][pos=0.33][label={[vcolour]above:0}]{} node[dot][pos=0.5][label={[vcolour]above:1}]{} node[dot][pos=0.66][label={[vcolour]above:0}]{} node[dot][pos=0.83][label={[vcolour]above:2}](afterZtoY){} (z);

\draw (afterYtoZ)--+(-90:0.5) node[dot][label={[vcolour]below:0}]{}
      (afterZtoY)--+(-90:0.5) node[dot][label={[vcolour]below:0}]{};

\draw (afterXtoY)--++(150:0.5) node[dot][label={[vcolour]above:0}]{}--++(150:0.5) node[dot]{} node[terminal](v1)[label=left:\( v_2 \)][label={[vcolour]above:1}]{}
      (afterYtoX)--++(150:0.5) node[dot][label={[vcolour]above:0}]{}--++(150:0.5) node[dot]{} node[terminal](v2)[label=left:\( v_1 \)][label={[vcolour]above:1}]{};
\draw (afterXtoZ)--++(30:0.5) node[dot][label={[vcolour]above:0}]{}--++(30:0.5) node[dot]{} node[terminal](v3)[label=right:\( v_3 \)][label={[vcolour]above:1}]{}
      (afterZtoX)--++(30:0.5) node[dot][label={[vcolour]above:0}]{}--++(30:0.5) node[dot]{} node[terminal](v4)[label=right:\( v_4 \)][label={[vcolour]above:1}]{};

\end{tikzpicture}
\caption{}%
\label{fig:3-star colouring vertex gadget}
\end{subfigure}%
\hspace{0.5cm}%
\begin{subfigure}[b]{0.4\textwidth}
\begin{tikzpicture}
\node[dot] (w)[label={[vcolour]above right:0}]{};
\draw (w)--+(90:2.5) node[dot] (x)[label={[vcolour]right:1}]{}
      (w)--+(-30:2.5) node[dot] (z)[label={[vcolour]below:1}]{}
      (w)--+(-150:2.5) node[dot] (y)[label={[vcolour]below:1}]{};
\draw (x)--node[dot][pos=0.17](afterXtoY)[label={[vcolour]right:2}]{} node[dot][pos=0.33][label={[vcolour]right:0}]{} node[dot][pos=0.5][label={[vcolour]right:1}]{} node[dot][pos=0.66][label={[vcolour]right:0}]{} node[dot][pos=0.83][label={[vcolour]right:2}](afterYtoX){} (y);
\draw (x)--node[dot][pos=0.17](afterXtoZ)[label={[vcolour]left:2}]{} node[dot][pos=0.33][label={[vcolour]left:0}]{} node[dot][pos=0.5][label={[vcolour]left:1}]{} node[dot][pos=0.66][label={[vcolour]left:0}]{} node[dot][pos=0.83][label={[vcolour]left:2}](afterZtoX){} (z);
\draw (y)--node[dot][pos=0.17][label={[vcolour]above:2}](afterYtoZ){} node[dot][pos=0.33][label={[vcolour]above:0}]{} node[dot][pos=0.5][label={[vcolour]above:1}]{} node[dot][pos=0.66][label={[vcolour]above:0}]{} node[dot][pos=0.83][label={[vcolour]above:2}](afterZtoY){} (z);

\draw (afterYtoZ)--+(-90:0.5) node[dot][label={[vcolour]below:0}]{}
      (afterZtoY)--+(-90:0.5) node[dot][label={[vcolour]below:0}]{};

\draw (afterXtoY)--+(150:0.5) node[dot][label={[vcolour]above:0}]{}
      (afterYtoX)--++(150:0.5) node[dot][label={[vcolour]above:0}]{}--+(150:0.5) node[dot]{} node[terminal][label={left:\( u_i \)}][label={[vcolour]above:1}]{};
\draw (afterXtoZ)--+(30:0.5) node[dot][label={[vcolour]above:0}]{}
      (afterZtoX)--++(30:0.5) node[dot](3vertex1)[label={[vcolour]below:0}]{} --++(30:0.5) node[dot](3vertex2)[label={[vcolour]below:1}]{}--++(30:0.5) node[dot][label={[vcolour]below:2}]{}--++(30:0.5) node[dot]{} node[terminal][label={right:\( v_j \)}][label={[vcolour]above:0}]{};
\draw (3vertex1)--+(120:0.5) node[dot][label={[vcolour]above:1}]{}
      (3vertex2)--+(120:0.5) node[dot][label={[vcolour]above:2}]{};
\end{tikzpicture}
\caption{}%
\label{fig:3-star colouring edge gadget}
\end{subfigure}
\caption{The \textit{unique} 3-star colouring of the vertex gadget and the edge gadget.}
\label{fig:3-star colouring vertex and edge gadgets}
\end{figure}

\noindent Recall that Figure~\ref{fig:3-star colouring of component} exhibits the \textit{unique} 3-star colouring of the gadget component by Observation~\ref{lem:gadget component}. 
This fixes colours on the gadget component within the vertex gadget (resp.\ edge gadget). We obtain Claim~1 by repeated application of Observation~\ref{obs:P3 force colours}.\\

The \textit{unique} 3-star colouring of the vertex gadget (resp.\ edge gadget) exhibited in Figure~\ref{fig:3-star colouring vertex and edge gadgets} ensures the following claim.\\[5pt]
\noindent \textbf{Claim~2:} If \( Q^* \) is a 3-vertex path in a vertex gadget (resp.\ edge gadget) and \( Q^* \) contains a terminal of the gadget, then \( Q^* \) is tricoloured.\\ 

Suppose that \( G \) admits a 3-colouring \( f \). 
We produce a 3-colouring \( f\bm{'} \) of \( G\bm{'} \) by assigning \( f\bm{'}(v_i)=f(v) \) for each vertex \( v \) of \( G \) and \( 1\leq i\leq 4 \), and extending it to vertex gadgets and edge gadgets using the schemes in Figure~\ref{fig:3-star colouring vertex and edge gadgets}.\\

\noindent \textbf{Claim~3:} \( f\bm{'} \) is a 3-star colouring of \( G\bm{'} \).\\[5pt]
On the contrary, assume that there is a 4-vertex path \( Q \) in \( G\bm{'} \) bicoloured by \( f\bm{'} \). 
Since star colouring schemes are used on gadgets in \( G\bm{'} \), \( Q \) must contain vertices from two gadgets. 
As a result, the terminal shared by the two gadgets must be in \( Q \) and the segment of \( Q \) in one of the gadgets is a 3-vertex path \( Q^* \). 
Clearly, the shared terminal must be in \( Q^* \). 
Since \( Q^* \) is a 3-vertex path in a gadget and \( Q^* \) contains a terminal of the gadget, \( Q^* \) is tricoloured by Claim~2. 
This is a contradiction to the assumption that \( Q \) is bicoloured. 
This proves Claim~3. 
Therefore, if \( G \) is 3-colourable, then \( G\bm{'} \) is 3-star colourable.\\


Conversely, suppose that \( G\bm{'} \) admits a 3-star colouring \( f\bm{'} \). By Claim~1, \( f\bm{'} \) must use (i)~the same colour on terminals of each vertex gadget, and (ii)~different colours on terminals of each edge gadget. Hence, the function \( f \) defined as \( f(v)=f\bm{'}(v_1) \) for all \( v\in V(G) \) is a 3-colouring of \( G \). 
\end{proof}
\begin{proof}[Proof of Guarantee 2]
Let us count the number of vertices and edges in \( G\bm{'} \). 
The vertex gadget has 29 vertices and 31 edges. 
The edge gadget has 29 vertices excluding the terminals, and 33 edges (let us count terminals as part of vertex gadgets). 
So, \( G\bm{'} \) has \( 29n+29m \) vertices and \( 31n+33m \) edges where \( n=|V(G)| \) and \( m=|E(G)| \). 
As \( \Delta(G)=4 \), we have \( m\leq n\Delta(G)/2=2n=O(n) \). 
Therefore, \( G' \) has only \( O(n) \) vertices and \( O(n) \) edges.
\end{proof}
\begin{proof}[Proof of Guarantee 3]
Suppose that \( G \) is planar. Fix a plane drawing of \( G \). For each vertex \( v \) of \( G \), the cyclic order of edges around \( v \) in \( G \) (usually called the rotation system at \( v \)) can be computed in time polynomial in the size of the input \( G \) \cite{nishizeki_rahman}. Hence, it is possible to construct \( G\bm{'} \) in such a way that \( G\bm{'} \) is planar, and the construction still requires only time polynomial in the size of \( G \). 
\end{proof}
\noindent Remark: If \( N=\Theta(n) \), \( g(n)=h(N) \) and \( h(N)=2^{o(N)} \) (resp.\ \( 2^{o(\sqrt{N})} \)), then \( g(n)=2^{o(n)} \) (resp.\ \( 2^{o(\sqrt{n})} \)). 
\begin{theorem}\label{thm:3-star planar bip girth 8}
\textsc{3-Star Colourability}\( ( \)planar, bipartite, \( \Delta=3 \), girth \( =8 \)\( ) \) is NP-complete, and the problem does not admit a \( 2^{o(\sqrt{n})} \)-time algorithm unless ETH fails. Moreover, the problem \textsc{3-Star Colourability}\( ( \)bipartite, \( \Delta=3 \), girth \( =8 \)\( ) \) does not admit a \( 2^{o(n)} \)-time algorithm unless ETH fails. 
\end{theorem}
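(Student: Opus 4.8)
The plan is to combine Construction~\ref{make:3-star} with known hardness results for \textsc{3-Colourability}. Membership in NP is immediate: given a candidate colouring of the input graph, one checks in polynomial time that it is proper and that no $4$-vertex path is bicoloured. For NP-hardness, I would reduce from \textsc{3-Colourability}$($planar, $\Delta=4)$, which is NP-complete. Given such an instance $G$, apply Construction~\ref{make:3-star} to obtain $G'$. By Guarantee~3 the construction runs in polynomial time and $G'$ is planar; by the construction itself $G'$ is bipartite, has $\Delta(G')=3$ and girth $8$; and by Guarantee~1, $G$ is $3$-colourable if and only if $G'$ is $3$-star colourable. Hence \textsc{3-Star Colourability}$($planar, bipartite, $\Delta=3$, girth $=8)$ is NP-complete.

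For the ETH lower bounds the decisive feature is that Construction~\ref{make:3-star} incurs only \emph{linear} blow-up: by Guarantee~2, $G'$ has $O(n)$ vertices, and since the construction produces at least $29n$ vertices we in fact have $|V(G')|=\Theta(n)$. Under ETH, \textsc{3-Colourability}$($planar, $\Delta=4)$ admits no $2^{o(\sqrt{n})}$-time algorithm, because planarizing a general $3$-SAT formula introduces a quadratic number of crossing gadgets, which together with the sparsification lemma forces the $\sqrt{\cdot}$ in the exponent. Writing $N=|V(G')|=\Theta(n)$, if \textsc{3-Star Colourability}$($planar, bipartite, $\Delta=3$, girth $=8)$ admitted a $2^{o(\sqrt{N})}$-time algorithm, then composing it with the polynomial-time reduction and invoking the Remark preceding the theorem (with $N=\Theta(n)$) would yield a $2^{o(\sqrt{n})}$-time algorithm for the planar source problem, contradicting ETH.

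For the non-planar statement I would instead reduce from \textsc{3-Colourability}$(\Delta=4)$ without planarity, which under ETH admits no $2^{o(n)}$-time algorithm, since the classical $3$-SAT-to-$3$-colouring reduction is linear in size, can be arranged to have bounded degree, and the sparsification lemma removes the dependence on the clause-to-variable ratio. Dropping the planarity hypothesis from Guarantee~3, the same construction still delivers a bipartite graph $G'$ of maximum degree three and girth eight with $N=\Theta(n)$ vertices and $G$ $3$-colourable iff $G'$ is $3$-star colourable. A $2^{o(N)}$-time algorithm for \textsc{3-Star Colourability}$($bipartite, $\Delta=3$, girth $=8)$ would then give, via the Remark, a $2^{o(n)}$-time algorithm for \textsc{3-Colourability}$(\Delta=4)$, again contradicting ETH.

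The main obstacle I anticipate is pinning down the precise ETH-hardness of the two source problems at exactly the degree bound demanded as input by Construction~\ref{make:3-star}. One must ensure that the $\sqrt{n}$-exponent lower bound holds for \emph{bounded-degree} planar $3$-colouring (so that degree-reduction gadgets preserving planarity and keeping the instance of linear size are available) while the full linear-exponent lower bound holds for the non-planar bounded-degree version. Once these source results are in place, everything else is routine bookkeeping, since Construction~\ref{make:3-star} already supplies the structural restrictions (bipartite, $\Delta=3$, girth $8$, and planarity when required) together with the $\Theta(n)$ size bound needed to apply the Remark.
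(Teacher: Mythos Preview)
Your proposal is correct and follows essentially the same approach as the paper: reduce from \textsc{3-Colourability}$(\text{planar},\Delta=4)$ (resp.\ \textsc{3-Colourability}$(\Delta=4)$) via Construction~\ref{make:3-star}, invoke Guarantees~1--3 for correctness, planarity, and the $\Theta(n)$ size bound, and transfer the known ETH lower bounds through the linear-size reduction using the Remark. The ``obstacle'' you flag---the precise ETH-hardness of the bounded-degree source problems---is handled in the paper simply by citation: the $2^{o(\sqrt{n})}$ bound for planar $\Delta=4$ instances is taken from Bir\'o et al.\ \cite{biro}, and the $2^{o(n)}$ bound for non-planar $\Delta=4$ instances from Cygan et al.\ \cite[Lemma~2.1]{cygan2016}.
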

\begin{proof}
We employ Construction~\ref{make:3-star} to establish a reduction from \textsc{3-Colourability}(planar, \( \Delta=4 \)) to \textsc{3-Star Colourability}\( ( \)planar, bipartite, \( \Delta=3 \), girth \( =8 \)\( ) \).
%
Let \( G \) be an instance of \textsc{3-Colourability}(planar, \( \Delta=4 \)). 
From \( G \), construct an instance \( G' \) of \textsc{3-Star Colourability}(planar, bipartite, \( \Delta=3 \), girth \( =8 \)) by Construction~\ref{make:3-star}. 
By Guarantee~1 of Construction~\ref{make:3-star}, \( G \) is 3-colourable if and only if \( G' \) is 3-star colourable. 
Since \( G \) is planar, the graph \( G' \) is planar by Guarantee~3 of Construction~\ref{make:3-star}. 
By Guarantee~2 of Construction~\ref{make:3-star}, the number of vertices in \( G' \) is \( N=O(n) \) where \( n=|V(G)| \).
Hence, \( N=\Theta(n) \). 

Since \textsc{3-Colourability}(planar, \( \Delta=4 \)) is NP-complete \cite{garey_johnson_stockmeyer1976} and the problem does not admit a \( 2^{o(\sqrt{n})} \)-time algorithm unless ETH fails \cite{biro}, \textsc{3-Star Colourability}\( ( \)planar, bipartite, \( \Delta=3 \), girth \( =8 \)\( ) \) is NP-complete, and the problem does not admit a \( 2^{o(\sqrt{n})} \)-time algorithm unless ETH fails.

Similarly, Construction~\ref{make:3-star} establishes a reduction from \textsc{3-Colourability}(\( \Delta=4 \)) to \textsc{3-Star Colourability}\( ( \)bipartite, \( \Delta=3 \), girth \( =8 \)\( ) \). Since \textsc{3-Colourability}(\( \Delta=4 \)) does not admit a \( 2^{o(n)} \)-time algorithm \cite[Lemma~2.1]{cygan2016} and \( |V(G')|=\Theta(|V(G)|) \) in Construction~\ref{make:3-star} (see Guarantee~2), \textsc{3-Star Colourability}\( ( \)bipartite, \( \Delta=3 \),\allowbreak \( \text{girth}=8 \)\( ) \) does not admit a \( 2^{o(n)} \)-time algorithm.
\end{proof}

Note that in Construction~\ref{make:3-star}, a 3-colouring \( f \) of \( G \) can be extended into a 3-star colouring \( f' \) of \( G' \) in \( 2^n \) ways where \( n=|V(G)| \) (since the colours on the terminals are fixed, each edge gadget has exactly one 3-star colouring whereas two choices are available for each vertex gadget; e.g., swapping colour~0 with colour~2 is possible in Figure~\ref{fig:3-star colouring vertex gadget}).
Therefore, the reduction in Theorem~\ref{thm:3-star planar bip girth 8} is weakly parsimonious. 
Thus, we have the following theorem since it is \#P-complete to count the number of 3-colourings of a graph of maximum degree four \cite{bubley}.
\begin{theorem}
It is \#P-complete to count the number of \( 3 \)-star colourings of a bipartite graph of maximum degree three and girth eight.\qed
\end{theorem}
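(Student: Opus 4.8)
The plan is to show that counting $3$-star colourings lies in \#P and then to reduce the \#P-complete problem of counting $3$-colourings of graphs of maximum degree four \cite{bubley} to it via Construction~\ref{make:3-star}. Membership in \#P is immediate: a $3$-star colouring of a graph on $N$ vertices is a string of length $N$ over $\{0,1,2\}$, and given such a string one can check in polynomial time whether it is a proper colouring and whether every bicoloured component (that is, every component of $G[V_i\cup V_j]$ over all pairs of colours $i,j$) is a star; hence the number of $3$-star colourings is the number of witnesses of a polynomially balanced, polynomial-time-checkable relation.

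For hardness, let $G$ be an instance of counting $3$-colourings with $\Delta(G)=4$, and let $G'$ be the graph produced by Construction~\ref{make:3-star}; then $G'$ is bipartite with $\Delta(G')=3$ and girth eight, and since the replacement of vertices and edges by gadgets is local and yields only $O(n)$ vertices (Guarantee~2), $G'$ is constructible from $G$ in polynomial time. First I would establish the exact relationship $\#\{3\text{-star colourings of }G'\}=2^{n}\cdot\#\{3\text{-colourings of }G\}$, where $n=|V(G)|$. The key facts are contained in Claim~1 of the proof of Guarantee~1 and the remark preceding the theorem: by Claim~1 every $3$-star colouring of $G'$ assigns a common colour to the four terminals of each vertex gadget and distinct colours to the two terminals of each edge gadget, so restricting to the terminals $v_1,\dots,v_4$ of the vertex gadget of each $v$ and setting $f(v)=f'(v_1)$ yields a well-defined $3$-colouring of $G$. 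Conversely, fixing such a $3$-colouring fixes the colours on all terminals, whereupon each edge gadget has exactly one $3$-star extension while each of the $n$ vertex gadgets has exactly two (the swap of colours $0$ and $2$ illustrated in Figure~\ref{fig:3-star colouring vertex gadget}). Hence every $3$-colouring of $G$ has a fibre of exactly $2^n$ $3$-star colourings of $G'$, and these fibres partition all $3$-star colourings of $G'$.

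Given this, the counting reduction is immediate: from the number of $3$-star colourings of $G'$ one recovers the number of $3$-colourings of $G$ by dividing by the fixed quantity $2^n$, which is computable in polynomial time from $G$. Thus the map $G\mapsto G'$ together with the output transformation ``divide by $2^n$'' is a polynomial-time (indeed weakly parsimonious) reduction, and since counting $3$-colourings of graphs of maximum degree four is \#P-complete \cite{bubley}, counting $3$-star colourings of bipartite graphs of maximum degree three and girth eight is \#P-hard. Together with membership in \#P, this establishes \#P-completeness.

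The main obstacle is the exactness and uniformity of the fibre size: one must verify not merely that each $3$-colouring of $G$ extends to some $3$-star colouring of $G'$ (the argument would fail without a uniform factor), but that the number of extensions is the same constant $2^n$ for every $3$-colouring, and that no further extensions are lost or gained. This hinges on the rigidity of the gadgets, namely that the gadget component has a \emph{unique} $3$-star colouring up to colour swaps (Lemma~\ref{lem:gadget component}) and that, once terminal colours are fixed, the only residual freedom is the independent $0$/$2$ swap within each vertex gadget; both are already secured by Claim~1 and Figure~\ref{fig:3-star colouring vertex and edge gadgets}.
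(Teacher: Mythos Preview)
Your proposal is correct and follows essentially the same approach as the paper: both use Construction~\ref{make:3-star} to obtain a weakly parsimonious reduction from counting $3$-colourings of graphs of maximum degree four (which is \#P-complete by \cite{bubley}), exploiting the fact that each $3$-colouring of $G$ extends to exactly $2^{n}$ $3$-star colourings of $G'$ because every vertex gadget admits exactly two extensions once its terminal colour is fixed while every edge gadget admits exactly one. Your write-up is in fact more detailed than the paper's, which dispatches the theorem in a single paragraph preceding the statement and a \qed.
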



The output graph \( G\bm{'} \) in Construction~\ref{make:3-star} has girth eight. 
We can modify Construction~\ref{make:3-star} to give \( G\bm{'} \) arbitrarily large girth. 
The modification required is to replace the gadget component by the new one displayed in Figure~\ref{fig:new component} below. 
For \( s=1 \), the new gadget component is as shown in Figure~\ref{fig:new component for s=1}. 

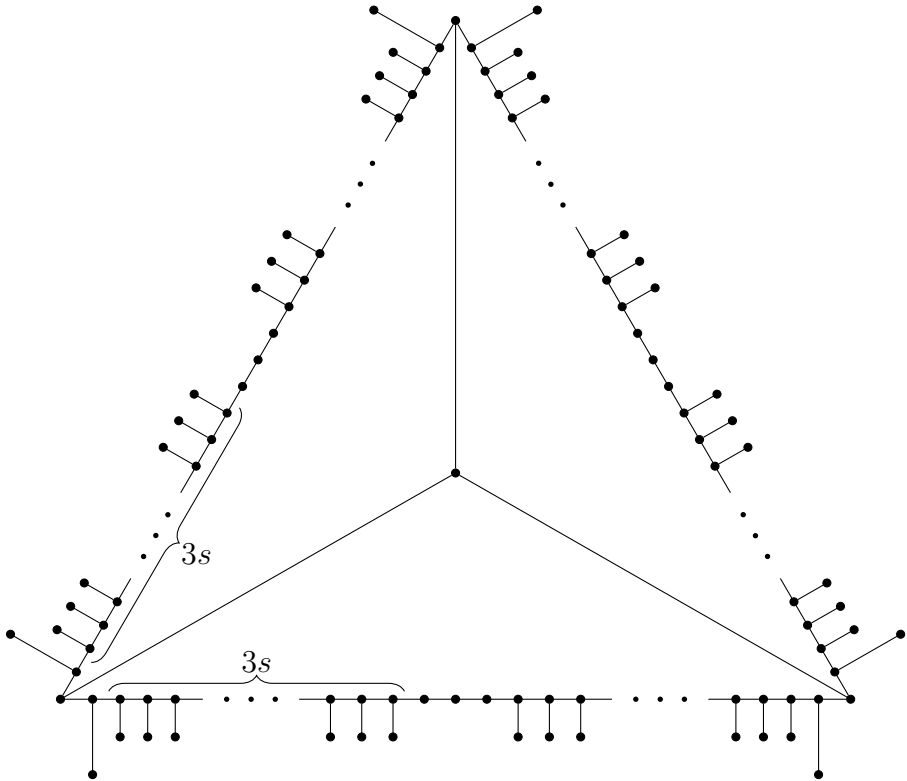
\begin{figure}[hbt]
\centering
\begin{tikzpicture}
\node[dot] (w){};
\draw (w)--+(90:6) node[dot](x){}
      (w)--+(-30:6) node[dot](z){}
      (w)--+(-150:6) node[dot](y){};

\path (x)-- node[pos=6/21](x2y-1of3){} node[pos=15/21](x2y-2of3){}  (y);

\draw (x2y-1of3)-- node[pos=1/10](x2y-mid1)[dot]{} node[pos=2/10](x2y-mid2)[dot]{} node[pos=3/10](x2y-mid3)[dot]{} node[pos=4/10](x2y-mid4)[dot]{} node[pos=5/10](x2y-mid5)[dot]{} node[pos=6/10](x2y-mid6)[dot]{} node[pos=7/10](x2y-mid7)[dot]{} node[pos=8/10](x2y-mid8)[dot]{} node[pos=9/10](x2y-mid9)[dot]{}  (x2y-2of3);

\path (x)-- node[pos=4/21](left){} node[pos=17/21](right){}  (y);

\draw (x)-- node[dot][pos=1/5](x2y-left1){} node[dot][pos=2/5](x2y-left2){} node[dot][pos=3/5](x2y-left3){} node[dot][pos=4/5](x2y-left4){} (left);

\draw (y)-- node[dot][pos=1/5](x2y-right1){} node[dot][pos=2/5](x2y-right2){} node[dot][pos=3/5](x2y-right3){} node[dot][pos=4/5](x2y-right4){} (right);

\path (left)-- node[sloped,font=\LARGE]{\( \dots \)} (x2y-1of3);
\path (right)-- node[sloped,font=\LARGE]{\( \dots \)} (x2y-2of3);

\draw (x2y-left1)--+(150:1) node[dot]{};
\draw (x2y-left2)--+(150:0.5) node[dot]{}
      (x2y-left3)--+(150:0.5) node[dot]{}
      (x2y-left4)--+(150:0.5) node[dot]{};

\draw (x2y-right1)--+(150:1) node[dot]{};
\draw (x2y-right2)--+(150:0.5) node[dot]{}
      (x2y-right3)--+(150:0.5) node[dot]{}
      (x2y-right4)--+(150:0.5) node[dot]{};

\draw (x2y-mid1)--+(150:0.5) node[dot]{}
      (x2y-mid2)--+(150:0.5) node[dot]{}
      (x2y-mid3)--+(150:0.5) node[dot]{}
      (x2y-mid7)--+(150:0.5) node[dot]{}
      (x2y-mid8)--+(150:0.5) node[dot]{}
      (x2y-mid9)--+(150:0.5) node[dot]{};

\path (x)-- node[pos=6/21](x2z-1of3){} node[pos=15/21](x2z-2of3){}  (z);

\draw (x2z-1of3)-- node[pos=1/10](x2z-mid1)[dot]{} node[pos=2/10](x2z-mid2)[dot]{} node[pos=3/10](x2z-mid3)[dot]{} node[pos=4/10](x2z-mid4)[dot]{} node[pos=5/10](x2z-mid5)[dot]{} node[pos=6/10](x2z-mid6)[dot]{} node[pos=7/10](x2z-mid7)[dot]{} node[pos=8/10](x2z-mid8)[dot]{} node[pos=9/10](x2z-mid9)[dot]{}  (x2z-2of3);

\path (x)-- node[pos=4/21](left){} node[pos=17/21](right){}  (z);

\draw (x)-- node[dot][pos=1/5](x2z-left1){} node[dot][pos=2/5](x2z-left2){} node[dot][pos=3/5](x2z-left3){} node[dot][pos=4/5](x2z-left4){} (left);

\draw (z)-- node[dot][pos=1/5](x2z-right1){} node[dot][pos=2/5](x2z-right2){} node[dot][pos=3/5](x2z-right3){} node[dot][pos=4/5](x2z-right4){} (right);

\path (left)-- node[sloped,font=\LARGE]{\( \dots \)} (x2z-1of3);
\path (right)-- node[sloped,font=\LARGE]{\( \dots \)} (x2z-2of3);

\draw (x2z-left1)--+(30:1) node[dot]{};
\draw (x2z-left2)--+(30:0.5) node[dot]{}
      (x2z-left3)--+(30:0.5) node[dot]{}
      (x2z-left4)--+(30:0.5) node[dot]{};

\draw (x2z-right1)--+(30:1) node[dot]{};
\draw (x2z-right2)--+(30:0.5) node[dot]{}
      (x2z-right3)--+(30:0.5) node[dot]{}
      (x2z-right4)--+(30:0.5) node[dot]{};

\draw (x2z-mid1)--+(30:0.5) node[dot]{}
      (x2z-mid2)--+(30:0.5) node[dot]{}
      (x2z-mid3)--+(30:0.5) node[dot]{}
      (x2z-mid7)--+(30:0.5) node[dot]{}
      (x2z-mid8)--+(30:0.5) node[dot]{}
      (x2z-mid9)--+(30:0.5) node[dot]{};

\path (y)-- node[pos=6/21](y2z-1of3){} node[pos=15/21](y2z-2of3){}  (z);

\draw (y2z-1of3)-- node[pos=1/10](y2z-mid1)[dot]{} node[pos=2/10](y2z-mid2)[dot]{} node[pos=3/10](y2z-mid3)[dot]{} node[pos=4/10](y2z-mid4)[dot]{} node[pos=5/10](y2z-mid5)[dot]{} node[pos=6/10](y2z-mid6)[dot]{} node[pos=7/10](y2z-mid7)[dot]{} node[pos=8/10](y2z-mid8)[dot]{} node[pos=9/10](y2z-mid9)[dot]{}  (y2z-2of3);

\path (y)-- node[pos=4/21](left){} node[pos=17/21](right){}  (z);

\draw (y)-- node[dot][pos=1/5](y2z-left1){} node[dot][pos=2/5](y2z-left2){} node[dot][pos=3/5](y2z-left3){} node[dot][pos=4/5](y2z-left4){} (left);

\draw (z)-- node[dot][pos=1/5](y2z-right1){} node[dot][pos=2/5](y2z-right2){} node[dot][pos=3/5](y2z-right3){} node[dot][pos=4/5](y2z-right4){} (right);

\path (left)-- node[font=\LARGE]{\( \dots \)} (y2z-1of3);
\path (right)-- node[font=\LARGE]{\( \dots \)} (y2z-2of3);

\draw (y2z-left1)--+(-90:1) node[dot]{};
\draw (y2z-left2)--+(-90:0.5) node[dot]{}
      (y2z-left3)--+(-90:0.5) node[dot]{}
      (y2z-left4)--+(-90:0.5) node[dot]{};

\draw (y2z-right1)--+(-90:1) node[dot]{};
\draw (y2z-right2)--+(-90:0.5) node[dot]{}
      (y2z-right3)--+(-90:0.5) node[dot]{}
      (y2z-right4)--+(-90:0.5) node[dot]{};

\draw (y2z-mid1)--+(-90:0.5) node[dot]{}
      (y2z-mid2)--+(-90:0.5) node[dot]{}
      (y2z-mid3)--+(-90:0.5) node[dot]{}
      (y2z-mid7)--+(-90:0.5) node[dot]{}
      (y2z-mid8)--+(-90:0.5) node[dot]{}
      (y2z-mid9)--+(-90:0.5) node[dot]{};

\path (y2z-left2)--+(-0.15,0) coordinate(brace1Start);
\path (y2z-mid3)--+(0.15,0) coordinate(brace1End);
\draw [decorate,decoration={brace,amplitude=6pt,raise=3pt}] (brace1Start)--  node[yshift=5mm]{\( 3s \)} (brace1End);

\path (x2y-right2)--+(-120:0.15) coordinate(brace2Start);
\path (x2y-mid7)--+(60:0.15) coordinate(brace2End);
\draw [decorate,decoration={brace,amplitude=6pt,raise=3pt,mirror}] (brace2Start)-- node[xshift=5mm,yshift=-3mm]{\( 3s \)}  (brace2End);
\end{tikzpicture}
\caption{New gadget component.}
\label{fig:new component}
\end{figure}

The graph produced by this modification has girth \( 6s+8 \). 
To prove that the new construction preserves the guarantees of Construction~\ref{make:3-star}, it suffices to show that the new gadget component admits a \textit{unique} 3-star colouring similar to the \textit{unique} 3-star colouring of the old gadget component. 
Lemma~\ref{lem:3-star large girth unique colouring} below proves this for \( s=1 \); the proof is similar for higher values of \( s \). 

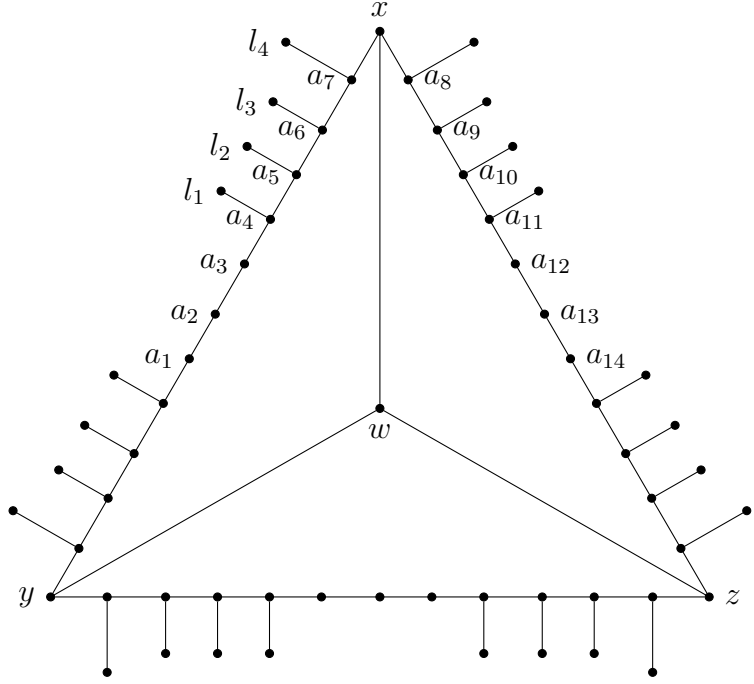
\begin{figure}[hbt]
\centering
\begin{tikzpicture}
\node[dot] (w)[label=below:\( w \)]{};
\draw (w)--+(90:5) node[dot](x)[label=above:\( x \)]{}
      (w)--+(-30:5) node[dot](z)[label=right:\( z \)]{}
      (w)--+(-150:5) node[dot](y)[label=left:\( y \)]{};

\draw (x)-- node[dot][pos=0.08](x2y-1)[label=left:\( a_7 \)]{} node[dot][pos=0.17](x2y-2)[label=left:\( a_6 \)]{} node[dot][pos=0.25](x2y-3)[label=left:\( a_5 \)]{} node[dot][pos=0.33](x2y-4)[label=left:\( a_4 \)]{} node[dot][pos=0.41](x2y-5)[label=left:\( a_3 \)]{} node[dot][pos=0.5](x2y-6)[label=left:\( a_2 \)]{} node[dot][pos=0.58](x2y-7)[label=left:\( a_1 \)]{} node[dot][pos=0.66](x2y-8){} node[dot][pos=0.75](x2y-9){} node[dot][pos=0.83](x2y-10){} node[dot][pos=0.92](x2y-11){}  (y);
\draw (x2y-1)--+(150:1) node[dot][label=left:\( l_4 \)]{}
      (x2y-11)--+(150:1) node[dot]{};
\draw (x2y-2)--+(150:0.75) node[dot][label=left:\( l_3 \)]{}
      (x2y-3)--+(150:0.75) node[dot][label=left:\( l_2 \)]{}
      (x2y-4)--+(150:0.75) node[dot][label=left:\( l_1 \)]{};
\draw (x2y-8)--+(150:0.75) node[dot]{}
      (x2y-9)--+(150:0.75) node[dot]{}
      (x2y-10)--+(150:0.75) node[dot]{};

\draw (x)-- node[dot][pos=0.08](x2z-1)[label=right:\( a_8 \)]{} node[dot][pos=0.17](x2z-2)[label=right:\( a_9 \)]{} node[dot][pos=0.25](x2z-3)[label=right:\( a_{10} \)]{} node[dot][pos=0.33](x2z-4)[label=right:\( a_{11} \)]{} node[dot][pos=0.41](x2z-5)[label=right:\( a_{12} \)]{} node[dot][pos=0.5](x2z-6)[label=right:\( a_{13} \)]{} node[dot][pos=0.58](x2z-7)[label=right:\( a_{14} \)]{} node[dot][pos=0.66](x2z-8){} node[dot][pos=0.75](x2z-9){} node[dot][pos=0.83](x2z-10){} node[dot][pos=0.92](x2z-11){}  (z);
\draw (x2z-1)--+(30:1) node[dot]{}
      (x2z-11)--+(30:1) node[dot]{};
\draw (x2z-2)--+(30:0.75) node[dot]{}
      (x2z-3)--+(30:0.75) node[dot]{}
      (x2z-4)--+(30:0.75) node[dot]{};
\draw (x2z-8)--+(30:0.75) node[dot]{}
      (x2z-9)--+(30:0.75) node[dot]{}
      (x2z-10)--+(30:0.75) node[dot]{};

\draw (y)-- node[dot][pos=0.08](y2z-1){} node[dot][pos=0.17](y2z-2){} node[dot][pos=0.25](y2z-3){} node[dot][pos=0.33](y2z-4){} node[dot][pos=0.41](y2z-5){} node[dot][pos=0.5](y2z-6){} node[dot][pos=0.58](y2z-7){} node[dot][pos=0.66](y2z-8){} node[dot][pos=0.75](y2z-9){} node[dot][pos=0.83](y2z-10){} node[dot][pos=0.92](y2z-11){}  (z);
\draw (y2z-1)--+(-90:1) node[dot]{}
      (y2z-11)--+(-90:1) node[dot]{};
\draw (y2z-2)--+(-90:0.75) node[dot]{}
      (y2z-3)--+(-90:0.75) node[dot]{}
      (y2z-4)--+(-90:0.75) node[dot]{};
\draw (y2z-8)--+(-90:0.75) node[dot]{}
      (y2z-9)--+(-90:0.75) node[dot]{}
      (y2z-10)--+(-90:0.75) node[dot]{};

\end{tikzpicture}
\caption{New gadget component when \( s=1 \).}
\label{fig:new component for s=1}
\end{figure}


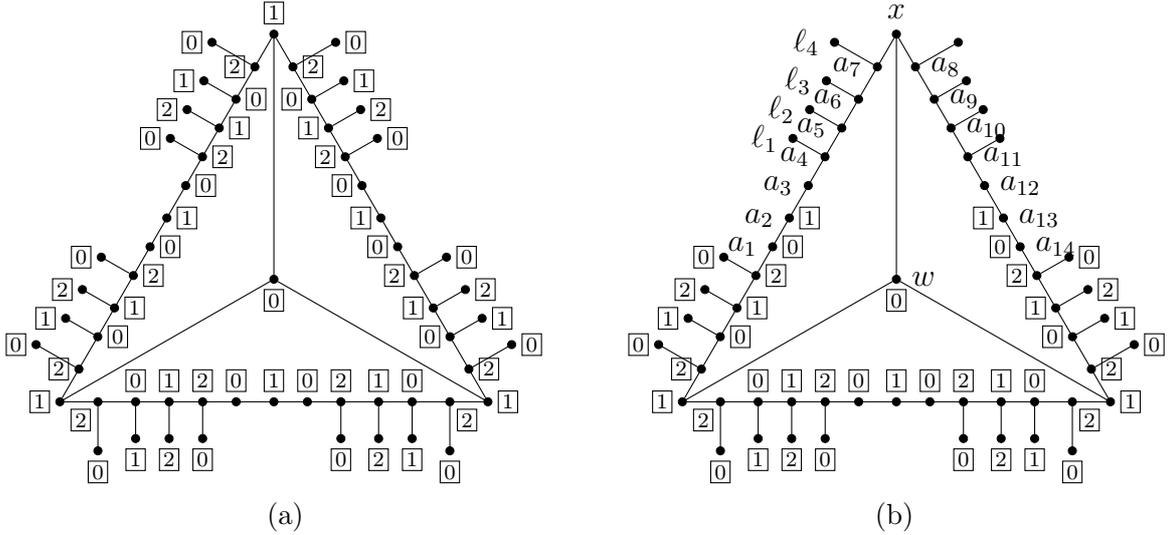
\begin{figure}[hbt]
\centering
\begin{subfigure}[b]{0.48\textwidth}
\begin{tikzpicture}[scale=0.65]
\node[dot] (w)[label={[vcolour]below:0}]{};
\draw (w)--+(90:5) node[dot](x)[label={[vcolour]above:1}]{}
      (w)--+(-30:5) node[dot](z)[label={[vcolour]right:1}]{}
      (w)--+(-150:5) node[dot](y)[label={[vcolour]left:1}]{};

\draw (x)-- node[dot][pos=0.08](x2y-1)[label={[vcolour]left:2}]{} node[dot][pos=0.17](x2y-2)[label={[vcolour]right:0}]{} node[dot][pos=0.25](x2y-3)[label={[vcolour]right:1}]{} node[dot][pos=0.33](x2y-4)[label={[vcolour]right:2}]{} node[dot][pos=0.41](x2y-5)[label={[vcolour]right:0}]{} node[dot][pos=0.5](x2y-6)[label={[vcolour]right:1}]{} node[dot][pos=0.58](x2y-7)[label={[vcolour]right:0}]{} node[dot][pos=0.66](x2y-8)[label={[vcolour]right:2}]{} node[dot][pos=0.75](x2y-9)[label={[vcolour]right:1}]{} node[dot][pos=0.83](x2y-10)[label={[vcolour]right:0}]{} node[dot][pos=0.92](x2y-11)[label={[vcolour]left:2}]{}  (y);
\draw (x2y-1)--+(150:1) node[dot][label={[vcolour]left:0}]{}
      (x2y-11)--+(150:1) node[dot][label={[vcolour]left:0}]{};
\draw (x2y-2)--+(150:0.75) node[dot][label={[vcolour]left:1}]{}
      (x2y-3)--+(150:0.75) node[dot][label={[vcolour]left:2}]{}
      (x2y-4)--+(150:0.75) node[dot][label={[vcolour]left:0}]{};
\draw (x2y-8)--+(150:0.75) node[dot][label={[vcolour]left:0}]{}
      (x2y-9)--+(150:0.75) node[dot][label={[vcolour]left:2}]{}
      (x2y-10)--+(150:0.75) node[dot][label={[vcolour]left:1}]{};

\draw (x)-- node[dot][pos=0.08](x2z-1)[label={[vcolour]right:2}]{} node[dot][pos=0.17](x2z-2)[label={[vcolour]left:0}]{} node[dot][pos=0.25](x2z-3)[label={[vcolour]left:1}]{} node[dot][pos=0.33](x2z-4)[label={[vcolour]left:2}]{} node[dot][pos=0.41](x2z-5)[label={[vcolour]left:0}]{} node[dot][pos=0.5](x2z-6)[label={[vcolour]left:1}]{} node[dot][pos=0.58](x2z-7)[label={[vcolour]left:0}]{} node[dot][pos=0.66](x2z-8)[label={[vcolour]left:2}]{} node[dot][pos=0.75](x2z-9)[label={[vcolour]left:1}]{} node[dot][pos=0.83](x2z-10)[label={[vcolour]left:0}]{} node[dot][pos=0.92](x2z-11)[label={[vcolour]right:2}]{}  (z);
\draw (x2z-1)--+(30:1) node[dot][label={[vcolour]right:0}]{}
      (x2z-11)--+(30:1) node[dot][label={[vcolour]right:0}]{};
\draw (x2z-2)--+(30:0.75) node[dot][label={[vcolour]right:1}]{}
      (x2z-3)--+(30:0.75) node[dot][label={[vcolour]right:2}]{}
      (x2z-4)--+(30:0.75) node[dot][label={[vcolour]right:0}]{};
\draw (x2z-8)--+(30:0.75) node[dot][label={[vcolour]right:0}]{}
      (x2z-9)--+(30:0.75) node[dot][label={[vcolour]right:2}]{}
      (x2z-10)--+(30:0.75) node[dot][label={[vcolour]right:1}]{};

\draw (y)-- node[dot][pos=0.08](y2z-1)[label={[vcolour]below left:2}]{} node[dot][pos=0.17](y2z-2)[label={[vcolour]above:0}]{} node[dot][pos=0.25](y2z-3)[label={[vcolour]above:1}]{} node[dot][pos=0.33](y2z-4)[label={[vcolour]above:2}]{} node[dot][pos=0.41](y2z-5)[label={[vcolour]above:0}]{} node[dot][pos=0.5](y2z-6)[label={[vcolour]above:1}]{} node[dot][pos=0.58](y2z-7)[label={[vcolour]above:0}]{} node[dot][pos=0.66](y2z-8)[label={[vcolour]above:2}]{} node[dot][pos=0.75](y2z-9)[label={[vcolour]above:1}]{} node[dot][pos=0.83](y2z-10)[label={[vcolour]above:0}]{} node[dot][pos=0.92](y2z-11)[label={[vcolour]below right:2}]{}  (z);
\draw (y2z-1)--+(-90:1) node[dot][label={[vcolour]below:0}]{}
      (y2z-11)--+(-90:1) node[dot][label={[vcolour]below:0}]{};
\draw (y2z-2)--+(-90:0.75) node[dot][label={[vcolour]below:1}]{}
      (y2z-3)--+(-90:0.75) node[dot][label={[vcolour]below:2}]{}
      (y2z-4)--+(-90:0.75) node[dot][label={[vcolour]below:0}]{};
\draw (y2z-8)--+(-90:0.75) node[dot][label={[vcolour]below:0}]{}
      (y2z-9)--+(-90:0.75) node[dot][label={[vcolour]below:2}]{}
      (y2z-10)--+(-90:0.75) node[dot][label={[vcolour]below:1}]{};

\end{tikzpicture}
\caption{}
\label{fig:3-star colouring when s=1}
\end{subfigure}%
\hfill
\begin{subfigure}[b]{0.48\textwidth}
\centering
\begin{tikzpicture}[scale=0.65]
\node[dot] (w)[label=right:\( w \)][label={[vcolour]below:0}]{};
\draw (w)--+(90:5) node[dot](x)[label=above:\( x \)]{}
      (w)--+(-30:5) node[dot](z)[label={[vcolour]right:1}]{}
      (w)--+(-150:5) node[dot](y)[label={[vcolour]left:1}]{};

\draw (x)-- node[dot][pos=0.08](x2y-1)[label=left:\( a_7 \)]{} node[dot][pos=0.17](x2y-2)[label=left:\( a_6 \)]{} node[dot][pos=0.25](x2y-3)[label=left:\( a_5 \)]{} node[dot][pos=0.33](x2y-4)[label=left:\( a_4 \)]{} node[dot][pos=0.41](x2y-5)[label=left:\( a_3 \)]{} node[dot][pos=0.5](x2y-6)[label=left:\( a_2 \)][label={[vcolour]right:1}]{} node[dot][pos=0.58](x2y-7)[label=left:\( a_1 \)][label={[vcolour]right:0}]{} node[dot][pos=0.66](x2y-8)[label={[vcolour]right:2}]{} node[dot][pos=0.75](x2y-9)[label={[vcolour]right:1}]{} node[dot][pos=0.83](x2y-10)[label={[vcolour]right:0}]{} node[dot][pos=0.92](x2y-11)[label={[vcolour]left:2}]{}  (y);
\draw (x2y-1)--+(150:1) node[dot][label=left:\( \ell_4 \)]{}
      (x2y-11)--+(150:1) node[dot][label={[vcolour]left:0}]{};
\draw (x2y-2)--+(150:0.75) node[dot][label=left:\( \ell_3 \)]{}
      (x2y-3)--+(150:0.75) node[dot][label=left:\( \ell_2 \)]{}
      (x2y-4)--+(150:0.75) node[dot][label=left:\( \ell_1 \)]{};
\draw (x2y-8)--+(150:0.75) node[dot][label={[vcolour]left:0}]{}
      (x2y-9)--+(150:0.75) node[dot][label={[vcolour]left:2}]{}
      (x2y-10)--+(150:0.75) node[dot][label={[vcolour]left:1}]{};

\draw (x)-- node[dot][pos=0.08](x2z-1)[label=right:\( a_8 \)]{} node[dot][pos=0.17](x2z-2)[label=right:\( a_9 \)]{} node[dot][pos=0.25](x2z-3)[label=right:\( a_{10} \)]{} node[dot][pos=0.33](x2z-4)[label=right:\( a_{11} \)]{} node[dot][pos=0.41](x2z-5)[label=right:\( a_{12} \)]{} node[dot][pos=0.5](x2z-6)[label={[vcolour]left:1}][label=right:\( a_{13} \)]{} node[dot][pos=0.58](x2z-7)[label={[vcolour]left:0}][label=right:\( a_{14} \)]{} node[dot][pos=0.66](x2z-8)[label={[vcolour]left:2}]{} node[dot][pos=0.75](x2z-9)[label={[vcolour]left:1}]{} node[dot][pos=0.83](x2z-10)[label={[vcolour]left:0}]{} node[dot][pos=0.92](x2z-11)[label={[vcolour]right:2}]{}  (z);
\draw (x2z-1)--+(30:1) node[dot]{}
      (x2z-11)--+(30:1) node[dot][label={[vcolour]right:0}]{};
\draw (x2z-2)--+(30:0.75) node[dot]{}
      (x2z-3)--+(30:0.75) node[dot]{}
      (x2z-4)--+(30:0.75) node[dot]{};
\draw (x2z-8)--+(30:0.75) node[dot][label={[vcolour]right:0}]{}
      (x2z-9)--+(30:0.75) node[dot][label={[vcolour]right:2}]{}
      (x2z-10)--+(30:0.75) node[dot][label={[vcolour]right:1}]{};

\draw (y)-- node[dot][pos=0.08](y2z-1)[label={[vcolour]below left:2}]{} node[dot][pos=0.17](y2z-2)[label={[vcolour]above:0}]{} node[dot][pos=0.25](y2z-3)[label={[vcolour]above:1}]{} node[dot][pos=0.33](y2z-4)[label={[vcolour]above:2}]{} node[dot][pos=0.41](y2z-5)[label={[vcolour]above:0}]{} node[dot][pos=0.5](y2z-6)[label={[vcolour]above:1}]{} node[dot][pos=0.58](y2z-7)[label={[vcolour]above:0}]{} node[dot][pos=0.66](y2z-8)[label={[vcolour]above:2}]{} node[dot][pos=0.75](y2z-9)[label={[vcolour]above:1}]{} node[dot][pos=0.83](y2z-10)[label={[vcolour]above:0}]{} node[dot][pos=0.92](y2z-11)[label={[vcolour]below right:2}]{}  (z);
\draw (y2z-1)--+(-90:1) node[dot][label={[vcolour]below:0}]{}
      (y2z-11)--+(-90:1) node[dot][label={[vcolour]below:0}]{};
\draw (y2z-2)--+(-90:0.75) node[dot][label={[vcolour]below:1}]{}
      (y2z-3)--+(-90:0.75) node[dot][label={[vcolour]below:2}]{}
      (y2z-4)--+(-90:0.75) node[dot][label={[vcolour]below:0}]{};
\draw (y2z-8)--+(-90:0.75) node[dot][label={[vcolour]below:0}]{}
      (y2z-9)--+(-90:0.75) node[dot][label={[vcolour]below:2}]{}
      (y2z-10)--+(-90:0.75) node[dot][label={[vcolour]below:1}]{};

\end{tikzpicture}
\caption{}
\label{fig:colours forced on new component}
\end{subfigure}
\captionsetup{width=0.85\textwidth}
\caption{(a)~A 3-star colouring of new gadget component (\( s=1 \)),\\
and (b)~Colours forced in the new gadget component (\( s=1 \)).}
\end{figure}

\begin{lemma}\label{lem:3-star large girth unique colouring}
For \( s=1 \), the colouring shown in Figure~\ref{fig:3-star colouring when s=1} is the \textit{unique} 3-star colouring of the new gadget component (see Figure~\ref{fig:new component for s=1} for the new gadget component). 
\end{lemma}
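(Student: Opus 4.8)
The plan is to mirror the proof of Lemma~\ref{lem:gadget component} exactly, viewing the new gadget component (Figure~\ref{fig:new component for s=1}) as a ``stretched'' copy of the old one and propagating forced colours via Observation~\ref{obs:P3 force colours}. First I would fix $f(w)=0$ without loss of generality. Since $x$, $y$, $z$ are the three neighbours of $w$, only colours $1$ and $2$ are available for them, so by the pigeonhole principle two of them agree in colour; without loss of generality assume $f(y)=f(z)=1$. Then $y,w,z$ is a bicoloured $P_3$ (colours $0$ and $1$), and Observation~\ref{obs:P3 force colours} forces colour $2$ on every neighbour of $y$ and of $z$ along the outer cycle, initiating the propagation.

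Next I would carry out the propagation itself. Repeatedly applying Observation~\ref{obs:P3 force colours} to the bicoloured $P_3$'s created at each step forces, in a unique way, the colours of all vertices of the $y$--$z$ path together with its pendants, and of the segments of the $x$--$y$ and $x$--$z$ paths (with their pendants) running inward from the $y$- and $z$-ends. This determines every colour except those in the immediate vicinity of $x$, namely the vertices $a_1,\dots,a_{14}$, the pendants $\ell_1,\dots,\ell_4$, and $x$ itself, yielding exactly the partial colouring shown in Figure~\ref{fig:colours forced on new component}. This step is routine but longer than in the $s=0$ case because the paths are longer; the key point is that the ``period-three'' pattern $2,0,1,0,2,\dots$ along each path, together with the forced colour $0$ on the pendants, makes the propagation consistent and forced at every vertex.

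The crux is the analogue of the Claim in Lemma~\ref{lem:gadget component}: that $f(x)=1$, so that $f(x)=f(y)=f(z)$. Assuming for contradiction that $f(x)=2$, I would trace the colours this forces on $a_1,\dots,a_7$ and their pendants on the $x$--$y$ side, and on $a_8,\dots,a_{14}$ on the $x$--$z$ side, exactly as in the old proof: each pendant at a ``type-$2$'' path vertex pins down the pattern, and the two segments meeting at $x$ eventually produce a bicoloured $P_4$ through $x$ (the direct analogue of the path $a_3,a_4,x,a_5$ in Figure~\ref{fig:f(x)=2 gives contradiction}), contradicting that $f$ is a star colouring. Hence $f(x)=1$.

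I expect the third step to be the main obstacle: because the paths are stretched, one must verify that the forced propagation actually reaches the neighbourhood of $x$ consistently and that the bicoloured $P_4$ contradiction still materialises at $x$ no matter how the two incident segments fill in. Once $f(x)=f(y)=f(z)$ is established, every colour is determined up to the initial choice of which colour plays the role of $0$ and which pair plays $\{1,2\}$, so by symmetry the colouring of Figure~\ref{fig:3-star colouring when s=1} is the \emph{unique} $3$-star colouring of the new gadget component; the argument for $s>1$ is identical with still-longer paths.
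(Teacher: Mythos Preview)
Your overall plan matches the paper's proof closely: fix $f(w)=0$, use pigeonhole to get $f(y)=f(z)=1$, propagate via Observation~\ref{obs:P3 force colours} to obtain the partial colouring of Figure~\ref{fig:colours forced on new component}, and then argue by contradiction that $f(x)\neq 2$. That is exactly how the paper proceeds.

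However, your description of the crux is inaccurate in two respects. First, once you assume $f(x)=2$, the colour of $a_7$ is \emph{not} forced: both $f(a_7)=0$ and $f(a_7)=1$ are a priori possible, and the paper handles these as two separate cases. Second, in neither case does the bicoloured $P_4$ pass through $x$; the ``direct analogue of $a_3,a_4,x,a_5$'' does not occur here. Instead, the propagation from $x$ runs \emph{down} the $x$--$y$ arm (using the pendants $\ell_4,\ell_3,\ell_2,\ell_1$ to keep forcing colours) until it collides with the already-fixed values $f(a_2)=1$, $f(a_1)=0$ coming up from $y$. In the case $f(a_7)=0$ the contradiction is the path $a_3,a_4,a_5,\ell_2$; in the case $f(a_7)=1$ one first uses symmetry to get $f(a_8)=1$, then the $P_3$ $a_8,x,a_7$ to launch the propagation, and the contradiction is the path $a_4,a_3,a_2,a_1$. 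So the obstacle you flagged is real, but its resolution is a case split with contradictions occurring away from $x$, not a single bicoloured $P_4$ through $x$.
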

\begin{proof}
Suppose \( f \) is a 3-star colouring of the new gadget component. 
%
Without loss of generality, assume that \( f(w)=0 \). 
First, we prove that \( f \) must use the same colour on vertices \( x,y \) and \( z \). 
Since only colours 1 and 2 are available for vertices \( x,y \) and \( z \), at least two of them should get the same colour. 
Without loss of generality, assume that \( f(y)=f(z)=1 \). 
Since \( y,w,z \) is a bicoloured \( P_3 \), neighbours of \( y \) on the outer cycle must be coloured 2 due to Observation~\ref{obs:P3 force colours}. 
Repeating application of Observation~\ref{obs:P3 force colours} reveals that colours are forced on the new gadget component as shown in Figure~\ref{fig:colours forced on new component}.

We claim that \( f(x)=1 \). To produce a contradiction, assume that \( f(x)=2 \). Clearly, \( f(a_7)=0 \) or 1. First, we show that \( f(a_7)=0 \) leads to a contradiction. Assume that \( f(a_7)=0 \). Since \( w,x,a_7 \) is a bicoloured \( P_3 \), \( f(a_6)=1 \) and \( f(l_4)=1 \) (by Observation~\ref{obs:P3 force colours}). Since \( l_4,a_7,a_6 \) is a bicoloured \( P_3 \), \( f(a_5)=2 \) and \( f(l_3)=2 \). Since \( l_3,a_6,a_5 \) is a bicoloured \( P_3 \), \( f(a_4)=0 \) and \( f(l_2)=0 \). Vertex \( a_3 \) can get only colour 2. So, \( a_3,a_4,a_5,l_2 \) is a bicoloured \( P_4 \). This contradiction proves that \( f(a_7)\neq 0 \).

Therefore, \( f(a_7)=1 \). By symmetry, \( f(a_8)=1 \) as well. Since \( a_8,x,a_7 \) is a bicoloured \( P_3 \), \( f(a_6)=0 \) and \( f(l_4)=0 \). Since \( l_4,a_7,a_6 \) is a bicoloured \( P_3 \), \( f(a_5)=2 \) and \( f(l_3)=2 \). Since \( l_3,a_6,a_5 \) is a bicoloured \( P_3 \), \( f(a_4)=1 \) and \( f(l_2)=1 \). Since \( l_2,a_5,a_4 \) is a bicoloured \( P_3 \), \( f(a_3)=0 \). But, then \( a_4,a_3,a_2,a_1 \) is a bicoloured \( P_4 \). This contradiction proves that \( f(x)\neq 2 \). Hence, \( f(x)=1 \). 
Therefore, by symmetry, the colouring shown in Figure~\ref{fig:3-star colouring when s=1} is the \textit{unique} 3-star colouring of the new gadget component. 
\end{proof}

The new vertex gadget (resp. edge gadget) is constructed from its old counterpart by replacing the old gadget component by the new gadget component (new gadgets are displayed as Figure~3 and Figure~4 in the supplementary material). 
It is easy to see that the new vertex gadget and the new edge gadget preserve the following properties of their old counterparts: (i)~for every 3-star colouring of the vertex gadget, its terminals should get the same colour, (ii)~for every 3-star colouring of the edge gadget, its terminals should get different colours, (iii)~there exist a 3-star colouring of the vertex gadget (resp. edge gadget) such that each \( P_3 \) in it containing a terminal is tricoloured. 

Thus, we have the following theorems.


\begin{theorem}\label{thm:3-star planar bip girth g}
Let \( g\geq 8 \). The problem \textsc{3-Star Colourability}\( ( \)planar, bipartite, \( \Delta=3 \), girth~\( \geq g \)\( ) \) is NP-complete, and it does not admit a \( 2^{o(\sqrt{n})} \)-time algorithm unless ETH fails. Moreover, the problem \textsc{3-Star Colourability}\( ( \)bipartite, \( \Delta=3 \), girth~\( \geq g \)\( ) \) does not admit a \( 2^{o(n)} \)-time algorithm unless ETH fails. 
\qed
\end{theorem}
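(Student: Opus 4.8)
The plan is to replay the proof of Theorem~\ref{thm:3-star planar bip girth 8} with Construction~\ref{make:3-star} replaced by its modified version, namely the construction in which the gadget component of Figure~\ref{fig:gadget component} is replaced by the new gadget component of Figure~\ref{fig:new component} (with parameter $s$). Given the target girth $g\geq 8$, I would set $s=\ceil{(g-8)/6}$, so that the modified construction outputs a graph $G'$ of girth $6s+8\geq g$. (The case $s=0$ recovers Construction~\ref{make:3-star} and hence Theorem~\ref{thm:3-star planar bip girth 8}.)

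First I would check that all three guarantees of Construction~\ref{make:3-star} survive the modification. The only property of the original gadget component used in those guarantees is that it admits a \textit{unique} 3-star colouring (Lemma~\ref{lem:gadget component}); this forces the terminals of each vertex gadget to share a colour and the terminals of each edge gadget to receive distinct colours, and it provides a 3-star colouring in which every $P_3$ through a terminal is tricoloured. Lemma~\ref{lem:3-star large girth unique colouring} establishes the required uniqueness for $s=1$, and the analogous forcing argument yields it for every $s\geq 0$; granting this, properties (i)--(iii) listed after Lemma~\ref{lem:3-star large girth unique colouring} hold for the new gadgets. Guarantee~1 (that $G$ is 3-colourable iff $G'$ is 3-star colourable) then follows by the same argument as in the proof of Guarantee~1 of Construction~\ref{make:3-star}, since that argument invokes only properties (i)--(iii). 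For fixed $g$ the parameter $s$ is a constant, so each gadget has $O(1)$ vertices and $G'$ has $\Theta(n)$ vertices (Guarantee~2); and the new gadget component is plane, so the planarity-preservation argument of Guarantee~3 carries over unchanged.

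With the modified construction in hand, I would finish exactly as in Theorem~\ref{thm:3-star planar bip girth 8}. Reducing from \textsc{3-Colourability}(planar, $\Delta=4$), which is NP-complete~\cite{garey_johnson_stockmeyer1976} and admits no $2^{o(\sqrt{n})}$-time algorithm unless ETH fails~\cite{biro}, and using $N=|V(G')|=\Theta(n)$ together with the remark preceding Theorem~\ref{thm:3-star planar bip girth 8}, gives NP-completeness and the $2^{o(\sqrt{n})}$ lower bound for \textsc{3-Star Colourability}(planar, bipartite, $\Delta=3$, girth~$\geq g$). Reducing instead from \textsc{3-Colourability}($\Delta=4$), which admits no $2^{o(n)}$-time algorithm unless ETH fails~\cite{cygan2016}, yields the $2^{o(n)}$ lower bound for \textsc{3-Star Colourability}(bipartite, $\Delta=3$, girth~$\geq g$).

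The main obstacle is the claim, proved explicitly only for $s=1$, that the new gadget component has a \textit{unique} 3-star colouring for every $s$. The difficulty is to verify that the periodic colour pattern forced along each of the three long subdivided sides (of the form $\dots,2,0,1,2,0,1,\dots$ with period three, together with the colours forced on the attached pendant vertices via Observation~\ref{obs:P3 force colours}) propagates consistently over the length-$3s$ middle segments and never creates a bicoloured $P_4$. Because the forcing is entirely local and the girth $6s+8$ rules out short cycles, I expect the $s=1$ argument of Lemma~\ref{lem:3-star large girth unique colouring} to extend with only bookkeeping changes; making the induction on $s$ (or the direct period-three argument) fully rigorous is the one genuinely new piece of work.
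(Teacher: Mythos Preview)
Your proposal is correct and follows essentially the same approach as the paper: replace the gadget component by the parametrised version of Figure~\ref{fig:new component}, invoke the uniqueness lemma for the new component (proved for $s=1$ and asserted to extend analogously), verify that properties (i)--(iii) of the gadgets persist, and then rerun the reduction argument of Theorem~\ref{thm:3-star planar bip girth 8}. The paper likewise leaves the general-$s$ uniqueness as a routine extension of Lemma~\ref{lem:3-star large girth unique colouring}, so your identification of this as the only new piece of work matches the paper exactly.
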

\begin{theorem}
For all \( g\geq 8 \), it is \#P-complete to count the number of \( 3 \)-star colourings of a bipartite graph of maximum degree three and girth at least \( g \).
\qed
\end{theorem}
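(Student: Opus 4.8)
The plan is to mirror the argument that established \#P-completeness for girth eight, but now using the modified construction (with the new gadget component of Figure~\ref{fig:new component}) that already underlies Theorem~\ref{thm:3-star planar bip girth g}. Fix \( g\geq 8 \). For \( g=8 \) the statement is the already-established \#P-completeness for girth eight, so I may assume \( g\geq 9 \) and choose \( s=\ceil{(g-8)/6}\geq 1 \), so that the modified construction produces output graphs of girth \( 6s+8\geq g \). Membership in \#P is immediate: a candidate colouring can be verified to be a 3-star colouring in polynomial time (check properness and the absence of a bicoloured \( P_4 \)). So the substance is \#P-hardness.

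First I would set up the reduction from counting 3-colourings of graphs of maximum degree four, which is \#P-complete \cite{bubley}. Given such a graph \( G \) with \( n=|V(G)| \) and \( m=|E(G)| \), apply the modified construction to obtain a bipartite graph \( G' \) of maximum degree three and girth \( \geq g \). The key is to show that this reduction is \emph{weakly parsimonious}, namely that the number of 3-star colourings of \( G' \) equals \( 2^n \) times the number of 3-colourings of \( G \). For this I rely on the three preserved properties of the new vertex and edge gadgets listed before Theorem~\ref{thm:3-star planar bip girth g}: under any 3-star colouring of \( G' \), the four terminals of each vertex gadget receive a common colour while the two terminals of each edge gadget receive distinct colours, and moreover each gadget admits a 3-star colouring in which every \( P_3 \) through a terminal is tricoloured. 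Together these yield the correspondence between 3-colourings \( f \) of \( G \) and 3-star colourings of \( G' \) via \( f(v)=f'(v_1) \), exactly as in the proof of Guarantee~1 of Construction~\ref{make:3-star}.

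The counting refinement is the crux. With the colours on all terminals fixed by a given 3-colouring \( f \) of \( G \), I would argue that each edge gadget has a \emph{unique} 3-star colouring extension while each vertex gadget has exactly \emph{two}. Both facts follow from the uniqueness (up to colour swaps) of the 3-star colouring of the new gadget component, Lemma~\ref{lem:3-star large girth unique colouring}, together with the forcing of Observation~\ref{obs:P3 force colours} propagated along the arms of the gadgets: once the gadget component is coloured, the colours on the arms leading to terminals are forced, the only residual freedom in a vertex gadget is the swap of the two colours distinct from the (fixed) terminal colour, and the asymmetry of the edge gadget leaves no such freedom. Since there are exactly \( n \) vertex gadgets and \( m \) edge gadgets, each 3-colouring of \( G \) lifts to exactly \( 2^n\cdot 1^m=2^n \) distinct 3-star colourings of \( G' \), and colourings arising from different 3-colourings of \( G \) are distinct. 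Hence the number of 3-star colourings of \( G' \) is \( 2^n \) times the number of 3-colourings of \( G \).

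Finally, since \( n \) is read off from the input \( G \), one recovers the number of 3-colourings of \( G \) by dividing the number of 3-star colourings of \( G' \) by \( 2^n \); this is a polynomial-time counting reduction, so the target problem is \#P-hard, and therefore \#P-complete. The main obstacle I anticipate is the careful verification that the longer arms of the new gadgets do not introduce extra 3-star colourings beyond the single colour swap per vertex gadget; this requires tracing the forcing of Observation~\ref{obs:P3 force colours} along each arm exactly as in the proof of Lemma~\ref{lem:3-star large girth unique colouring}, but the computation is routine once the gadget component is pinned down.
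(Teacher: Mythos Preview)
Your proposal is correct and follows essentially the same route as the paper: the paper also treats this theorem as an immediate consequence of the modified construction (with the new gadget component of Figure~\ref{fig:new component}) being weakly parsimonious, noting that each 3-colouring of \( G \) extends to exactly \( 2^n \) 3-star colourings of \( G' \) because each edge gadget has a unique extension once its terminal colours are fixed while each vertex gadget has exactly two (the swap of the two non-terminal colours). Your identification of the residual verification---tracing Observation~\ref{obs:P3 force colours} along the longer arms to rule out spurious extensions---is exactly the routine check the paper leaves implicit when it asserts that the new gadgets preserve the properties of their old counterparts.
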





It is known that for all \( k\geq 3 \), testing whether a graph has a \textit{unique} \( k \)-colouring is coNP-hard. 
As far as we know, there is no hardness result on \textit{unique} star colouring. 
We show that it is coNP-hard to check whether a graph has a \textit{unique} 3-star colouring. 
The decision problem \textsc{Unique \( k \)-Colouring} takes a graph \( G \) as input and asks whether \( G \) has a \textit{unique} \( k \)-colouring. 
The problem \textsc{Unique \( k \)-Star Colouring} is defined likewise. 
The problem \textsc{Another \( k \)-Colouring} is closely related to the problem \textsc{Unique \( k \)-Colouring}. 
The problem \textsc{Another \( k \)-Colouring} takes a graph \( G \) and a \( k \)-colouring \( f_1 \) of \( G \) as input and asks whether \( G \) admits a \( k \)-colouring \( f_2 \) of \( G \) which cannot be obtained from \( f_1 \) by merely swapping colours. 
The problem \textsc{Another \( k \)-Star Colouring} is defined likewise. 
Let \( k\geq 3 \). 
Dailey~\cite{dailey} established a reduction from \textsc{\( k \)-Colourability} to \textsc{Another \( k \)-Colouring}. 
So, \textsc{Another \( k \)-Colouring} is NP-hard. 
That is, given a graph \( G \) and a \( k \)-colouring of \( G \), it is NP-hard to test whether \( G \) has \textit{another} \( k \)-colouring (i.e., \( G \) is a no instance of \textsc{Unique \( k \)-Colouring}). 
Hence, given a (\( k \)-colourable) graph \( G \), it is coNP-hard to check whether \( G \) has a \textit{unique} \( k \)-colouring. 
Therefore, \textsc{Unique \( k \)-Colouring} is coNP-hard even when restricted to the class of \( k \)-colourable graphs.

It is easy to observe that Dailey’s construction provides a reduction from the problem \textsc{3-Colourability}(\( \Delta=4 \)) to \textsc{Another 3-Colouring}(\( \Delta=8 \)). 
So, \textsc{Another 3-Colouring}(\( \Delta=8 \)) is NP-complete. 
We prove that \textsc{Another 3-Star Colouring} is NP-complete for 2-degenerate bipartite graphs of maximum degree eight and arbitrarily large girth. 
As a result, \textsc{Unique 3-Star Colouring} is coNP-hard for the same class.

Using the construction of Coleman and Mor\'{e}~\cite{coleman_more}, one can show that \textsc{Another 3-Star Colouring} is NP-complete for 2-degenerate bipartite graphs of maximum degree twenty-four. 
We utilize ideas from Construction~\ref{make:3-star} to reduce the degree bound to eight. 

\begin{construct}\label{make:3-star R_swap max degree 8}
\emph{Input:} A graph \( G \) of maximum degree eight.\\
\emph{Output:} A 2-degenerate bipartite graph \( G' \) of maximum degree eight and girth eight.\\
\emph{Guarantee:} The number of \( 3 \)-colourings of \( G \) up to colour swaps equals the number of \( 3 \)-star colourings of \( G' \) up to colour swaps.\\
\emph{Steps:}\\
Replace each edge \( e=uv \) of \( G \) by an edge gadget as shown in Figure~\ref{fig:3-star R_swap max degree 8}.

Clearly, \( G' \) is bipartite, \( \Delta(G')=8 \) and \( \text{girth}(G')=8 \). 
Also, \( G' \) is 2-degenerate because we can remove all vertices from \( G' \) by repeatedly removing vertices of degree one or two. 
\begin{figure}[hbt]
\centering
\begin{tikzpicture}
\draw (0,0) node[bigdot](uk)[label=below:\( u \)]{} --++(1,0) node[bigdot](vl)[label=below:\( v \)]{};
\path (vl) --++(1,0) coordinate(from) --++(0.6,0) coordinate(to);
\draw [-stealth,draw=black,line width=3pt] (from)--(to);

\path (to)--+(4.5,0) node[bigdot] (w){};
\draw (w)--+(90:2.5) node[dot] (x){}
      (w)--+(-30:2.5) node[dot] (z){}
      (w)--+(-150:2.5) node[dot] (y){};
\draw (x)--node[bigdot][pos=0.17](afterXtoY){} node[dot][pos=0.33]{} node[bigdot][pos=0.5]{} node[dot][pos=0.66]{} node[bigdot][pos=0.83](afterYtoX){} (y);
\draw (x)--node[bigdot][pos=0.17](afterXtoZ){} node[dot][pos=0.33]{} node[bigdot][pos=0.5]{} node[dot][pos=0.66]{} node[bigdot][pos=0.83](afterZtoX){} (z);
\draw (y)--node[bigdot][pos=0.17](afterYtoZ){} node[dot][pos=0.33]{} node[bigdot][pos=0.5]{} node[dot][pos=0.66]{} node[bigdot][pos=0.83](afterZtoY){} (z);
\draw (afterXtoY)--+(150:0.5) node[dot]{}
      (afterYtoX)--++(150:0.5) node[dot]{}--+(150:0.5) node[bigdot]{} node[terminal][label={left:\( u \)}]{};
\draw (afterXtoZ)--+(30:0.5) node[dot]{}
      (afterZtoX)--++(30:0.5) node[dot](3vertex1){} --++(30:0.5) node[bigdot](3vertex2){}--++(30:0.5) node[dot]{}--++(30:0.5) node[bigdot]{} node[terminal][label={right:\( v \)}]{};
\draw (3vertex1)--+(120:0.5) node[bigdot]{}
      (3vertex2)--+(120:0.5) node[dot]{};
\draw (afterYtoZ)--+(-90:0.5) node[dot]{}
      (afterZtoY)--+(-90:0.5) node[dot]{};

\end{tikzpicture}
\caption{Replacement of edge by edge gadget in Construction~\ref{make:3-star R_swap max degree 8}.}
\label{fig:3-star R_swap max degree 8}
\end{figure}
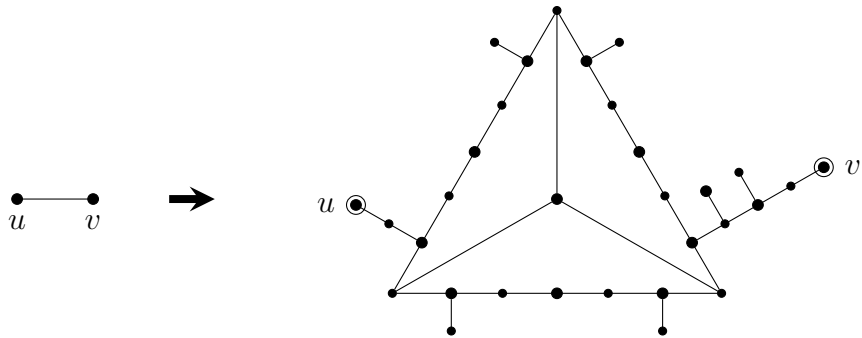
\end{construct}
\begin{proof}[Proof of Guarantee]
Note that the edge gadget is identical to the edge gadget in Construction~\ref{make:3-star}. 
%
%
In Construction~\ref{make:3-star}, the following properties of the edge gadget are proved: (i)~for every 3-star colouring of the gadget, the terminals get different colours, and every 3-vertex path containing a terminal is tricoloured, and (ii)~the edge gadget has a \textit{unique} 3-star colouring, namely the scheme in Figure~\ref{fig:3-star colouring edge gadget} (the scheme is repeated here as Figure~\ref{fig:re-3-star colouring edge gadget} for convenience).\\
\begin{figure}[hbt]
\centering
\begin{tikzpicture}
\node[dot] (w)[label={[vcolour]above right:0}]{};
\draw (w)--+(90:2.5) node[dot] (x)[label={[vcolour]right:1}]{}
      (w)--+(-30:2.5) node[dot] (z)[label={[vcolour]below:1}]{}
      (w)--+(-150:2.5) node[dot] (y)[label={[vcolour]below:1}]{};
\draw (x)--node[dot][pos=0.17](afterXtoY)[label={[vcolour]right:2}]{} node[dot][pos=0.33][label={[vcolour]right:0}]{} node[dot][pos=0.5][label={[vcolour]right:1}]{} node[dot][pos=0.66][label={[vcolour]right:0}]{} node[dot][pos=0.83][label={[vcolour]right:2}](afterYtoX){} (y);
\draw (x)--node[dot][pos=0.17](afterXtoZ)[label={[vcolour]left:2}]{} node[dot][pos=0.33][label={[vcolour]left:0}]{} node[dot][pos=0.5][label={[vcolour]left:1}]{} node[dot][pos=0.66][label={[vcolour]left:0}]{} node[dot][pos=0.83][label={[vcolour]left:2}](afterZtoX){} (z);
\draw (y)--node[dot][pos=0.17][label={[vcolour]above:2}](afterYtoZ){} node[dot][pos=0.33][label={[vcolour]above:0}]{} node[dot][pos=0.5][label={[vcolour]above:1}]{} node[dot][pos=0.66][label={[vcolour]above:0}]{} node[dot][pos=0.83][label={[vcolour]above:2}](afterZtoY){} (z);

\draw (afterYtoZ)--+(-90:0.5) node[dot][label={[vcolour]below:0}]{}
      (afterZtoY)--+(-90:0.5) node[dot][label={[vcolour]below:0}]{};

\draw (afterXtoY)--+(150:0.5) node[dot][label={[vcolour]above:0}]{}
      (afterYtoX)--++(150:0.5) node[dot][label={[vcolour]above:0}]{}--+(150:0.5) node[dot]{} node[terminal][label={left:\( u \)}][label={[vcolour]above:1}]{};
\draw (afterXtoZ)--+(30:0.5) node[dot][label={[vcolour]above:0}]{}
      (afterZtoX)--++(30:0.5) node[dot](3vertex1)[label={[vcolour]below:0}]{} --++(30:0.5) node[dot](3vertex2)[label={[vcolour]below:1}]{}--++(30:0.5) node[dot][label={[vcolour]below:2}]{}--++(30:0.5) node[dot]{} node[terminal][label={right:\( v \)}][label={[vcolour]above:0}]{};
\draw (3vertex1)--+(120:0.5) node[dot][label={[vcolour]above:1}]{}
      (3vertex2)--+(120:0.5) node[dot][label={[vcolour]above:2}]{};
\end{tikzpicture}
\caption{The \textit{unique} 3-star colouring of the edge gadget.}
\label{fig:re-3-star colouring edge gadget}
\end{figure}

\noindent \textbf{Claim~1:} For distinct colours \( i,j\in \{0,1,2\} \), an edge gadget with terminals \( u \) and \( v \) has exactly one 3-star colouring \( f \) such that \( f(u)=i \) and \( f(v)=j \).\\[5pt] 
Let \( i,j\in\{0,1,2\} \) be distinct colours, and let \( c \) be the third colour (i.e., \( \{i,j,c\}=\{0,1,2\} \)). 
Using the scheme in Figure~\ref{fig:re-3-star colouring edge gadget} and swapping colour~1 with colour~\( i \) and colour~0 with colour~\( j \) gives a 3-star colouring \( f \) of the edge gadget with \( f(u)=i \) and \( f(v)=j \). 
We need to prove that \( f \) is the unique 3-star colouring of the edge gadget with this property. 
Suppose that \( f^* \) is a 3-star colouring of the edge gadget such that \( f^*(u)=i \) and \( f^*(v)=j \). 
We know that the edge gadget has a \textit{unique} 3-star colouring (i.e., unique up to colour swaps; see Claim~1 in proof of Guarantee~1 in Construction~\ref{make:3-star}). 
Hence, we can obtain \( f^* \) from \( f \) by merely swapping colours. 
Since \( f^*(u)=f(u)=i \), colour~\( i \) is not swapped when we go from \( f \) to \( f^* \); that is, every vertex of the gadget with colour~\( i \) under \( f \) gets colour \( i \) under \( f^* \) as well. 
Similarly, since \( f^*(v)=f(v)=j \), every vertex of the gadget with colour~\( j \) under \( f \) gets colour \( j \) under \( f^* \) as well. 
Since \( c \) is the only remaining colour, every vertex of the gadget with colour~\( c \) under \( f \) gets colour \( c \) under \( f^* \).  
Therefore, \( f^*=f \). 
This proves the uniqueness of \( f \) and thus Claim~1.\\ 

We define a function \( \phi \) from the set of 3-colourings of \( G \) to the set of 3-star colourings of \( G' \). 
Each 3-colouring \( f \) of \( G \), \( f:V(G)\to\{0,1,2\} \), can be extended into a 3-star colouring \( f' \) of \( G' \) by applying the 3-star colouring scheme shown in Figure~\ref{fig:re-3-star colouring edge gadget} on each edge gadget (colour swaps may be needed). 
Note that \( f' \) will be a 3-star colouring of \( G' \) because (i)~3-star colouring schemes are used on edge gadgets, and (ii)~every 3-vertex path containing a terminal is tricoloured by \( f' \). 
The extension of \( f \) into a 3-star colouring of \( G' \) is unique because the edge gadget has exactly one 3-star colouring once the colours on the terminals are fixed
 (see Claim~1). 
We define \( \phi \) as the function that maps each 3-colouring \( f \) of \( G \) to the unique 3-star colouring extension \( f' \) of \( f \) into \( V(G') \). 

Note that for every 3-colouring \( f\) of \( G \), the restriction of \( \phi(f) \) into \( V(G) \) is precisely \( f \) (i.e., \( \phi(f)_{\restriction V(G)}=f \)). 
The function \( \phi \) is one-one because \( \phi(f_1)=\phi(f_2) \) implies that \( f_1=\phi(f_1)_{\restriction V(G)}=\phi(f_2)_{\restriction V(G)}=f_2 \). 
We claim that \( \phi \) is onto. 
Let \( f' \) be a 3-star colouring of \( G' \).
Then, \( f'(u)\neq f'(v) \) whenever there is an edge gadget between terminals \( u \) and \( v \) in \( G' \) because \( f' \) assigns different colours to terminals of each edge gadget. 
For every edge \( uv \) of \( G \), there is an edge gadget between terminals \( u \) and \( v \) in \( G' \). 
Hence, \( f'(u)\neq f'(v) \) for every edge \( uv \) of \( G \). 
So, the restriction of \( f' \) into \( V(G) \) is a 3-colouring of \( G \). 
This proves that \( \phi \) is onto. 
So, \( \phi \) is a bijection from the set of 3-colourings of \( G \) to the set of 3-star colourings of \( G' \). 
If \( f_1 \) and \( f_2 \) are two colourings of the same graph and \( f_2 \) can be obtained from \( f_1 \) by merely swapping colours, then we say that \( f_1 \) and \( f_2 \) are \emph{equivalent under colour swaps}. 
If two 3-colourings \( f_1 \) and \( f_2 \) of \( G \) are equivalent under colour swaps, then their images \( \phi(f_1) \) and \( \phi(f_2) \) are equivalent under colour swaps because they are the unique extensions of \( f_1 \) and \( f_2 \) respectively as 3-star colourings of \( G' \). 
Also, if two 3-star colourings \( \phi(f_1) \) and \( \phi(f_2) \) are equivalent under colour swaps, then their pre-images \( f_1=\phi(f_1)_{\restriction V(G)} \) and \( f_2=\phi(f_2)_{\restriction V(G)} \) are equivalent under colour swaps. 
So, two 3-colourings \( f_1 \) and \( f_2 \) of \( G \) are non-equivalent under colour swaps if and only if \( \phi(f_1) \) and \( \phi(f_2) \) are non-equivalent under colour swaps. 
Therefore, the number of \( 3 \)-colourings of \( G \) up to colour swaps is equal to the number of \( 3 \)-star colourings of \( G' \) up to colour swaps. 
\end{proof}

Thanks to Construction~\ref{make:3-star R_swap max degree 8}, we have the following theorem.
\begin{theorem}\label{thm:3-star R_swap max degree 8}
For 2-degenerate bipartite graphs of maximum degree eight and girth eight, \textsc{Another 3-Star Colouring} is NP-complete and \textsc{Unique 3-Star Colouring} is coNP-hard.
\end{theorem}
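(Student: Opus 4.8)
The plan is to reduce from \textsc{Another 3-Colouring}$(\Delta=8)$, which is NP-complete (as noted above, Dailey's construction reduces \textsc{3-Colourability}$(\Delta=4)$ to it), using Construction~\ref{make:3-star R_swap max degree 8} together with the bijection $\phi$ built in its proof of Guarantee. First I would settle membership in NP: given a graph $G'$, a 3-star colouring $f_1'$ of $G'$, and a candidate 3-star colouring $f_2'$ offered as a certificate, one verifies in polynomial time that $f_2'$ is a proper colouring with no bicoloured $P_4$, and that $f_2'$ is \emph{not} obtainable from $f_1'$ by a colour swap by simply testing all $3!=6$ permutations of $\{0,1,2\}$. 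Hence \textsc{Another 3-Star Colouring} is in NP.

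For NP-hardness, given an instance $(G,f_1)$ of \textsc{Another 3-Colouring}$(\Delta=8)$, I would apply Construction~\ref{make:3-star R_swap max degree 8} to obtain $G'$, which is 2-degenerate, bipartite, of maximum degree eight and girth eight. As the accompanying colouring I would take $f_1':=\phi(f_1)$, the unique extension of $f_1$ to a 3-star colouring of $G'$ obtained by applying the scheme of Figure~\ref{fig:re-3-star colouring edge gadget} (with appropriate colour swaps) on each edge gadget; by Claim~1 in the proof of the Guarantee this extension is unique, and it is clearly computable in polynomial time. The pair $(G',f_1')$ is then the instance of \textsc{Another 3-Star Colouring}.

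For correctness I would invoke the two properties of $\phi$ already established in Construction~\ref{make:3-star R_swap max degree 8}: that $\phi$ is a bijection from the 3-colourings of $G$ to the 3-star colourings of $G'$, and that two 3-colourings $f_1,f_2$ of $G$ are equivalent under colour swaps if and only if $\phi(f_1),\phi(f_2)$ are equivalent under colour swaps. Consequently $G$ has a 3-colouring $f_2$ inequivalent to $f_1$ under colour swaps if and only if $G'$ has a 3-star colouring $\phi(f_2)$ inequivalent to $f_1'=\phi(f_1)$ under colour swaps. Thus $(G,f_1)$ is a yes-instance of \textsc{Another 3-Colouring} if and only if $(G',f_1')$ is a yes-instance of \textsc{Another 3-Star Colouring}, which (with the NP-membership above) gives NP-completeness for the stated class.

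Finally, coNP-hardness of \textsc{Unique 3-Star Colouring} follows exactly as in the colouring case discussed earlier: since $G'$ is always 3-star colourable (it admits $f_1'$), $G'$ has a \textit{unique} 3-star colouring if and only if $G'$ has no 3-star colouring inequivalent to $f_1'$, i.e.\ if and only if $(G',f_1')$ is a no-instance of \textsc{Another 3-Star Colouring}. As the latter is NP-hard for the class, testing uniqueness is coNP-hard for the same class. The only point that needs care is the exact colour-swap correspondence carried by $\phi$, but this is precisely what the Guarantee of Construction~\ref{make:3-star R_swap max degree 8} supplies, so no further combinatorial obstacle remains; the work is essentially assembling the reduction and checking the class parameters.
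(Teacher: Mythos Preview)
Your proposal is correct and follows essentially the same route as the paper: reduce from \textsc{Another 3-Colouring}$(\Delta=8)$ via Construction~\ref{make:3-star R_swap max degree 8}, set $f_1'=\phi(f_1)$, and use the bijection $\phi$ together with its colour-swap--equivalence preservation to transfer yes/no instances, then derive coNP-hardness of uniqueness from NP-hardness of \textsc{Another 3-Star Colouring} on the (always 3-star-colourable) outputs. The only cosmetic difference is that the paper routes the equivalence through an intermediate ``at least two non-equivalent colourings'' counting claim, whereas you argue the instance correspondence directly from the bijection; your version is marginally more streamlined but not materially different.
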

\begin{proof}
%
The reduction is from \textsc{Another 3-Colouring}(\( \Delta=8 \)). 
Let \( (G,f) \) be an instance of the source problem. 
From \( G \), produce a graph \( G' \) by Construction~\ref{make:3-star R_swap max degree 8}. 
In Construction~\ref{make:3-star R_swap max degree 8}, it is established that there is a bijection \( \phi \) from the set of 3-colourings of \( G \) to the set of 3-star colourings of \( G' \). 
In particular, \( f'=\phi(f) \) is a 3-star colouring of \( G' \). 
Recall that if \( f_1 \) and \( f_2 \) are two colourings of the same graph and \( f_2 \) can be obtained from \( f_1 \) by merely swapping colours, then we say that \( f_1 \) and \( f_2 \) are \emph{equivalent under colour swaps}.\\

By guarantee in Construction~\ref{make:3-star R_swap max degree 8}, the number of 3-colourings of \( G \) up to colour swaps is equal to the number of 3-star colourings of \( G' \) up to colour swaps. 
In particular, we have the following.\\[5pt]
\noindent \textbf{Claim~1:} \( G \) has at least two 3-colourings non-equivalent under colour swaps if and only if \( G' \) has at least two 3-star colourings non-equivalent under colour swaps.\\

\noindent \textbf{Claim~2:} \( (G,f) \) is a yes instance of \textsc{Another 3-Colouring} if and only if \( (G',f') \) is a yes instance of \textsc{Another 3-Star Colouring}.\\[5pt] 
Suppose that \( (G,f) \) is a yes instance of \textsc{Another 3-Colouring}. 
Hence, \( G \) admits a 3-colouring not equivalent to \( f \). 
So, \( G \) has at least two 3-colourings non-equivalent under colour swaps. 
By Claim~1, this implies that \( G' \) has at least two 3-star colourings non-equivalent under colour swaps. 
Therefore, \( G' \) has a 3-star colouring not equivalent to \( f' \). 
That is, \( (G',f') \) is a yes instance of \textsc{Another 3-Star Colouring}.

Conversely, suppose that \( (G',f') \) is a yes instance of \textsc{Another 3-Star Colouring}. 
Hence, \( G' \) admits a 3-star colouring not equivalent to \( f' \). 
So, \( G' \) has at least two 3-star colourings non-equivalent under colour swaps. 
By Claim~1, this implies that \( G \) has at least two 3-colourings non-equivalent under colour swaps. 
Therefore, \( G \) has a 3-colouring not equivalent to \( f \). 
That is, \( (G,f) \) is a yes instance of \textsc{Another 3-Colouring}. 
This proves the converse part and thus Claim~2.\\

Thanks to Claim~2, we have a reduction from \textsc{Another 3-Colouring}(\( \Delta=8 \)) to \textsc{Another 3-Star Colouring}(2-degenerate, bipartite, \( \Delta(G)=8 \), \( \text{girth}(G)=8 \)). 
Therefore, \textsc{Another 3-Star Colouring} is NP-complete for 2-degenerate bipartite  graphs of maximum degree eight and girth eight, and thus \textsc{Unique 3-Star Colouring} is coNP-hard for the same class.
\end{proof}

Recall that Theorem~\ref{thm:3-star planar bip girth g} improved on the girth of \( G' \) in Construction~\ref{make:3-star} by replacing the gadget component by a new gadget component. Applying the same idea to Construction~\ref{make:3-star R_swap max degree 8} gives the following result. 
\begin{theorem}\label{thm:3-star R_swap max degree 8 large girth}
Let \( g\geq 8 \) be a fixed integer. For 2-degenerate bipartite graphs of maximum degree eight and girth at least \( g \), \textsc{Another 3-Star Colouring} is NP-complete and \textsc{Unique 3-Star Colouring} is coNP-hard.
\qed
\end{theorem}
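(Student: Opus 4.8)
The plan is to reuse the proof of Theorem~\ref{thm:3-star R_swap max degree 8} almost verbatim, after enlarging the girth of its edge gadget exactly as Theorem~\ref{thm:3-star planar bip girth g} enlarges the girth of the gadgets of Construction~\ref{make:3-star}. Concretely, I would modify Construction~\ref{make:3-star R_swap max degree 8} by replacing the old gadget component inside its edge gadget with the new gadget component of Figure~\ref{fig:new component}, whose parameter $s$ controls the girth; choosing any $s$ with $6s+8\ge g$ (for instance $s=\max\{1,\lceil (g-8)/6\rceil\}$) makes the new gadget component, and hence the output graph $G'$, have girth at least $6s+8\ge g$. The degree, bipartiteness and $2$-degeneracy bookkeeping is unaffected by this substitution: each terminal still has degree at most $\deg_G(\cdot)\le 8$, the enlarged internal structure consists only of paths and pendant vertices hanging off the three arms of the component (all removable by repeatedly deleting vertices of degree at most two), and the two-part structure is preserved. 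Thus $G'$ remains a $2$-degenerate bipartite graph of maximum degree eight; only the girth changes.

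The core of the argument is to verify that the enlarged edge gadget retains the three properties of its girth-eight counterpart used in Construction~\ref{make:3-star R_swap max degree 8}: (i)~under every $3$-star colouring the two terminals receive distinct colours, (ii)~every $3$-vertex path containing a terminal is tricoloured, and (iii)~once the colours of the terminals are fixed, the gadget has a unique $3$-star colouring. All three follow once we know that the new gadget component admits a \textit{unique} $3$-star colouring in which $f(x)=f(y)=f(z)$, exactly as in Lemma~\ref{lem:3-star large girth unique colouring}. That lemma is stated only for $s=1$, so the one step I would carry out in detail is its extension to arbitrary $s$: the proof is the same forcing argument via Observation~\ref{obs:P3 force colours}, propagating forced colours along each of the three long arms of the component and deriving a bicoloured $P_4$ whenever one assumes $f(x)=2$ (after the normalisation $f(w)=0$, $f(y)=f(z)=1$). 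Granting this, the \textit{unique} $3$-star colouring of the new edge gadget (up to colour swaps) is obtained by applying Observation~\ref{obs:P3 force colours} repeatedly from the component outward, precisely as for the girth-eight edge gadget in Figure~\ref{fig:re-3-star colouring edge gadget}.

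With properties (i)--(iii) in hand, the bijection $\phi$ between $3$-colourings of $G$ and $3$-star colourings of $G'$ constructed in the proof of the Guarantee of Construction~\ref{make:3-star R_swap max degree 8} goes through unchanged: $\phi$ extends each $3$-colouring of $G$ to the unique compatible $3$-star colouring on each edge gadget, is injective because $\phi(f)_{\restriction V(G)}=f$, and is surjective because property~(i) forces the restriction of any $3$-star colouring of $G'$ to be a proper colouring of $G$. Moreover $\phi$ and its inverse both respect equivalence under colour swaps, so the number of $3$-colourings of $G$ up to colour swaps equals the number of $3$-star colourings of $G'$ up to colour swaps. The reduction from \textsc{Another $3$-Colouring}$(\Delta=8)$ then proceeds exactly as in Theorem~\ref{thm:3-star R_swap max degree 8}: $(G,f)$ is a yes-instance iff $(G',\phi(f))$ is, which yields NP-completeness of \textsc{Another $3$-Star Colouring} and coNP-hardness of \textsc{Unique $3$-Star Colouring} on $2$-degenerate bipartite graphs of maximum degree eight and girth at least $g$.

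The main obstacle I anticipate is purely the bookkeeping in extending Lemma~\ref{lem:3-star large girth unique colouring} to general $s$: one must check that the forcing chains along the three arms of the longer component never terminate prematurely and always collide to produce a bicoloured $P_4$ under the hypothesis $f(x)=2$, uniformly in $s\ge 1$. Everything else --- the degree, bipartiteness and $2$-degeneracy certificates, the girth count $6s+8$, and the counting bijection $\phi$ --- transfers mechanically from the two earlier constructions, so no genuinely new idea beyond the component substitution is required.
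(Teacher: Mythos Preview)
Your proposal is correct and follows exactly the approach the paper indicates: the paper's proof is just the one-line remark that applying to Construction~\ref{make:3-star R_swap max degree 8} the same gadget-component substitution that Theorem~\ref{thm:3-star planar bip girth g} applied to Construction~\ref{make:3-star} yields the result. Your write-up simply fills in the details the paper leaves implicit (the girth count $6s+8$, the preservation of properties (i)--(iii), and the extension of Lemma~\ref{lem:3-star large girth unique colouring} to general $s$), all of which are handled in the paper by the phrase ``the proof is similar for higher values of $s$.''
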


\subsection{\( k \)-Star Colouring with \( k\geq 4 \)}\label{sec:hardness k-star k>=4}
For \( k\geq 4 \), it is known that \textsc{\( k \)-Star Colourability} is NP-complete for bipartite graphs \cite{coleman_more}. 
We prove that the problem remains NP-complete when further restricted to graphs of maximum degree \( k \). 
We employ Construction~\ref{make:k-star} to this end. 
The gadget component used in the construction is displayed in Figure~\ref{fig:k-star colouring gadget component}. 
The set \( W_1 \) is an independent set of cardinality \( k-2 \). 
Similarly,  \( W_2 \), \( W_3 \), \( U_1 \), \( U_2 \) and \( U_3 \) are independent sets of cardinality \( k-3 \). 
Also, for each \( j\in\{1,2,3\} \), every vertex in \( W_j \) is adjacent to vertices \( x_j,y_j,z_j \) and members of \( U_j \). 
In upcoming diagrams, the gadget component is drawn by the symbol in Figure~\ref{fig:gadget component symbol}. 
When \( k=5 \), the gadget component is as in Figure~\ref{fig:5-star colouring gadget component}. 
Note that the gadget component has maximum degree \( k \), and it is bipartite (small dots form one part and big dots form the other part; see Figure~\ref{fig:5-star colouring gadget component}).

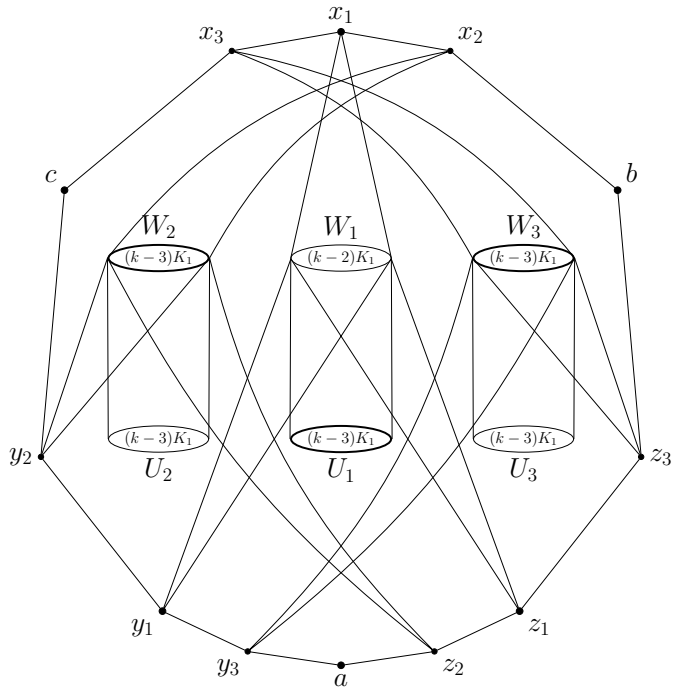
\begin{figure}[hbt]
\centering
\scalebox{0.5}{%
\tikzset{ 
dotplus/.style={bigdot,minimum size=4pt},
bigdotplus/.style={bigdot,minimum size=5pt}
}
\begin{tikzpicture}[scale=1.2]
\tikzstyle every label=[font=\LARGE]
\path (-90:7) node(a)[bigdotplus][label=below:\( a \)]{}
      ( 30:7) node(b)[bigdotplus][label=above right:\( b \)]{}
      (150:7) node(c)[bigdotplus][label=above left:\( c \)]{};
\path (90:2) node(W1)[ellipse,inner sep=1pt,draw][label=\( W_1 \)]{\( (k-2)K_1 \)}
      (-90:2) node(U1)[ellipse,inner sep=1pt,draw,line width=1.5pt][label=below:\( U_1 \)]{\( (k-3)K_1 \)};
\path (W1)+(-4,0) node(W2)[ellipse,inner sep=1pt,draw,line width=1.5pt][label=\( W_2 \)]{\( (k-3)K_1 \)}
      (U1)+(-4,0) node(U2)[ellipse,inner sep=1pt,draw][label=below:\( U_2 \)]{\( (k-3)K_1 \)};
\path (W1)+(4,0) node(W3)[ellipse,inner sep=1pt,draw,line width=1.5pt][label=\( W_3 \)]{\( (k-3)K_1 \)}
      (U1)+(4,0) node(U3)[ellipse,inner sep=1pt,draw][label=below:\( U_3 \)]{\( (k-3)K_1 \)};
\path ( 90:7) node(x)[bigdotplus][label=\( x_1 \)]{}
      (-124:7) node(y)[bigdotplus][label=below left:\( y_1 \)]{}
      (-56:7) node(z)[bigdotplus][label=below right:\( z_1 \)]{};
\draw (x)--(W1.180)  (x)--(W1.0)
      (y)--(W1.181)  (y)--(W1.-1)
      (z)--(W1.181)  (z)--(W1.-1);
\path (-107:7) node(y2)[dotplus][label=below left:\( y_3 \)]{}
      (- 73:7) node(z1)[dotplus][label=below right:\( z_2 \)]{};
\draw (y2) to[bend right=12] (W3.0)
      (y2) to[bend right=15] (W3.180);
\draw (z1) to[bend left=12] (W2.180)
      (z1) to[bend left=15] (W2.0);
\path (-160:7) node(y1)[dotplus][label=left:\( y_2 \)]{}
      ( -20:7) node(z2)[dotplus][label=right:\( z_3 \)]{};
\draw (y1)--(W2.180)  (y1)--(W2.0);
\draw (z2)--(W3.180)  (z2)--(W3.0);
\path (110:7) node(x2)[dotplus][label=above left:\( x_3 \)]{}
      ( 70:7) node(x1)[dotplus][label=above right:\( x_2 \)]{};
\draw (x2) to[bend left=20] (W3.180)
      (x2) to[bend left=20] (W3.0);
\draw (x1) to[bend right=20] (W2.180)
      (x1) to[bend right=20] (W2.0);
\draw (W1.0)--(U1.0)  (W1.180)--(U1.180)
      (W2.0)--(U2.0)  (W2.180)--(U2.180)
      (W3.0)--(U3.0)  (W3.180)--(U3.180);
\draw (x)--(x1)--(b)--(z2)--(z)--(z1)--(a)--(y2)--(y)--(y1)--(c)--(x2)--(x);
\end{tikzpicture}
}
\caption{The gadget component in Construction~\ref{make:k-star}. 
For each \( j\in \{1,2,3\} \), vertices in \( W_j \) are adjacent to \( x_j,y_j,z_j \) and members of \( U_j \).}
\label{fig:k-star colouring gadget component}
\end{figure}

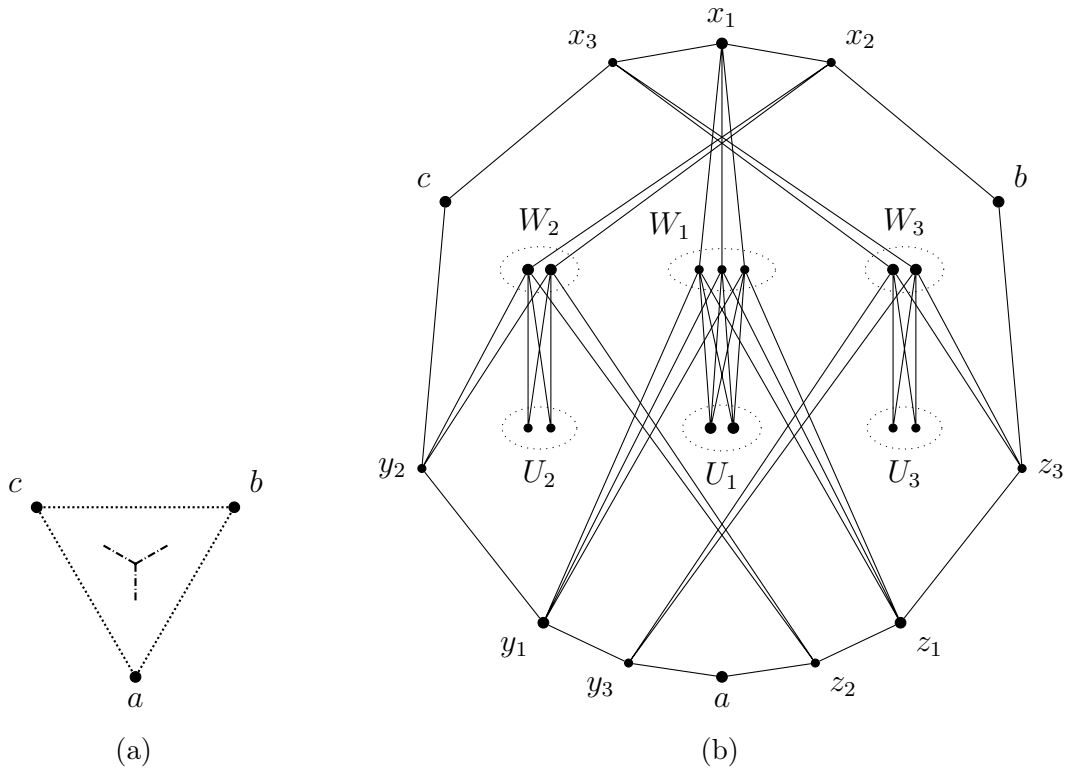
\begin{figure}[hbt]
\centering
\begin{subfigure}[b]{0.3\textwidth}
\centering
\begin{tikzpicture}
\path (-90:1.5) node(a)[bigdot][label=below:\( a \)]{}
      ( 30:1.5) node(b)[bigdot][label=above right:\( b \)]{}
      (150:1.5) node(c)[bigdot][label=above left:\( c \)]{};
\draw[densely dotted,thick] (a)--(b)--(c)--(a);
\draw[densely dash dot,thick]
(0,0)--(-90:0.5)
(0,0)--( 30:0.5)
(0,0)--(150:0.5);
\end{tikzpicture}
\caption{}
\label{fig:gadget component symbol}
\end{subfigure}%
\hfill
\begin{subfigure}[b]{0.7\textwidth}
\centering
\begin{tikzpicture}[scale=0.6]
\path (-90:7) node(a)[bigdot][label=below:\( a \)]{}
      ( 30:7) node(b)[bigdot][label=above right:\( b \)]{}
      (150:7) node(c)[bigdot][label=above left:\( c \)]{};
\path (90:2) node(w12)[dot]{}+(0.5,0) node(w13)[dot]{}
                             +(-0.5,0) node(w11)[dot]{}
      (-90:1.5) coordinate(u12)+(0.25,0) node(u13)[bigdot]{}
                             +(-0.25,0) node(u11)[bigdot]{};
\path (w12)++(-4,0) coordinate(w22)+(0.25,0) node(w23)[bigdot]{}
                                   +(-0.25,0) node(w21)[bigdot]{}
      (u12)++(-4,0) coordinate(u22)+(0.25,0) node(u23)[dot]{}
                                   +(-0.25,0) node(u21)[dot]{};
\path (w12)++(4,0) coordinate(w32)+(0.25,0) node(w33)[bigdot]{}
                                  +(-0.25,0) node(w31)[bigdot]{}
      (u12)++(4,0) coordinate(u32)+(0.25,0) node(u33)[dot]{}
                                  +(-0.25,0) node(u31)[dot]{};
\draw (u11)--(w11)   (u11)--(w12)   (u11)--(w13)
      (u13)--(w11)   (u13)--(w12)   (u13)--(w13);
\draw (u21)--(w21)    (u21)--(w23)
      (u23)--(w21)    (u23)--(w23);
\draw (u31)--(w31)    (u31)--(w33)
      (u33)--(w31)    (u33)--(w33);

\path ( 90:7) node(x)[bigdot][label=\( x_1 \)]{}
      (-124:7) node(y)[bigdot][label=below left:\( y_1 \)]{}
      (-56:7) node(z)[bigdot][label=below right:\( z_1 \)]{};
\draw (x)--(w11)   (x)--(w12)   (x)--(w13)  
      (y)--(w11)   (y)--(w12)   (y)--(w13)  
      (z)--(w11)   (z)--(w12)   (z)--(w13);
\path (-107:7) node(y2)[dot][label=below left:\( y_3 \)]{}
      (- 73:7) node(z1)[dot][label=below right:\( z_2 \)]{};
\draw (y2)--(w31)    (y2)--(w33);
\draw (z1)--(w21)    (z1)--(w23);
\path (-160:7) node(y1)[dot][label=left:\( y_2 \)]{}
      ( -20:7) node(z2)[dot][label=right:\( z_3 \)]{};
\draw (y1)--(w21)    (y1)--(w23);
\draw (z2)--(w31)    (z2)--(w33);
\path (110:7) node(x2)[dot][label=above left:\( x_3 \)]{}
      ( 70:7) node(x1)[dot][label=above right:\( x_2 \)]{};
\draw (x1)--(w21)    (x1)--(w23);
\draw (x2)--(w31)    (x2)--(w33);
\draw (x)--(x1)--(b)--(z2)--(z)--(z1)--(a)--(y2)--(y)--(y1)--(c)--(x2)--(x);

\node(W1) [fit={(w11) (w12) (w13)},draw,ellipse,dotted][label=above left:\( W_1 \)]{};
\node(W2) [fit={(w21) (w22) (w23)},draw,ellipse,dotted][label=\( W_2 \)]{};
\node(W3) [fit={(w31) (w32) (w33)},draw,ellipse,dotted][label=\( W_3 \)]{};
\node(U1) [fit={(u11) (u12) (u13)},draw,ellipse,dotted][label=below:\( U_1 \)]{};
\node(U2) [fit={(u21) (u22) (u23)},draw,ellipse,dotted][label=below:\( U_2 \)]{};
\node(U3) [fit={(u31) (u32) (u33)},draw,ellipse,dotted][label=below:\( U_3 \)]{};
\end{tikzpicture}
\caption{}
\label{fig:5-star colouring gadget component}
\end{subfigure}
\captionsetup{width=0.65\textwidth}
\caption{(a) The symbol for gadget component, and (b) the gadget component when \( k=5 \).}
\end{figure}

The following lemma shows the usefulness of the gadget component.
\begin{lemma}
Every \( k \)-star colouring of the gadget component must use the same colour on vertices \( a, b \) and \( c \).
\label{lem: k-star colouring gadget component}
\end{lemma}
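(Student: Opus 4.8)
The plan is to reduce everything to a single local principle—a $k$-colour version of Observation~\ref{obs:P3 force colours}—together with the ``amplifier'' behaviour of the independent sets $W_1,W_2,W_3$. First I would record the forcing rule: if $u,v,w$ is a $P_3$ that is bicoloured under a $k$-star colouring $f$ (so $f(u)=f(w)$), then every neighbour $t\neq v$ of $w$ satisfies $f(t)\neq f(v)$, since otherwise $u,v,w,t$ would be a bicoloured $P_4$. Next I would prove an amplifier lemma: if $W$ is an independent set all of whose vertices have the same neighbourhood $N$ with $|N|=k$, and each vertex of $W$ has degree exactly $k$, then under any $k$-star colouring the vertices of $W$ get pairwise distinct colours, whence $N$ uses at most $k-|W|$ colours. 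Indeed each $w\in W$ sees $k$ neighbours but must avoid its own colour, so two vertices $p,q\in N$ share a colour; applying the forcing rule to the bicoloured path $p,w,q$ for every $w\in W$ shows the $W$-vertices are pairwise differently coloured, and then $N$ must avoid the $|W|$ colours used on $W$. Applied to $W_1$ (with $|W_1|=k-2$ and $N_1=\{x_1,y_1,z_1\}\cup U_1$, which has size $k$) this forces $N_1$, hence $\{x_1,y_1,z_1\}$, to use at most two colours; applied to $W_2,W_3$ (with $|W_j|=k-3$) it makes each $W_j$ rainbow, so the complement of $f(W_j)$ is a palette of exactly three colours.

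Since the three ``hubs'' $x_1,y_1,z_1$ take at most two values $\{\alpha,\beta\}$, two of them coincide. The order-three rotational automorphism of the gadget—rotation of the outer $12$-cycle $x_1,x_2,b,z_3,z_1,z_2,a,y_3,y_1,y_2,c,x_3$ by four steps, which cyclically permutes the hubs and the corners $a,b,c$ while fixing each $W_j$ setwise—lets me place the repeated value on a chosen pair, so I would split into \textbf{Case A} ($f(x_1)=f(y_1)=f(z_1)=\alpha$) and \textbf{Case B} ($f(x_1)=f(y_1)=\alpha\neq f(z_1)=\beta$). In either case the forcing rule applied to the bicoloured paths $x_1,w,y_1$ through the rainbow set $W_1$ pushes each arm vertex adjacent to an $\alpha$-hub off every colour of $W_1$, and since such an arm vertex is adjacent to an $\alpha$-hub it lands on $\beta$. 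Thus in Case~A all six arm vertices $x_2,x_3,y_2,y_3,z_2,z_3$ become $\beta$, while in Case~B at least $x_2,x_3,y_2,y_3$ become $\beta$.

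Finally I would read off the corners. Each corner sits between two arm vertices, and a path such as $a,y_3,y_1,y_2$ (colours $f(a),\beta,\alpha,\beta$) forces $f(a)\neq\alpha$, while adjacency to a $\beta$-coloured arm vertex gives $f(a)\neq\beta$; the same applies to $b$ and $c$, so no corner uses $\alpha$ or $\beta$. To pin the actual value I combine this with the amplifier palettes: because $x_2=y_2=\beta$, each $w\in W_2$ makes $x_2,w,y_2$ bicoloured, so the forcing rule drives $b$ and $c$ (the cycle-neighbours of $x_2$ and $y_2$) off $f(W_2)$, landing them in the unique third colour $\xi$ of the three-element complement $\{\alpha,\beta,\xi\}$ of $f(W_2)$; symmetrically $W_3$ drives $a$ and $c$ onto the third colour $\eta$ of the complement of $f(W_3)$. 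The crux I expect is reconciling these two independently produced colours in Case~B: the corner $c$ is at once a cycle-neighbour of $y_2\in N_2$ and of $x_3\in N_3$, hence constrained by \emph{both} amplifiers, which forces $\xi=\eta$ and therefore $f(a)=\eta=\xi=f(b)=f(c)$. Case~A is the easy sub-case, where the single amplifier $W_2$ already sees all three corners; the real bookkeeping is checking that the asymmetric Case~B still closes through this shared-corner identification and that the hypotheses $|W_1|=k-2\ge 2$ and $|W_2|=|W_3|=k-3\ge 1$ (guaranteed by $k\geq 4$) keep every amplifier non-degenerate.
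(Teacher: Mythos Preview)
Your outline is correct, but one step is asserted without verification: you claim the complement of $f(W_2)$ is exactly $\{\alpha,\beta,\xi\}$, which needs $\alpha\notin f(W_2)$. Adjacency to $x_2$ only gives $\beta\notin f(W_2)$; to exclude $\alpha$ you must apply your forcing rule once more, e.g.\ to the bicoloured path $x_3,x_1,x_2$ (coloured $\beta,\alpha,\beta$) with $w\in W_2$ a neighbour of $x_2$. The same fix is needed for $W_3$. Without it the set $\{0,\dots,k-1\}\setminus(f(W_2)\cup\{\alpha,\beta\})$ could have two elements, and your overlap argument would not close.

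With that step filled in, your argument works and is close in spirit to the paper's, though the concluding mechanism differs. The paper also reduces to a pair of equal hubs and forces the four adjacent arm vertices to a single colour; it then uses $W_2$ to pin two corners (in its labelling, $a$ and $c$), \emph{feeds the computed value of $a$ back to determine $f(W_3)$ exactly}, and finally uses $W_3$ to pin the remaining corner $b$---a chain argument. You instead run $W_2$ and $W_3$ independently and reconcile them at the shared corner $c$---an overlap argument. Your explicit use of the order-three rotational automorphism (which is genuine: the rotation fixes each $W_j$ setwise, so the unequal sizes $|W_1|\neq|W_2|$ do not obstruct it) is absent from the paper, which just makes a WLOG choice of coinciding hubs and does not split into your Cases~A/B. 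Both routes rest on the same forcing principle; yours is a bit more symmetric, the paper's slightly more economical in avoiding a case distinction.
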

\begin{proof}
We start with a simple claim.\\[5pt]
\textbf{Claim~1:} Let \( f \) is a \( k \)-star colouring of a graph \( G \), and let \( w' \) and \( w'' \) be two vertices in \( G \) with \( k \) common neighbours. Then, \( f(w')\neq f(w'') \).\\[5pt]
Suppose that \( v_1,v_2,\dots,v_k \) are \( k \) common neighbours of \( w' \) and \( w'' \) in \( G \). 
Clearly, the colour \( f(w') \) is unavailable for vertices \( v_1,v_2,\dots,v_k \). 
Since only \( k-1 \) colours are available for vertices \( v_1,v_2,\dots,v_k \), at least two of these vertices have the same colour, say, \( f(v_1)=f(v_2) \).  
Hence, we have \( f(w')\neq f(w'') \) (otherwise, \( w',v_1,w'',v_2 \) will be a bicoloured \( P_4 \); a contradiction). 
This proves Claim~1.\\ 

Let \( f \) be a \( k \)-star colouring of the gadget component. 
The following claim deals with \( W_1 \), \( W_2 \), and \( W_3 \).\\[5pt]
\noindent \textbf{Claim~2:} For each \( j\in\{1,2,3\} \), vertices in \( W_j \) get pair-wise distinct colours under \( f \).\\[5pt]
\noindent Since every pair of vertices in \( W_1 \) have \( k \) common neighbours (namely \( x_1,y_1,z_1 \) and members of \( U_1 \)), vertices in \( W_1 \) get pair-wise distinct colours under \( f \) by Claim~1. 
A~similar argument works for \( W_2 \) and \( W_3 \). 
This proves Claim~2.\\

Thanks to Claim~2, we may assume without loss of generality that vertices in \( W_1 \) are assigned a permutation of colours \( 0,1,\dots,k-3 \) (by \( f \)). 
Therefore, only colours \( k-2 \) and \( k-1 \) are available for vertices \( x_1,y_1 \) and \( z_1 \). 
Hence, at least two of vertices \( x_1,y_1,z_1 \) get the same colour. 
By symmetry, we assume without loss of generality  that \( f(y_1)=f(z_1)=k-2 \). 
For each colour \( i\in\{0,1,\dots,k-3\} \), there is a vertex \( w\in W_1 \) coloured~\( i \) so that \( y_1,w,z_1 \) is a 3-vertex path coloured \( k-2,i,k-2 \), and thus \( f(y_2)\neq i \) (if not, path \( y_2,y_1,w,z_1 \) is a bicoloured \( P_4 \)). 
Thus, \( f(y_2)\neq i \) for each \( i\in\{0,1,\dots,k-3\} \). 
Also, \( f(y_2)\neq k-2=f(y_1) \) since \( y_2 \) is adjacent to \( y_1 \). 
So, \( f(y_2)=k-1 \). 
Similarly, \( f(y_3)=k-1 \) and \( f(z_2)=f(z_3)=k-1 \). 
Since \( y_2,y_1,y_3 \) is a 3-vertex path coloured \( k-1,k-2,k-1 \) and \( y_2 \) is adjacent to every member of \( W_2 \), no member of \( W_2 \) is coloured \( k-2 \). 
Also, no member \( w \) of \( W_2 \) is coloured \( k-1 \) because \( w \) is adjacent to \( x_2 \) and \( f(x_2)=k-1 \). 
So, only colours \( 0,1,\dots,k-3 \) are available for vertices in \( W_2 \). 
Thanks to Claim~2 and the fact \( |W_2|=k-3 \), exactly one colour from \( \{0,1,\dots,k-3\} \) is missing in \( W_2 \). 
Without loss of generality, we may assume that the missing colour is \( 0 \); that is, vertices in \( W_2 \) are assigned a permutation of colours \( 1,2,\dots,k-3 \). 
Now, for each colour \( i\in\{1,2,\dots,k-3\} \), there is a vertex \( w\in W_2 \) coloured \( i \) so that \( y_2,w,z_2 \) is a 3-vertex path coloured \( k-1,i,k-1 \); as a result, \( c \) cannot be coloured \( i \) (otherwise, path \( c,y_2,w,z_2 \) is a bicoloured \( P_4 \)). 
Thus, \( f(c)\neq i \) for each \( i\in \{1,2,\dots,k-3\} \). 
Besides, \( f(c)\neq k-2 \) because \( y_2,y_1,y_3 \) is a 3-vertex path coloured \( k-1,k-2,k-1 \) and \( c \) is a neighbour of \( y_2 \) (if \( f(c)=k-2 \), then path \( c,y_2,y_1,y_3 \) is a bicoloured \( P_4 \)). 
Therefore, \( f(c)=0 \). 
By similar arguments, \( f(a)=0 \) as well (\( \because \) \( a \) is a neighbour of \( y_3 \)).

To complete the proof of the lemma, it suffices to show that \( f(b)=0 \). 
Consider the set \( W_3 \) and an arbitrary vertex \( w\in W_3 \). 
Vertex \( w \) is not coloured \( 0 \) since otherwise \( a,y_3,w,z_3 \) is a bicoloured \( P_4 \). 
Also, \( w \) is not coloured \( k-2 \) since \( z_2,z_1,z_3 \) is 3-vertex path coloured \( k-1,k-2,k-1 \) and \( w \) is adjacent to \( z_3 \). 
Thus, \( f(w)\neq 0 \) and \( f(w)\neq k-2 \). 
Also, \( f(w)\neq k-1=f(y_3) \) since \( y_3 \) is a neighbour of \( w \). 
Therefore, only colours \( 1,2,\dots,k-3 \) are available for vertices in \( W_3 \). 
Thanks to Claim~2 and the fact \( |W_3|=k-3 \), vertices in \( W_3 \) are assigned a permutation of colours \( 1,2,\dots,k-3 \). 
Hence, for each colour \( i\in\{1,2,\dots,k-3\} \), there is a 3-vertex path from \( y_3 \) to \( z_3 \) coloured \( k-1,i,k-1 \) so that vertex \( b \) cannot be coloured~\( i \) (if not, there is a 4-vertex path from \( y_3 \) to \( b \) coloured \( k-1,i,k-1,i \)). 
Thus, \( f(b)\neq i \) for each \( i\in\{1,2,\dots,k-3\} \). 
Moreover, \( b \) cannot be coloured~\( k-2 \) since \( z_2,z_1,z_3 \) is a 3-vertex path coloured \( k-1,k-2,k-1 \) and \( z_3 \) is adjacent to \( b \). 
Therefore, the only colour available at \( b \) is \( 0 \). 
This proves that \( f(a)=f(b)=f(c) \).
\end{proof}

\begin{construct}\label{make:k-star}
\emph{Parameter:} An integer \( k\geq 4 \).\\
\emph{Input:} A graph \( G \) of maximum degree \( 2(k-1) \).\\
\emph{Output:} A bipartite graph \( G' \) of maximum degree \( k \).\\
\emph{Guarantee 1:} \( G \) is \( k \)-colourable if and only if \( G' \) is \( k \)-star colourable.\\
\emph{Guarantee 2:} \( G' \) has only \( O(n) \) vertices where \( n=|V(G)| \).\\
\emph{Steps:}\\
Replace each vertex \( v \) of \( G \) by a vertex gadget as shown in Figure~\ref{fig:k-star colouring vertex replacement}. 
For each vertex \( v \) of \( G \), the vertex gadget for \( v \) has \( 2(k-1) \) terminals \( v_1,v_2,\dots,v_{2(k-1)} \) which accommodate the edges incident on \( v \) in \( G \) (each terminal takes at most one edge; order does not matter). 
Replacement of vertices by vertex gadgets converts each edge \( uv \) of \( G \) into an edge \( u_iv_j \) between terminals \( u_i \) and \( v_j \) of corresponding vertex gadgets (where \( i,j\in\{1,2,\dots,2(k-1)\} \)). 
\begin{figure}[hbt]
\centering
\begin{tikzpicture}
\path (0,0)  node[bigdot](v)[label=\( v \)]{} --++(2,-0.5) coordinate(from) --++(0.6,0) coordinate(to);
\draw[dashed,very thin] (v)--+(-0.75,-1) coordinate(w1)
                        (v)--+(-0.5,-1) coordinate(w2)
                        (v)+(0.20,-0.75) node{\dots}
                        (v)--+(1,-1) coordinate(w2k-2);
\draw [decorate,decoration={brace,amplitude=6pt,raise=5pt,mirror}] (w1)+(-0.1,0)--node[below=9pt]{\( 2(k-1) \) potential edges} ($(w2k-2)+(0.1,0)$);

\draw [-stealth,draw=black,line width=3pt] (from)--(to);

\path (to) --+(2.5,0.5) coordinate(centre1);
\path (centre1)+(-90:1.5) node(a)[bigdot]{} node(v1)[terminal][label=right:\( v_1 \)]{}
      (centre1)+( 30:1.5) node(b)[bigdot]{}
      (centre1)+(150:1.5) node(c)[bigdot]{};
\draw[densely dotted,thick] (a)--(b)--(c)--(a);
\draw[densely dash dot,thick]
(centre1)--+(-90:0.5)
(centre1)--+( 30:0.5)
(centre1)--+(150:0.5);

\path (centre1) --+(2.60,0) coordinate(centre2);
\path (centre2)+(-90:1.5) node(a)[bigdot]{} node(v2)[terminal][label=right:\( v_2 \)]{}
      (centre2)+( 30:1.5) node(b)[bigdot]{}
      (centre2)+(150:1.5) node(c)[bigdot]{};
\draw[densely dotted,thick] (a)--(b)--(c)--(a);
\draw[densely dash dot,thick]
(centre2)--+(-90:0.5)
(centre2)--+( 30:0.5)
(centre2)--+(150:0.5);

\path (centre2) --+(4.30,0) coordinate(centre3);
\path (centre3)+(-90:1.5) node(a)[bigdot]{} node(v2k-2)[terminal][label=right:\( v_{2(k-1)} \)]{}
      (centre3)+( 30:1.5) node(b)[bigdot]{}
      (centre3)+(150:1.5) node(c)[bigdot]{};
\draw[densely dotted,thick] (a)--(b)--(c)--(a);
\draw[densely dash dot,thick]
(centre3)--+(-90:0.5)
(centre3)--+( 30:0.5)
(centre3)--+(150:0.5);

\path (centre2)-- node[font=\huge,yshift=-5mm]{\dots} (centre3);

\draw[dashed,very thin] (v1)--+(-0.75,-1)
                        (v2)--+(-0.5,-1)
                        (v2k-2)--+(1,-1);

\end{tikzpicture}
\caption{Replacement of vertex by vertex gadget.}
\label{fig:k-star colouring vertex replacement}
\end{figure}
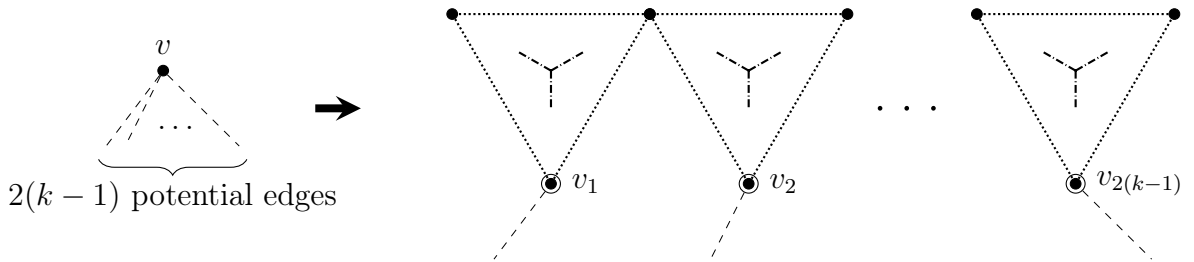
\begin{figure}[hbt]
\centering
\begin{tikzpicture}
\draw (0,0) node[bigdot]{} node[terminal](ui)[label=below:\( u_i \)]{} --++(1,0) node[bigdot]{} node[terminal](vj)[label=below:\( v_j \)]{};
\path (vj) --++(1,0) coordinate(from) --++(0.6,0) coordinate(to);
\draw [-stealth,draw=black,line width=3pt] (from)--(to);

\path (to) --++(1.5,0) node(ui2)[bigdot]{} node[terminal][label={left:\( u_i \)}]{} --++(4.25,0) coordinate(centre1)--++(3.5,0) node(vj2)[bigdot]{} node[terminal][label={right:\( v_j \)}]{};

\path (centre1)+(0:1.5) node(a)[dot]{} node(v1)[label=right:\( a \)]{}
      (centre1)+(120:1.5) node(b)[dot][label=above:\( b \)]{}
      (centre1)+(240:1.5) node(c)[dot][label=below:\( c \)]{};
\draw[densely dotted,thick] (a)--(b)--(c)--(a);
\draw[densely dash dot,thick]
(centre1)--+(0:0.5)
(centre1)--+(120:0.5)
(centre1)--+(240:0.5);

\draw (ui2)--(b)--(vj2)
      (ui2)--(c)--(vj2);
\end{tikzpicture}
\caption{Replacement of edge between terminals by edge gadget.}
\label{fig:k-star colouring edge replacement}
\end{figure}
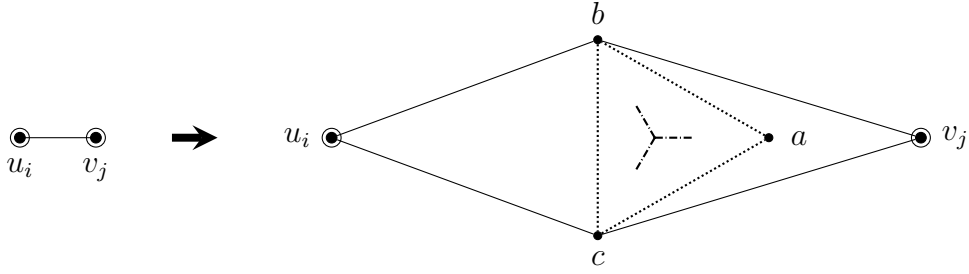

Finally, replace each edge \( u_iv_j \) between terminals by an edge gadget as shown in Figure~\ref{fig:k-star colouring edge replacement}. 
An example is available in the supplementary material.

Observe that every vertex of \( G' \) is within a gadget component. 
We know that the gadget component has maximum degree \( k \) and the `corners' \( a,b,c \) of the gadget component has only two neighbours within the gadget component. 
Since the vertex gadget is a `chain' of gadget components and \( k\geq 4 \), the vertex gadget has maximum degree \( k \) and each terminal of the gadget has only two neighbours within the gadget. 
Also, the edge gadget has maximum degree \( k \) and each terminal of the gadget has only two neighbours within the gadget. 
So, each terminal in \( G' \) has degree two or four. 
The graph \( G' \) has maximum degree \( k \) because gadgets in \( G' \) have maximum degree \( k \), terminals in \( G' \) have degree at most four, and \( k\geq 4 \). 
\end{construct}


\begin{proof}[Proof of Guarantee 1]
Suppose that \( G \) admits a \( k \)-colouring \( f\colon V(G)\to\{0,1,\dots,k-1\} \). We construct a \( k \)-colouring \( f' \) of \( G' \) as follows. For each gadget component within the vertex gadget for \( v\in V(G) \), use the scheme in Figure~\ref{fig:k-star colouring of gadget component}, but swap colours~0 and \( f(v) \). For each gadget component between terminals \( u_i \) and \( v_j \), choose two distinct colours \( p,q\in\{0,1,\dots,k-1\}\setminus\{f(u),f(v)\} \) and apply the scheme in Figure~\ref{fig:k-star colouring of gadget component}, but swap colour~0 with colour~\( p \) and swap colour~\( k-1 \) with colour~\( q \). Clearly, \( f' \) is a \( k \)-colouring. 

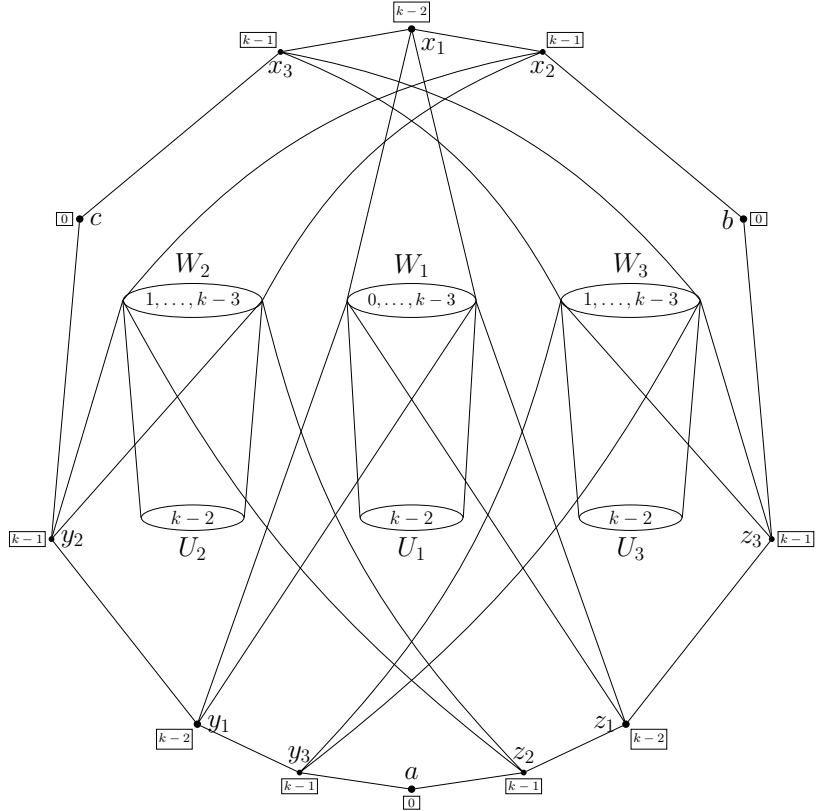
\begin{figure}[hbt]
\centering
\scalebox{0.60}{%
\tikzset{ 
dotplus/.style={bigdot,minimum size=4pt},
bigdotplus/.style={bigdot,minimum size=5pt}
}
\begin{tikzpicture}[scale=1.2]
\tikzstyle every label=[font=\Large]
\path (-90:7) node(a)[bigdot][label=above:\( a \)][label={[vcolour]below:\,\( 0 \)\,}]{}
      ( 30:7) node(b)[bigdot][label=left:\( b \)][label={[vcolour]right:\,\( 0 \)\,}]{}
      (150:7) node(c)[bigdot][label=right:\( c \)][label={[vcolour]left:\,\( 0 \)\,}]{};
\path (90:2) node(W1)[ellipse,inner sep=1pt,draw,font=\small,minimum height=0.75cm][label=\( W_1 \)]{\( 0,\dots,k-3 \)}
      (-90:2) node(U1)[ellipse,inner sep=1pt,draw,minimum width=2.25cm][label=below:\( U_1 \)]{\( k-2 \)};
\path (W1)+(-4,0) node(W2)[ellipse,inner sep=1pt,draw,minimum height=0.75cm][label=\( W_2 \)]{\( 1,\dots,k-3 \)}
      (U1)+(-4,0) node(U2)[ellipse,inner sep=1pt,draw,minimum width=2.25cm][label=below:\( U_2 \)]{\( k-2 \)};
\path (W1)+(4,0) node(W3)[ellipse,inner sep=1pt,draw,minimum height=0.75cm][label=\( W_3 \)]{\( 1,\dots,k-3 \)}
      (U1)+(4,0) node(U3)[ellipse,inner sep=1pt,draw,minimum width=2.25cm][label=below:\( U_3 \)]{\( k-2 \)};
\path ( 90:7) node(x)[bigdot][label=below right:\( x_1 \)][label={[vcolour]above:\( k-2\strut \)}]{}
      (-124:7) node(y)[bigdot][label=right:\( y_1 \)][label={[vcolour]below left:\( k-2\strut \)}]{}
      (-56:7) node(z)[bigdot][label=left:\( z_1 \)][label={[vcolour]below right:\( k-2\strut \)}]{};
\draw (x)--(W1.180)  (x)--(W1.0)
      (y)--(W1.181)  (y)--(W1.-1)
      (z)--(W1.181)  (z)--(W1.-1);
\path (-107:7) node(y2)[dot][label=above:\( y_3 \)][label={[vcolour]below:\( k-1 \)}]{}
      (- 73:7) node(z1)[dot][label=above:\( z_2 \)][label={[vcolour]below:\( k-1 \)}]{};
\draw (y2) to[bend right=12] (W3.0)
      (y2) to[bend right=15] (W3.180);
\draw (z1) to[bend left=12] (W2.180)
      (z1) to[bend left=15] (W2.0);
\path (-160:7) node(y1)[dot][label=right:\( y_2 \)][label={[vcolour]left:\( k-1 \)}]{}
      ( -20:7) node(z2)[dot][label=left:\( z_3 \)][label={[vcolour]right:\( k-1 \)}]{};
\draw (y1)--(W2.180)  (y1)--(W2.0);
\draw (z2)--(W3.180)  (z2)--(W3.0);
\path (110:7) node(x2)[dot][label=below:\( x_3 \)][label={[vcolour]above left:\( k-1 \)}]{}
      ( 70:7) node(x1)[dot][label=below:\( x_2 \)][label={[vcolour]above right:\( k-1 \)}]{};
\draw (x2) to[bend left=20] (W3.180)
      (x2) to[bend left=20] (W3.0);
\draw (x1) to[bend right=20] (W2.180)
      (x1) to[bend right=20] (W2.0);
\draw (W1.0)--(U1.0)  (W1.180)--(U1.180)
      (W2.0)--(U2.0)  (W2.180)--(U2.180)
      (W3.0)--(U3.0)  (W3.180)--(U3.180);
\draw (x)--(x1)--(b)--(z2)--(z)--(z1)--(a)--(y2)--(y)--(y1)--(c)--(x2)--(x);
\end{tikzpicture}
}
\caption{A \( k \)-star colouring scheme for the gadget component (for every vertex in \( U_1\cup U_2\cup U_3 \), use colour \( k-2 \)).}
\label{fig:k-star colouring of gadget component}
\end{figure}

~\\
\noindent \textbf{Claim:} \( f' \) is a \( k \)-star colouring.\\

\noindent Contrary to the claim, assume that there is a 4-vertex path \( Q \) in \( G' \) bicoloured by \( f' \). By construction, the restriction of \( f' \) to each gadget component is a \( k \)-star colouring. It is easy to verify that \( f' \) restricted to each edge gadget is a \( k \)-star colouring. Therefore, either \( Q \) is within a vertex gadget, or \( Q \) contains vertices from a vertex gadget and an edge gadget. We show that the latter leads to a contradiction (the former can be ruled out by similar arguments). Suppose \( Q \) contains vertices from the vertex gadget for vertex \( v\in V(G) \) and from an edge gadget between two terminals \( u_i \) and \( v_j \) of \( G' \). Clearly, \( Q \) must contain a bicoloured 3-vertex path \( Q^* \) such that (i)~\( Q^* \) is entirely within a gadget, and (ii)~an endpoint of \( Q^* \) is a terminal of the gadget. Observe that \( Q^* \) cannot be in the vertex gadget because there is no bicoloured \( P_3 \) in Figure~\ref{fig:k-star colouring of gadget component} having a terminal as an endpoint. Similarly, the choice of colour \( q \) ensures that \( q\notin\{f(u),f(v)\} \), and thus \( Q^* \) cannot be in the edge gadget either. This contradiction proves the claim. Hence, \( G' \) is \( k \)-star colourable.

Conversely, suppose that \( G' \) admits a \( k \)-star colouring \( f':V(G')\to\{0,1,\dots,k-1\} \). 
Thanks to Lemma~\ref{lem: k-star colouring gadget component}, \( f' \) must colour all terminals of a vertex gadget by the same colour (because the vertex gadget is a `chain' of gadget components). 
Again by Lemma~\ref{lem: k-star colouring gadget component}, for each edge gadget between terminals \( u_i \) and \( v_j \) in \( G' \), \( f'(b)=f'(c) \) and hence \( f'(u_i)\neq f'(v_j) \) (if not, \( b,u_i,c,v_j \) is a bicoloured \( P_4 \)). 
So, all terminals of a vertex gadget have the same colour, and terminals of each edge gadget have different colours. 
Therefore, the function \( f\colon V(G)\to\{0,1,\dots,k-1\} \) defined as \( f(v)=f'(v_1) \) for all \( v\in V(G) \), is a \( k \)-colouring of \( G \). 
Hence, \( G \) is \( k \)-colourable whenever \( G' \) is \( k \)-star colourable.
\end{proof}
\begin{proof}[Proof of Guarantee 2]
%
Let us count the number of vertices in \( G' \). 
Each gadget component has \( (k-2)+5(k-3)+12=6k-5 \) vertices. 
Each vertex gadget has \( (2k-2)(6k-5)-(2k-1)=12k^2-24k+11 \) vertices. 
Each edge gadget has \( 6k-5 \) vertices excluding the terminals (let us count terminals as part of vertex gadgets). 
So, \( G' \) has \( n(12k^2-24k+11)+m(6k-5)=O(m+n) \) vertices where \( m=|E(G)| \) and \( n=|V(G)| \). 
As \( \Delta(G)=2(k-1) \), we have \( m\leq n\Delta(G)/2=n(k-1)=O(n) \). 
Therefore, \( G' \) has only \( O(m+n)=O(n) \) vertices.
\end{proof}

Thanks to Construction~\ref{make:k-star}, we have the following theorem since \( k \)-Colourability is NP-complete for graphs of maximum degree \( 2(k-1) \) (in fact, NP-complete for line graphs of \( k \)-regular graphs \cite{leven_galil}) and the problem does not admit a \( 2^{o(n)} \)-time algorithm unless ETH fails (the latter can be observed from a reduction of Emden-Weinert et al.~\cite{emden-weinert}; Theorem~3 in supplementary material presents a shorter alternate proof with the help of an alternate reduction). 
\begin{theorem}\label{thm:k-star max degree k}
For all \( k\geq 4 \), \textsc{\( k \)-Star Colourability}(bipartite, \( \Delta=k \)) is NP-complete, and the problem does not admit a \( 2^{o(n)} \)-time algorithm unless ETH fails.
\qed
\end{theorem}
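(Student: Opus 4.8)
The plan is to obtain the theorem as a direct consequence of Construction~\ref{make:k-star}, which already carries out all the combinatorial work; the argument is essentially a packaging step reducing from \( k \)-colouring of bounded-degree graphs. First I would settle membership in NP: given a graph and a candidate colouring \( f\colon V(G')\to\{0,1,\dots,k-1\} \), one checks in polynomial time that \( f \) is proper and that no bicoloured \( 4 \)-vertex path occurs (equivalently, that every bicoloured component is a star). Hence \textsc{\( k \)-Star Colourability} is in NP, and this holds in particular for the restricted class of bipartite graphs of maximum degree \( k \).

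For NP-hardness I would reduce from \textsc{\( k \)-Colourability}\( ( \)\( \Delta=2(k-1) \)\( ) \), which is NP-complete by \cite{leven_galil} (it is already NP-complete for line graphs of \( k \)-regular graphs). Given an instance \( G \) with \( \Delta(G)=2(k-1) \), apply Construction~\ref{make:k-star} to produce \( G' \). By the specification of the construction, \( G' \) is bipartite with \( \Delta(G')=k \), so \( G' \) is a legitimate instance of the restricted problem, and the reduction runs in polynomial time since every gadget has constant size for fixed \( k \). By Guarantee~1 of Construction~\ref{make:k-star}, \( G \) is \( k \)-colourable if and only if \( G' \) is \( k \)-star colourable, so the reduction is correct. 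Combined with membership in NP, this gives NP-completeness of \textsc{\( k \)-Star Colourability}\( ( \)bipartite, \( \Delta=k \)\( ) \).

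Finally I would transfer the ETH lower bound. The source problem \textsc{\( k \)-Colourability}\( ( \)\( \Delta=2(k-1) \)\( ) \) admits no \( 2^{o(n)} \)-time algorithm unless ETH fails \cite{emden-weinert}. By Guarantee~2, \( G' \) has \( N=O(n) \) vertices, and since each vertex of \( G \) is replaced by a vertex gadget of \( 12k^2-24k+11 \) vertices we also have \( N=\Omega(n) \) for fixed \( k \); hence \( N=\Theta(n) \). By the remark preceding Theorem~\ref{thm:3-star planar bip girth 8} (applied with \( h(N)=2^{o(N)} \)), a \( 2^{o(N)} \)-time algorithm for the target problem would yield a \( 2^{o(n)} \)-time algorithm for the source problem, contradicting ETH; thus \textsc{\( k \)-Star Colourability}\( ( \)bipartite, \( \Delta=k \)\( ) \) admits no \( 2^{o(n)} \)-time algorithm unless ETH fails.

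The place where I would take the most care is the size bookkeeping: the ETH transfer needs a genuinely \emph{linear} blow-up \( N=\Theta(n) \), not merely a polynomial one, since a superlinear blow-up would degrade the conclusion to a weaker bound. This is precisely why the bounded-degree hypothesis \( \Delta(G)=2(k-1) \) is essential, as it forces \( m=O(n) \) and hence collapses the raw vertex count \( O(m+n) \) of \( G' \) down to \( O(n) \). There is no genuine obstacle beyond this verification, because the only nontrivial structural fact — that the gadget component forces a common colour on its corners \( a,b,c \) (Lemma~\ref{lem: k-star colouring gadget component}) — is already established and feeds directly into Guarantee~1.
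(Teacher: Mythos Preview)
Your proposal is correct and follows essentially the same approach as the paper: reduce from \textsc{\( k \)-Colourability}\( (\Delta=2(k-1)) \) via Construction~\ref{make:k-star}, invoke \cite{leven_galil} for NP-completeness of the source problem and \cite{emden-weinert} for its ETH lower bound, and use the linear size guarantee to transfer the \( 2^{o(n)} \) bound. The paper's proof is in fact even terser than yours, leaving the NP-membership and the \( N=\Theta(n) \) verification implicit.
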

\section{Open Problems and Related Work}\label{sec:open}

Many problems related to star colouring are open even for the class of cubic graphs. 
Chen et al.~\cite{chen2013} proved that cubic graphs are 6-star colourable. 
On the other hand,  Xie et al.~\cite{xie} proved that cubic graphs are not 3-star colourable. 
So, \( 4\leq \chi_s(G)\leq 6 \) for every cubic graph \( G \). 
Both bounds are tight because \( \chi_s(K_4)=4 \) and \( \chi_s(M_8)=6 \) \cite{fertin2004} where \( M_8 \) is the Wagner's graph (also called the 8-vertex M\"{o}bius ladder graph). 
Conjecture~12 of Almeter et al.~\cite{almeter} implies that \( M_8 \) is the only cubic graph with star chromatic number six. 
\begin{conjecture}[\cite{almeter}]\label{conj:cubi neq M8 5-star colourable}
Every cubic graph except \( M_8 \) is 5-star colourable.
\end{conjecture}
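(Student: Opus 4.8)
Since the final statement is a conjecture, what follows is a strategy I would pursue rather than a finished proof. By the results quoted just above, every cubic graph $G$ satisfies $4 \le \chi_s(G) \le 6$, so the task is to rule out $\chi_s(G)=6$ for all cubic $G \not\cong M_8$. The plan is to argue by a minimal counterexample: let $G$ be a cubic graph with $\chi_s(G)=6$ and $G \not\cong M_8$ on the fewest vertices. Star colourability is decided component-by-component, so $G$ is connected; and because $\chi_s(K_4)=4$ and there are only finitely many cubic graphs on at most, say, twelve vertices (which can be checked exhaustively by computer, with $M_8$ emerging as the sole graph of star chromatic number six), I may assume $G$ is large. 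The first goal is to force $G$ to be highly connected.

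The natural reductions raise the (cyclic) edge-connectivity. If $G$ has a bridge, a $2$-edge-cut, or a nontrivial $3$-edge-cut, I would split $G$ along the cut, cap each side with a small fixed gadget so that the pieces remain cubic and smaller than $G$, then $5$-star-colour the pieces by minimality and glue the colourings back together. Gluing is already the subtle point: two bicoloured $3$-vertex paths meeting at the cut can fuse into a forbidden bicoloured $P_4$, so one must recolour a bounded neighbourhood of the cut, using the slack afforded by having five colours on a degree-$3$ graph, to make the colours crossing the cut disagree in the right pattern. Here the orientation reformulation of Albertson et al.\ --- a colouring is a star colouring iff some orientation sends both edges of every bicoloured $P_3$ into its centre --- is a convenient bookkeeping device, since it localises the constraint at each vertex.

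Once $G$ is reduced to the cyclically $4$-edge-connected case, I would induct along a generation theorem for $3$-connected cubic graphs (every such graph is obtainable from $K_4$ by vertex-to-triangle expansions and edge/handle insertions), showing that each generating operation preserves $5$-star-colourability unless it reproduces $M_8$; a discharging argument isolating an unavoidable set of reducible configurations (short cycles and clustered triangles) is a plausible alternative. The hard part --- and the reason the statement is still only a conjecture --- is that the forbidden object, a bicoloured $P_4$, is genuinely nonlocal, so the standard ``delete a reducible configuration, colour the rest by minimality, extend'' paradigm breaks down: extending the colouring onto the deleted part can force recolourings arbitrarily far away. Taming this propagation, and in particular certifying both that $M_8$ is the unique obstruction and that its obstructing behaviour cannot re-emerge inside larger cyclically highly-connected cubic graphs, is the crux on which a complete proof hinges.
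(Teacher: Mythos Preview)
The paper does not prove this statement at all: it is listed in the ``Open Problems and Related Work'' section as a conjecture taken from Almeter et al., with no accompanying proof or proof sketch. So there is nothing in the paper to compare your proposal against. You correctly flagged that this is a conjecture and offered a strategy rather than a proof; that is the appropriate response here. Your outline (minimal counterexample, raising edge-connectivity via cut reductions, then a generation/discharging argument) is a reasonable plan of attack on its own terms, and you are right to identify the nonlocality of the bicoloured-$P_4$ constraint as the main obstacle to making the extension steps go through, but none of this can be assessed against the paper since the paper simply records the conjecture as open.
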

\noindent Conjecture~\ref{conj:cubi neq M8 5-star colourable} implies that \textsc{5-Star Colourability} is in P for cubic graphs.
\begin{problem}
What is the complexity of \textsc{4-Star Colourability} in cubic graphs?
\end{problem}

\section*{Statements and Declarations}

\subsection*{Funding}
The first author is supported by SERB(DST), MATRICS scheme MTR/2018/000086. 

\subsection*{Conflict of interest}
The authors declare that they have no conflict of interest.

\subsection*{Acknowledgement}
We thank  Emil Je\v{r}\'{a}bek and an anonymous referee for their valuable comments. 

\end{document}